\pdfoutput=1
\documentclass[10pt]{amsart} % reqno places equation numbers on the right
%\linespread{1.1}
%\setcounter{tocdepth}{1} % hide subsections in Table of contents
\usepackage{amsmath,amscd,amsthm,amssymb,mathrsfs,mathtools,bm} % math related
\usepackage{microtype,lmodern,textcomp} % latex technical issues
\usepackage{enumerate,comment,braket,xspace,tikz-cd} % utilities
\usepackage[utf8]{inputenc} % input encoding
\usepackage[T1]{fontenc} % font encoding 4518376
\usepackage[centering,vscale=0.7,hscale=0.7]{geometry}
\usepackage[hidelinks]{hyperref}
\usepackage[capitalize]{cleveref}

\numberwithin{equation}{section}
\theoremstyle{plain}
\newtheorem{theorem}[equation]{Theorem}
\newtheorem{lemma}[equation]{Lemma}
\newtheorem*{lemma*}{Lemma}
\newtheorem{claim}[equation]{Claim}
\newtheorem*{claim*}{Claim}
\newtheorem{conjecture}[equation]{Conjecture}
\newtheorem{proposition}[equation]{Proposition}
\newtheorem*{proposition*}{Proposition}
\newtheorem{corollary}[equation]{Corollary}
\theoremstyle{definition}
\newtheorem{definition}[equation]{Definition}
\newtheorem{definition-theorem}[equation]{Definition-Theorem}
\newtheorem{definition-lemma}[equation]{Definition-Lemma}
\newtheorem{construction}[equation]{Construction}
\newtheorem{assumption}[equation]{Assumption}
\newtheorem{notation}[equation]{Notation}
\newtheorem{example}[equation]{Example}
\newtheorem{remark}[equation]{Remark}

\newif\ifpersonal
%\personaltrue

% personal remarks

\ifpersonal
\newcommand{\personal}[1]{\textcolor[rgb]{0,0,1}{(Personal: #1)}}

\newcommand{\todo}[1]{\textcolor{red}{(Todo: #1)}}
\else
\newcommand{\personal}[1]{\ignorespaces}
\newcommand{\discussion}[1]{\ignorespaces}
\newcommand{\todo}[1]{\ignorespaces}
\fi

\providecommand{\abs}[1]{\lvert#1\rvert}

% Fonts
\newcommand{\bbA}{\mathbb A}

\newcommand{\bbC}{\mathbb C}
\newcommand{\bbD}{\mathbb D}

\newcommand{\bbG}{\mathbb G}

\newcommand{\bbN}{\mathbb N}
\newcommand{\bbP}{\mathbb P}
\newcommand{\bbQ}{\mathbb Q}
\newcommand{\bbR}{\mathbb R}

\newcommand{\bbZ}{\mathbb Z}

\newcommand{\fC}{\mathfrak C}

\newcommand{\fL}{\mathfrak L}

\newcommand{\fX}{\mathfrak X}
\newcommand{\fY}{\mathfrak Y}
\newcommand{\fZ}{\mathfrak Z}

\newcommand{\cC}{\mathcal C}
\newcommand{\cD}{\mathcal D}
\newcommand{\cE}{\mathcal E}

\newcommand{\cH}{\mathcal H}

\newcommand{\cL}{\mathcal L}
\newcommand{\cM}{\mathcal M}
\newcommand{\cN}{\mathcal N}
\newcommand{\cO}{\mathcal O}
\newcommand{\cP}{\mathcal P}

\newcommand{\cT}{\mathcal T}

\newcommand{\cV}{\mathcal V}

\newcommand{\cX}{\mathcal X}
\newcommand{\cY}{\mathcal Y}

% Definition of \widebar from http://tex.stackexchange.com/questions/16337/can-i-get-a-widebar-without-using-the-mathabx-package/60253#60253
\makeatletter
\let\save@mathaccent\mathaccent
\newcommand*\if@single[3]{%
	\setbox0\hbox{${\mathaccent"0362{#1}}^H$}%
	\setbox2\hbox{${\mathaccent"0362{\kern0pt#1}}^H$}%
	\ifdim\ht0=\ht2 #3\else #2\fi
}
%The bar will be moved to the right by a half of \macc@kerna, which is computed by amsmath:
\newcommand*\rel@kern[1]{\kern#1\dimexpr\macc@kerna}
%If there's a superscript following the bar, then no negative kern may follow the bar;
%an additional {} makes sure that the superscript is high enough in this case:
\newcommand*\widebar[1]{\@ifnextchar^{{\wide@bar{#1}{0}}}{\wide@bar{#1}{1}}}
%Use a separate algorithm for single symbols:
\newcommand*\wide@bar[2]{\if@single{#1}{\wide@bar@{#1}{#2}{1}}{\wide@bar@{#1}{#2}{2}}}
\newcommand*\wide@bar@[3]{%
	\begingroup
	\def\mathaccent##1##2{%
		%Enable nesting of accents:
		\let\mathaccent\save@mathaccent
		%If there's more than a single symbol, use the first character instead (see below):
		\if#32 \let\macc@nucleus\first@char \fi
		%Determine the italic correction:
		\setbox\z@\hbox{$\macc@style{\macc@nucleus}_{}$}%
		\setbox\tw@\hbox{$\macc@style{\macc@nucleus}{}_{}$}%
		\dimen@\wd\tw@
		\advance\dimen@-\wd\z@
		%Now \dimen@ is the italic correction of the symbol.
		\divide\dimen@ 3
		\@tempdima\wd\tw@
		\advance\@tempdima-\scriptspace
		%Now \@tempdima is the width of the symbol.
		\divide\@tempdima 10
		\advance\dimen@-\@tempdima
		%Now \dimen@ = (italic correction / 3) - (Breite / 10)
		\ifdim\dimen@>\z@ \dimen@0pt\fi
		%The bar will be shortened in the case \dimen@<0 !
		\rel@kern{0.6}\kern-\dimen@
		\if#31
		\overline{\rel@kern{-0.6}\kern\dimen@\macc@nucleus\rel@kern{0.4}\kern\dimen@}%
		\advance\dimen@0.4\dimexpr\macc@kerna
		%Place the combined final kern (-\dimen@) if it is >0 or if a superscript follows:
		\let\final@kern#2%
		\ifdim\dimen@<\z@ \let\final@kern1\fi
		\if\final@kern1 \kern-\dimen@\fi
		\else
		\overline{\rel@kern{-0.6}\kern\dimen@#1}%
		\fi
	}%
	\macc@depth\@ne
	\let\math@bgroup\@empty \let\math@egroup\macc@set@skewchar
	\mathsurround\z@ \frozen@everymath{\mathgroup\macc@group\relax}%
	\macc@set@skewchar\relax
	\let\mathaccentV\macc@nested@a
	%The following initialises \macc@kerna and calls \mathaccent:
	\if#31
	\macc@nested@a\relax111{#1}%
	\else
	%If the argument consists of more than one symbol, and if the first token is
	%a letter, use that letter for the computations:
	\def\gobble@till@marker##1\endmarker{}%
	\futurelet\first@char\gobble@till@marker#1\endmarker
	\ifcat\noexpand\first@char A\else
	\def\first@char{}%
	\fi
	\macc@nested@a\relax111{\first@char}%
	\fi
	\endgroup
}
\makeatother

\newcommand{\oK}{\widebar K}

\newcommand{\oN}{\widebar N}

\newcommand{\oQ}{\widebar Q}

\newcommand{\oY}{\widebar Y}

\newcommand{\oDelta}{\widebar\Delta}

\newcommand{\hR}{\widehat R}

\newcommand{\hX}{\widehat X}

\newcommand{\tA}{\widetilde A}

\newcommand{\tC}{\widetilde C}

\newcommand{\tE}{\widetilde E}

\newcommand{\tH}{\widetilde H}

\newcommand{\tN}{\widetilde N}

\newcommand{\tX}{\widetilde X}
\newcommand{\tY}{\widetilde Y}

\newcommand{\tGamma}{\widetilde\Gamma}
\newcommand{\tDelta}{\widetilde\Delta}

%% Arrows
%\newcommand*{\longhookrightarrow}{\ensuremath{\lhook\joinrel\relbar\joinrel\rightarrow}}
%\newcommand*{\DashedArrow}[1][]{\mathbin{\tikz [baseline=-0.25ex,-latex, dashed,#1] \draw [#1] (0pt,0.5ex) -- (1.3em,0.5ex);}}
%
%\usetikzlibrary{decorations.markings} %arrows for open immersions and closed immersions
%\tikzset{
%  closed/.style = {decoration = {markings, mark = at position 0.5 with { \node[transform shape, xscale = .8, yscale=.4] {/}; } }, postaction = {decorate} },
%  open/.style = {decoration = {markings, mark = at position 0.5 with { \node[transform shape, scale = .7] {$\circ$}; } }, postaction = {decorate} }
%}
%

\newcommand{\cTV}{\mathcal{TV}}
\newcommand{\dV}{\mathrm{dV}}
\DeclareMathOperator{\link}{link}
\DeclareMathOperator{\Sec}{Sec}
\DeclareMathOperator{\MovFan}{MovFan}
\DeclareMathOperator{\MovSec}{MovSec}

\newcommand{\bdry}{\mathrm{bdry}}
\newcommand{\interior}{\mathrm{int}}
\newcommand{\NN}{\mathit{NN}}
\newcommand{\tNN}{\widetilde{\mathit{NN}}}

\DeclareMathOperator{\Mov}{Mov}
\DeclareMathOperator{\Hilb}{Hilb}

\DeclareMathOperator{\Sing}{Sing}
\DeclareMathOperator{\Tors}{Tors}

\newcommand{\ocP}{\widebar{\cP}}
\newcommand{\bbk}{\Bbbk}
\newcommand{\hcX}{\widehat{\cX}}

% Arrows

\newcommand{\longto}{\longrightarrow}

% Decorations

\newcommand{\hgamma}{\widehat\gamma}

\newcommand{\tSigma}{\widetilde\Sigma}
\newcommand{\tsigma}{\widetilde\sigma}

\newcommand{\uGamma}{\underline\Gamma}
\newcommand{\uLambda}{\underline\Lambda}

\newcommand{\ocT}{\widebar{\cT}}
\newcommand{\ocM}{\widebar{\cM}}

% fonts

\newcommand{\ff}{\mathfrak f}
\newcommand{\fm}{\mathfrak m}

\newcommand{\sfL}{\mathsf L}
\newcommand{\sfN}{\mathsf N}
\newcommand{\sfM}{\mathsf M}

\newcommand{\SP}{\mathbf{SP}}

\newcommand{\an}{\mathrm{an}}

\newcommand{\ess}{\mathrm{ess}}

\newcommand{\trop}{\mathrm{trop}}

% Shorthands

\newcommand{\Gm}{\bbG_\mathrm{m}}
\newcommand{\Gmk}{\bbG_{\mathrm m/k}}

\newcommand{\Zaffine}{$\bbZ$-affine\xspace}
\newcommand{\inv}{^{-1}}

\newcommand{\kc}{k^\circ}

% Math Operators

\DeclareMathOperator{\Eff}{Eff}

\DeclareMathOperator{\MoriFan}{MoriFan}
\DeclareMathOperator{\NE}{NE}
\DeclareMathOperator{\Nef}{Nef}
\DeclareMathOperator{\Pic}{Pic}
\DeclareMathOperator{\Proj}{Proj}
\DeclareMathOperator{\Sk}{Sk}
\DeclareMathOperator{\oSk}{\overline{Sk}}

\DeclareMathOperator{\Spec}{Spec}
\DeclareMathOperator{\Spf}{Spf}
\DeclareMathOperator{\Sp}{Sp}

\DeclareMathOperator{\Trop}{Trop}
\DeclareMathOperator{\TV}{TV}

\DeclareMathOperator{\ex}{ex}

\DeclareMathOperator{\val}{val}
\DeclareMathOperator{\Tor}{Tor}

\renewenvironment{abstract}{%
	\quotation
	\small
	\textbf{\textit{\abstractname.}} % with a normal space
}{\endquotation}

\makeatletter
\@namedef{subjclassname@2020}{%
	\textup{2020} Mathematics Subject Classification}
\makeatother

\begin{document}
\title{Secondary fan, theta functions and moduli of Calabi-Yau pairs}

\author{Paul Hacking}
\address{Paul Hacking, Department of Mathematics and Statistics, Lederle Graduate Research Tower, University of Massachusetts, Amherst, MA 01003-9305, USA}
\email{hacking@math.umass.edu}
\author{Sean Keel}
\address{Sean Keel, Department of Mathematics, 1 University Station C1200, Austin, TX 78712-0257, USA}
\email{keel@math.utexas.edu}
\author{Tony Yue YU}
\address{Tony Yue YU, Department of Mathematics M/C 253-37, California Institute of Technology, 1200 E California Blvd, Pasadena, CA 91125, USA}
\email{yuyuetony@gmail.com}
\date{January 10, 2022}
\subjclass[2020]{Primary 14J33; Secondary 14J10, 14J32, 14E30, 14G22}
%\keywords{Calabi-Yau pair, stable pair, moduli space, secondary fan, mirror symmetry, non-archimedean geometry, theta function}

\maketitle

\begin{abstract}
	We conjecture that any connected component $Q$ of the moduli space of triples $(X,E=E_1+\dots+E_n,\Theta)$ where $X$ is a smooth projective variety, $E$ is a normal crossing anti-canonical divisor with a 0-stratum, every $E_i$ is smooth, and $\Theta$ is an ample divisor not containing any 0-stratum of $E$, is \emph{unirational}.
	More precisely:  note that $Q$ has a natural embedding into the Kollár-Shepherd-Barron-Alexeev moduli space of stable pairs, we conjecture that the induced compactification admits a finite cover by a complete toric variety.
	We construct the associated complete toric fan, generalizing the Gelfand-Kapranov-Zelevinski secondary fan for reflexive polytopes.
	Inspired by mirror symmetry, we speculate a synthetic construction of the universal family over this toric variety, as the Proj of a sheaf of graded algebras with a canonical basis, whose structure constants are given by counts of non-archimedean analytic disks.
	In the Fano case and under the assumption that the mirror contains a Zariski open torus, we construct the conjectural universal family, generalizing the families of Kapranov-Sturmfels-Zelevinski and Alexeev in the toric case.
	In the case of del Pezzo surfaces with an anti-canonical cycle of $(-1)$-curves, we prove the full conjecture.
\end{abstract}

\tableofcontents

\section{Introduction and statement of results}

We propose the following conjecture regarding the moduli space of smooth polarized Calabi-Yau pairs:

\begin{conjecture} \label{conj:smooth}
	Any connected component $Q$ of the coarse moduli space of triples $(X,E=E_1+\dots+E_n,\Theta)$ where
	\begin{enumerate}
		\item $X$ is a connected smooth projective complex variety,
		\item $E\in \abs{-K_X}$ is a normal crossing divisor with a 0-stratum, every $E_i$ is smooth,
		\item $\Theta\subset X$ is an ample divisor not containing any 0-stratum of $E$,
	\end{enumerate}
	is unirational.
	\end{conjecture}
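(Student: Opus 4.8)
The plan is to realize $Q$ as a dense open subset of the projective variety $\overline{Q}$ --- its closure in the Koll\'ar-Shepherd-Barron-Alexeev moduli space of stable pairs --- and to exhibit a surjective, indeed finite, morphism onto $\overline{Q}$ from a complete toric variety. Since complete toric varieties are rational, such a morphism restricts to a dominant morphism onto the dense open $Q$ and hence forces $Q$ to be unirational (the finiteness then gives the sharper statement of the abstract). The starting point is that along a connected component the combinatorial type of $(X,E,\Theta)$ is locally, hence globally, constant: the dual intersection complex $\mathcal{B}$ of the anti-canonical divisor $E$ --- an integral-affine pseudomanifold, singular in codimension $2$, carrying a piecewise-linear class that records the polarization $\Theta$ --- is a deformation invariant. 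Write $\mathbf{P}$ for this combinatorial package.

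Next I would construct, generalizing the Gelfand-Kapranov-Zelevinski secondary fan (the reflexive-polytope case being classical), a complete fan $\mathrm{Sec}(\mathbf{P})$ inside the finite-dimensional real vector space of continuous piecewise-linear functions on $\mathcal{B}$ modulo the globally affine ones. Its maximal cones are to be indexed by the regular (coherent) polyhedral decompositions of $\mathcal{B}$ that refine the given one and are compatible with $\Theta$, equivalently by the toric degenerations of $(X,E,\Theta)$ into unions of toric pairs. Completeness is the statement that every piecewise-linear function on $\mathcal{B}$ lies in the closed cone of some such decomposition; for reflexive polytopes this is the classical GKZ convexity argument, and the new ingredient is carrying it out at the non-simplicial, non-manifold locus of $\mathcal{B}$.

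The core of the argument is the construction of the universal family $\mathcal{X}$ over $\mathrm{TV}(\mathrm{Sec}(\mathbf{P}))$. Over the large-complex-structure torus chart I would write down a $\mathbb{Z}_{\ge 0}$-graded algebra whose degree-$d$ piece is freely spanned by theta functions $\vartheta_q$, with $q$ ranging over the integral points of the $d$-th dilate, inside $\mathcal{B}$, of the polytope cut out by $\Theta$, and with structure constants $\vartheta_q\vartheta_r = \sum_p N^p_{qr}\,\vartheta_p$ defined as counts of non-archimedean analytic disks in the mirror --- equivalently, read off the canonical scattering diagram. One then has to prove: (i) the $N^p_{qr}$ are well defined and the product is associative; (ii) the algebra is finitely generated, so that $\mathcal{X} := \mathrm{Proj}$ of it is a flat projective family of polarized pairs whose generic fibre is a smooth pair of combinatorial type $\mathbf{P}$; (iii) the construction glues over the whole toric variety, the fibre over a point of a boundary stratum being the stable toric pair attached to the corresponding decomposition of $\mathcal{B}$, so that $\mathcal{X}$ is a family of KSBA-stable pairs; and (iv) the morphism $\mathrm{TV}(\mathrm{Sec}(\mathbf{P}))\to\overline{Q}$ classifying $\mathcal{X}$ --- well defined since the torus chart maps into $Q$ and $\mathrm{TV}(\mathrm{Sec}(\mathbf{P}))$ is irreducible --- is surjective (its image is closed by properness and dense because it contains the dense subset of $Q$ coming from the chart) and finite (quasi-finite because a polarized pair admits only finitely many markings by a standard degeneration model, plus proper).

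Granting all this, $\mathrm{TV}(\mathrm{Sec}(\mathbf{P}))$ is a complete, hence rational, toric variety surjecting onto $\overline{Q}$, and $Q$ is dense open in it, so $Q$ is unirational. The hard part will be the universal family, above all parts (i)--(ii): showing that the disk-counting structure constants really assemble into a finitely generated associative algebra. This requires the full force of non-archimedean enumerative geometry and of Gross-Siebert-type consistency for the canonical scattering diagram, and is open in general --- which is also why one expects to need the Fano hypothesis, together with a mirror containing an open torus, to carry out the construction unconditionally. The del Pezzo surface case with an anti-canonical cycle of $(-1)$-curves should however be fully accessible: there $\mathcal{B}$ is a $2$-sphere with a few mild singularities, the canonical scattering diagram is the well-understood one of a log Calabi-Yau surface, finite generation of the theta algebra is known, and the theory of stable surface pairs is developed enough to pin down the boundary strata --- so (i)--(iv) can be checked directly and \cref{conj:smooth} follows for those components.
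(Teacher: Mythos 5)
Your top-level reduction is the paper's: the sharper \cref{conj:main} asserts a finite surjection $T\to\oQ$ from a complete toric variety, and rationality of $T$ together with density of $Q\subset\oQ$ gives unirationality of $Q$. Beyond that, your route to the fan is genuinely different and has two substantive gaps.

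You build the fan inside PL functions on the dual complex $\mathcal{B}$ of $E$ modulo affine, with maximal cones indexed by regular polyhedral decompositions of $\mathcal{B}$ --- an $X$-side generalization of GKZ. The paper deliberately does not do this: it works on the mirror side, taking $K$ the total space of the canonical bundle of the mirror $Y$, and defines $\Sec(K)\subset\Pic(K)_\bbR$ as a \emph{coarsening of the Mori fan} $\MoriFan(K/\oK)$, merging Mori chambers whose canonical Berkovich section $\varphi\colon\Gamma\to\tGamma$ (\cref{const:varphi_of_SQM}) agrees. Convexity of the merged cones (\cref{thm:convex_cone}) is then a consequence of the negativity lemma, and completeness is inherited from the Mori fan. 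The ``new ingredient'' you flag --- convexity of your cones at the non-simplicial, non-manifold locus of $\mathcal{B}$ --- is precisely what this Mori-theoretic re-encoding renders provable, and you give no argument for it directly. Your combinatorial indexing also need not match the paper's: equal $\varphi$ forces equal triangulations (\cref{lem:recover_dual_complex}), but the paper explicitly does not know the converse beyond the torus and del Pezzo cases, so in higher dimension ``regular decompositions of $\mathcal{B}$'' may fail to parametrize the cones one actually needs.

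The second gap is the paper's key observation that the naive fan --- all Mori chambers, equivalently all coherent triangulations --- is the \emph{wrong} base: it over-separates, contracting boundary 1-strata of the toric base, so the induced map to $\SP$ is not finite. Identifying the correct coarsening and then proving no 1-stratum is contracted (\cref{lem:finite_map_from_toric_variety}, \cref{cl:restriction_to_1-stratum}) is the heart of the modularity argument in the del Pezzo case; your sketch treats finiteness as ``quasi-finite plus proper'' without engaging with this. Finally, where you invoke ``the full force of non-archimedean enumerative geometry and Gross--Siebert consistency'' for steps (i)--(ii), the paper takes well-definedness, associativity and finite generation as inputs from \cite{Keel_Yu_The_Frobenius}, whose hypothesis ($U$ contains an open torus) is exactly what limits the unconditional part of the theorem; a Gross--Siebert substitute would hit the currently open problem of showing the structure constants depend only on $U$ and not on its compactification, which is precisely what is needed for the algebras to glue over the coarsened fan.
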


We have a more precise form of the above conjecture.
Note that in view of conditions (1-2), (3) is equivalent to the condition that for sufficiently small $\epsilon > 0$, $(X,E + \epsilon \Theta)$ is a stable pair\footnote{It is sometimes called KSBA stable pair in honor of the works of Kollár-Shepherd-Barron \cite{Kollar_Threefolds_and_deformations} and Alexeev \cite{Alexeev_Moduli_spaces_MgnW}.
	}
(see \cite[\S 5]{Kollar_Singularities_of_the_minimal_model_program}), thus $Q$ immerses into $\SP$, the moduli space of stable pairs (which is a higher-dimensional generalization of the moduli space $\ocM_{g,n}$ of stable pointed curves, see \cite{Kollar_Moduli_of_varieties_of_general_type}).
Let $\oQ$ denote the closure of $Q$ in $\SP$.

\begin{conjecture} \label{conj:main}
	There is a complete toric variety $T$ with a finite surjective map $T \to \oQ$.
\end{conjecture}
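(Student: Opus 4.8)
The plan is to realize $T$ as the toric variety of an explicit complete fan attached to $Q$ --- its \emph{secondary fan} $\MovSec(Q)$, generalizing the Gelfand--Kapranov--Zelevinsky secondary fan of a reflexive polytope --- and then to equip $T$ with a universal family of stable pairs whose classifying morphism to $\SP$ is finite onto $\oQ$. First I would extract from $Q$ its intrinsic combinatorial shadow. Fixing a maximal degeneration $(X_0,E_0,\Theta_0)$ in $\oQ$ (a large complex structure limit), its dual intersection complex is a compact integral affine manifold with singularities $B$ --- for del Pezzo surfaces a $2$-sphere carrying an integral affine structure with finitely many singular points --- and $\Theta_0$ is recorded as a piecewise-linear polarizing datum on $B$. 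Distinct toric models of $(X,E)$ within $Q$ are related by elementary modifications (``mutations''), and their compatible chamber structures --- moment polytopes together with the ambient Mori-chamber decomposition --- assemble into the fan $\MovSec(Q)$, which one shows is complete; put $T:=\TV(\MovSec(Q))$, a proper toric variety.

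Next I would construct over $T$ a flat family $\pi\colon\cX\to T$, with a relative boundary $\cE$ and a relatively ample $\Theta$, realized as $\Proj$ of a sheaf of graded $\cO_T$-algebras carrying a canonical basis of theta functions $\{\vartheta_p : p\in B(\bbZ)\}$. The structure constants $N^r_{pq}$ in $\vartheta_p\cdot\vartheta_q=\sum_r N^r_{pq}\,\vartheta_r$ are the non-archimedean analytic disk counts of the Gross--Hacking--Keel--Siebert / Gross--Siebert program; the role of the ambient toric variety is that these become monomials in the Cox coordinates of $T$, hence regular functions on $T$, and associativity of the resulting algebra is exactly consistency of the relevant scattering diagram. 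Over the open torus $T^\circ\subset T$ the fibres are smooth polarized Calabi--Yau pairs lying in $Q$, while the toric boundary strata of $T$ parametrize the various stable degenerations; this generalizes the Kapranov--Sturmfels--Zelevinsky and Alexeev toric families.

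Since this is a family of stable pairs in the sense of \cite{Kollar_Singularities_of_the_minimal_model_program}, it induces a morphism $\varphi\colon T\to\SP$; as $T$ is irreducible and $\varphi(T^\circ)$ lies in $Q$, we get $\varphi(T)\subseteq\oQ$. It then remains to see that $\varphi$ is \emph{finite} and \emph{surjective} onto $\oQ$. For finiteness, $\varphi$ is proper (source proper, target separated), so it suffices to bound its fibres: a point of $T$ is determined, up to a finite group, by its combinatorial chamber together with the normalization of the theta basis, and because the $\vartheta_p$ form a \emph{canonical} basis of the section ring of the polarization, two points of $T$ with isomorphic fibres can differ only by that residual finite scaling ambiguity --- precisely as in the toric case. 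For surjectivity, a general smooth polarized pair in $Q$ lies in $\varphi(T^\circ)$ because it admits a maximal toric degeneration of the prescribed intrinsic type, so $\varphi(T^\circ)$ is dense in $Q$; properness of $T$ then forces $\varphi(T)=\oQ$.

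The crux --- and the reason the statement is only conjectural in general --- is precisely this surjectivity: one must show that \emph{every} smooth polarized pair in $Q$, and in a controlled fashion every stable degeneration, admits (or is degenerated from something admitting) a maximal toric degeneration of the correct combinatorial type, so that the theta/mirror machine genuinely reconstructs it. This is where a purely synthetic construction does not suffice and hard geometric input is required: in the del Pezzo case one invokes the classification of Looijenga pairs, the Gross--Hacking--Keel mirror construction for log Calabi--Yau surfaces, and a global Torelli statement pinning such surfaces and their degenerations down by periods; outside that case, the very existence of the requisite maximal degenerations and the convergence and consistency of the disk-counting structure constants that define the family over $T$ are the essential open points.
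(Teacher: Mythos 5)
Your high-level skeleton matches the paper's: build a complete toric variety $T$ from a secondary-fan-type coarsening of a Mori chamber decomposition, put a flat family of stable pairs over $T$ via $\Proj$ of a graded algebra with a canonical theta basis whose structure constants are non-archimedean disk counts, and show the induced classifying map to $\SP$ is finite and surjective onto $\oQ$. But the two key steps are carried out rather differently from how the paper does them, and in one place your understanding of where the difficulty lies runs opposite to what the paper emphasizes.

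First, the fan. You build $\MovSec(Q)$ ``intrinsically'' from $Q$: pick a maximal degeneration, take its dual intersection complex $B$, and assemble the mutations/toric models of $(X,E)$ into a chamber structure. The paper does \emph{not} do this. It works on the mirror side: it fixes the Fano pair $(Y,D)$ whose mirror gives $Q$, passes to the total space $K$ of the canonical bundle and the contraction $K\to\oK$, and defines the secondary fan $\Sec(K)\subset\Pic(K)_\bbR\simeq\Pic(Y)_\bbR$ as a coarsening of $\MoriFan(K/\oK)$ using a Berkovich-theoretic section $\varphi\colon\Gamma\to\tGamma$ of skeletons of the universal torsor (Construction \ref{const:varphi}, Theorem \ref{thm:convex_cone}, Proposition \ref{prop:add_bogus_cones}). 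The convexity of the glued cones is a non-trivial point the paper proves via the negativity lemma; your mutation/chamber description doesn't obviously give that.

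Second, and more substantively, the finiteness argument. You propose bounding fibres via the canonical normalization of the theta basis, and in the del Pezzo case you invoke the classification of Looijenga pairs \emph{and a global Torelli theorem with period computations}. The paper explicitly flags this as the thing it manages to \emph{avoid}: the key observation is Lemma \ref{lem:finite_map_from_toric_variety}, that a map from a complete toric variety to a scheme is finite iff no toric 1-stratum is contracted, which reduces finiteness to showing the restricted families over 1-strata are non-constant (Claim \ref{cl:restriction_to_1-stratum}), a much simpler explicit check. Surjectivity onto $\oQ$ then follows from a dimension count (Proposition \ref{prop:dimcount}), not from periods or an a priori density of ``maximal toric degenerations.'' So while you correctly identify that surjectivity is where the hard content resides in general, your proposed machinery for both finiteness and surjectivity in the del Pezzo case is heavier than what the paper actually uses, and the Torelli route is exactly the road the paper says it is able to bypass.
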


We prove \cref{conj:main} in the following special case:

\begin{theorem} \label{thm:del_Pezzo_intro}
	\cref{conj:main} holds when $X$ is a del Pezzo surface, $E\in\abs{-K_X}$ is a cycle of $K_X$-degree-$(-1)$ curves, and $\Theta\in\abs{-K_X}$.
\end{theorem}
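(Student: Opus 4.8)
The plan is to exhibit the complete toric variety $T = \TV(\Sigma)$ of the secondary fan $\Sigma$ attached to the polarized Calabi-Yau pair $(X, E, -K_X)$ as a finite cover of $\oQ$: I would construct the universal family over $T$ via theta functions, show it is a family of stable pairs, and recognize the induced map $\phi \colon T \to \SP$ as a finite morphism onto $\oQ$. Since $X$ is del Pezzo it is Fano, $E \in \abs{-K_X}$ is an anticanonical cycle, and the polarization $\Theta$ lies in $\abs{-K_X}$, so we are exactly in the Fano, anticanonical-polarization setting of our construction. It remains to note that the mirror log Calabi-Yau surface of $(X,E)$ contains a Zariski open torus: the boundary $E$ has charge $12 - 2\deg X \ge 0$, the only degrees admitting an anticanonical cycle of $(-1)$-curves are $1 \le \deg X \le 6$ (with $\deg X = 6$ the toric case), and in each of these the mirror is a log Calabi-Yau surface containing a Zariski open torus, which can be verified degree by degree or taken from the theory of log Calabi-Yau surfaces. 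The construction then produces: the complete toric variety $T = \TV(\Sigma)$; the associated integral-affine polygon $P \subset B$, a lattice polygon with $\deg X$ vertices and a single interior lattice point at the unique affine singularity; the canonical theta basis $\{\vartheta_p\}_{p \in P \cap \bbZ^2}$ of $H^0(X, -K_X)$, identifying $\abs{-K_X} \cong \bbP^{\deg X}$ with homogeneous coordinates $(a_p)$; and a flat projective family $\pi \colon \cX \to T$, the relative $\Proj$ of the sheaf of graded theta algebras, carrying a relative boundary $\cE$ and a relative anticanonical divisor $\Theta_\cX = \{\sum_p a_p\vartheta_p = 0\}$, which over the open torus orbit of $T$ restricts to smooth del Pezzo pairs.

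There are then three key steps. First, one shows every geometric fiber $(\cX_t, \cE_t + \epsilon\Theta_{\cX,t})$, $0 < \epsilon \ll 1$, is a stable pair, so that $\pi$ induces $\phi \colon T \to \SP$. Over the codimension-$k$ torus orbit of $T$ the fiber is the Mumford-type degeneration glued from toric surfaces along the cyclic boundary according to the corresponding regular subdivision of $(B, P)$; using explicit local models --- toric away from the $0$-strata, and the Gross-Hacking-Keel model near the interior affine singularity --- one checks these fibers are semi-log-canonical with $K_{\cX_t} + \cE_t + \epsilon\Theta_{\cX,t}$ ample (using $K_{\cX_t} + \cE_t = 0$ and $\Theta_{\cX,t}$ ample), exactly as in the toric case of Kapranov-Sturmfels-Zelevinski and Alexeev. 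Second, one shows $\phi(T) = \oQ$: as $T$ is proper and irreducible with generic fiber a smooth polarized del Pezzo pair, $\phi(T)$ is closed and every fiber is a flat degeneration of the generic one, so $\phi(T) \subseteq \oQ$; conversely $\phi$ restricted to the open torus dominates $Q$, the family there being versal for the marked smooth pairs (the log Calabi-Yau part by the theory of log Calabi-Yau surfaces, the polarization part being all of $\abs{-K_X}$), whence $\phi(T) = \oQ$. Third, one shows $\phi$ is quasi-finite: the fiber of $\phi$ over $[X', E', \Theta']$ is the set of markings of this triple compatible with a cone of $\Sigma$; the cone --- i.e. the combinatorial type of the degeneration --- takes only finitely many values, and for fixed cone the markings form a torsor under a finite group (governed by the Weyl group of the negative-definite lattice $(-K_X)^\perp \subset \Pic X$ together with the combinatorial automorphisms of $\Sigma$). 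A proper quasi-finite morphism is finite, so $\phi \colon T \to \oQ$ is finite and surjective, which is \cref{conj:main} for these pairs.

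The heart of the matter, and the main obstacle, is the first step: verifying that the theta-function degenerations over the toric boundary of $T$ are genuine KSBA-stable pairs, and that $\Sigma$ is ``complete enough'' that the construction produces \emph{all} such degenerations. The first point requires the explicit semi-log-canonical local structure of the degenerate fibers together with control of $\Theta_\cX$ near the $0$-strata, where condition (3) of \cref{conj:smooth} resurfaces as the statement that, after stable replacement, the limiting $\Theta$ avoids the $0$-strata of the limiting cycle --- the surface absorbing any ``bad'' $\Theta$. The second is where completeness of the secondary fan does the work, forcing via properness of $T$ that no stable pair is omitted; a subsidiary technical point is the quasi-finiteness over the deepest strata of $\oQ$, which rests on the compatibility between the tropicalization of a stable pair and the cone structure of $\Sigma$.
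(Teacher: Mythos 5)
Your proposal has the correct high-level architecture — build the mirror family over the toric variety of a secondary fan, verify stability of all fibers, establish finiteness and surjectivity onto $\oQ$ — and this does agree with the paper's broad plan. However, the three ``key steps'' you list are exactly where the genuine work lies, and in each case your sketch either glosses over the hard part or introduces a gap.

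For stability, you say one ``controls $\Theta_\cX$ near the $0$-strata'' by local models, but the nontrivial content is \cref{cl:center_claim}: exactly one theta function is nonvanishing at each $0$-dimensional log-canonical center. The paper's mechanism for this is the equivariant boundary torus action $T_\Lambda$ on $(\cX,\cE)$, which scales the coefficients of $\Theta$ independently while fixing $(X,E)$; combined with the non-trivial facts that over the structure torus the fibers are du Val del Pezzo (\cref{prop:du_Val}) and over ruling strata there are no $0$-dimensional lc centers (\cref{prop:P1_fibration}), this forces the claim. You never invoke the torus action, and ``GHK model near the affine singularity'' does not by itself give the required cancellation. For finiteness, your argument is circular: you claim the fiber of $\phi$ over a stable pair ``forms a torsor under a finite group,'' but finiteness of those fibers is precisely what you are trying to prove — a priori the fiber could be positive-dimensional if the family were constant over a boundary stratum. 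The paper sidesteps this entirely via \cref{lem:finite_map_from_toric_variety}: a map from a complete toric variety is finite iff no toric $1$-stratum is contracted, reducing the problem to checking the (vastly simpler) restrictions over $1$-strata (\cref{cl:restriction_to_1-stratum}); this is exactly the ``pleasant feature'' the introduction points to, and your route misses it. Finally, for surjectivity you claim versality of the family over the open torus, but that versality is not obvious and is not what the paper proves; instead the paper uses the elementary dimension count of \cref{prop:dimcount} — both $Q$ and the torus orbit have dimension $10-n$ when $D$ has $n$ components — which, combined with properness and irreducibility of $T$, yields $\phi(T)=\oQ$. So while you identified the right scaffolding, each load-bearing step needs to be replaced by the paper's actual arguments.
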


We introduce a generalization of the Gelfand-Kapranov-Zelevinski secondary fan \cite{Gelfand_Discriminants} and conjecture $T$ to be the associated toric variety. 
Moreover, we conjecture a synthetic construction of the pullback family over $T$, generalizing the mirror construction of \cite{Keel_Yu_The_Frobenius} (cf.\ \cite{Gross_Mirror_symmetry_for_log_Calabi-Yau_surfaces_I_v1,Gross_Intrinsic_mirror_symmetry_announcement,Gross_Intrinsic_mirror_symmetry}), as the Proj of a sheaf of graded algebras with a canonical basis, whose structure constants are given by counts of non-archimedean analytic disks --- we refer to this throughout the paper as the \emph{mirror family}.
By construction, every fiber is endowed with a canonical \emph{theta function} basis of sections of every power of the polarization.
We prove these conjectures under the assumptions of \cref{thm:del_Pezzo_intro}.

Our speculations are led by considerations from mirror symmetry.
From the viewpoint of birational geometry, even the conjecture that $Q$ is \emph{uniruled} does not strike us as at all obvious.
Furthermore, we are predicting that there is a (nearly uni) versal family of stable pairs parametrized by an algebraic torus.
In dimension two, although there is an elementary construction of versal families of pairs $(X,E)$ in \cite{Gross_Moduli_of_surfaces}, it is without the divisor $\Theta$, and does not apply to the stable pair compactifications.
We do not know any elementary constructions in higher dimensions.

Our generalization of the GKZ secondary fan is a general construction, of independent interest, mixing ideas from Mori theory and Berkovich geometry, see \cref{sec:secondary_fan}.
When $X$ is Fano and $\Theta\in\abs{-K_X}$, our \emph{secondary fan} $\Sec$ is a complete rational polyhedral fan with support $\Pic(Y)_\bbR$, which is a coarsening of the Mori fan for the mirror Calabi-Yau pair $(Y,D)$.
Under the further assumption that $U \coloneqq Y \setminus D$ contains a Zariski open torus\footnote{This always holds if $Y$ is 2-dimensional; there are also many important higher dimensional cluster variety examples, e.g.\ an open Richardson variety inside a flag manifold, see \cite{Serhiyenko_Cluster_structures}.},
we build the mirror family of triples $(\cX,\cE,\Theta)$ over (the complete toric variety) $\TV(\Sec)$, by gluing together $\Proj(A_\alpha)$ for graded rings $A_\alpha$ constructed from counts of non-archimedean analytic disks in $Y^\an$ as in \cite[\S 1.1]{Keel_Yu_The_Frobenius} (based on ideas from \cite{Yu_Enumeration_of_holomorphic_cylinders_I,Yu_Enumeration_of_holomorphic_cylinders_II}).
We show that all the fibers $(X,E)$ are semi-log-canonical, and the generic fiber $(X,E + \epsilon \Theta)$ is stable for sufficiently small $\epsilon>0$, see \cref{prop:mirror_is_Fano}.
In the case when $Y$ is del Pezzo, we can prove more:

\begin{theorem}[see \cref{thm:stable}] \label{thmintro:stable}
	In the two-dimensional case, the following hold:
	\begin{enumerate}
		\item \label{thmintro:stable:family} The mirror family $(\cX,\cE) \to \TV(\Sec)$ is a flat family of semi-log-canonical pairs $(X,E)$ with $K_X+E$ trivial, and $H^i(X,\cO_X) =0$ for $i > 0$.
		\item \label{thmintro:stable:E} The boundary $\cE \to \TV(\Sec)$ is a trivial family, with fiber a cycle of rational curves.
		\item \label{thmintro:stable:restriction}
		For every fiber $(X,E)$ over the structure torus $T_{\Pic(Y)}\subset\TV(\Sec)$, $X$ is a del Pezzo surface with at worst du Val singularities, $E\subset X$ is an anti-canonical cycle of $K_X$-degree-$(-1)$ rational curves, and the self-intersection number of $K_X$ is equal to the number of irreducible components of $D$.
				\item \label{thmintro:stable:stable} For $0<\epsilon\ll 1$, $(\cX,\cE + \epsilon \Theta)\to\TV(\Sec)$ is a family of stable pairs.
		\item \label{thminto:stable:finite} The induced map $\TV(\Sec)\to\SP$ to the moduli space of stable pairs is finite.
	\end{enumerate}
\end{theorem}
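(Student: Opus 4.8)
The first ingredient is the local structure of the mirror algebras. Over each maximal cone $\alpha$ of $\Sec$ one has a graded ring $A_\alpha$, flat over the corresponding affine toric chart $U_\alpha\subset\TV(\Sec)$, which as a module over $\cO(U_\alpha)$ is free with basis the theta functions $\vartheta_q$ indexed by the integral points $q$ of the cone over the polarizing polytope, and whose structure constants are counts of non-archimedean analytic disks in $Y^\an$, hence are regular functions on $U_\alpha$ with non-negative coefficients. Gluing the $\Proj(A_\alpha)$ produces $\cX\to\TV(\Sec)$, and freeness of the theta basis gives flatness at once, which is the flatness part of \eqref{thmintro:stable:family}. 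For \eqref{thmintro:stable:E} I would note that $\cE$ is the common zero locus of the theta functions attached to the primitive ray generators of the mirror cone --- equivalently to the vertices of the dual complex of $D$ --- and that products among exactly these theta functions involve no non-trivial disk counts; chart by chart $\cE$ is then cut out by equations with coefficients in $\bbk$ alone, so $\cE\to\TV(\Sec)$ is a trivial family, with fiber the cycle of $\bbP^1$'s dual to the central scattering fan, a cycle of length $\#\{\text{components of }D\}$.

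Next I would identify the fiber over the big torus $T_{\Pic(Y)}$. By the Frobenius structure theorem of \cite{Keel_Yu_The_Frobenius}, the algebra assembled from non-archimedean disk counts agrees with the Gross-Hacking-Keel mirror algebra assembled from broken lines and scattering \cite{Gross_Mirror_symmetry_for_log_Calabi-Yau_surfaces_I_v1}, so over $T_{\Pic(Y)}$ the family $(\cX,\cE)\to T_{\Pic(Y)}$ is the Gross-Hacking-Keel mirror family of the positive log Calabi-Yau surface $(Y,D)$; their description of that family yields \eqref{thmintro:stable:restriction}, namely the generic fiber $X$ is a del Pezzo surface with at worst du Val singularities, $E\subset X$ is an anticanonical cycle of $(-1)$-curves, and $-K_X$ is the polarization $\cO(1)$ cut out by $\Theta$. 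The identity $K_X^2=\#\{\text{components of }D\}$ then follows by combining $K_X^2=E^2=\sum_i E_i^2+2\ell=\ell$ (each of the $\ell$ cyclic components of $E$ being a $(-1)$-curve) with the identification $\ell=\#\{\text{components of }D\}$ built into the construction, or alternatively from a Riemann-Roch computation of $\dim H^0(X,-K_X)$. The remaining parts of \eqref{thmintro:stable:family} propagate from the generic fiber by flatness: every fiber is reduced and connected (being slc, by the next step) with $\chi(\cO_X)=1$, so $H^0(\cO_X)=\bbk$; moreover $\omega_X\cong\cO_X(-E)$ has no global sections, whence $H^2(\cO_X)=H^0(\omega_X)^\vee=0$ and then $H^1(\cO_X)=0$; and $K_{\cX/\TV(\Sec)}+\cE$, a relative Cartier divisor restricting to $0$ on the generic fiber, restricts to $0$ on every fiber.

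The substance of the theorem is the semi-log-canonical part of \eqref{thmintro:stable:family} and then \eqref{thmintro:stable:stable}. By \cref{prop:mirror_is_Fano} every fiber $(X,E)$ of $\cX\to\TV(\Sec)$ is semi-log-canonical; concretely, over a torus-fixed point of $\TV(\Sec)$ the fiber is a cyclically glued union of toric surfaces meeting along their toric boundary divisors (visibly slc, with $K+E=0$), and over a general point of a boundary cone it is a partial smoothing of this along the toric directions. To upgrade to \eqref{thmintro:stable:stable} I would note that $\Theta$ is relatively ample while $K_X+E\equiv 0$, so $K_X+E+\epsilon\Theta$ is $\bbQ$-Cartier and ample on every fiber; that slc is preserved after adding $\epsilon\Theta$ for $0<\epsilon\ll 1$ because $\Theta$ avoids the $0$-strata of $E$; and that Koll\'ar's condition for the family --- that the formation of $\cO(m(K_{\cX/\TV(\Sec)}+\cE))$ commute with base change --- is checked directly in the theta-function charts, where $K_{\cX/\TV(\Sec)}+\cE$ is manifestly $\bbQ$-Cartier. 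Hence $(\cX,\cE+\epsilon\Theta)\to\TV(\Sec)$ is a family of stable pairs, proving \eqref{thmintro:stable:stable}.

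Finally, for \eqref{thminto:stable:finite}, the resulting morphism $\TV(\Sec)\to\SP$ is proper since $\TV(\Sec)$ is complete and $\SP$ is separated, so it is enough to prove quasi-finiteness. Over $T_{\Pic(Y)}$ a fiber recovers the polarized pair $(X,E,-K_X)$, hence the log Calabi-Yau surface $(X,E)$, and by the Torelli theorem for log Calabi-Yau surfaces the isomorphism class of $(X,E)$ is determined by its period point; I would exhibit this period as an explicit monomial map of finite degree on $T_{\Pic(Y)}$, so that only finitely many torus points yield isomorphic pairs, and then run the same analysis stratum by stratum on the toric boundary (where the fibers are the corresponding degenerate stable pairs). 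I expect this last step to be the main obstacle: one must make the period map out of $\TV(\Sec)$ completely explicit, control the finite ambiguity coming from automorphisms of $(Y,D)$ and from the passage from the Mori fan to its coarsening $\Sec$, and rule out any positive-dimensional subvariety of a boundary stratum that parametrizes pairwise isomorphic stable pairs.
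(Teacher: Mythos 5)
Your treatment of parts (1)--(3) follows the paper's structure reasonably well: flatness from the free theta basis, triviality of $\cE$ from the constancy of the boundary multiplication rule, and the du Val del Pezzo description over the torus via the comparison with Gross--Hacking--Keel (the paper's \cref{prop:alg_equal}, \cref{prop:boundary_E}, \cref{prop:du_Val}). But your proofs of (4) and (5) diverge substantially, and (4) in particular has a real gap.

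For (4) you assert that slc is preserved after adding $\epsilon\Theta$ ``because $\Theta$ avoids the $0$-strata of $E$.'' That is the easy part, handled automatically by \cref{lem:boundary_E_dim2}. The substance, which you have skipped, is that over toric boundary strata of $\TV(\Sec)$ the fiber $X$ acquires double curves and $0$-dimensional log-canonical centers \emph{away from} $E$: by \cref{prop:slc_surface} the minimal log-canonical centers of $(X,E)$ are exactly the singular points of $\tNN\cup\tE$ on the normalization, and the non-normal locus $\NN$ generically does not lie on $E$. Showing $\Theta$ misses \emph{those} centers is the actual content, and the paper does it via \cref{cl:center_claim} (exactly one degree-one theta function is non-vanishing at each such center), established by pushing to fibers over torus-fixed points with the equivariant boundary torus action, then running the case-by-case analysis I--V together with the inductive structure of \cref{prop:restriction_SQM}. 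Without that analysis, one cannot conclude stability.

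For (5), your proposed route is a Torelli/period argument, and you yourself flag it as the main obstacle. The paper deliberately \emph{avoids} this (the introduction explicitly contrasts the two strategies). Its key observation is \cref{lem:finite_map_from_toric_variety}: a map from a complete toric variety to a scheme is finite if and only if no toric $1$-stratum is contracted to a point. This reduces finiteness to showing the mirror family restricted to each $1$-stratum is non-constant (\cref{cl:restriction_to_1-stratum}), which is vastly more tractable because over $1$-strata the family simplifies enormously and one can exhibit either a smoothing double curve or a disappearing $0$-dimensional log-canonical center, again using the inductive structure and the five-case analysis. Your Torelli approach would buy a much stronger modularity statement if carried out, but as written it is a sketch of a program, not a proof, whereas the paper's toric criterion gives a short and self-contained argument that also points up why identifying the \emph{correct} fan $\Sec$ (rather than $\MoriFan$) was the crucial step.
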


Finally when $Y$ is del Pezzo and $D \subset Y$ is an anti-canonical cycle of $(-1)$-curves, we prove that the image of the finite map $\TV(\Sec)\to\SP$ is the full deformation space $\oQ\subset\SP$, where $Q$ is the moduli space in \cref{conj:smooth} for the pair $(Y,D)$, see Theorem \ref{thm:del_Pezzo}.

\begin{remark}
	Mirror symmetry suggests that deformation types of $(X,E)$ as in \cref{conj:smooth} come in dual pairs, generalizing the Batyrev duality via reflexive polytopes \cite{Batyrev_Dual_polyhedra_and_mirror_symmetry}. 
	The mirror to a pair $(Y,D)$ with $Y$ a smooth del Pezzo consists of pairs $(X,E)$ with $E$ a cycle of $K_X$-degree-$(-1)$ curves on a del Pezzo $X$ with an $A_{k-1}$ singularity at the node of $E$ corresponding to an irreducible component of $D$ of $K_X$-degree $-k$.
	So the mirror can only be smooth when $D\subset Y$ is a cycle of $K_X$-degree-$(-1)$ curves, which is why we specialize to this case in \cref{thm:del_Pezzo_intro}.
	It is possible to have an analog of \cref{thm:del_Pezzo_intro} where we allow such $A_{k-1}$ singularities, see \cref{rem:duval}.
\end{remark}

\begin{remark}
	A version of mirror algebra has been constructed by Gross-Siebert \cite{Gross_Intrinsic_mirror_symmetry} in much greater generality using counts of punctured log curves instead of non-archimedean analytic curves. 
	However, in order to produce a family over the complete toric base associated to the secondary fan, one must show that the multiplication rule of the mirror algebra depends \emph{only} on the log Calabi-Yau $U\coloneqq Y\setminus D$, not on the compactification $U\subset Y$, (up to change of curve classes in the coefficients).
	This is easy from the non-archimedean approach, because the (punctured) analytic disks that contribute to the structure constants live inside the log Calabi-Yau $U$; however, this is currently unknown (and not at all clear) for the punctured log curve approach of Gross-Siebert, because the log curves can have components mapping completely into the boundary $D$).
	For the moment the non-archimedean construction of mirror algebra has only been carried out under the assumption of containing a Zariski open torus.
	As soon as the non-archimedean mirror construction generalizes to all affine log Calabi-Yau varieties with maximal boundary, the construction of the universal family over the toric variety associated to the secondary fan in this paper can also be extended to the general case of Fano varieties.
	\cref{sec:conjectural_construction} contains a speculative discussion about how one might approach \cref{conj:main} in full generality.
\end{remark}

Now let us give a more detailed overview of what we do in this paper:

\smallskip
As a guide let us first consider briefly the case where $(Y,D)$ is toric Fano.
Let $M$ denote the cocharacter lattice of the torus $U\coloneqq Y\setminus D$.
Since $Y$ is Fano, the first lattice points on the rays of the dual fan $\Sigma_{(Y,D)}$
are the vertices of a reflexive convex lattice polytope $\Lambda \subset M_\bbR$, in particular $0 \in \Lambda$ is the unique lattice point (see \cite{Batyrev_Dual_polyhedra_and_mirror_symmetry}).
The mirror to $(Y,D)$ is the polarized toric Fano $(X,E)$ given by $\Lambda$.
In this case, \cref{conj:main} can be deduced from Kapranov-Sturmfels-Zelevinsky \cite{Kapranov_Quotients_of_toric_vareities} and Alexeev \cite[1.2.15]{Alexeev_Complete_moduli}.
The finite map $T\to\oQ$ is a normalization, and $T$ is the toric variety associated to the GKZ secondary fan, which contains a cone for each tiling $\uLambda$ of $\Lambda$ given by a convex piecewise affine function.
The theta function basis for the homogeneous coordinate ring of the mirror family over $T$ consists of monomials corresponding to the lattice points in the cone $\Gamma$ over $\Lambda$.
The multiplication rule in the basis is simply the toric one: $\theta_P \cdot \theta_Q = \theta_{P + Q}$, addition in the ambient lattice $M\oplus\bbZ\supset\Gamma(\bbZ)$.

Now consider a general pair $(Y,D)$ with $Y$ smooth Fano and $D\subset Y$ a normal crossing anti-canonical divisor containing a 0-stratum.
Let $p\colon K \to Y$ denote the canonical bundle, and $K\to\oK$ the contraction of the 0-section.
The divisor $D \in \abs{-K_Y}$ gives a function $d\colon K \to \bbA^1$.
The central fiber $D_K \coloneqq d^{-1}(0)\subset K$ has normal crossings; we denote its dual complex by $\uLambda$.
Note that the complement $V \coloneqq K \setminus D_K$ is log Calabi-Yau, and $p\times d \colon V \to U \times \Gm$ is an isomorphism, where $U \coloneqq Y \setminus D$.
Let $\uGamma$ denote the cone complex over $\uLambda$.
The generalization of the ambient lattice $M \oplus \bbZ \subset M_\bbR \oplus \bbR$ is
\[
\Sk(V,\bbZ) \subset \Sk(V) \subset V^\an, \]
where $V^\an$ denotes the Berkovich analytification with respect to the trivial valuation on $\bbC$, $\Sk(V)$ the Kontsevich-Soibelman essential skeleton, and $\Sk(V,\bbZ)$ the integer points inside (see \cite[\S 2]{Keel_Yu_The_Frobenius} for a quick description of these objects).

Note $K/\oK$ is a relative Mori dream space, and Mori theory provides a complete fan $\MoriFan$, with support $\Pic(K/\oK)_\bbR \simeq \Pic(Y)_\bbR$.
It contains a maximal cone $\Nef(K') \subset \Pic(K/\oK)_\bbR$ for each flop $K \dasharrow K'$ over $\oK$,
as well as bogus cones (see \cref{sec:moving}).
Note $K' \dasharrow \bbA^1$ is again regular,
and the dual complex of the central fiber gives another triangulation $\uLambda'$ of the underlying topological space $\Lambda$ of $\uLambda$.
In the toric case, $\MoriFan$ is isomorphic to the GKZ secondary fan.
However, in general MoriFan gives the \emph{wrong} base for the mirror family:
Although the mirror family extends over the associated toric variety $\TV(\MoriFan)$, the induced map to the moduli space $\SP$ of stable pairs is not in general finite.
For example, when $Y$ is del Pezzo, the exceptional locus of $K\dasharrow K'$ is a disjoint union of $(-1)$-curves in $Y\subset K$.
Consider the case of a single exceptional $(-1)$-curve $C$, then $\Nef(K)$ and $\Nef(K')$ are adjacent maximal cones of $\MoriFan$ meeting along a codimension-one face, determining a boundary 1-stratum $S \subset \partial \TV(\MoriFan)$.
If $C$ is \emph{internal}, i.e.\ not an irreducible component of $D \subset Y$, then the restriction of the mirror family to $S$ will be trivial, and the map $\TV(\MoriFan)\to\SP$ contracts $S$ to a point.
In order to remedy this problem, we observe in this case that the triangulations $\uLambda$ and $\uLambda'$ are the same.
So the rough idea is to coarsen the Mori fan by gluing cones associated with same tilings of $\Lambda$.
We call the resulting fan the \emph{secondary fan}, denote it by $\Sec$, and prove that the mirror family over (the toric variety associated to) the Mori fan indeed descends to the secondary fan.
The precise construction of the secondary fan relies on Berkovich geometry and Mori theory, and is quite different from the Gelfand-Kapranov-Zelevinski toric construction (see \cref{sec:secondary_fan}, see also \cref{rem:Sec_on_PicY} for an alternative explicit description in the del Pezzo case).

The mirror family over the toric base $\TV(\Sec)$ is constructed in \cref{sec:family_over_secondary_fan}.
We start from the family over the nef cone $\Nef(K)$: as in the toric case, the associated mirror algebra $A$ has a basis parametrized by the integer points $\Gamma(\bbZ)$, but the multiplication rule is much more subtle, given by counts of non-archimedean analytic disks in $K^\an$.
The tropicalization $d^\trop\colon \Sk(V,\bbZ)\to\bbZ$ gives a grading on $A$, then $\Proj A$ gives the mirror family $\cX$ over the nef cone;
the boundary $\cE\subset\cX$ is given by the ideal generated by the integer points $\Gamma^\circ(\bbZ)$ in the interior of $\Gamma$, and the divisor $\Theta$ is the zero-locus of the sum of sections associated to the integer points $\Lambda(\bbZ)$ in $\Lambda$.
We carry out the same construction for every flop $K\dasharrow K'$ over $\oK$;
then by rephrasing the mirror algebras using universal torsor as in \cref{sec:rephrase},
the mirror families for various $K'$ glue together to a family over the moving part of the secondary fan, see \cref{sec:extension_moving}.
The extension to the full secondary fan is carried out in \cref{sec:extension_full}, which relies on the equivariant boundary torus action of \cref{sec:torus_action} and the toric fiber bundle construction of \cref{sec:toric_fiber_bundle}.

We study the singularities in \cref{sec:singularities}.
We show that the mirror family $(\cX,\cE)$ over the nef cone is a family of semi-log-canonical Fano varieties with log-canonical generic fiber, moreover for sufficiently small $\epsilon>0$, the generic fiber $(X,E+\epsilon\Theta)$ is stable, thus the family determines at least a rational map $\TV(\Sec) \dasharrow \SP$, see \cref{prop:mirror_is_Fano}.
We conjecture all the fibers are stable, and that the induced regular map to $\SP$ is finite.
We have much better control on singularities in the del Pezzo case, see \cref{thmintro:stable} (or \ref{thm:stable}).
Our proof uses the equivariant boundary torus action to push any fiber $(X,E)$ to the fiber over a 0-stratum of the toric base, which admits explicit geometric descriptions.
Note that over an orbit of the boundary torus action, $(X,E)$ is constant, but $\Theta$ varies, by arbitrary scaling of the non-zero coefficients: the weights for the theta function basis of $\Gamma(X,\cO(1))$ are exactly the coordinates on the boundary torus.
Thus the condition that $\Theta$ does not pass any log-canonical center of $(X,E)$ has a striking implication: all but exactly one of the (degree-one) theta functions must vanish at each log-canonical center, see \cref{cl:center_claim}, because otherwise by scaling we can move $\Theta$ so that it would contain a log-canonical center.

Finally we note one pleasant feature of our construction, that points up how desirable it is to find the {\it correct} fan:
In general, to prove modularity one would expect to need some deformation theory, or as in \cite{Gross_K3}, a Torelli theorem and computation of periods.
However, note that a map from a complete toric variety to a scheme is finite if and only if no boundary 1-stratum is contracted to a point, see \cref{lem:finite_map_from_toric_variety}.
While our mirror family built from non-archimedean enumerative geometry is quite complicated (it is after all, versal), the restrictions over 1-strata of the base toric variety are vastly simpler.
So we are able to show that they are non-trivial (i.e.\ no 1-stratum is contracted by the map to $\SP$), by exhibiting the smoothing of either a double curve or a 0-dimensional log-canonical center, see \cref{cl:restriction_to_1-stratum}.
Once we have the finite map $\TV(\Sec)\to\SP$, we show that its image is the full deformation space $\oQ$ (notation as in \cref{thm:del_Pezzo_intro}) by dimension count, see \cref{thm:del_Pezzo}.

\begin{remark}
	In the del Pezzo case, the mirror family $(\cX,\cE,\Theta) \to \TV(\Sec)$ is equivariant with respect to a natural (finite) Weyl group $W$ (see \cite{Gross_Moduli_of_surfaces}) and the map to $\SP$ factors through the quotient.
	We expect the induced finite map $\TV(\Sec)/W \to \SP$ is the normalization of its image; so in particular, in this case the normalization of $\oQ$ from \cref{thm:del_Pezzo_intro} is a quotient of a toric variety by a finite group.
	This is very special, as irreducible components of $\SP$ can be very complicated, even for the pair of $\bbP^2$ and a collection of general lines, see \cite[1.3, 3.13]{Keel_Geometry_of_Chow_quotients}.
	We are very interested in the question of how to generalize this Weyl group to higher dimensions.
\end{remark}

\bigskip \paragraph{\textbf{Acknowledgments}}
We enjoyed fruitful conversations with P.\ Achinger, V.\ Alexeev, M.\ Baker, V.\ Berkovich, F.\ Charles, A.\ Corti, A.\ Durcos, W.\ Gubler, J.\ Koll\'ar, M.\ Porta, J.\ Rabinoff, D.\ Ranganathan, M.\ Robalo and Y.\ Soibelman.
We were heavily inspired and influenced by our long-term collaborations with M.\ Gross, M.\ Kontsevich and B.\ Siebert.
Hacking was supported by NSF grants DMS-1601065 and DMS-1901970.
Keel was supported by NSF grant DMS-1561632.
T.Y.\ Yu was supported by the Clay Mathematics Institute as Clay Research Fellow.
Some of the research was conducted while Keel and Yu visited the Institute for Advanced Study in Princeton, and some while the three authors visited the Institut des Hautes Études Scientifiques in Bures-sur-Yvette.

\section{The secondary fan} \label{sec:secondary_fan}

In this section, we give a generalization of the Gelfand-Kapranov-Zelevinski secondary fan \cite{Gelfand_Discriminants}.
Here is the basic idea:
Consider a $\bbQ$-factorial compactification of a log Calabi-Yau variety $V\subset K$, with $\Pic(K)$ a lattice.
Let $G\to K$ be the universal torsor.
The restriction $\cT\coloneqq G|_V$ is again log Calabi-Yau, hence we obtain a map between essential skeletons $\Sk(\cT)\to\Sk(V)$, which admits a canonical section $\varphi\colon\Sk(V)\to\Sk(\cT)$, see \cref{sec:varphi}.
Then the secondary fan $\Sec(K)$ restricted to the cone $\Mov(K)\subset\Pic(K)_\bbR$ of moving divisors, is the coarsening of the Mori fan, whose maximal cones are unions of $\Nef(K')$ over all flops $K\dasharrow K'$ for which the sections $\varphi'$ coincide.
We prove that such unions are convex, see \cref{thm:convex_cone}.
A similar gluing for bogus cones is worked out in \cref{sec:bogus}.
An explicit description of the secondary fan in the del Pezzo case is given in \cref{sec:secondary_fan_del_Pezzo}.

\subsection{Construction of the section \texorpdfstring{$\varphi$}{φ}} \label{sec:varphi}

Let $k$ be a non-archimedean field (trivial valuation allowed), and $X$ a scheme locally of finite type over $k$.
We can analytify $X$ in the sense of Berkovich \cite{Berkovich_Spectral_theory} and obtain a $k$-analytic space $X^\an$.
The underlying topological space of $X^\an$ has a very simple description, which we recall for readers' convenience:

As a set, $X^\an$ consists of pairs $(\xi,\abs{\cdot})$, where $\xi\in X$ is a scheme point, and $\abs{\cdot}$ is an absolute value on the residue field $\kappa(\xi)$ extending the one on $k$.
The topology on $X^\an$ is the weakest one such that the forgetful map $\pi\colon X^\an\to X$ is continuous, and that for all Zariski open $U\subset X$, $f\in\cO_X(U)$, the function \[ \pi^{-1}(U)\longrightarrow\bbR_{\ge 0},\quad (\xi,\abs{\cdot})\longmapsto\abs{f(\xi)}\]
is continuous.

When $k$ has trivial valuation, we have a canonical analytic subdomain $X^\beth\subset X^\an$, consisting of $(\xi,\abs{\cdot})$ such that the center of the absolute value lies inside $X$, see \cite[Definition 1.3]{Thuillier_Geometrie_toroidale}.
The inclusion is an equality when $X$ is proper.

\begin{construction} \label{const:varphi}
	Let $\fX, \fY$ be formal schemes locally of finite presentation over the ring of integers $\kc$.
	Let $f\colon\fX\to\fY$ be a flat morphism with geometrically integral fibers.
	We can construct a canonical set-theoretic section $\varphi\colon\fY_\eta\longto\fX_\eta$ as follows:
	
	For every $y\in\fY_\eta$, let $\cH(y)$ denote its complete residue field.
	Let $\fZ$ be the pullback of $\fX$ along $\Spf\cH(y)^\circ\to\fY$.
	By assumption, it is an admissible formal scheme over $\cH(y)^\circ$ with integral special fiber $\fZ_s$, and generic fiber $\fZ_\eta\simeq(\fX_\eta)_y$.
	Let $\pi\colon\fZ_\eta\to\fZ_s$ be the reduction map (see \cite[\S 1]{Berkovich_Vanishing_cycles_for_formal_schemes}).
		By \cite[2.4.4(ii)]{Berkovich_Spectral_theory}, the preimage of the generic point of $\fZ_s$ by $\pi$ gives a unique point in $\fZ_\eta\simeq(\fX_\eta)_y$, which we take to be the image $\varphi(y)\in(\fX_\eta)_y\subset\fX_\eta$.
	
	In particular, we note the following special case when $k$ has trivial valuation:
	Let $f\colon X\to Y$ be a flat map of $k$-varieties with geometrically integral fibers.
	Then we have a canonical set-theoretic section \[\varphi\colon Y^\beth\longto X^\beth\subset X^\an.\]
\end{construction}

\begin{assumption} \label{ass:1}
	From now on we assume that $k$ has trivial valuation.
		Fix a smooth log Calabi-Yau variety $V$ with volume form $\omega_V$ (unique up to scaling).
	Fix a partial compactification $V\subset K$, with $K$ normal.
	In our application $K$ will be the total space of the canonical bundle of a smooth variety, whence our choice of letter.
\end{assumption}

\begin{construction} \label{const:universal_torsor}
	Given any lattice $N\subset\Pic(K)$, denote $M\coloneqq N^*$ and consider the universal torsor
	\[\pi\colon G\coloneqq\underline{\Spec}\bigg(\bigoplus_{L\in N} L\bigg)\longto K,\]
	which is a principal 
	\[T^N\coloneqq T_M\coloneqq M\otimes_\bbZ \Gmk = \Spec(k[N])\]
	bundle, and the action of $T^N$ is given by the $N$-grading.
	Denote $\cT\coloneqq G|_V$.
	The torus bundle has a canonical relative volume form, wedging with $\omega_V$ gives a volume form on $\cT$, making $\cT$ also log Calabi-Yau.
	Let $\Sk(V)\subset V^\an$ and $\Sk(\cT)\subset\cT^\an$ denote the essential skeletons (see \cite[\S 2]{Keel_Yu_The_Frobenius}), and let $\Gamma\coloneqq\Sk(V)\cap K^\beth$, $\tGamma\coloneqq\Sk(\cT)|_\Gamma$.
	Applying \cref{const:varphi}, we obtain $\varphi\colon K^\beth\to G^\beth$, a section of the projection.
	It follows from the construction that $\varphi$ restricts to	$\varphi\colon\Gamma\to\tGamma$, a section of $\tGamma\to\Gamma$.
\end{construction}

\begin{remark} \label{rem:varphi_resolution}
	Given any other compactification $V\subset K'$ with $K'\to K$ proper, consider the pullback $G'$ of $G$, and apply the above construction.
	We have $\cT'=\cT$, $\Sk(\cT')=\Sk(\cT)$, $\Gamma'=\Gamma$, $\tGamma'=\tGamma$ and $\varphi'=\varphi$.
	So for computing $\varphi$ one can always resolve $K$.
\end{remark}

\begin{remark}
	The $N_1(K,\bbR)$-bundle $\tGamma\to\Gamma$ and the section $\varphi\colon\Gamma\to\tGamma$ are basic objects in several previous works on mirror symmetry \cite{Gross_Mirror_symmetry_for_log_Calabi-Yau_surfaces_I_v1,Gross_K3,Gross_Mirror_symmetry_via_logarithmic_degeneration_data_I}, arising in a (it seems to us) rather ad hoc way. 
	As far as we know, the above canonical non-archimedean theoretic description is new.
\end{remark}

\begin{remark} \label{rem:varphiBT_explicit}
	Let us give a more explicit description of $\varphi\colon\Gamma\to\tGamma$.
	By \cref{rem:varphi_resolution} we may assume that $(K,D\coloneqq K\setminus V)$ is snc.
	Let $D^{\ess} \subset D$ be the union of essential divisors, i.e.\ irreducible components where $\omega_V$ has a pole, and $\Sigma \coloneqq \Sigma_{(K,D^{\ess})}$ the dual cone complex.
	Let $S_1\dots,S_m \in N \subset \Pic(K)$ be a line bundle basis so that $G \simeq L_1^\times \times \dots L_m^\times$, where $L_i$ denotes the dual of $S_i$, and $L_i^\times \subset L_i$ denotes the complement of the zero section.
	By \cite[Lemma 8.5]{Keel_Yu_The_Frobenius}, we have a canonical identification $\Gamma=\Sk(V)\cap K^\beth\simeq |\Sigma|$.
	
	Let $\pi\colon\ocT\coloneqq \bbP(\oplus_i L_i \oplus \cO)\to K$.
	Similarly we have
	$$
	\tGamma=\Sk(\cT)|_\Gamma=\Sk(\cT)\cap\ocT^\beth\simeq\Sigma_{(\ocT,(\ocT \setminus \cT)^{\ess})} \eqqcolon \tSigma.
	$$
	Note we have a natural inclusion of cone complexes $\Sigma \subset \tSigma$, the subcomplex generated by the rays corresponding to the irreducible components of $\pi^{-1}(D^{\ess})$;
	moreover, we have a natural projection $\tSigma \to\Sigma$.
	These coincide with $\varphi\colon\Gamma\to\tGamma$ and $\pi\colon\tGamma\to\Gamma$ respectively, via the identifications with the skeletons.
\end{remark}

\subsection{Moving cones} \label{sec:moving}

We recall some notions of birational geometry from \cite{Hu_Mori_dream_spaces}.
A \emph{small $\bbQ$-factorial modification} (SQM for short) is a birational map $f\colon X\dasharrow X'$ of normal $\bbQ$-factorial projective varieties that is an isomorphism in codimension 1.
A normal $\bbQ$-factorial projective variety $X$ is called a \emph{Mori dream space} if
\begin{enumerate}
	\item $\Pic(X)$ is finitely generated,
	\item $\Nef(X)$ is generated by finitely many semi-ample divisors,
	\item there is a finite collection of SQMs $f_i\colon X\dasharrow X_i$, such that each $X_i$ satisfies (2), and the cone of moving divisors $\Mov(X)$ is the union of $f_i^*(\Nef(X_i))$.
\end{enumerate}

A Mori dream space $X$ has a simple Mori chamber decomposition via Mori equivalence of line bundles, giving rise to a finite polyhedral fan $\MoriFan(X)$, called \emph{Mori fan}, which is supported on the cone of effective divisors $\Eff(X)$ (see \cite[Proposition 1.11]{Hu_Mori_dream_spaces}).
Each maximal cone of $\MoriFan(X)$ is of the form
\[f^*\Nef(Y)+\braket{\ex(f)},\]
for a birational contraction $f\colon X\dasharrow Y$ with $Y$ a Mori dream space, where $\braket{\ex(f)}$ denotes the subcone of $\Eff(X)$ spanned by $f$-exceptional effective divisors.

We denote by $\MovFan(X)$ the restriction of $\MoriFan(X)$ to the cone of moving divisors $\Mov(X)$, and call it the \emph{moving fan}.
Each of its maximal cones is of the form $f^*\Nef(X')$ for an SQM $f\colon X\dasharrow X'$.
The maximal cones of $\MoriFan(X)$ not contained in $\Mov(X)$ are called \emph{bogus cones}.

\begin{assumption} \label{ass:2}
In addition to \cref{ass:1}, we fix regular proper map $q\colon K \to \oK$, and assume $K/\oK$ is a relative Mori dream space.
Given an SQM $K \dasharrow K'$ over $\oK$, we have a canonical identification $\Pic(K)_\bbQ \simeq \Pic(K')_\bbQ$.
Whenever we consider various cones, e.g.\ $\NE$, $\Nef$, and fans, e.g.\ $\MoriFan$, for $K$ or $K'$, we will always mean relative to $\oK$.
Note that for sufficiently divisible $n$, $n\Pic(K') \subset \Pic(K')$ is free, and $n \Pic(K') \subset \Pic(K)_\bbQ$ is independent of the SQM.
We fix such an $\sfN\coloneqq n \Pic(K') \subset \Pic(K)_\bbQ$ throughout the paper, and let $\sfM$ denote its dual.
%All line bundles we consider will lie in $\sfN$.
We have a canonical projection $N_1(K',\bbZ) \to \sfM$, and write $\NE(K')_\sfM\subset\sfM$ for the image of $\NE(K',\bbZ)$.
%We will often drop $\sfN$, $\sfM$ from the notation in the absence of ambiguity.
For our later application to \cref{thm:del_Pezzo_intro}, all SQM $K'$ will be smooth and we will simply take $n=1$ and $\sfN\coloneqq\Pic(K)$.
\end{assumption}

\begin{construction} \label{const:varphi_of_SQM}
	Applying \cref{const:universal_torsor} to $K$ and $\sfN\subset\Pic(K)$, we obtain $G\to K$, $\tGamma\to\Gamma$ and a section $\varphi\colon\Gamma\to\tGamma$.
	Given an SQM $f\colon K\dasharrow K'$ over $\oK$, applying \cref{const:universal_torsor} to $K'$ and $\sfN\subset\Pic(K')$, we obtain $G'\to K'$, $\tGamma'\to\Gamma'$ and a section $\varphi'\colon\Gamma'\to\tGamma'$.
	Since the volume forms agree, and $q\colon K\to\oK$, $q'\colon K'\to\oK$ are proper, we have $\Gamma=\Gamma'$ and $\tGamma=\tGamma'$.
	So $\varphi'\colon\Gamma'\to\tGamma'$ gives also a section of $\tGamma\to\Gamma$, which is in general different from $\varphi$.
\end{construction}

\begin{proposition} \label{prop:SQM}
	Let $f\colon K_1\dasharrow K_2$ be an SQM over $\oK$ corresponding to two maximal cones of $\MovFan(K)$.
	The following hold:
	\begin{enumerate}
	\item \label{prop:SQM:difference} The difference $\varphi_{K_2} - \varphi_{K_1}\colon \Gamma \to \sfM_\bbR$ has image in $\NE(K_1)$, (note the difference is between two sections of a principal $N_1(K,\bbR)$-bundle, hence an $N_1(K,\bbR)$-valued function.)
	\item \label{prop:SQM:Lperp} Let $L\in\Nef(K_1)\cap\Nef(K_2)$, then $\varphi_{K_2} - \varphi_{K_1}$ has image in $L^\perp$.
	\end{enumerate}
\end{proposition}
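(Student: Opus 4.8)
The plan is to reduce to a single wall-crossing and then treat the two assertions separately: part (2) will come out formally, while part (1) requires an explicit local computation at the flopping locus, which is the real work.

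First I would reduce to the case where $\Nef(K_1)$ and $\Nef(K_2)$ are adjacent maximal cones of $\MovFan(K)$, meeting along a codimension-one face $\tau$; the general case follows by composing the wall-crossings below. Let $\psi\colon K_1\to Z$ and $\psi'\colon K_2\to Z$ be the flopping contractions associated to $\tau$, so that $\tau=\psi^*\Nef(Z)=\psi'^*\Nef(Z)$ and $K_1\dasharrow K_2$ is the flop over $Z$; let $R\subseteq\NE(K_1)$ be the face contracted by $\psi$. Since $\NE(K_1)=\Nef(K_1)^\vee$, since $\Nef(K_1)\cap\Nef(K_2)=\psi^*\Nef(Z)$, and since $R=\NE(K_1)\cap(\psi^*\Pic(Z)_\bbR)^\perp$, both parts of the proposition follow once one shows $\delta(\gamma)\coloneqq(\varphi_{K_2}-\varphi_{K_1})(\gamma)\in R$ for every $\gamma\in\Gamma$; moreover part (2) needs only the weaker statement $\delta(\gamma)\perp\psi^*\Pic(Z)_\bbR$.

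Next I would record a \emph{locality} property of \cref{const:varphi}: the point $\varphi_{K_i}(\gamma)$ depends only on the restriction of the universal torsor $G_i\to K_i$ to a formal neighbourhood of the center of $\gamma$, because the model $\fZ$ appearing there is the pullback of $G_i$ along the arc $\Spf\cH(\gamma)^\circ\to K_i$, which factors through that neighbourhood. Since $\psi$ is an isomorphism away from its exceptional locus, $K_1$ and $K_2$ — with their universal torsors — are canonically identified near the center of any $\gamma$ whose center avoids $\mathrm{Exc}(\psi)$, so for such $\gamma$ one gets $\varphi_{K_1}(\gamma)=\varphi_{K_2}(\gamma)$ and $\delta(\gamma)=0$. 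Thus $\delta$ is supported on the cones of $\Gamma$ whose interior valuations are centered on $\mathrm{Exc}(\psi)$.

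For part (2) I would argue by functoriality of \cref{const:varphi}. Given $L\in\Nef(K_1)\cap\Nef(K_2)=\psi^*\Nef(Z)$, write $L=\psi^*\oL=\psi'^*\oL$. Applying \cref{const:varphi} to the rank-one universal torsors of $L$ on $K_1$, of $L$ on $K_2$, and of $\oL$ on $Z$ — which sit in Cartesian squares over $\psi$ and $\psi'$ — and using that the Gauss point is stable under base change of valued fields, the $L$-coordinate of each of $\varphi_{K_1}(\gamma)$ and $\varphi_{K_2}(\gamma)$ equals the $\oL$-coordinate of the corresponding section over $Z$ evaluated at $\ogamma\coloneqq\psi^\beth(\gamma)=\psi'^\beth(\gamma)\in Z^\beth$ (the valuation $\gamma$ restricted to $\Frac Z$). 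Hence $\langle\delta(\gamma),L\rangle=0$; letting $L$ range over $\tau$ gives $\delta(\gamma)\in(\psi^*\Pic(Z)_\bbR)^\perp=\Span R$, which is part (2).

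It remains to prove $\delta(\gamma)\in\NE(K_1)$, which combined with $\delta(\gamma)\in\Span R$ yields $\delta(\gamma)\in R$ and finishes both parts. By the locality step I may localize at the generic point of a component of $\mathrm{Exc}(\psi)$, where $K_1\dasharrow K_2$ becomes the standard toric local model of a flopping contraction — an Atiyah flop of a $(-1,-1)$-curve in the situation of \cref{thm:del_Pezzo_intro}. In this toric local model the skeleta $\Gamma\subseteq\tGamma$, the sections $\varphi_{K_1},\varphi_{K_2}$, and the change of affine framing on $\tGamma\to\Gamma$ between the $\Pic(K_1)$- and $\Pic(K_2)$-trivializations are all explicit, as in \cref{rem:varphiBT_explicit}, and a direct computation should identify $\delta(\gamma)$ with a nonnegative combination of the ruling classes of $\mathrm{Exc}(\psi)$, i.e.\ an effective curve class lying in the contracted face $R$. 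The main obstacle is exactly this last computation — in particular pinning down the sign, so that the image lands in $\NE(K_1)$ rather than in $-\NE(K_1)$; this is where one must use the orientation of the SQM from $K_1$ towards $K_2$ together with the nonnegativity of the barycentric coordinates defining $\Gamma$ (swapping $K_1\leftrightarrow K_2$ negates $\delta$ and replaces $R$ by the face $-R\subseteq\NE(K_2)$). The earlier steps — locality, functoriality, reduction to a toric model — are comparatively routine.
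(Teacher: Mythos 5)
Your proposal leaves a genuine gap, and the reductions you set up actually make the problem harder than the paper's direct approach.

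\textbf{The reduction to adjacent cones is not automatic.} You want to deduce the general case ``by composing the wall-crossings,'' but the cones $\NE(K_i)$ rotate as you cross walls: for an Atiyah flop, the flopped curve class $[C'] \subset K_{1.5}$ equals $-[C]$ under the canonical identification $N_1(K_1,\bbR) \simeq N_1(K_{1.5},\bbR)$. So if you write $\varphi_{K_2}-\varphi_{K_1} = \delta_{1,1.5} + \delta_{1.5,2}$ with each summand in the flopping face of the intermediate model, there is no immediate reason the sum lands in $\NE(K_1)$ --- the flopping face of $\NE(K_{1.5})$ points partly into $-\NE(K_1)$. Indeed the paper does need exactly this kind of composition for the convexity argument of \cref{thm:convex_cone}, and it requires a separate non-trivial argument there. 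For \cref{prop:SQM} itself the paper avoids the issue entirely by working directly with arbitrary $K_1, K_2$.

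\textbf{The local toric model is not valid in the generality of the proposition.} \cref{prop:SQM} is stated under \cref{ass:2} for an arbitrary relative Mori dream space $K/\oK$; the flops are not $(-1,-1)$ flops except in the surface/del Pezzo specialization where \cref{lem:SQM_of_K} applies. Your plan to ``localize at the generic point of a component of $\mathrm{Exc}(\psi)$'' and use a standard toric model only makes sense in that special case, and you acknowledge the sign computation there as an unresolved ``obstacle'' --- precisely the content of the proposition.

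The paper's route, via \cref{lem:SQM}, sidesteps all of this. One chooses a common resolution $p_i\colon Z \to K_i$ and writes $p_1^*L = p_2^*L \otimes \cO(E)$; the point is the clean valuative identity $(\cdot L)\circ(\varphi_{K_1}-\varphi_{K_2}) = E^\trop$, proved by a two-line Gauss-point computation after restricting the torsor along $\Spec\cH(v)^\circ \to Z$ (properness gives the extension, no adjacency needed). Then for $L \in \Nef(K_1)$ one observes $E$ is $p_2$-nef, so the \emph{negativity lemma} gives $-E \geq 0$, hence $-E^\trop \geq 0$, and duality of $\Nef(K_1)$ and $\NE(K_1)$ finishes (1); part (2) is immediate from (1). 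The negativity lemma is exactly the tool that ``pins down the sign'' that your local computation was going to chase, and it does so in full generality without any explicit model. Your functoriality argument for part (2) is a reasonable independent check, but since you still need (1), it doesn't save work; and your locality observation is essentially \cref{rem:internal_flop}, which is correct but not sufficient by itself.
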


The proposition will follow from a more precise statement:

\begin{lemma} \label{lem:SQM}
	Choose a $\bbQ$-factorial projective variety $Z$ with birational morphisms $p_i\colon Z\to K_i$.
	Let $L \in \Pic(K)_\bbQ$, and write $p_1^*L = p_2^*L \otimes \cO(E)$ for a $\bbQ$-Cartier divisor $E$ supported on the $p_i$-exceptional loci (since $f$ is small, $p_1$ and $p_2$ have the same exceptional divisors).
	Then
	\[
	(\cdot L)\circ (\varphi_{K_1} - \varphi_{K_2}) = E^\trop \colon \Gamma \longto \bbR,
	\]
	where $E^\trop$ is given by taking valuation of local defining equations of $E$, see \cite[Construction 15.1]{Keel_Yu_The_Frobenius}.
\end{lemma}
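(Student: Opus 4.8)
The plan is to make the section $\varphi_K$ completely explicit on divisorial points, and then to compare $\varphi_{K_1}$ and $\varphi_{K_2}$ on a common resolution, where the discrepancy divisor $E$ appears directly.

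First I would reduce. Both sides of the asserted identity are $\bbQ$-linear in $L$ (the right side because $E$ is, and $E^\trop$ is linear in $E$), so we may assume $L\in\sfN$; and both are piecewise linear on the cone complex $\Gamma$ (the left side because $\varphi_{K_i}|_\Gamma$ is a morphism of cone complexes by \cref{rem:varphiBT_explicit} and $(\cdot L)$ is linear, the right side by the definition of $E^\trop$), so it suffices to check the identity at the divisorial points of $\Gamma$, which are dense. Fix such a point $x\in\Gamma$. Replacing $Z$ by a further resolution — which changes neither side, the right side because $E^\trop$ is unchanged under pullback — we may assume $(Z,Z\setminus V)$ is snc, that $p_i$ is an isomorphism over $V$ (possible since the SQM is an isomorphism on $V$), and that $x=\ord_{D_0}$ for a prime divisor $D_0\subset Z$ lying over $D$. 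By the explicit description of \cref{rem:varphiBT_explicit}, applied on $Z$ with the line bundles $p_i^*S_j$, the section $\varphi_{K_i}$ sends $\ord_{D_0}$ to the divisorial valuation $\ord_{\pi_i^{-1}(D_0)}$ on $k(\cT)$, where $\pi_i\colon\ocT_i=\bbP(\bigoplus_j p_i^*L_j\oplus\cO)\to Z$. The key computation is then that, for any rational section $s_L$ of $L$ with induced rational fibre coordinate $\chi^L$ on the $T^N$-torsor $\cT$,
\[
(\cdot L)\bigl(\varphi_{K_i}(x)\bigr)=\ord_{\pi_i^{-1}(D_0)}(\chi^L)=\ord_{D_0}(p_i^*s_L),
\]
because $\chi^L$ differs, near $\eta_{D_0}$, from the pullback of a local equation of $s_L$ by a unit along $\pi_i^{-1}(D_0)$, and $\pi_i^{-1}(D_0)$ occurs with multiplicity one in $\pi_i^*D_0$. (One checks the case $L=S_j$ from the definitions, where $\chi^{S_j}$ is up to a unit the pullback of a local equation of $s_j$, and extends multiplicatively.)

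Next I would compare the two. Take one and the same rational section $s_L$ of $L|_V$ and extend it across $K_1$ and $K_2$; on $Z$ it becomes a rational section of $p_1^*L$ and of $p_2^*L$, agreeing over $V$. Under the chosen isomorphism $p_1^*L\cong p_2^*L\otimes\cO_Z(E)$, which restricts to the identity over $V$ since $E$ lies over the exceptional locus, $p_1^*s_L$ corresponds to $p_2^*s_L\otimes s_E$, where $s_E$ is the tautological rational section of $\cO_Z(E)$ with $\Div_Z(s_E)=E$ (the possible correcting rational function equals $1$, being $1$ on the dense open $V$). Hence
\[
(\cdot L)\bigl(\varphi_{K_1}(x)-\varphi_{K_2}(x)\bigr)=\ord_{D_0}(p_1^*s_L)-\ord_{D_0}(p_2^*s_L)=\ord_{D_0}(s_E)=\mathrm{mult}_{D_0}(E)=E^\trop(x),
\]
the last equality being the definition of $E^\trop$ as the valuation of a local defining equation of $E$. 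Since divisorial points are dense in $\Gamma$ and both sides are PL, the lemma follows.

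The step I expect to require the most care is the displayed formula $(\cdot L)\circ\varphi_{K_i}=\ord_{D_0}(p_i^*s_L)$: one must correctly match the identification of the torus part of the skeleton of $\cT$ with $\Hom(\sfN,\bbR)$, the identification of $\chi^{S_j}$ with the fibre coordinate dual to $S_j$, and the normalization of the ray generators used in \cref{rem:varphiBT_explicit}, in order to get the sign right. (If one prefers to argue directly from \cref{const:varphi} instead of \cref{rem:varphiBT_explicit}, the analogous care is needed in identifying the reduction-map preimage of the generic point of the integral special fibre $T^N_{k(D_0)}$ with the Gauss point, whose value on the \emph{integral} generator of $S_j$ is $0$ and hence on $s_j$ is $\ord_{D_0}(s_j)$.) Everything else is routine bookkeeping, the only subtlety being that the comparison between the two compactifications must be carried out over $V$, where the line bundle and its rational section are canonically the same.
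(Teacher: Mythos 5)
Your proof is correct; it is a valid alternative to the paper's argument, though the localization strategy differs. The paper works directly with an arbitrary point $v\in\Gamma$: it replaces $Z$ by $\Spec\cH(v)^\circ$ (the valuation ring of the complete residue field), reduces $\Pic(K)$ to the rank-one sublattice generated by $L$, trivializes $p_1^*L$ and $p_2^*L$ so that the two trivializing sections differ by the defining equation $e$ of $E$, and reads off $(\cdot L)\circ(\varphi_{K_1}-\varphi_{K_2})(v)$ directly as $\val(e)=E^\trop(v)$ by comparing Gauss points. You instead reduce to divisorial points of $\Gamma$ via piecewise linearity and density, invoke \cref{rem:varphiBT_explicit} to identify $\varphi_{K_i}(\ord_{D_0})$ with the divisorial valuation $\ord_{\pi_i^{-1}(D_0)}$, and compare the pullbacks of one fixed rational section on a common snc resolution $Z$. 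Both proofs pivot on the same central comparison --- two trivializations of $L$ differing by a local equation of $E$ --- but the paper dispenses with the density/PL argument by working over the (possibly non-discrete) valuation ring $\cH(v)^\circ$ at an arbitrary point, which is more economical; your route demands the extra check that both sides of the identity are piecewise linear on $\Gamma$ (which you do supply), but in return it proceeds entirely at the level of divisorial valuations and snc models. The sign convention you flag is indeed the delicate point, and the paper's reduction to $\Spec\cH(v)^\circ$ with a single coordinate $X_2$ is precisely what renders it manifest.
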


\begin{proof}
	Put $\varphi_i \coloneqq (\cdot L) \circ \varphi_{K_i}$.
	Let $v\in\Gamma$, $\cH(v)$ its complete residue field, and $\cH(v)^\circ$ the valuation ring.
		By the properness of $Z$, we get a map $\Spec\cH(v)^\circ\to Z$.
	Now for computing $(\varphi_1 - \varphi_2)(v)$ we can replace $Z$ by $\Spec\cH(v)^\circ$.
	Furthermore, we can replace $\Pic(K)$ by the sublattice generated by $L$.
	We can also assume that $E$ is effective Cartier.
	Let $e\in\cH(v)$ be a defining equation for (the pullback of) $E$.
	Choose a trivialization, i.e.\ a nowhere vanishing section $s$, of $p_2^*L$ and write $p_2^*L\simeq\Spec\cH(v)^\circ[X_2]$.
	By $p_1^*L = p_2^*L \otimes \cO(E)$, $es$ gives a trivialization of $p_1^*L$, so we can write $p_1^*L\simeq\Spec\cH(v)^\circ[X_1]$ where $X_1=X_2/e$ over the generic fiber.
	By construction, $\varphi_{K_2}(v)$ is the Gauss point on the generic fiber with respect to the coordinate $X_2$, while $\varphi_{K_1}(v)$ is the Gauss point with respect to the coordinate $X_1=X_2/e$.
	Therefore,
	\begin{multline*}
	\varphi_1(v)-\varphi_2(v)=\val X_2(\varphi_{K_1}(v))-\val X_2(\varphi_{K_2}(v))=\val X_2(\varphi_{K_1}(v))\\
	=\val eX_1(\varphi_{K_1}(v))=\val X_1(\varphi_{K_1}(v))+\val e=\val e=E^\trop(v),
	\end{multline*}
	where $\val$ denotes the valuation on $\cH(v)^\circ$.
	This completes the proof.
\end{proof}

\begin{remark} \label{rem:internal_flop}
	Notation as in the proof of \cref{lem:SQM}, if $\Spec\cH(v)^\circ\to K_i$ has image in the open subset where $K_1 \dasharrow K_2$ is an isomorphism, then $\varphi_1(v) = \varphi_2(v)$.
	Note this holds for all $v\in\Gamma$ if the exceptional loci of $K_1 \dasharrow K_2$ contain no strata of the boundary.
\end{remark}

\begin{proof}[Proof of \cref{prop:SQM}]
(2) follows from (1).
For (1), we pick $L \in \Nef(K_1)$.
Notation as in \cref{lem:SQM}, we see that $E$ is $p_2$-nef.
So $-E$ is effective by the negativity lemma (see \cite[Lemma 3.39]{Kollar_Birational_geometry_of_algebraic_varieties}).
Thus $-E^\trop$ is non-negative.
So by \cref{lem:SQM}, $(\cdot L)\circ(\varphi_{K_2}-\varphi_{K_1})=-E^\trop$ is non-negative.
Now the result follows from the duality between $\NE(K_1)$ and $\Nef(K_1)$.
\end{proof}

\begin{theorem} \label{thm:convex_cone}
	Let $\alpha\subset\MovFan(K)$ be a maximal cone.
	Let $\sec(\alpha)$ be the union of maximal cones $\beta$ with $\varphi_\alpha=\varphi_\beta$.
	Then $\sec(\alpha)$ is a convex cone.
	Consequently, the collection of all such $\sec(\alpha)$ are the maximal cones of a rational polyhedral fan, denoted by $\MovSec(K)$, which is a coarsening of $\MovFan(K)$.
\end{theorem}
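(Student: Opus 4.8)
The statement has two layers — convexity of each $\sec(\alpha)$, and the assembly of these cones into a fan — and I would derive both from the single principle that \cref{prop:SQM}(1) forces the difference between the sections $\varphi$ attached to two chambers of $\MovFan(K)$ to point in a controlled direction. Fix once and for all a reference SQM $K_0$ of $K$ over $\oK$, used to trivialize the affine $\sfM_\bbR$-bundle $\tGamma\to\Gamma$, and for $v\in\Gamma$ put
\[
\psi_v(L)\ :=\ \min_{K'}\,\langle\,\varphi_{K'}(v)-\varphi_{K_0}(v),\,L\,\rangle ,\qquad L\in\Mov(K),
\]
the minimum ranging over the finitely many SQMs $K'$ of $K$ over $\oK$. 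For $L\in\Nef(K')$, \cref{prop:SQM}(1) says $\varphi_{K''}(v)-\varphi_{K'}(v)\in\NE(K')$ for every $K''$, so the minimum is attained at $K'$; hence $\psi_v$ is a well-defined continuous \emph{concave} positively homogeneous piecewise-linear function, linear with slope $\varphi_{K'}(v)$ on each chamber $\Nef(K')$. Its maximal domains of linearity therefore form a rational polyhedral fan $\cF_v$ of which $\MovFan(K)$ is a refinement; and since $\Nef(K_\beta)$ is full-dimensional, a chamber $\beta$ lies in a given domain of linearity of $\psi_v$ precisely when the two relevant linear functionals coincide, i.e.\ when $\varphi_\beta(v)=\varphi_\alpha(v)$. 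Thus the domain of $\cF_v$ through a generic point of $\alpha$ equals $\bigcup\{\beta:\varphi_\beta(v)=\varphi_\alpha(v)\}$.

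Next I would use that, after a common resolution dominating all the finitely many SQMs, the maps $\varphi_{K'}\colon\Gamma\to\tGamma$ are all piecewise linear for a single finite rational cone-complex structure on $\Gamma$ (\cref{rem:varphi_resolution}, \cref{rem:varphiBT_explicit}); hence $\varphi_{K'}=\varphi_{K''}$ as soon as they agree on a finite set of rational test points $v_1,\dots,v_N\in\Gamma$, one generator of each ray. I then define $\MovSec(K)$ to be the common refinement of $\cF_{v_1},\dots,\cF_{v_N}$. This is a rational polyhedral fan, it is coarsened by $\MovFan(K)$ (which refines each $\cF_{v_j}$, hence their common refinement), and for any chamber $\alpha$ its maximal cone through a generic point of $\alpha$ is
\[
\bigcap_{j=1}^{N}\Big(\,\bigcup\{\beta:\varphi_\beta(v_j)=\varphi_\alpha(v_j)\}\,\Big)\ =\ \bigcup\{\beta:\varphi_\beta=\varphi_\alpha\}\ =\ \sec(\alpha).
\]
In particular each $\sec(\alpha)$ is a finite intersection of convex rational polyhedral cones, hence itself one — this is the convexity assertion — and the $\sec(\alpha)$ are exactly the maximal cones of the fan $\MovSec(K)$, which is the remaining assertion.

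It is worth keeping in mind the more geometric, direct proof of convexity, which isolates the mechanism. Given chambers $\gamma\sim\gamma'\sim\alpha$ and a generic segment $L_t$, $t\in[0,1]$, from $\interior\gamma$ to $\interior\gamma'$ inside the convex cone $\Mov(K)$, one crosses a chain of chambers $\gamma=\gamma_0,\gamma_1,\dots,\gamma_m=\gamma'$ across interior walls $\tau_i$; let $C_i$ be the primitive curve class with $\tau_i\subset C_i^\perp$, oriented so that $\gamma_{i-1}\subset\{C_i\ge 0\}$ (the two adjacent chambers lie on opposite sides of the hyperplane $C_i^\perp=\Span\tau_i$). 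Applying \cref{prop:SQM}(1) to $K_{\gamma_{i-1}}\dasharrow K_{\gamma_i}$ in both directions gives $\varphi_{\gamma_i}(v)-\varphi_{\gamma_{i-1}}(v)\in\NE(K_{\gamma_{i-1}})\cap\bigl(-\NE(K_{\gamma_i})\bigr)\subseteq\bbR_{\ge 0}C_i$, say equal to $\nu_i(v)C_i$ with $\nu_i(v)\ge 0$; meanwhile $t\mapsto L_t\cdot C_i$ is affine, vanishes at the $\tau_i$-crossing, and is negative just after it, so $L_1\cdot C_i<0$ for \emph{every} $i$. Summing the telescoping identity, using $\varphi_{\gamma'}-\varphi_\gamma=0$, and pairing with $L_1$ yields $\sum_i\nu_i(v)\,(L_1\cdot C_i)=0$ with all $\nu_i(v)\ge 0$ and all $L_1\cdot C_i<0$, forcing $\nu_i\equiv 0$; hence every $\gamma_i\sim\alpha$, the segment lies in $\sec(\alpha)$, and one finishes by passing to closures (using density of generic segments and closedness of the finite union $\sec(\alpha)$).

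The one genuine difficulty — in either presentation — is securing control over \emph{all} the walls simultaneously: the uniformity of the sign $L_1\cdot C_i<0$ over all $i$ (an affine-linearity fact along a straight segment), without which the telescoping sum could cancel; equivalently, the concavity of $\psi_v$, which is what makes its linearity domains convex. Everything else — the reduction to generic segments, the finiteness of the test set $\{v_j\}$, the closedness of the $\sec(\alpha)$, and the rationality bookkeeping — is routine.
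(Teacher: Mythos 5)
Both of your arguments are correct, and the second ("more geometric, direct") one is essentially the paper's own proof: the paper likewise takes a generic segment from a general point of $\alpha$ to a general point of $\beta$, subdivides it at the wall-crossings, pairs the differences $\varphi_i(b)-\varphi_j(b)$ against $f(x)$, and derives a sign-monotonicity contradiction from \cref{prop:SQM}(\ref{prop:SQM:difference}) together with linearity in the segment parameter. (The paper works with the aggregated telescoping quantities $d_{0,i}$, $d_{i,n}$ rather than isolating the per-wall contribution $\nu_iC_i$; this is pure bookkeeping, though your refinement $\NE(K_{\gamma_{i-1}})\cap(-\NE(K_{\gamma_i}))\subseteq\bbR_{\ge 0}C_i$ does need $\tau_i^\perp$ to be one-dimensional, i.e.\ the moving cone to be full-dimensional in its span, which is fine here but is an extra check the paper avoids.) Your first argument, via the concave piecewise-linear functions $\psi_v$, is a genuinely different packaging of the same input: the concavity of a minimum of linear functions is exactly the content that \cref{prop:SQM}(\ref{prop:SQM:difference}) makes the graph of $\psi_v$ sit below each tangent plane, and the "linearity domain is convex" conclusion is the same telescoping inequality in disguise. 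Its concrete advantage is that it produces $\MovSec(K)$ directly as the common refinement of finitely many fans $\cF_{v_j}$, so the final claim — that the $\sec(\alpha)$ assemble into a rational polyhedral fan, which the paper dispatches with a single "Consequently" — comes for free, rather than requiring a separate check that the convex unions of chambers glue face-to-face. The price you pay is the finiteness-of-test-points step (simultaneous piecewise-linearity of all $\varphi_{K'}$ for a single cone-complex structure on $\Gamma$ after a common resolution), which you justify correctly via \cref{rem:varphi_resolution} and \cref{rem:varphiBT_explicit}; one could alternatively get it directly from \cref{lem:SQM}, which identifies the differences as tropicalizations of Cartier divisors and hence piecewise-linear on the dual cone complex of any dominating snc model.
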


\begin{proof}
	It suffices to show that for any two maximal cones $\alpha,\beta\subset\MovFan(K)$ with $\varphi_\alpha=\varphi_\beta$, and any interval $f\colon[0,1]\to\MovFan(K_\gamma)$ from a general point of $\alpha$ to a general point of $\beta$, the whole image is contained in $\sec(\alpha)$.
	
	Since the interval is general, it intersects lower dimensional cones at finitely many points; so we obtain a partition $0=x_0<x_1<\dots<x_n=1$.
	Denote $I_i\coloneqq[x_i,x_{i+1}]$, and $\varphi_i\coloneqq\varphi_{K_i}$, where $K\to K_i$ is the SQM associated to the unique maximal cone containing $f(I_i)$.
	Fix any $b\in\Gamma$ and consider $d_{i,j}\coloneqq\varphi_i(b)-\varphi_j(b)$.
	For any $y\in\Pic(K)_\bbR$, write $d_{i,j,y}\coloneqq (\cdot y)\circ d_{i,j}$.
	
	Note $d_{i,j,f(x)}$ is linear on the interval $[0,1]$.
	Assume $i\le j$.
	By \cref{prop:SQM}(\ref{prop:SQM:difference}), $d_{i,j,f(x)}$ is non-positive for $x\in I_i$ and non-negative for $x\in I_j$.
	By linearity, this function is non-decreasing, and non-positive for all $x\in [0,x_{i+1}]$, and non-negative for all $x\in[x_j,1]$.
	
	Given any $i$ and $x\in I_i^\circ$, we claim that $d_{0,i,f(x)}=0$.
	Suppose to the contrary, by the above paragraph, we have $d_{0,i,f(1)}>0$ and $d_{i,n,f(1)}\ge 0$.
	So
	\[d_{0,n,f(1)}=d_{0,i,f(1)}+d_{i,n,f(1)}>0,\]
	a contraction to the assumption that $\varphi_\alpha=\varphi_\beta$.
	
	Since $f(1)$ is a general point of $\beta$, by perturbing $f(1)$, we deduce that $d_{0,i,y}=0$ for $y$ in a small neighborhood of $f(x)$; hence $d_{0,i}=0$.
	We conclude that $\varphi_i=\varphi_0$, completing the proof.
\end{proof}

\begin{lemma} \label{lem:recover_dual_complex}
	Let $K\dasharrow K'$ be an SQM over $\oK$, if $\varphi=\varphi'$, and both pairs $(K,V^c)$ and $(K',V^c)$ are dlt, then the dual complexes of $V\subset K$ and $V\subset K'$ coincide.
\end{lemma}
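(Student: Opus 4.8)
The plan is to show that the section $\varphi$ rigidifies the intrinsic piecewise-linear space underlying the dual complex into an honest cone complex, and then to play this off against the hypothesis $\varphi=\varphi'$.

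First, recall from \cref{const:varphi_of_SQM} that the torsor $\tGamma\to\Gamma$, its integral piecewise-linear structure, and the underlying space $\Gamma$ are identical for $K$ and $K'$; what depends on the model is only the section. Since $(K,V^c)$ is dlt and crepant, so that $K_K+V^c\sim 0$, every component of $V^c$ is essential, and hence (by the dlt analogue of the identification used in \cref{rem:varphiBT_explicit}, which reduces to the snc statement there after passing to a log resolution via \cref{rem:varphi_resolution}) the space $\Gamma=\Sk(V)\cap K^\beth$ is the topological realization of the dual complex $\cD(V\subset K)$, carrying the latter's cone-complex structure $\Sigma_K\coloneqq\Sigma_{(K,V^c)}$; likewise $\Gamma=|\Sigma_{K'}|$ for $\Sigma_{K'}\coloneqq\Sigma_{(K',V^c)}$. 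Thus $\Sigma_K$ and $\Sigma_{K'}$ are two rational cone-complex structures on one and the same integral piecewise-linear space $\Gamma$, and via the dictionary between such cone complexes and $\Delta$-complexes it suffices to prove $\Sigma_K=\Sigma_{K'}$.

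The mechanism is that $\Sigma_K$ can be read off from $\varphi_K$ alone. By \cref{rem:varphiBT_explicit} (applied on a log resolution if $(K,V^c)$ is not snc) the section $\varphi_K$ is the inclusion of $\Sigma_K$ as a subcomplex of the ambient cone complex $\tSigma_K$ appearing there; in particular $\varphi_K$ restricts to an affine map — in the sense of the $\sfM_\bbR$-torsor $\tGamma\to\Gamma$ — on each cone of $\Sigma_K$. The crux is the converse: I claim the maximal cones of $\Sigma_K$ are exactly the maximal subcones of $\Gamma$ on which $\varphi_K$ is affine, i.e.\ that $\varphi_K$ genuinely bends across every codimension-one wall of $\Sigma_K$. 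Such a wall is the cone over a common facet of two maximal simplices, hence corresponds to a one-dimensional stratum $C$ of $(K,V^c)$ joining two boundary $0$-strata; the bending of $\varphi_K$ there is an element of $\sfM_\bbR$ whose pairing with a class $L\in\sfN$ is, by a local version of \cref{lem:SQM}, the bending of $E^\trop$ across the wall for the discrepancy divisor $E$ comparing the $\varphi$-data on the two sides of $C$, and the negativity lemma — used exactly as in the proof of \cref{prop:SQM}, cf.\ also \cref{rem:internal_flop} — forces this to be nonzero for a suitable $L$ whenever $C$ genuinely separates two distinct $0$-strata.

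Granting the claim the conclusion is immediate: $\varphi_K=\varphi_{K'}$ forces the two decompositions of $\Gamma$ into maximal loci of affineness to agree, hence $\Sigma_K=\Sigma_{K'}$, i.e.\ $\cD(V\subset K)=\cD(V\subset K')$. I expect the main obstacle to be precisely this claim — that $\varphi$ bends across every wall of the cone complex of a dlt crepant model, equivalently that there is no ``hidden'' small modification which subdivides the dual complex while leaving $\varphi$ unchanged. Proving it should require a careful local analysis of $\varphi$ near a one-dimensional boundary stratum via \cref{rem:varphiBT_explicit} and \cite[Construction 16.1]{Keel_Yu_The_Frobenius}, feeding into the negativity lemma just as in \cref{lem:SQM} and \cref{prop:SQM}.
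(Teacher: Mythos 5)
Your approach is genuinely different from the paper's, and it does not actually close the lemma: you identify the crux (that $\varphi$ bends across every codimension-one wall of $\Sigma_{(V\subset K)}$) and then explicitly defer it, so as written the argument has a gap exactly where the real work is. Moreover, the route you sketch for proving the bending claim does not transfer cleanly: \cref{lem:SQM} computes the \emph{difference} $\varphi_{K_1}-\varphi_{K_2}$ for two distinct SQMs as the tropicalization of a discrepancy divisor and feeds that into the negativity lemma; but the bending you need is an intrinsic property of a \emph{single} section $\varphi_K$ across a wall of its own dual complex. There is no discrepancy divisor $E$ and no negativity-lemma input in that local picture, so the analogy with \cref{prop:SQM} and \cref{rem:internal_flop} does not give you what you want.

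The paper's proof avoids the locus-of-affineness idea entirely and is considerably more direct. Since an SQM fixes the set of boundary divisors, it suffices to recover which subsets of components meet, i.e.\ to recover the canonical embedding $\Sigma_{(V\subset K)}\hookrightarrow\bbR^{I_D}$ where $I_D$ indexes the components of $D=V^c$. For each $E\in I_D$ the canonical section $1_{nE}\in H^0(K,\cO(nE))$ defines, via $H^0(G,\cO)\simeq\bigoplus_{L\in\sfN}H^0(K,L)$, a function $f_{nE}$ on $G$ that is invertible on $\cT$, hence tropicalizes on $\Sk(\cT)$. Assembling these gives a map $W\colon\Sk(\cT)\to\bbR^{I_D}$ that is \emph{canonical}, depending only on $\cT$ and the set $I_D$, not on the SQM. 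One then checks that $W\circ\varphi_K$ is exactly (a scaling by $n$ of) the canonical embedding $\Sigma_{(V\subset K)}\subset\bbR^{I_D}$. Since $W$ is SQM-independent, $\varphi_K=\varphi_{K'}$ forces $W\circ\varphi_K=W\circ\varphi_{K'}$ and hence equality of the two embedded cone complexes. Note that this in particular \emph{implies} your bending claim (the standard embedding into $\bbR^{I_D}$ bends across every wall, since adjacent maximal cones map into distinct coordinate subspaces), but the implication runs the opposite way from how you tried to argue; the paper never needs to prove bending as a freestanding statement. If you want to salvage your approach, the cleanest repair is to incorporate the $W$-projection and observe that $W\circ\varphi$ detects the walls, at which point you have essentially reproduced the paper's argument.
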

\begin{proof}
	Let us show that the dual complex of $V\subset K$ can be recovered from $\varphi$.
	Since we are only considering SQMs, the set of vertices of the dual complex is fixed; we only need to recover the information of which sets of boundary components have non-empty intersection.
	Let $\sfN=n\Pic(K)$ be as in \cref{ass:2}.
	Following Constructions \ref{const:varphi_of_SQM} and \ref{const:universal_torsor}, we have
	\[H^0(G,\cO) \simeq \bigoplus_{L \in \sfN} H^0(K,L).\]
	For each irreducible component $E\subset D$, the canonical section $1_{nE} \in H^0(K,\cO(nE))$ gives a regular function $f_{nE}$ on $G$ whose restriction to $\cT=G|_V$ is invertible.
		Let $I_D$ denote the set of irreducible components of $D$, and
	\begin{align*}
	W\colon\Sk(\cT)&\longto\bbR^{I_D}\\
	x&\longmapsto \set{f_{nE}^\trop(x)}_{E\in I_D}.
	\end{align*}
	Under the isomorphism $\Sigma_{(V\subset K)}\simeq\Gamma$, the composition $W\circ\varphi\colon\Gamma\to\bbR^{I_D}$ coincides with the canonical embedding $\Sigma_{(V\subset K)}\subset\bbR^{I_D}$ (up to scaling by $n$).
	Therefore, $\varphi$ determines the dual cone complex $\Sigma_{(V\subset K)}$, hence the dual complex of $V\subset K$.
\end{proof}

\begin{remark}
	We do not know whether the converse holds, i.e.\ whether equality of the dual complexes implies $\varphi= \varphi'$.
	This holds when $V$ is a torus, and also when $K$ is the total space of the canonical bundle of a del Pezzo (as in the context of \cref{thm:del_Pezzo_intro}).
\end{remark}

\subsection{Bogus cones} \label{sec:bogus}

\begin{assumption} \label{ass:3}
	We now assume, in addition to \cref{ass:2}, $q\colon K \to\oK$ is birational.
\end{assumption}

Here we define a canonical extension from the fan $\MovSec(K)$ to a coarsening of the full $\MoriFan(K)$.
Note since $q$ is birational, the support of $\MoriFan(K)$ is the full vector space $\Pic(K)_\bbR$.

\begin{lemma} \label{lem:pure_codimension_one}
	Let $f\colon Y \to Z$ be a regular birational contraction, with $Z$ $\bbQ$-factorial.
	Then the exceptional locus is pure codimension one.
\end{lemma}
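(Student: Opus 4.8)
The plan is to exhibit an effective $\bbQ$-divisor $B$ on $Y$ whose support is exactly the exceptional locus $\mathrm{Exc}(f)$; since the support of an effective divisor is pure of codimension one, this proves the lemma. The construction of $B$ is where the $\bbQ$-factoriality of $Z$ is used, and the identification $\supp(B)=\mathrm{Exc}(f)$ is where the negativity lemma is used.

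First I would record two standard preliminary observations. The assertion is local on $Z$, and $\bbQ$-factoriality is inherited by open subsets, so I may assume $Z$ affine; since $f$ is a contraction it is projective, so there is an $f$-ample Cartier divisor $A$ on $Y$. Second, for a proper birational morphism $f\colon Y\to Z$ of normal varieties a point $y$ lies in $\mathrm{Exc}(f)$ if and only if $\dim_y f^{-1}(f(y))\ge 1$: if the fibre is finite at $y$ then $f$ is finite near $y$ by properness, hence an isomorphism near $y$ since $f$ is birational onto the normal variety $Z$ (Zariski's main theorem). (Here I take $Y$ normal, as it is in all our applications, where it is even $\bbQ$-factorial.)

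Now set $B\coloneqq f^*(f_*A)-A$. The point is that $f_*A$ is a Weil divisor on $Z$ which, precisely because $Z$ is $\bbQ$-factorial, is $\bbQ$-Cartier, so $f^*(f_*A)$ is a well-defined $\bbQ$-Cartier $\bbQ$-divisor on $Y$. By the projection formula and birationality of $f$, $f_*B=f_*f^*(f_*A)-f_*A=f_*A-f_*A=0$, so $B$ is $f$-exceptional; in particular $\supp(B)\subseteq\mathrm{Exc}(f)$. Moreover $-B=A-f^*(f_*A)$ is $f$-ample, since $A$ is $f$-ample and $f^*(f_*A)$ is numerically $f$-trivial. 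As $B$ is $f$-exceptional and $-B$ is $f$-nef, the negativity lemma \cite[Lemma 3.39]{Kollar_Birational_geometry_of_algebraic_varieties} gives $B\ge 0$.

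It remains to prove $\mathrm{Exc}(f)\subseteq\supp(B)$. Take $y\in\mathrm{Exc}(f)$, put $z=f(y)$, and let $F_0$ be an irreducible component of the projective fibre $f^{-1}(z)$ with $y\in F_0$ and $\dim F_0\ge 1$ (such $F_0$ exists by the second observation above). Suppose $y\notin\supp(B)$; then $F_0\not\subseteq\supp(B)$, so there is an irreducible complete curve $C\subseteq F_0$ with $C\not\subseteq\supp(B)$ — take $C=F_0$ if $\dim F_0=1$, and a general complete curve in $F_0$ otherwise, noting $\supp(B)\cap F_0$ is then a proper closed subset. Since $C$ lies in a fibre it is contracted by $f$, so $f$-ampleness of $-B$ gives $(-B)\cdot C>0$, i.e. $B\cdot C<0$; but $B$ is effective and $C\not\subseteq\supp(B)$, so $B\cdot C\ge 0$, a contradiction. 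Hence $y\in\supp(B)$, so $\mathrm{Exc}(f)=\supp(B)$ is pure of codimension one. The only genuinely delicate step is this last paragraph — extracting from a positive-dimensional fibre a complete contracted curve avoiding $\supp(B)$; the construction of $B$ and the appeal to the negativity lemma are short and formal, and I do not anticipate any real obstacle.
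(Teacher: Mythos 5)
Your proof is correct, but it follows a genuinely different route from the paper's. The paper argues directly by contradiction: assuming some component $E$ of $\mathrm{Exc}(f)$ has codimension at least $2$, it picks a \emph{general very ample} divisor $A$ through a \emph{general} point $e\in E$, observes that $f^*(f_*A)$ equals $A$ near $e$ (because $f^*(f_*A)-A$ is supported on the divisorial exceptional components, which $e$ avoids by generality), and then notes that $\supp f^*(f_*A)=f^{-1}(f(A))$ contains the positive-dimensional fiber through $e$, which the general $A$ does not — done, with no negativity lemma. You instead fix an $f$-ample $A$, form the same difference $B=f^*(f_*A)-A$, prove $B\geq 0$ globally via the negativity lemma, and then run a fiber-curve intersection argument to show $\supp(B)=\mathrm{Exc}(f)$. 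Both proofs use $\bbQ$-factoriality of $Z$ in the identical way — only to make $f_*A$ a $\bbQ$-Cartier divisor that can be pulled back. The paper's version is shorter and entirely local; yours is longer but establishes the stronger and independently useful fact that $\mathrm{Exc}(f)$ is precisely the support of an effective $\bbQ$-Cartier divisor $B$ with $-B$ $f$-ample. One small remark: your preliminary characterization $\mathrm{Exc}(f)=\{y:\dim_y f^{-1}(f(y))\geq 1\}$ implicitly relies on Zariski connectedness (so that an isolated point of a fiber is the whole fiber) before the appeal to ZMT kicks in; this is standard and not a gap, but worth being aware of.
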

\begin{proof}
	Suppose to the contrary there is an irreducible component of the exceptional locus which has higher codimension, and let $A \subset Y$
	be a general very ample divisor through a general point $ e \in E$.
	Then $f^*(f_*(A))$ is an effective $\bbQ$-Cartier divisor, identical to $A$ in a neighborhood of $e$.
	On the other hand, since set theoretically it is the inverse image of $f(A)$, it contains the (positive dimensional)
	fiber of $f$ through $e$, a contradiction.
\end{proof}

\begin{lemma} \label{lem:NefZ}
	Let $Y$ be a birational contraction of $K$.
	Let $f\colon Y \to Z$ be a regular birational contraction with $Z$ $\bbQ$-factorial, $E_1,\dots,E_n \subset Y$ the exceptional divisors, and $C_i \subset E_i$ a curve through a general point.
	Then $\bigcap_i C_i^{\perp} \cap \Nef(Y) = \Nef(Z)$.
\end{lemma}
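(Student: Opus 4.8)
The plan is to prove the two inclusions $\bigcap_i C_i^\perp \cap \Nef(Y) \supseteq \Nef(Z)$ and $\bigcap_i C_i^\perp \cap \Nef(Y) \subseteq \Nef(Z)$ separately, using $f^*$ to move divisor classes between $Y$ and $Z$ and using \cref{lem:pure_codimension_one} to know that $E_1,\dots,E_n$ are exactly the exceptional \emph{divisors} (not just a codimension-one part of the exceptional locus). Throughout I identify $\Pic(Y)_\bbR$ with $\Pic(Z)_\bbR \oplus \bigoplus_i \bbR[E_i]$ via pullback $f^*$ plus the exceptional classes, so that $f^* \colon \Pic(Z)_\bbR \hookrightarrow \Pic(Y)_\bbR$ is the inclusion of the first summand, and $f^*$ is characterized on a class $D$ by $f^*D = D'$ where $D' \equiv D$ modulo exceptionals and $D' \cdot C_i = 0$ for all $i$.

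For the inclusion $\supseteq$: take a nef class $H$ on $Z$. Then $f^*H$ is nef on $Y$ (pullback of nef is nef under a morphism), and $f^*H \cdot C_i = H \cdot f_* C_i = 0$ since $C_i$ lies in a fiber of $f$ (it is a curve in an exceptional divisor through a general point, hence contracted by $f$). So $f^*H \in \bigcap_i C_i^\perp \cap \Nef(Y)$, giving $\Nef(Z) \subseteq \bigcap_i C_i^\perp \cap \Nef(Y)$ once we identify $\Nef(Z)$ with its image $f^*\Nef(Z)$.

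For the inclusion $\subseteq$, which I expect to be the main obstacle: take $L \in \Nef(Y)$ with $L \cdot C_i = 0$ for all $i$. I want to show $L = f^* M$ for some nef $M$ on $Z$. Write $L = f^*(f_* L) + \sum_i a_i [E_i]$ for suitable $a_i \in \bbR$; intersecting with $C_i$ and using $f^*(f_*L)\cdot C_i = 0$ together with $L \cdot C_i = 0$ gives a linear system $\sum_j a_j (E_j \cdot C_i) = 0$ for all $i$. The key point is that the matrix $(E_j \cdot C_i)_{i,j}$ is \emph{nondegenerate}: this is essentially the negativity-of-contraction statement underlying the negativity lemma (\cite[Lemma 3.39]{Kollar_Birational_geometry_of_algebraic_varieties}) — for a birational contraction the exceptional divisors pair nondegenerately with general curves in the fibers, because any nonzero combination $\sum a_j E_j$ supported on the exceptional locus is not pulled back from $Z$ and hence must have nonzero intersection with some contracted curve. (For a flopping-type situation one would argue via $\bbQ$-factoriality of $Z$ that an exceptional $\bbR$-divisor cannot be numerically trivial on fibers.) Hence all $a_i = 0$, so $L = f^*(f_*L)$. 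It remains to check $M \coloneqq f_* L$ is nef on $Z$: any curve $B \subset Z$ is $f_* B''$ for some curve $B'' \subset Y$ not contracted by $f$, and $M \cdot B = f^* M \cdot B'' = L \cdot B'' \ge 0$ since $L$ is nef. Therefore $L \in f^* \Nef(Z)$, completing the reverse inclusion and the proof.

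The one delicate ingredient to nail down carefully is the nondegeneracy of $(E_j \cdot C_i)$, i.e.\ that the $C_i$ (one general curve per exceptional divisor) suffice to detect the full exceptional divisor lattice; this is where \cref{lem:pure_codimension_one}, the $\bbQ$-factoriality of $Z$, and the negativity lemma all enter, and it should be stated as the crux of the argument.
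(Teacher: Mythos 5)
Your proof takes a genuinely different route from the paper's, and it has a real gap at the exact step you flag as delicate. The paper never decomposes $L$ or looks at an intersection matrix. It picks a rational point $A$ in the relative interior of the face $\bigcap_i C_i^\perp\cap\Nef(Y)$, invokes the hypothesis that $Y$ is a birational contraction of the Mori dream space $K$ (a hypothesis you never use) to conclude that $A$ is semi-ample, and passes to the associated contraction $g\colon Y\to Z'$. The inclusion $\supset$, together with $A$ being in the relative interior, forces $f$ to factor as $h\circ g$; since $A\cdot C_i=0$ and $C_i$ passes through a general point of $E_i$, each $E_i$ is $g$-exceptional, so $h\colon Z'\to Z$ is a small birational morphism and hence an isomorphism by \cref{lem:pure_codimension_one}. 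The MDS hypothesis is there precisely to make this semi-ampleness shortcut available.

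The gap in your argument: the nondegeneracy of $M\coloneqq(E_j\cdot C_i)_{i,j}$ is not a consequence of the negativity lemma as you cite it. \cite[Lemma 3.39]{Kollar_Birational_geometry_of_algebraic_varieties} concerns divisors supported on the exceptional locus whose negative is $f$-nef, i.e.\ intersects \emph{every} $f$-contracted curve non-negatively; it says nothing about the $n$ chosen curves $C_i$. Your heuristic that a nonzero combination $\sum a_jE_j$ ``must have nonzero intersection with some contracted curve'' is true but does not imply that curve is one of the $C_i$. Nor does the sign pattern alone suffice: $M$ has $M_{ii}<0$ and $M_{ij}\ge 0$ for $i\neq j$, but a matrix with that sign pattern, such as $\bigl(\begin{smallmatrix}-1&1\\1&-1\end{smallmatrix}\bigr)$, can be singular. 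The matrix $M$ \emph{is} in fact nondegenerate, but establishing it requires an extra input, e.g.\ taking an ample $H=f^*f_*H+\sum b_jE_j$, deducing $b_j<0$ for all $j$, and noting that $\mathbf{c}=(-b_j)_j>0$ satisfies $M\mathbf{c}<0$, which together with the sign pattern makes $-M$ a nonsingular $M$-matrix. A more elementary fix that stays closer to the paper's ingredients is to actually use the nefness of $L$ at this point: nefness of $L$ makes $\sum a_jE_j$ relatively nef, so the negativity lemma gives $a_j\le 0$, and then $\sum_j a_j(E_j\cdot C_i)=0$ combined with $E_i\cdot C_i<0$ and $E_j\cdot C_i\ge 0$ ($j\neq i$) forces each $a_i=0$. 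As written, your proof jumps straight from the linear system to $a_j=0$ by invoking a nondegeneracy that the cited negativity lemma does not deliver.
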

\begin{proof}
	The inclusion $\supset$ is obvious.
	Let us prove the direction $\subset$.
	The given intersection defines a face of $\Nef(Y)$, let $A$ be a rational point in its interior.
	Since $Y$ is a birational contraction of $K$, $A$ is semi-ample;
		let $g\colon Y \to Z'$ be the corresponding contraction.
	Since the inclusion $\supset$ holds, the rational map $h\colon Z'\dasharrow Z$ is regular.
	Since $f$ is birational, so are $g$ and $h$.
	Since every $C_i$ is contracted by $g$, and $C_i$ passes through a general point of $E_i$, we see that every $E_i$ is $g$-exceptional.
	Therefore, $h\colon Z'\to Z$ is small, and thus an isomorphism by \cref{lem:pure_codimension_one}.
	We conclude that $A$ lies in the interior of $\Nef(Z)$, completing the proof.
\end{proof}

\begin{lemma} \label{lem:span}
	Let $p_i\colon K \dasharrow Z_i$ be birational contractions over $\oK$ such that $\Pic(Z_i)_\bbR \subset \Pic(K)_\bbR$ give the same linear subspaces, for $i=1,2$.
	Then both $p_i$ have the same exceptional divisors; in particular $Z_1 \dasharrow Z_2$ is small.
	
	Let $p\colon K \dasharrow Z$ be a birational contraction with $Z$ $\bbQ$-factorial.
	Then $\Pic(Z)_\bbR \subset \Pic(K)_\bbR$ is the linear span of a face of $\Mov(K)$.
\end{lemma}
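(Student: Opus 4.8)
The plan is to reduce both assertions to the case of a regular morphism and then translate them into convex geometry on $\Mov(K)$. First I would use the standard factorization of a birational contraction $p\colon K\dasharrow Z$ with $Z$ $\bbQ$-factorial as an SQM $\mu\colon K\dasharrow K'$ followed by a regular birational contraction $f\colon K'\to Z$ (take $B$ ample on $Z$; since $p^*B\in\Mov(K)$ it lies in $g^*\Nef(K')$ for some SQM $g$, so let $\mu=g$ and let $f$ be the semiample contraction of $p^*B$ on $K'$). Since $\mu$ is an isomorphism in codimension one it identifies $\Pic(K)_\bbR$ with $\Pic(K')_\bbR$, $\Mov(K)$ with $\Mov(K')$, and the exceptional divisors of $p$ with those of $f$; so in both assertions we may assume $p=f\colon K\to Z$ is regular, with exceptional divisors $E_1,\dots,E_n$ forming a pure-codimension-one locus by \cref{lem:pure_codimension_one}. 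Set $W\coloneqq f^*\Pic(Z)_\bbR$ and $F_W\coloneqq W\cap\Mov(K)$. A nef class on the Mori dream space $Z$ is semiample, so $f^*\Nef(Z)\subset\Nef(K)\subset\Mov(K)$, hence $f^*\Nef(Z)\subset F_W$; as $f^*\Nef(Z)$ spans $W$, the rational polyhedral cone $F_W$ satisfies $\Span F_W=W$ and depends only on the subspace $W$.

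For the first assertion I would prove the claim: for a suitably general rational class $A\in\operatorname{relint}(F_W)$, the variety $\Proj R(K,A)$ is an SQM of $Z$. Granting this and applying it to $p_1$ and $p_2$ at one class $A$ admissible for both — possible since for each $i$ the admissible classes form the complement of finitely many hyperplanes inside the full-dimensional open set $\operatorname{relint}(F_W)$, and $F_{W_1}=F_{W_2}$ because $W_1=W_2$ — we deduce that $Z_1$ and $Z_2$ are both SQMs of $\Proj R(K,A)$, so $Z_1\dasharrow Z_2$ is an isomorphism in codimension one, i.e.\ small, whence $p_1$ and $p_2$ have the same exceptional divisors. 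To prove the claim, write $A=f^*B$ with $B\in\Pic(Z)_\bbQ$ (possible as $A\in W$); since a fixed component of $|mB|$ pulls back to a fixed component of $|mf^*B|=|mA|$, movability of $A$ forces $B$ to be movable on $Z$, so $\widehat F\coloneqq(f^*)^{-1}(F_W)\subset\Mov(Z)$. As $\widehat F\supset\Nef(Z)$ it is full-dimensional, so $\operatorname{relint}(\widehat F)\subset\operatorname{int}(\Mov(Z))$; arranging in addition that $B$ misses the finitely many walls of $\MoriFan(Z)$ places $B$ in the ample cone of some SQM $Z'$ of $Z$, and then $\Proj R(K,A)=\Proj R(Z,B)=Z'$ by the projection formula $H^0(K,mA)=H^0(Z,mB)$.

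For the second assertion I would show that $F_W$ is a face of $\Mov(K)$; together with $\Span F_W=W$ from the first paragraph this is exactly the assertion. Choose $C_i\subset E_i$ a curve through a general point, general enough to lie in none of the stable base loci $\mathbf{B}(D)$ of movable classes $D$: this is possible because $\Mov(K)$ has finitely many Mori chambers, on the interior of each of which $\mathbf{B}$ is a fixed set of codimension $\ge2$, so the union of these loci meets $E_i$ in codimension $\ge1$. For $D$ in a chamber interior, $C_i\not\subset\mathbf{B}(D)$ yields an $m$ with $C_i\not\subset\operatorname{Bs}|mD|$, hence a member of $|mD|$ avoiding $C_i$, so $D\cdot C_i\ge0$; this extends to all of $\Mov(K)$ by continuity. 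Thus $\bigcap_i C_i^\perp\cap\Mov(K)$ is a face of $\Mov(K)$, and by \cref{lem:NefZ} and its proof one has $\bigcap_i C_i^\perp=f^*\Pic(Z)_\bbR=W$ (the classes $[C_i]$ being linearly independent in $N_1(K,\bbR)$), so this face is precisely $F_W$.

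The step I expect to be the main obstacle is this last convex-geometric input: choosing the $C_i$ so that $D\cdot C_i\ge0$ for \emph{every} movable $D$ (not merely the nef ones), and knowing that $[C_1],\dots,[C_n]$ span the full kernel $\ker(N_1(K,\bbR)\to N_1(Z,\bbR))$, which is what forces $\bigcap_i C_i^\perp$ to equal $W$ rather than a larger subspace. Both are, however, essentially the content of \cref{lem:NefZ} (together with \cref{lem:pure_codimension_one}); granted them, the rest — the SQM-and-morphism factorization, the fact that the pushforward of a movable divisor along a birational contraction to a $\bbQ$-factorial target stays movable, and the Mori-chamber genericity landing $\Proj R(K,A)$ on an SQM — is routine.
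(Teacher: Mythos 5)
Your first paragraph (reduction to a regular morphism via the SQM--morphism factorization) matches the paper. For the first assertion, however, you take a genuinely different and somewhat longer route: you pass through $\Proj R(K,A)$ for a common general $A\in\operatorname{relint}(F_W)$ and deduce that $Z_1\dasharrow Z_2$ is small. This works, but the paper is more direct: for a regular contraction $f\colon Y\to Z$ of $\bbQ$-factorial varieties, an irreducible divisor $E$ is $f$-exceptional if and only if $E$ is a fixed component of $\lvert f^*L\otimes\cO(E)\rvert$ for every $L\in\Pic(Z)$ (projection formula); this is a criterion phrased purely in terms of the subspace $W=f^*\Pic(Z)_\bbR\subset\Pic(K)_\bbR$ and is manifestly SQM-invariant, so it immediately shows that $W_1=W_2$ forces the same exceptional divisors.

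For the second assertion there is a genuine gap, and it is exactly the step you flag as ``the main obstacle.'' You want $\bigcap_i C_i^\perp = W$, and you assert that the linear independence of $[C_1],\dots,[C_n]$ in $N_1(K,\bbR)$ is ``essentially the content of \cref{lem:NefZ}.'' It is not. \cref{lem:NefZ} gives $\bigcap_i C_i^\perp\cap\Nef(Y)=\Nef(Z)$, a statement about the intersection with the nef cone only. A full-dimensional cone can meet a linear subspace $V$ in a face of dimension strictly less than $\dim V$ (e.g.\ $\{x,y,z\ge0\}\cap\{x+y=0\}$ in $\bbR^3$), so knowing the face $\Nef(Z)$ has codimension $n$ does not force $\bigcap_i C_i^\perp$ to have codimension $n$; a priori the $[C_i]$ could satisfy a linear relation and $\bigcap_i C_i^\perp$ could strictly contain $W$. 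Establishing that the $[C_i]$ span $\ker(N_1(K,\bbR)\to N_1(Z,\bbR))$ requires a separate input (in dimension two it follows from negative definiteness of the exceptional intersection matrix; in higher dimension one needs a negativity-lemma type argument), and without it the identification of $\bigcap_i C_i^\perp\cap\Mov(K)$ with $F_W$ fails. The paper sidesteps the question entirely by a contradiction argument: let $\Gamma$ be the \emph{minimal} face of $\Mov(K)$ containing $\Nef(Z)$, assume $\Nef(Z)$ is lower-dimensional in $\Gamma$, pick a full-dimensional chamber $\Nef(Z_1)\subset\Gamma$ containing it and apply \cref{lem:NefZ}. If all $C_i\ge0$ on $\Gamma$ one contradicts minimality of $\Gamma$; otherwise some $C_i<0$ on a chamber $\Nef(Z_2)\subset\Gamma$, whence $E_i$ lies in the base locus of an ample class on $Z_2$ — impossible because $Z_1\dasharrow Z_2$ is small by the first assertion. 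This argument never needs the $[C_i]$ to span anything. You should either supply a proof that the $[C_i]$ are linearly independent (it does not come for free from \cref{lem:NefZ}), or replace this step with the paper's minimal-face contradiction.
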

\begin{proof}
	Observe that for a regular birational contraction $f\colon Y\to Z$ of $\bbQ$-factorial varieties, an irreducible effective divisor $E$ is in the base locus of $|f^*L\otimes \cO(E)|$ for all $L\in\Pic(Z)$ if and only if $E$ is $f$-exceptional (using the projection formula).
		Thus the vector subspace $\Pic(Z)_\bbR \subset \Pic(Y)_\bbR$ determines the exceptional divisors of $f$.
	For each $p_i\colon K\dasharrow Z_i$, up to replacing $K$ by an SQM, we can assume that $p_i$ is regular.
	So the first statement follows.
	
	For the second:
	Let $\Gamma$ be the minimal face of $\Mov(K)$ containing $\Nef(Z)$, and suppose to the contrary that $\Nef(Z) \subset \Gamma$ is lower dimensional.
	Choose a maximal cone $\Nef(Z_1) \subset \Gamma$ containing $\Nef(Z)$.
	Apply \cref{lem:NefZ} to the regular birational contraction $Z_1\to Z$, and let $E_i, C_i, i=1,\dots,n$ be as in that lemma, with $\bigcap_i C_i^{\perp} \cap \Nef(Z_1) = \Nef(Z)$.
	If all the $C_i$ are non-negative on $\Gamma$, then they cut out a proper face of $\Gamma$ containing $\Nef(Z)$, contradicting the minimality of $\Gamma$.
	Thus there exists one $C_i$, another maximal cone $\Nef(Z_2) \subset \Gamma$ containing $\Nef(Z)$, and a rational point $A$ in the interior of $\Nef(Z_2)$ with $C_i\cdot A < 0$.
	Since $C_i$ passes through a general point of $E_i$ as in \cref{lem:NefZ}, $C_i\cdot A < 0$ implies that $E_i$ lies in the base locus of $A$ (viewed as a divisor on $Z_1$).
	Note that $Z_1 \dasharrow Z_2$ is small by the first paragraph.
	This contradicts the assumption that $A$ is ample on $Z_2$, completing the proof.
\end{proof}

\begin{definition}
	For each maximal bogus cone $b=f^*\Nef(Z)+\braket{\ex(f)}$ corresponding to a birational contraction $f\colon K\to Z$, let $\gamma_b \in\MovSec(K)$ be the minimal cone containing $f^*\Nef(Z)$.
	Two maximal bogus cones are said to be \emph{equivalent} if they have the same $\gamma_b$.
\end{definition}

\begin{proposition} \label{prop:add_bogus_cones}
	Equivalent maximal bogus cones have the same (associated) exceptional locus, and same $\Pic(Z)_\bbR\subset\Pic(K)_\bbR$.
	For each maximal bogus cone $b=f^*\Nef(Z)+\braket{\ex(f)}$, define
	\[
	\beta_b \coloneqq \gamma_b + \braket{\ex(f)}.
	\]
	The cone $\beta_b$ and the above sum decomposition depend only on the equivalence class of $b$.
	Adding the cones $\beta_b$, over all (equivalence classes of) maximal bogus cones $b$, to $\MovSec(K)$ gives a finite rational polyhedral fan $\Sec(K)$, called the \emph{secondary fan}, which is a coarsening of $\MoriFan(K)$.
\end{proposition}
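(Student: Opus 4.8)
The plan is to reduce the whole statement to an analysis of a single cone $\beta_b=\gamma_b+\braket{\ex(f)}$, using the splitting of $\Pic(K)_\bbR$ attached to a bogus contraction. Fix a maximal bogus cone $b=f^*\Nef(Z)+\braket{\ex(f)}$; after replacing $K$ by an SQM (which by \cref{ass:2} changes none of the relevant objects, the identification of Picard groups being canonical) I may take $f\colon K\to Z$ regular, with exceptional divisors $E_1,\dots,E_r$. By \cref{lem:pure_codimension_one} the exceptional locus of $f$ is $\bigcup_iE_i$; moreover $\Pic(K)_\bbR=f^*\Pic(Z)_\bbR\oplus\bigoplus_i\bbR[E_i]$ and $\braket{\ex(f)}=\sum_i\bbR_{\ge0}[E_i]$ is simplicial. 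I would first record three facts. \textbf{(a)} $\gamma_b\subseteq f^*\Pic(Z)_\bbR$ with $\Span(\gamma_b)=f^*\Pic(Z)_\bbR$: if $\Gamma$ denotes the minimal face of $\Mov(K)$ containing $f^*\Nef(Z)$, then $\Span(\Gamma)=f^*\Pic(Z)_\bbR$ by \cref{lem:span}; writing $\sec(\alpha)$ (for $\alpha=\Nef(K)$) for the maximal cone of $\MovSec(K)$ containing $\Nef(K)$, the convexity of $\sec(\alpha)\subseteq\Mov(K)$ makes $\sec(\alpha)\cap\Gamma$ a face of $\sec(\alpha)$ (cut out by the supporting hyperplane of $\Mov(K)$ defining $\Gamma$), hence a cone of $\MovSec(K)$ containing $f^*\Nef(Z)$; minimality gives $\gamma_b\subseteq\sec(\alpha)\cap\Gamma\subseteq\Gamma$, and $\Span(\gamma_b)\supseteq\Span(f^*\Nef(Z))=f^*\Pic(Z)_\bbR$. \textbf{(b)} Hence the two summands of $\beta_b$ span complementary subspaces, so $\beta_b$ is a pointed, full-dimensional rational polyhedral cone whose faces are exactly the cones $\tau+\rho$ with $\tau$ a face of $\gamma_b$ and $\rho$ a face of $\braket{\ex(f)}$. \textbf{(c)} $\beta_b\cap\Mov(K)=\gamma_b$; this follows from the description $\beta_b=\bigcup_{b'\sim b}b'$ established below, together with the standard fact (cf.\ \cite{Hu_Mori_dream_spaces}) that $b'\cap\Mov(K)=(f')^*\Nef(Z')$ for every maximal bogus cone $b'=(f')^*\Nef(Z')+\braket{\ex(f')}$.

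Given (a), Parts 1--3 are immediate: equivalent maximal bogus cones share $\gamma_b$, hence by (a) the subspace $f^*\Pic(Z)_\bbR=\Span(\gamma_b)$, hence by \cref{lem:span} the exceptional divisors, hence by \cref{lem:pure_codimension_one} the exceptional locus and the cone $\braket{\ex(f)}$; so $\beta_b=\gamma_b+\braket{\ex(f)}$ and its displayed sum decomposition depend only on the equivalence class of $b$. Next I would show that every cone of $\Sec(K)$ is a union of cones of $\MoriFan(K)$ and that $|\Sec(K)|=\Pic(K)_\bbR$. The cone $\gamma_b$, being a face of the convex cone $\sec(\alpha)=\bigcup\Nef(K')$, is the union of the faces $\tau$ of the various $\Nef(K')$ contained in it; the $\dim\gamma_b$-dimensional such $\tau$ cover $\gamma_b$, each has the form $\tau=g^*\Nef(W)$ with $\Span(\tau)=f^*\Pic(Z)_\bbR$, so $\ex(g)=\ex(f)$ by \cref{lem:span} and $\tau+\braket{\ex(f)}=\tau+\braket{\ex(g)}$ is a maximal cone of $\MoriFan(K)$; since such a $\tau$ meets the relative interior of $\gamma_b$, its minimal $\MovSec(K)$-cone is $\gamma_b$, so $\tau+\braket{\ex(f)}$ is a maximal bogus cone equivalent to $b$. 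Therefore $\beta_b=\gamma_b+\braket{\ex(f)}=\bigcup_{b'\sim b}b'$ is a union of maximal cones of $\MoriFan(K)$ (and (c) follows), and by the same supporting-hyperplane argument each face $\beta_b\cap H$ of $\beta_b$, and each face of $\sec(\alpha)$, is a union of cones of $\MoriFan(K)$. Finally every maximal cone of $\MoriFan(K)$ is either some $\Nef(K')\subseteq\sec(\alpha)$ or a maximal bogus $b'\subseteq\beta_{b'}$, so $|\Sec(K)|=|\MoriFan(K)|=\Pic(K)_\bbR$; finiteness and rationality are clear from \cref{thm:convex_cone} and the description of $\beta_b$.

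It remains to verify the fan (intersection) axiom for $\Sec(K)$, which I expect to be the main obstacle. Closure under faces is built in; the intersection of two cones of $\MovSec(K)$ is handled by \cref{thm:convex_cone}; and for a cone of $\MovSec(K)$ (reduced to a maximal $\sec(\alpha)$) and a face $\tau+\rho$ of $\beta_b$, since $\sec(\alpha)\subseteq\Mov(K)$ and $(\tau+\rho)\cap f^*\Pic(Z)_\bbR=\tau$ by (b), fact (c) gives $\sec(\alpha)\cap(\tau+\rho)=\sec(\alpha)\cap\tau$, an intersection of two cones of $\MovSec(K)$, hence a common face. The delicate case is a pair $\beta_{b_1},\beta_{b_2}$ with $b_1\not\sim b_2$, together with their faces. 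The key point is that $\Int(\beta_{b_1})\cap\Int(\beta_{b_2})=\emptyset$: a point of this (open) set could be moved into the interior of a maximal cone $\sigma$ of $\MoriFan(K)$, forcing $\sigma\subseteq\beta_{b_1}\cap\beta_{b_2}$; by (c) $\sigma$ is not contained in $\Mov(K)$ (it is full-dimensional, while $\beta_{b_i}\cap\Mov(K)=\gamma_{b_i}$ is not), so $\sigma$ is a maximal bogus cone, and $\beta_{b_i}=\bigcup_{b'\sim b_i}b'$ gives $\sigma\sim b_1$ and $\sigma\sim b_2$, so $b_1\sim b_2$ --- a contradiction. Hence $\beta_{b_1}\cap\beta_{b_2}$ is convex with empty interior, so it lies in $\partial\beta_{b_i}$; since a convex subset of the boundary of a pointed convex cone lies in a proper face of that cone, $\beta_{b_1}\cap\beta_{b_2}=G_1\cap G_2$ for proper faces $G_i\subsetneq\beta_{b_i}$. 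To finish I would identify $G_1\cap G_2$ with a common face of $G_1$ and $G_2$ --- hence of $\beta_{b_1}$ and $\beta_{b_2}$ --- by induction on $\dim\Pic(K)_\bbR$, or directly from the description of faces in (b) by reconciling the two decompositions $G_i=\tau_i+\rho_i$; intersections of the remaining lower-dimensional faces then reduce to this. I expect this last reconciliation of the two face structures to be the only genuinely delicate point.
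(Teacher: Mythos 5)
The paper's own proof is a single line — ``This follows from \cref{lem:span}'' — so your much longer argument is not a variant of the paper's proof so much as an attempt to supply the content the paper leaves implicit. Most of what you write is correct and well organized. Fact (a) is a correct use of \cref{lem:span} (its proof shows that the \emph{minimal} face $\Gamma$ of $\Mov(K)$ containing $f^*\Nef(Z)$ has $\Span\Gamma = f^*\Pic(Z)_\bbR$, and your supporting-hyperplane argument does pin $\gamma_b$ inside $\Gamma$). Fact (b) is standard for Minkowski sums of pointed cones spanning complementary subspaces. Your identification $\beta_b = \bigcup_{b'\sim b} b'$ is right: Minkowski sums distribute over unions, the top-dimensional faces $\tau = g^*\Nef(W)$ of the Mori subdivision of $\gamma_b$ span $f^*\Pic(Z)_\bbR$ so share exceptional divisors with $f$ by \cref{lem:span}, and each $\tau + \braket{\ex(f)}$ is a maximal bogus cone with the same $\gamma_b$. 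This gives Parts 1--3 of the proposition cleanly, gives the coarsening claim, and handles the fan axiom for intersections $\MovSec(K)\cap\MovSec(K)$ and $\MovSec(K)\cap\beta_b$.

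The genuine gap is exactly the one you flag at the end: you do not actually verify the fan intersection axiom for a pair of inequivalent maximal bogus cones $\beta_{b_1},\beta_{b_2}$. Showing their interiors are disjoint and that $\beta_{b_1}\cap\beta_{b_2}$ sits inside proper faces $G_1\subsetneq\beta_{b_1}$, $G_2\subsetneq\beta_{b_2}$ with $\beta_{b_1}\cap\beta_{b_2}=G_1\cap G_2$ does not yet make that intersection a \emph{face} of either cone. And this is not a harmless formality that follows from convexity alone: coarsening a fan by arbitrary convex unions of its maximal cones with pairwise disjoint interiors need not produce a fan --- e.g.\ in $\bbR^3$ the cones $\{x\ge 0,z\ge 0\}$, $\{x\le 0,z\ge 0\}$, $\{z\le 0\}$ cover $\bbR^3$, are convex, have disjoint interiors, and each is a union of octants, yet $\{x\ge 0,z\ge 0\}\cap\{z\le 0\}=\{x\ge 0,z=0\}$ is not a face of the half-space $\{z\le 0\}$. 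So the fact that $\beta_{b_1}\cap\beta_{b_2}$ is a common face has to come from the specific structure of the bogus cones --- one must ``reconcile'' the two decompositions $G_i=\tau_i+\rho_i$, matching up the movable parts $\tau_i$ in $\MovSec(K)$ and the exceptional parts $\rho_i$ generated by subsets of $\ex(f_i)$, and this is precisely the step you announce but do not carry out. To your credit, you identified the delicate point correctly; but as written the proposal stops short of proving the proposition's final and most substantive claim, that $\Sec(K)$ is a fan.
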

\begin{proof}
	This follows from \cref{lem:span}.
\end{proof}

\subsection{The secondary fan in the del Pezzo case} \label{sec:secondary_fan_del_Pezzo}

Here we apply the general construction of secondary fan to the canoincal bundle over a del Pezzo surface, see \cref{prop:secondary_fan_del_Pezzo} and \cref{rem:Sec_on_PicY} for an explicit description.
We begin with some general notations.

\begin{notation} \label{nota:K}
	Let $k$ be the field of complex numbers $\bbC$.
		Let $Y$ be a smooth Fano $k$-variety, and $D\subset Y$ a normal crossing anti-canonical divisor containing a 0-stratum.
	Let $U\coloneqq Y\setminus D$, $K\to Y$ the canonical bundle, $K^\times\coloneqq K\setminus Y$, $V\coloneqq K^\times|_U$, and $\cP\coloneqq\bbP(K\oplus\cO)$.
	Let $K\to\oK$ and $\cP\to\ocP$ be the contractions of the 0-sections.
		By \cite[Cor.\ 1.3.1]{Birkar_Existence_of_minimal_models_for_varieties_of_log_general_type}, $Y$, $K/\oK$ and $\cP/\ocP$ are Mori dream spaces.
\end{notation}

\begin{lemma} \label{lem:Mori_fan}
	The following hold:
	\begin{enumerate}
		\item We have isomorphisms
		\[\Pic(Y)\xrightarrow[\ \sim\ ]{p^*}\Pic(K)\xrightarrow{\ \sim\ }\Pic(K/\oK),\]
		\[\Pic(Y)\xrightarrow[\ \sim\ ]{p^*}\Pic(\cP/\ocP),\]
		where $p^*$ denotes pullback by projection.
		\item Under the isomorphism $\Pic(\cP/\ocP) \xrightarrow{\sim} \Pic(K/\oK)$, $\MoriFan(\cP/\ocP)$ is identified with $\MoriFan\allowbreak (K/\oK)$.
		\item Under these isomorphisms, the cones $\Mov(K/\oK)$, $\Mov(\cP/\ocP)$ and $\Eff(Y)$ are identified.
		\item As fans on $\Mov(K/\oK)\simeq\Eff(Y)$, $\MovFan(K/\oK)$ refines $\MoriFan(Y)$.
		The two fans are the same when $Y$ has dimension two, (which we expect to hold in all dimensions).
	\end{enumerate}
\end{lemma}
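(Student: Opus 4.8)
The plan is to prove (1) directly and reduce (2)--(4) to statements about the total space $K$ of the canonical bundle.

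\textbf{Part (1).} Since $p\colon K\to Y$ is the total space of a line bundle, $p^*\colon\Pic(Y)\to\Pic(K)$ is an isomorphism, and since $p\colon\cP\to Y$ is a $\bbP^1$-bundle, $\Pic(\cP)=p^*\Pic(Y)\oplus\bbZ\,\cO_\cP(1)$. Both $0$-sections have normal bundle $K_Y$, anti-ample as $Y$ is Fano, so each can be contracted; $\oK$ is the (normal) anticanonical cone over $(Y,-K_Y)$, which has trivial Picard group, so $\Pic(K/\oK)=\Pic(K)$. The contraction $q'\colon\cP\to\ocP$ collapses the zero section $Y_0\cong Y$ to a point, hence collapses the whole subspace $N_1(Y_0)\subset N_1(\cP)$ of dimension $\rho(Y)$; since $\Pic(\cP)_\bbR=p^*\Pic(Y)_\bbR\oplus\bbR$, this forces $\rho(\ocP)=1$ and $\rho(\cP/\ocP)=\rho(Y)$. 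As $(q')^*\Pic(\ocP)_\bbR=\bbR\cdot\cO_\cP(Y_\infty)$ is nonzero on a fibre of $p$, it meets $p^*\Pic(Y)_\bbR$ trivially, and a dimension count gives $p^*\colon\Pic(Y)_\bbR\xrightarrow{\ \sim\ }\Pic(\cP/\ocP)_\bbR$; the integral statement follows since $\Pic(Y)$ is torsion-free ($Y$ Fano) and the lattice $\sfN$ of \cref{ass:2} is chosen divisible (in the del Pezzo application one just takes $\sfN=\Pic(K)$).

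\textbf{Part (2).} Using the open immersions $K=\cP\setminus Y_\infty\hookrightarrow\cP$ and $\oK=\ocP\setminus q'(Y_\infty)\hookrightarrow\ocP$, I will set up a bijection between the birational models of $\cP$ over $\ocP$ and those of $K$ over $\oK$. Write $v$ for the vertex (the common image of $Y_0$). Any SQM or birational contraction of $\cP$ over $\ocP$ is an isomorphism over the smooth locus $\ocP\setminus\{v\}$, so its indeterminacy and exceptional loci lie in $(q')^{-1}(v)=Y_0$, disjoint from $Y_\infty$; deleting $Y_\infty$ yields the corresponding map of $K$ over $\oK$. Conversely an SQM $f\colon K\dasharrow K'$ over $\oK$ is an isomorphism outside its flopping curves, which lie in $Y_0$, so $K\setminus Y_0=K'\setminus Y_0'$ canonically (both equal $\oK\setminus\{v\}$, $Y_0'$ the strict transform); regluing $K'$ to the complementary line-bundle chart $\cP\setminus Y_0$ along this common open set extends $f$ to an SQM $\cP\dasharrow\cP'$ over $\ocP$, and likewise for birational contractions. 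These operations are mutually inverse and compatible with $p^*$ on Picard, and since $q^{-1}(v)=Y_0'=(q')^{-1}(v)$, corresponding models have the same exceptional divisors and the same nef cones (tested on curves in $Y_0'$). Hence $\MoriFan(\cP/\ocP)=\MoriFan(K/\oK)$.

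\textbf{Part (3).} From $p_*\cO_K=\bigoplus_{j\ge0}\cO_Y(-jK_Y)$ and the projection formula, $H^0(K,p^*D)=\bigoplus_{j\ge0}H^0(Y,D-jK_Y)$, so the total coordinate ring of $K$ over $\oK$ is $\mathrm{Cox}(Y)[\tau]$ with $\deg\tau=K_Y$. For $x\notin Y_0$, a section $\tau^{\,j}s$ with $j\gg0$ and $s$ a section of $D-jK_Y$ (very ample, since $-K_Y$ is ample) that is nonzero at $p(x)$ is nonzero at $x$; on $Y_0\cong Y$ only the $j=0$ summand survives; hence the relative stable base locus of $p^*D$ over $\oK$ is $\mathrm{Bs}_Y(D)\subset Y_0$. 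It has codimension $\ge2$ in $K$ iff $\mathrm{Bs}_Y(D)\ne Y$, i.e.\ iff $D$ is effective, so $\Mov(K/\oK)=p^*\Eff(Y)$. The same computation over the chart $\oK\subset\ocP$ (on $\ocP\setminus\{v\}$ the map $q'$ is an isomorphism, contributing no base locus) gives $\Mov(\cP/\ocP)=p^*\Eff(Y)$; this also follows from (2).

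\textbf{Part (4).} For $D\in\Eff(Y)$, let $K'$ be the SQM over $\oK$ on which $p^*D$ is semiample; its unique exceptional divisor over $\oK$ is the strict transform $Y_0'$ of the zero section, and (via $\mathrm{Cox}(K/\oK)\cong\mathrm{Cox}(Y)[\tau]$, or by restricting sections to $Y_0'$) the pair $(Y_0',\,p^*D|_{Y_0'})$ recovers the birational model $\Proj R(Y,D)$ and its semiample class, hence the Mori chamber of $Y$ containing $D$. Thus each chamber $\Nef(K'/\oK)$ of $\MovFan(K/\oK)$ lies in a single Mori chamber of $Y$, i.e.\ $\MovFan(K/\oK)$ refines $\MoriFan(Y)$ (both fans on $\Eff(Y)$, by (3) and \cref{sec:moving}). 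When $\dim Y=2$, every $Y_0'$ is again a smooth surface and every birational contraction of $Y$ is a composition of contractions of pairwise disjoint $(-1)$-curves; matching ``contract the $(-1)$-curve $E\subset Y$'' with ``flop $E\subset Y_0\subset K$'' and comparing wall by wall upgrades the refinement to an equality. I expect this last equality to be the main obstacle: it requires matching all birational contractions of the two Mori programs at once, and in dimension $>2$ the surface $Y_0'$ need not be $\bbQ$-factorial nor the relevant maps decompose into matched elementary steps --- which is why the equality is only expected there, while the refinement and parts (1)--(3) are comparatively routine.
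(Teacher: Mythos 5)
Parts (2)--(4) follow the paper's approach, just spelled out more explicitly. The paper's proof of (2) is the one-line observation that $\cP\to\ocP$ and $K\to\oK$ share the exceptional locus $Y_0$, which your model-by-model bijection unpacks; for (3) the paper notes that since $\oK$ is affine, $Y_0$ is the only possible base divisor for $p^*L$, which your section-ring calculation makes concrete; for (4) both arguments rest on surjectivity of restriction of sections to $Y_0\cong Y$, with the dimension-two equality deferred in the paper to the explicit description in \cref{lem:SQM_of_K}. Part (1) is where you genuinely diverge. The paper pulls back along the zero section $s$: $s^*p^*=\mathrm{id}$ by \cite[Thm.\ 3.3]{Fulton_Intersection_theory}, while $s^*q^*=0$ since $q\circ s$ contracts $Y$ to a point, so $q^*\Pic(\oK)$ and $q'^*\Pic(\ocP)$ vanish in $\Pic(K)$ and $\Pic(\cP)$ with no further input, and the integral statement is immediate. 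You instead invoke $\Pic(\oK)=0$ --- true ($\oK$ is the spectrum of a normal, positively graded domain with degree-zero part $\bbC$), but a nontrivial fact that deserves a citation and is strictly more than the lemma needs --- and, for $\cP$, a $\rho$-count in $N^1$ plus torsion-freeness of $\Pic(Y)$ to promote the result from $\bbR$-coefficients to $\Pic$. That route does close: injectivity because $\Pic(Y)\hookrightarrow\Pic(Y)_\bbR\xrightarrow{\ \sim\ }\Pic(\cP/\ocP)_\bbR$ factors through $\Pic(\cP/\ocP)$, and surjectivity because $\cO_\cP(Y_\infty)=q'^*\cO_\ocP(q'(Y_\infty))$ dies in the relative Picard group. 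But the $s^*$-retraction treats $K$ and $\cP$ uniformly in two lines and avoids both the cone fact and the numerical argument, so I would adopt it; everything else you wrote is sound.
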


\begin{proof}
	By \cite[Theorem 3.3(a)]{Fulton_Intersection_theory}, we have $\Pic(Y)\xrightarrow[\sim]{p^*}\Pic(K)$, with inverse given by $s^*$, with $s\colon Y\to K$ the 0-section.
	The map $\Pic(K)\to\Pic(K/\oK)$ is surjective by definition.
	To see that it is injective, take $L\in\Pic(K)$ which is equal to the pullback of a line bundle on $\oK$.
	Then $s^*L$ is a trivial line bundle on $Y$, i.e.\ $s^*L=0\in\Pic(Y)$, so $L=0\in\Pic(K)$.
	Next, \cite[Theorem 3.3(b)]{Fulton_Intersection_theory} shows that $\Pic(Y)\xrightarrow{p^*}\Pic(\cP)$ is injective, with retraction given by $s^*$, with $s\colon Y\to\cP$ the 0-section.
	Then the same argument above shows that $\Pic(Y)\xrightarrow{p^*}\Pic(\cP/\ocP)$ is an isomorphism.
	This shows (1).
	
	Observe that $\cP\to\ocP$ and $K\to\oK$ have the same exceptional locus $Y$, so (2) follows, as well as the identification between $\Mov(K/\oK)$ and $\Mov(\cP/\ocP)$.
	Moreover, since $\oK$ is affine, $Y$ is the only possible base divisor for any $L\in\Pic(K)$; this implies the identification between $\Mov(K/\oK)$ and $\Eff(Y)$.
	So we obtain (3).	
	
	Next we show that Mori equivalence for $K/\oK$ implies Mori equivalence for $Y$.
	Recall that two effective divisors are Mori equivalent if they give rise to the same contraction and they have the same stable base divisors (with reduced structure).
	Since the restriction $H^0(K,p^*L)\to H^0(Y,L)$ is surjective (viewed as $\cO(\oK)$-module) for any $L\in\Eff(Y)$, the contraction of $K/\oK$ given by $p^*L$ restricts to the contraction of $Y$ given by $L$, and the base locus of $p^*L$ on $K$ restricts to the base locus of $L$ on $Y$.
	Therefore, given $L_1,L_2\in\Eff(Y)$, if $p^*L_1$ and $p^*L_2$ are Mori equivalent on $K/\oK$, then $L_1$ and $L_2$ are Mori equivalent on $Y$.
	So $\MovFan(K/\oK)$ refines $\MoriFan(Y)$.
	Finally, when $Y$ is 2-dimensional, the refinement is an equality by the explicit description in \cref{lem:SQM_of_K}.
\end{proof}

\begin{remark} \label{rem:Gamma}
	Let $\Gamma=\Sk(V)\cap K^\beth$ be as in \cref{const:universal_torsor}.
	It can also be described by tropicalization of divisors:
	Let $Y_0$ and $Y_\infty$ denote respectively the 0-section and the $\infty$-section of $\pi\colon\cP=\bbP(K\oplus\cO)\to Y$.
	Consider the divisor $\delta\coloneqq Y_0-Y_\infty$.
	Then $\Gamma=\set{\delta^\trop\ge 0}\subset\Sk(V)\subset V^\an$.
	By \cite[Lemma 15.2]{Keel_Yu_The_Frobenius},
		$\Gamma$ is the support of the sub cone complex $\Sigma_{(V\subset K)}\subset\Sigma_{(V\subset\cP)}$, generated by the components of $K\setminus V$ (see \cref{def:dual_complex} for dual (cone) complexes in the normal crossing case).
	We denote by $\uGamma$ the induced cone complex structure on $\Gamma$.
	
	The divisor $D\subset Y$, viewed as a section of the dual $K^*\to Y$, gives a regular function $d$ on $K$.
	Note the principle divisor on $\cP$ associated to $d$ is equal to $p^*D+\delta$.
	We have an isomorphism $p \times d\colon V \xrightarrow{\sim} U \times \Gm$ which induces $\Sk(V) \xrightarrow{\sim} \Sk(U) \times \bbR$ (see \cite[Proposition 8.8]{Keel_Yu_The_Frobenius}), and
	\begin{equation} \label{eq:Gamma}
	\Gamma\xrightarrow{\ \sim\ }\Set{(x,n) | D^\trop(x)\le n}\subset\Sk(U)\times\bbR_{\ge 0},
	\end{equation}
	together with the analogous bijections for the integer points.
	
	Let
	\begin{equation} \label{eq:Lambda}
	\Lambda\coloneqq\Set{x \in \Gamma| d^{\trop}(x) =1}.
	\end{equation}
	It projects to $\{D^{\trop}(x) \leq 1\}\subset\Sk(U)$.
	We have $[Y] \in \Lambda(\bbZ)$, the divisorial valuation given by $Y\subset K$.
	When $(Y,D)$ is toric Fano, $\Lambda$ is isomorphic to the dual reflexive polytope associated to $Y$.
	
	By construction, we have $\Gamma\simeq C(\Lambda)$, where $C(\cdot)$ denotes the cone.
	Then the cone complex $\uGamma$ induces a cell complex structure $\uLambda$ on $\Lambda$ (more precisely, a $\Delta$-complex structure, see \cref{def:dual_complex}).
	Taking boundary, we have $\partial\uGamma\simeq\Sigma_{(V\subset K^\times)}\simeq C(\partial\uLambda)$.

Note $K^\times\to\oK$ is an open embedding, its complement is the point to which $Y$ contracts under $K\to\oK$.
It follows that any birational contraction $K\dasharrow K'$ over $\oK$ has exceptional locus contained in $Y$, so it compactifies uniquely to a birational contraction $\cP\dasharrow\cP'$.
Moreover, we always have $K^\times\subset K'$, so the dual cone complex $\Sigma_{(V\subset K^\times)}$ is a sub-complex of $\Sigma_{(V\subset K')}$, which is supported on $\partial\Gamma$.

Let $\cT \to V$ denote the restriction of the universal torsor $\pi\colon G \to K$ corresponding to the fixed $\sfN \subset \Pic(K)$, as in \cref{const:varphi_of_SQM}.
For any SQM $K\dasharrow K'$ over $\oK$, the restriction to $V$ of the corresponding torsor $G'\to K'$ is canonical identified with $\cT$, and we have a section $\varphi_{K'}\colon\Gamma\to\tGamma$, where $\tGamma=\Sk(\cT)|_\Gamma$.
\end{remark}

We will be omitting $/\oK$ from the notations as in \cref{ass:2}.

\begin{lemma} \label{lem:bogus_K}
	The bogus cones of $\Sec(K)$ are exactly $\gamma+\bbR_{\ge 0}[Y]\subset\Eff(K)\simeq\Pic(K)_\bbR$ where $\gamma$ is a cone of $\MovSec(K)$ lying in the boundary $\Mov(K)$.
\end{lemma}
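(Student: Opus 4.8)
The plan is to read the statement off from \cref{prop:add_bogus_cones} together with the geometric feature of this situation recorded in \cref{rem:Gamma}: every birational contraction $K\dasharrow K'$ over $\oK$ has exceptional locus contained in the prime divisor $Y$. Concretely, I would first determine the shape of an arbitrary maximal bogus cone of $\MoriFan(K)$. Such a cone is $f^*\Nef(Z)+\braket{\ex(f)}$ for a birational contraction $f\colon K\dasharrow Z$ over $\oK$, and being bogus means $f$ contracts some divisor; since the exceptional locus of $f$ lies in the prime divisor $Y$, the only divisor it can contract is $Y$ itself, so $\braket{\ex(f)}=\bbR_{\ge 0}[Y]$ and $f$ is a divisorial contraction of $Y$, factoring as an SQM $K\dasharrow K'$ followed by a divisorial morphism $g\colon K'\to Z$ contracting the strict transform $Y'$. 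By \cref{prop:add_bogus_cones} the associated cone of $\Sec(K)$ is then $\beta_b=\gamma_b+\bbR_{\ge 0}[Y]$, where $\gamma_b$ is the minimal cone of $\MovSec(K)$ containing $f^*\Nef(Z)$, so the whole lemma reduces to identifying the cones $\gamma_b$.

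To see that each $\gamma_b$ is a (codimension-one) cone of $\MovSec(K)$ lying in $\partial\Mov(K)$, I would first show $f^*\Nef(Z)=g^*\Nef(Z)\subseteq\partial\Mov(K)$. It is a facet of $\Nef(K')$ since $\rho(Z)=\rho(K)-1$. Splitting $\Pic(K')_\bbR=g^*\Pic(Z)_\bbR\oplus\bbR[Y']$, a standard computation with the contraction $g$ (using an ample divisor on $K'$ and the $g$-contracted curve $C$, which satisfies $Y'\cdot C<0$) shows $\Nef(K')$ lies on the side where the $[Y']$-component is $\le 0$. On the opposite side, for $A$ in the relative interior of $g^*\Nef(Z)$ and small $t>0$, pushing $m(A+t[Y'])$ forward to $Z$ — via the projection formula and $g_*\cO_{K'}(mtY')=\cO_Z$ (valid because $mtY'$ is effective and $g$-exceptional) — gives $H^0(K',m(A+t[Y']))=H^0(Z,mA_0)$ with every section divisible by the canonical section of $\cO(mtY')$; hence $Y'\subseteq\mathrm{Bs}\,|m(A+t[Y'])|$ for all divisible $m$, so $A+t[Y']$ is not movable. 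Thus $g^*\Nef(Z)\subseteq\partial\Mov(K)$. Since $\MovSec(K)$ is a fan supported on the full-dimensional cone $\Mov(K)$, any of its cones whose relative interior meets $\partial\Mov(K)$ is entirely contained in $\partial\Mov(K)$ (the relative interior is connected and the interior of $\Mov(K)$ is open); by minimality $g^*\Nef(Z)$ meets the relative interior of $\gamma_b$, so this applies to $\gamma_b$, and forces in addition $\dim\gamma_b=\rho(K)-1$, $\Span\gamma_b=g^*\Pic(Z)_\bbR$, and $[Y]=[Y']\notin\Span\gamma_b$ (so $\beta_b$ is genuinely full-dimensional). This establishes that every bogus cone of $\Sec(K)$ has the asserted form.

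For the reverse inclusion, let $\gamma$ be a codimension-one cone of $\MovSec(K)$ contained in $\partial\Mov(K)$ (this is the case giving a maximal cone $\gamma+\bbR_{\ge 0}[Y]$; lower-dimensional $\gamma$ then produce the remaining, non-maximal, faces of these cones). As a codimension-one cone of the complete fan $\Sec(K)$, $\gamma$ is a facet of exactly two maximal cones, one on each side of the hyperplane $\Span\gamma$; one of them is a maximal cone of $\MovSec(K)$, and the other cannot lie in $\Mov(K)$ — otherwise a neighbourhood of a relative-interior point of $\gamma$ would lie in $\Mov(K)$, contradicting $\gamma\subseteq\partial\Mov(K)$ — so it is a bogus cone $\beta_b=\gamma_b+\bbR_{\ge 0}[Y]$. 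The facets of $\beta_b$ are $\gamma_b$ together with the cones $\delta+\bbR_{\ge 0}[Y]$ for $\delta$ a facet of $\gamma_b$; the latter all contain $[Y]\notin\Mov(K)$, so the unique facet of $\beta_b$ lying in $\Mov(K)$ is $\gamma_b$, whence $\gamma=\gamma_b$ and $\gamma+\bbR_{\ge 0}[Y]=\beta_b\in\Sec(K)$.

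The only genuinely substantive step is the base-locus computation in the second paragraph, which pins down $\partial\Mov(K)$ exactly at $g^*\Nef(Z)$ — equivalently, that crossing that facet in the $[Y']$-direction immediately leaves the movable cone. This is where birational geometry enters (the negativity of contracted loci and pushing forward exceptional divisors). Everything else is formal bookkeeping with the fan structures, using \cref{prop:add_bogus_cones} and the exceptional-locus fact from \cref{rem:Gamma}.
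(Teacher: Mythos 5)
Your proof is correct and follows essentially the same route as the paper: the paper's one-line proof records exactly the observation in your first paragraph (that the only divisor a birational contraction $K\dasharrow Z$ over $\oK$ can contract is $Y$), and leaves the remaining bookkeeping — that $\gamma_b$ lies in $\partial\Mov(K)$ and has codimension one, and that every such cone arises — to \cref{prop:add_bogus_cones}, \cref{lem:span}, and completeness of the fan, which you spell out explicitly (your base-locus computation in the second paragraph re-derives the relevant part of \cref{lem:span} directly rather than citing it). One small imprecision: the step ``any cone of $\MovSec(K)$ whose relative interior meets $\partial\Mov(K)$ lies entirely in $\partial\Mov(K)$'' is not quite a consequence of connectedness and openness as you state; the clean justification is that a point of $\partial\Mov(K)$ lies in a proper face $F$ of $\Mov(K)$, and a subcone of $\Mov(K)$ whose relative interior meets a face $F$ is contained in $F$.
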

\begin{proof}
	It suffices to observe that any divisorial contraction $K\dasharrow K'$ over $\oK$ cannot contract any divisor except $Y\subset K$.
\end{proof}

\begin{lemma} \label{lem:varphieq}
	For any SQM $K\dasharrow K'$ over $\oK$, the section $\varphi_{K'}\colon\Gamma\to\tGamma$ restricted to $\partial\Gamma\cup\{[Y]\}\subset\Gamma$ is independent of the SQM.
\end{lemma}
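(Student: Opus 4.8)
The plan is to show that for every SQM $f\colon K\dasharrow K'$ over $\oK$ the section $\varphi_{K'}$ agrees with $\varphi_K$ on $\partial\Gamma\cup\{[Y]\}$; since $K\dasharrow K$ is the trivial SQM, this yields the asserted independence. The main tool will be \cref{rem:internal_flop}: if the image of the map $\Spec\cH(v)^\circ\to K$ (produced, via a common resolution $Z$ of $K$ and $K'$, by the valuative criterion as in the proof of \cref{lem:SQM}, applied with $K_1=K$ and $K_2=K'$) lies in the open locus where $f$ is an isomorphism, then $\varphi_{K'}(v)=\varphi_K(v)$. So everything reduces to locating the exceptional locus of $f$ and checking that the centers of the valuations $v\in\partial\Gamma\cup\{[Y]\}$ avoid it.

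First I would pin down the exceptional locus. By \cref{rem:Gamma}, $K^\times\hookrightarrow\oK$ is an open immersion whose complement is the single point to which $Y$ contracts; hence $f$ is an isomorphism over the open subscheme $K^\times\subseteq\oK$, so its exceptional locus is contained in $Y=K\setminus K^\times$. Since $f$ is an SQM it is an isomorphism in codimension one, so this exceptional locus has codimension $\ge2$ in $K$, i.e.\ dimension $\le\dim Y-1$; therefore it is a proper closed subset $E\subsetneq Y$, and $f$ is an isomorphism on the open set $K\setminus E$, which contains $K^\times$ and also the generic point of $Y$.

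Next I would check centers; since the underlying scheme point of any $v\in\Sk(V)\subset V^\an$ already lies in $V\subset K^\times$, only the center (the image of the closed point of $\Spec\cH(v)^\circ$) needs attention. For $v\in\partial\Gamma$, \cref{rem:Gamma} identifies $\partial\Gamma$ with the support of $\Sigma_{(V\subset K^\times)}$, so $v$ is a quasi-monomial valuation adapted to the snc pair $(K^\times,K^\times\setminus V)$ and its center is the generic point of a stratum of $K^\times\setminus V$, which lies in $K^\times\subseteq K\setminus E$. For $v=[Y]$, the center in $K$ is the generic point of $Y$, which is not contained in the proper closed subset $E\subsetneq Y$, hence also lies in $K\setminus E$. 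In either case \cref{rem:internal_flop} gives $\varphi_{K'}(v)=\varphi_K(v)$, which completes the argument.

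The one place requiring care is $v=[Y]$, where the center does meet $Y$ and $Y$ does contain $E$: here one must genuinely use that the exceptional locus of a small modification is a proper subvariety of $Y$, so that it misses the generic point of $Y$. Equivalently, in the notation of \cref{lem:SQM}, the strict transform of $Y$ on the common resolution $Z$ is exceptional over neither $K$ nor $K'$ (its image is a divisor on both), so the divisor $E$ appearing there does not pass through $c_Z([Y])$ and $E^\trop([Y])=0$. Apart from this point, the proof is a routine unwinding of the constructions of \cref{rem:Gamma} and \cref{const:universal_torsor}, and I do not expect a further obstacle.
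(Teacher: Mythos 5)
Your proof is correct and follows essentially the same route as the paper's: the paper's one-line argument cites \cref{rem:internal_flop} and the fact that $K\dasharrow K'$ is an isomorphism on $K^\times$ and at the generic point of $Y$, which is exactly what you establish (with the centers of the relevant valuations carefully located). Your more explicit unwinding, especially the discussion at $v=[Y]$ of why the exceptional locus of a small modification is a proper subset of the divisor $Y$, is a faithful elaboration of the paper's terse justification.
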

\begin{proof}
	Using \cref{rem:internal_flop}, it follows from the fact that $K \dasharrow K'$ is an isomorphism on $K^\times$, as well as on the generic point of $Y\subset K$.
\end{proof}

Assume for the remainder of this section that $Y$ is 2-dimensional.

\begin{lemma} \label{lem:SQM_of_K}
	For any SQM $b\colon K \dasharrow K'$ over $\oK$,
	the following hold:
	\begin{enumerate}
		\item The exceptional locus, $\ex(b)$, is a disjoint union of $(-1)$-curves in the zero section $Y \subset K$, each of which has normal bundle $O(-1) \oplus O(-1)$ in $K$.
		\item The map $b$ is the composition of $(-1,-1)$-flops of disjoint curves in $\ex(b)$, meaning that we blowup each curve $C$ in $\ex(b)$, obtain an exceptional divisor $E\simeq\bbP^1\times\bbP^1 \xrightarrow{p_1}\bbP^1\simeq C$, and then blowdown by the second projection $p_2$.
		Moreover, $K'$ is smooth.
		\item The rational map $Y \subset K \overset{b}{\dasharrow} K'$ is regular, the blowdown of the disjoint union of $(-1)$-curves in $\ex(b)$.
		We denote $Y'\coloneqq b(Y)\subset K'$ and $b_Y\colon Y\to Y'$.
		\item \label{lem:SQM_of_K:J} Let $J \subset K'$ be the strict transform of $p^{-1}(\ex(b)) \subset K$ (where $p\colon K \to Y$ is the projection).
		Then the restriction of the rational map $p\colon K' \dasharrow Y'$ to $J^c$ is regular, and is canonically identified with the canonical bundle $K_{Y'} \to Y'$.
	\end{enumerate}
\end{lemma}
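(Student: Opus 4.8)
The plan is to factor $b$ into elementary flops, to recognise each one as an Atiyah flop of a disjoint family of $(-1,-1)$-curves lying in the zero section, and to run an induction on the number of flops, using two inputs: that the canonical bundle $\omega_K$ of $K=\mathrm{Tot}(\omega_Y)$ is trivial (for a line bundle $L$ on $Y$ one has $\omega_{\mathrm{Tot}(L)}=\pi^*(\omega_Y\otimes L^{-1})$, so $\omega_K=\pi^*\cO_Y=\cO_K$), and that a smooth del Pezzo surface has no $(-2)$-curve (for an irreducible curve $C$, $C^2=-2+(-K)\cdot C\ge -1$).

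First I would write $b$ as a chain $K=K_0\dashrightarrow K_1\dashrightarrow\cdots\dashrightarrow K_m=K'$ of flops, obtained by crossing the codimension-one walls of $\MovFan(K)$ along a generic segment from $\Nef(K)$ to $\Nef(K')$, and let $Y_j\subseteq K_j$ be the strict transform of $Y$. At step $j$ the flopping contraction $c_j\colon K_j\to Z_j$ over $\oK$ is small, with exceptional locus a reduced curve contained in $Y_j$ (by \cref{rem:Gamma}, since $K^\times\to\oK$ is an open immersion), and each of its components is $\omega_K$-trivial. Restricting $c_j$ to $Y_j$ gives a birational morphism of $Y_j$ onto a normal surface contracting exactly this curve; arguing inductively that $Y_j$ is a smooth del Pezzo surface (an iterated blow-down of $Y$ along a disjoint family of $(-1)$-curves), the absence of $(-2)$-curves forces each contracted component to be a $(-1)$-curve, and two of them cannot meet (after contracting one, the image of the other would have self-intersection $0$), so the exceptional curve of $c_j$ is a disjoint union of $(-1)$-curves $C$. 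For a single $C\cong\bbP^1$, adjunction in the smooth threefold $K_j$ together with $\omega_{K_j}=\cO$ gives $\det N_{C/K_j}=\cO(-2)$; since $N_{C/Y_j}=\cO(-1)$ is a subbundle, the quotient $N_{Y_j/K_j}|_C$ is $\cO(-1)$, and because $\mathrm{Ext}^1(\cO(-1),\cO(-1))=0$ the normal sequence splits, so $N_{C/K_j}=\cO(-1)^{\oplus 2}$. Hence $K_j\dashrightarrow K_{j+1}$ is the Atiyah flop of a disjoint family of $(-1,-1)$-curves: blow up $C$, with exceptional divisor $\bbP^1\times\bbP^1\xrightarrow{p_1}C$, and blow down by $p_2$; and $K_{j+1}$ is smooth. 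Moreover in this local model the strict transform of $Y_j$ meets each exceptional $\bbP^1\times\bbP^1$ in a fibre of $p_2$ — because $N_{C/Y_j}$, being a rank-one direct summand of $\cO(-1)^{\oplus 2}$, is a constant subbundle — so the second blow-down collapses each component of $C$, and $Y_{j+1}$ is the blow-down of $Y_j$ along $C$. To close the induction one checks that a $(-1)$-curve of $Y_j$ flopped at stage $j+1$ cannot pass through a point blown down by $Y=Y_0\to Y_j$: its strict transform in $Y$ would have self-intersection $\le -2$, excluded by the second input. Hence its preimage in $Y$ is a $(-1)$-curve disjoint from those flopped earlier, and the lemma follows for $b$: $\ex(b)\subseteq Y$ is a disjoint union of $(-1)$-curves with normal bundle $\cO(-1)^{\oplus 2}$ in $K$, giving (1); $b$ is the composite of their Atiyah flops and $K'$ is smooth, giving (2); and $b|_Y=b_Y\colon Y\to Y':=b(Y)$ is regular, the simultaneous blow-down of $\ex(b)$, giving (3).

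For (4) I would argue locally near each component $C$ of $\ex(b)$, with image $q:=b_Y(C)\in Y'$. There $q$ is a smooth point of $Y'$, $\omega_{Y'}$ is trivial near $q$, $Y=\Bl_q Y'$, and $\omega_Y\cong\cO_Y(E)$ near $C=E$ (from $K_Y=b_Y^*K_{Y'}+C$), so $K=\mathrm{Tot}(\omega_Y)$ near $C$ is a small resolution of a conifold (ordinary double point) in which $Y$ and $p^{-1}(C)$ are the two coordinate subbundles, and $K'$ is the opposite small resolution. Since $p^{-1}(C)$ contains the flopping curve it is the ``Cartier'' ruling plane, so its strict transform $J\subset K'$ is the complementary ``Weil'' ruling plane, and a chart-by-chart computation in the two affine pieces of $K'$ shows that $K'\setminus J$ is an $\bbA^2$-bundle over a neighbourhood of $q$ in $Y'$. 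Over $Y'\setminus b_Y(\ex(b))$, where $K'=K$ and $\omega_Y=b_Y^*\omega_{Y'}$, this already agrees with $\mathrm{Tot}(\omega_{Y'})$; patching the local descriptions yields the canonical identification of $p\colon K'\setminus J\to Y'$ with the canonical bundle of $Y'$.

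The step I expect to be most delicate is the bookkeeping inside the induction — that the curves flopped at later stages pull back, through the iterated blow-down, to one disjoint family of $(-1,-1)$-curves in $K$, and that $Y_j$ is exactly the iterated blow-down — together with the explicit chart computations underlying (3) and (4); these are elementary but require keeping careful track of the local Atiyah and conifold models, in particular of which ruling of the exceptional $\bbP^1\times\bbP^1$ each of the two small resolutions contracts.
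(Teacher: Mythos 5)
Your proof is correct, but it takes a genuinely different route from the paper's. The paper avoids the chain decomposition entirely: it picks $L = b^*A$ for $A$ ample on $K'$, uses $\Pic(K)\simeq p^*\Pic(Y)$ (\cref{lem:Mori_fan}(1)) and the surjectivity of $H^0(K,p^*L)\to H^0(Y,L_Y)$ to conclude that the contraction of $K/\oK$ associated to $L$ \emph{restricts} to a contraction $b_Y$ of $Y$; since $Y$ is a surface and $L_Y$ is semi-ample, $b_Y$ is automatically a regular birational morphism, which gives (3) in one stroke. Then the genus formula together with $-K_Y$ ample forces each contracted irreducible curve to be a $(-1)$-curve, and negative definiteness of the intersection matrix of the contracted curves (Grauert--Mumford) forces them to be disjoint. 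Adjunction gives $N_{C/K}\simeq\cO(-1)^{\oplus 2}$ as you do, and then (2) follows from uniqueness of the flop. For (4) the paper observes that the flop is precisely the elementary transformation of the line bundle $K\to Y$ along $E=\ex(b)$, turning it into $K\otimes\cO(-E)\simeq b_Y^*(K_{Y'})$, a tidier formulation of your local conifold/chart computation. Your approach replaces the two global inputs (the restriction-of-contraction argument, and Grauert--Mumford negative definiteness) by explicit local models (Atiyah flop charts) plus a careful induction along a generic path in $\MovFan(K)$, using that del Pezzos carry no curves of self-intersection $\le -2$ to maintain disjointness across the chain. This is more elementary and more explicit, at the cost of the bookkeeping you yourself flagged — in particular, your observation that $N_{C/Y_j}\subset N_{C/K_j}$ is a \emph{constant} subbundle, hence a fiber of $p_2$, is exactly the point the paper sidesteps by getting regularity of $b_Y$ from the contraction interpretation. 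Both proofs are complete; the paper's is shorter, yours makes the local geometry of each flop visible.
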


\begin{proof}
		Since $K \to \oK$ is an isomorphism outside the zero section $Y\subset K$, the exceptional locus $\ex(b)$ lies in $Y\subset K$.
	Let $L\coloneqq b^*A$ for an ample line bundle $A$ on $K'$.
	By \cref{lem:Mori_fan}(1), $L\simeq p^*L_Y$ for a line bundle $L_Y$ on $Y$.
	Since the restriction $H^0(K,p^*L)\to H^0(Y,L)$ is surjective (viewed as $\cO(\oK)$-module), the contraction of $K/\oK$ given by $L$ restricts to the contraction of $Y$ given by $L_Y$, denoted by $b_Y\colon Y\dasharrow Y'$.
	Since $b$ is small, $b_Y$ is birational.
	Since $Y$ is 2-dimensional, $b_Y$ is regular.
	By the genus formula, the exceptional locus of $b_Y$ is a union of $(-1)$-curves.
		It is a disjoint union because the intersection matrix of the irreducible components must be negative definite.
		Then by the adjunction formula, the normal bundle $\cN_{C\subset K}$ is isomorphic to $\cO(-1)\oplus\cO(-1)$.
	It follows from the uniqueness of flop of a small contraction that $b$ is the $(-1,-1)$-flop of these curves.
	This shows statements (1-3).
	
	Statement (4) follows from a simple explicit computation relating the flop to the elementary transformation of the line bundle $K \to Y$ along $E\coloneqq\ex(b)\subset Y\subset K$, which transforms it into $K \otimes \cO(-E) \simeq b_Y^*(K_{Y'})$.
	\end{proof}

\begin{proposition} \label{prop:secondary_fan_del_Pezzo}
	We call a $(-1)$-curve in $Y$ either \emph{boundary} or \emph{internal} depending on whether it is a component of $D\subset Y$.
	Each maximal cone of $\MovSec(K)$ is the union of maximal cones of $\MovFan(K)$ corresponding to SQMs $K\dasharrow K'$ with the same set of boundary exceptional $(-1)$-curves.
	The bogus cones of $\Sec(K)$ are described in \cref{lem:bogus_K}.
\end{proposition}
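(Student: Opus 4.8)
The plan is to combine \cref{thm:convex_cone} with the explicit description of SQMs in \cref{lem:SQM_of_K}, and to reduce everything to a tropical computation of the difference of two sections $\varphi_{K'},\varphi_{K''}$ by means of \cref{lem:SQM}. By \cref{thm:convex_cone} the maximal cones of $\MovSec(K)$ are precisely the unions $\sec(\alpha)$ of those $\Nef(K')$ for which $\varphi_{K'}=\varphi_\alpha$, and by \cref{lem:SQM_of_K} each SQM $K\dasharrow K'$ over $\oK$ is the simultaneous $(-1,-1)$-flop of the disjoint union $\ex(K\dasharrow K')$ of $(-1)$-curves in $Y\subset K$, which therefore determines and is determined by $K'$. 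Writing $\mathcal{B}(K')$ and $\mathcal{C}(K')$ for the boundary, resp.\ internal, curves occurring in $\ex(K\dasharrow K')$, the proposition follows once we establish
\[
\varphi_{K'}=\varphi_{K''}\iff\mathcal{B}(K')=\mathcal{B}(K''),
\]
the statement about the bogus cones of $\Sec(K)$ being exactly \cref{lem:bogus_K}.

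To prove this equivalence I would first record the elementary divisorial identity governing a $(-1,-1)$-flop: if $K\dasharrow K'$ flops a disjoint union of $(-1)$-curves, $Z\to K$ is the blow-up of that union, with exceptional divisor $E_C\simeq\bbP^1\times\bbP^1$ over each component $C$, and $Z\to K'$ is the contraction of the opposite ruling, then for $L$ ample on $K$ one has $p_K^*L=p_{K'}^*L\otimes\cO\!\left(-\textstyle\sum_C(L\cdot C)E_C\right)$. By \cref{lem:SQM} this yields $(\cdot\,L)\circ(\varphi_K-\varphi_{K'})=-\sum_C(L\cdot C)\,E_C^{\trop}$ on $\Gamma$, and the heart of the proof is to evaluate each $E_C^{\trop}$, using the description of $\Gamma\simeq C(\Lambda)$ in \cref{rem:Gamma}: central vertex $[Y]$, boundary vertices $v_1,\dots,v_s$ (where $v_j$ is the ray of $p^{-1}(D_j)$), and two-cells the triangles $[Y]v_jv_{j+1}$ of $\uGamma$. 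If $C$ is internal, then $-K_Y\cdot C=1$, so $C$ meets $D$ transversally at a single non-nodal point and thus contains no stratum of the central fibre $D_K=Y+\sum_jp^{-1}(D_j)$; by \cref{rem:internal_flop} (equivalently: for $v\in\Gamma$ centered at a stratum $\sigma$, the generic point of $\sigma$ lies outside $C$, so $E_C$ has unit local equation there and $E_C^{\trop}(v)=0$) this gives $E_C^{\trop}\equiv 0$. If $C=D_j$ is boundary, then $D_j=Y\cap p^{-1}(D_j)$ is a one-dimensional stratum of $D_K$ and $E_{D_j}$ is the blow-up of that codimension-two stratum; a local calculation shows $E_{D_j}^{\trop}$ is non-negative, vanishes on the cone over every simplex of $\uGamma$ not containing the edge $[Y]v_j$, and is strictly positive on the relative interior $\sigma_j^\circ$ of the two-dimensional cone $\sigma_j$ spanned by $[Y]$ and $v_j$; in particular $E_{D_{j'}}^{\trop}$ vanishes on $\sigma_j^\circ$ for $j'\neq j$.

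Granting these two computations, I would conclude by telescoping through $K$: from $\varphi_{K'}-\varphi_{K''}=(\varphi_{K'}-\varphi_K)-(\varphi_{K''}-\varphi_K)$ and the previous paragraph (internal curves contributing nothing),
\[
(\cdot\,L)\circ(\varphi_{K'}-\varphi_{K''})=\sum_{D_j\in\mathcal{B}(K')}(L\cdot D_j)\,E_{D_j}^{\trop}-\sum_{D_j\in\mathcal{B}(K'')}(L\cdot D_j)\,E_{D_j}^{\trop}.
\]
If $\mathcal{B}(K')=\mathcal{B}(K'')$ the right-hand side vanishes identically in $L$, so $\varphi_{K'}=\varphi_{K''}$; conversely, restricting to $\sigma_{j_0}^\circ$ for each $j_0$ and using $L\cdot D_{j_0}>0$ for $L$ ample shows that $\varphi_{K'}=\varphi_{K''}$ forces $D_{j_0}\in\mathcal{B}(K')\iff D_{j_0}\in\mathcal{B}(K'')$, i.e.\ $\mathcal{B}(K')=\mathcal{B}(K'')$. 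The remaining bookkeeping — that the maximal cones of $\MovSec(K)$ are thereby indexed by the sets of boundary exceptional $(-1)$-curves, and that the bogus cones of $\Sec(K)$ are as in \cref{lem:bogus_K} — is immediate. I expect the main obstacle to be the local computation of the tropicalizations $E_C^{\trop}$, in particular pinning down the support of $E_{D_j}^{\trop}$ and its positivity on $\sigma_j^\circ$; once this is in hand the rest is formal, via \cref{thm:convex_cone} and \cref{lem:SQM_of_K}.
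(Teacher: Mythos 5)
Your proposal is correct and follows essentially the same route as the paper: the paper's proof is precisely that internal flops leave $\varphi$ unchanged by \cref{rem:internal_flop}, while flopping a boundary $(-1)$-curve $C=D_j$ changes $\varphi$ at the divisorial valuation $[E]=[Y]+v_j$ of the blowup of $C\subset K$ by \cref{lem:SQM}. You have merely spelled out what the paper leaves implicit — the telescoping through $K$ and the support computation showing $E_{D_{j'}}^{\trop}$ vanishes on $\sigma_j^\circ$ for $j'\neq j$, so contributions from distinct boundary curves cannot cancel — which is exactly how one separates at the point $[E]$ the paper uses.
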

\begin{proof}
	By \cref{rem:internal_flop}, the flop of an internal $(-1)$-curve does not change $\varphi$.
	On the other hand by \cref{lem:SQM}, if we flop a boundary $(-1)$-curve  $C\subset Y\subset K$, then $\varphi([E])$ changes, where $[E]\in\Gamma(\bbZ)$ corresponds to the exceptional divisor for the blowup of $C\subset K$.
\end{proof}

\begin{remark} \label{rem:Sec_on_PicY}
	We can then equivalently describe $\Sec(K)$ as a fan structure on $\Pic(Y)_\bbR$.
	By \cref{lem:Mori_fan}, the isomorphism $\Pic(K/\oK)_\bbR\simeq\Pic(Y)_\bbR$ identifies $\Mov(K)$ with $\Eff(Y)$.
	Then each maximal cone of $\MovSec(K)$ corresponds to the union of cones in $\Pic(Y)_\bbR$ of form $f^*\Nef(Y')+\braket{\ex(f)}$ over all divisorial contractions $f\colon Y\to Y'$ with the same set of boundary exceptional $(-1)$-curves; while each bogus cone of $\Sec(K)$ corresponds to a cone in $\Pic(Y)_\bbR$ of form $\gamma+\bbR_{\ge 0}[K]$ where $\gamma$ lies in the boundary $\partial\Eff(Y)$ and is a cone of the fan we just made in $\Eff(Y)$.
\end{remark}

\begin{example}
	Consider the case when $Y$ is a del Pezzo of degree one.
	Then $D \in |-K_Y|$ is irreducible, so there are no boundary $(-1)$-curve in $Y$.
	Thus by \cref{prop:secondary_fan_del_Pezzo}, $\MovSec(K)$ has just a single maximal cone, $\Mov(K) = \Eff(Y)$, and $\Sec(K)$ is obtained by adding the cone $\gamma + \bbR_{\geq 0}[Y]\subset\Eff(K)$ for each face $\gamma$ of $\Mov(K)$.
\end{example}

\section{Rephrasing the mirror algebra using the universal torsor} \label{sec:rephrase}

Let $k$ be a field of characteristic 0, $U$ a smooth affine log Calabi-Yau $k$-variety containing an open split algebraic torus, and $U\subset Y$ a normal crossing compactification.
Let $A_Y$ denote the associated mirror algebra of \cite{Keel_Yu_The_Frobenius}.
It is a free module over the monoid ring $R_Y\coloneqq\bbZ[\NE(Y,\bbZ)]$ with basis $\Sk(U,\bbZ)$, the integer points in the essential skeleton.
Given $P_1,\dots,P_n\in\Sk(U,\bbZ)$, $n\ge 2$, write the product in $A_Y$ as
\begin{equation} \label{eq:multiplication}
\theta_{P_1}\cdots\theta_{P_n}=\sum_{Q\in\Sk(U,\bbZ)}\ \sum_{\gamma\in\NE(Y)} \chi(P_1,\dots,P_n,Q,\gamma) z^\gamma\theta_Q.
\end{equation}
The structure constants $\chi(P_1,\dots,P_n,Q,\gamma)$ are given by counts of non-archimedean analytic disks in the analytification $U^\an$ with respect to the trivial valuation on $k$ (see \cite[Definition 1.5]{Keel_Yu_The_Frobenius}).

Note that the mirror algebra involves two kinds of monomial-like objects: the theta function basis $\theta_P$ for $P\in\Sk(U,\bbZ)$, and the coefficients $z^\gamma$ for $\gamma\in\NE(Y,\bbZ)$.
In this section, we will rephrase the mirror algebra using the universal torsor over $Y$, see \cref{thm:reformulation_universal_torsor}.
In this way, we can incorporate the second sort of monomials into the first sort.
Such a reformulation will be necessary for comparing and gluing mirror algebras over different birational models of $Y$.

As in \cite[Remark 1.3]{Keel_Yu_The_Frobenius}, we can remove the independence of the mirror algebra $A_Y$ on the compactification $Y$ by setting all curve classes to 0: we put $A_U\coloneqq A_Y\otimes_{R_Y}\bbZ$, where $R_Y\to\bbZ$ sends every $z^\gamma$ to 1.

Since $Y$ contains an open algebraic torus, $\Pic(Y)$ is a lattice.
Given any sublattice $N\subset\Pic(Y)$, let $M\coloneqq N^*$, $T^N=T_M$, $\pi\colon G\to Y$, $\cT\coloneqq G|_U$, $\Sk(\cT)\to\Sk(U)$ and $\varphi\colon\Sk(U)\to\Sk(\cT)$ be as in \cref{const:universal_torsor}.
We have a natural projection $\pi_M\colon N_1(Y,\bbZ)\to M$.
Let $S_1,\dots,S_m$ be a basis of $N$, then we have $G\simeq L_1^\times\times\dots\times L_m^\times$, where $L_i$ denotes the dual of $S_i$.
Let \[f\colon [\bbD,(p_1,\dots,p_n,s)]\longrightarrow Y^\an\]
be a structure disk as in \cite[Definition 14.5]{Keel_Yu_The_Frobenius} defined over a discrete valuation field.
Let $\eta\coloneqq\partial\bbD$ be the Berkovich boundary point, $\bbD^\circ \coloneqq \bbD \setminus \{p_1,\dots,p_n\}$, $\Gamma$ the convex hull of $\eta,p_1,\dots,p_n$ in $\bbD$, and $\Gamma^\circ\coloneqq\Gamma\setminus\{p_1,\dots,p_n\}$.
By \cite[Theorem 3.7.2]{Fresnel_Rigid_analytic_geometry_and_its_applications} and \cite[Tag 0BCH]{Stacks_project}, all line bundles on $\bbD$ are trivial; in particular, $f^*(L_i)$ is trivial for every $i=1,\dots,m$.
Let $\sigma_i$ be a non-vanishing section of $f^*(L_i)$.
They give rise to a section $\sigma\colon\bbD\to f^*(G)$.
Composing with fiberwise retraction of $T_M^\an$ onto its skeleton, we obtain a section $\sigma^t\colon\Gamma^\circ\to\Sk(\cT)|_{\Gamma^\circ}$.
Furthermore, the restriction of $\varphi\colon\Sk(U)\to\Sk(\cT)$ gives a section $\varphi\colon\Gamma^\circ\to\Sk(\cT)|_{\Gamma^\circ}$.
Thus we obtain a continuous function $g\colon\sigma^t-\varphi\colon\Gamma^\circ\to M_\bbR$, as it is the difference of two sections of an $M_\bbR$-principal bundle.

\begin{lemma} \label{lem:class_via_varphi}
	We have
	\[d_\eta(g) = \pi_M ([f\colon \bbD \to Y^\an]) \in M\]
	where $d_\eta$ denotes the derivative at $\eta$ in the direction of the unique incident edge.
	Moreover $d(g)$ is zero near every $p_j$.
\end{lemma}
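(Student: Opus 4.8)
The plan is to reduce the statement to the single–line–bundle case and then to compute both $d_\eta(g)$ and the local behaviour of $dg$ near the marked points directly from the definitions of $\sigma^t$ and $\varphi$ on a skeleton of a disk. First I would observe that, since $G \simeq L_1^\times \times \dots \times L_m^\times$ and $M_\bbR \simeq \bigoplus_i \bbR$, the function $g = \sigma^t - \varphi$ splits as a tuple $(g_1, \dots, g_m)$ with $g_i = \sigma_i^t - \varphi_i$, where $\varphi_i = (\cdot S_i)\circ\varphi$ and $\sigma_i^t$ is the piecewise–linear function measuring the failure of the chosen trivialization $\sigma_i$ of $f^*L_i$ to be integral along $\Gamma^\circ$. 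Likewise $\pi_M([f])$ has $i$-th coordinate $\deg f^*S_i = (f_*[\bbD\to Y^\an])\cdot S_i$. So it suffices to prove, for a single line bundle $L = L_i$ with $S = S_i$, that $d_\eta(\sigma^t - \varphi) = \deg f^*L^\vee$ (with sign conventions as in the setup) and that $d(\sigma^t-\varphi)$ vanishes near each $p_j$.

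Next I would make the computation local on $\bbD$. On the convex hull $\Gamma \subset \bbD$, which is a finite metric tree, the function $\varphi$ is, by Construction \ref{const:varphi} and the explicit description in Remark \ref{rem:varphiBT_explicit}, the tropicalization of a \emph{rational} section of $f^*L$ that is regular and nonvanishing at the generic point of the special fiber of the semistable model, i.e.\ it records where the canonical section picked out by the reduction map sits relative to the chosen trivialization $\sigma$. Concretely, if $h \in \mathcal{H}(\eta)$ is the ratio between $\sigma$ and this canonical section, then $\sigma^t - \varphi = \operatorname{val}(h)$ as functions on $\Gamma^\circ$, valued in $\bbR$ after identifying $M_\bbR \simeq \bbR$. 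The derivative of a tropicalized rational function at a point of a skeleton, in the direction of an edge, is the order of vanishing of the corresponding divisor on the model along that branch; summing the outgoing slopes over all edges at a vertex gives zero (harmonicity away from the support of the divisor). At $\eta = \partial\bbD$ there is a unique incident edge, so $d_\eta(g)$ equals the total degree of the divisor of $h$ on the part of the model lying over $\bbD^\circ$ — but $\sigma$ is nonvanishing on all of $\bbD$, so this divisor is exactly the pullback of the divisor of the canonical section, whose degree is $-\deg f^*L = \deg f^*S$, which is the $i$-th coordinate of $\pi_M([f\colon\bbD\to Y^\an])$. For the second assertion: near a marked point $p_j$, both $\sigma$ (being nonvanishing) and the canonical reduction section are regular and nonvanishing on a small disk around $p_j$ (the structure disk has no boundary condition forcing a zero or pole there — only the boundary point $\eta$ and the $p_j$ as punctures matter), so $h$ is a unit near $p_j$ and $dg = d\operatorname{val}(h) = 0$ there.

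The main obstacle I expect is pinning down the precise identification $\sigma^t - \varphi = \operatorname{val}(h)$ with the correct sign and making rigorous the claim that $\varphi$ restricted to $\Gamma^\circ$ is the tropicalization of the distinguished section coming from the reduction map in Construction \ref{const:varphi}: this requires tracking through the Berkovich reduction map $\pi\colon \fZ_\eta \to \fZ_s$ and the point \cite[2.4.4(ii)]{Berkovich_Spectral_theory} carefully, identifying the section picked out on each fiber with the Gauss-point normalization used in Lemma \ref{lem:SQM}, and checking compatibility with the fiberwise retraction $T_M^\an \to \Sk(T_M^\an)$ used to define $\sigma^t$. Once that dictionary is in place, both claims are the standard statement that the outgoing slopes of a tropicalized section sum to the degree of its divisor, applied to the tree $\Gamma$ with its single boundary ray at $\eta$; the vanishing near $p_j$ is then immediate because $h$ is a unit there. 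I would therefore structure the write-up as: (1) reduce to one line bundle; (2) identify $g$ with the tropicalization of the ratio $h$; (3) invoke harmonicity/slope-sum on the tree $\Gamma$ to read off $d_\eta(g)$ and $d(g)|_{\text{near }p_j}$.
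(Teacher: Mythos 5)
Your high-level plan coincides with the paper's: reduce to a single line bundle $S$ with dual $L$, identify $g = \sigma^t - \varphi$ with the tropicalization of a vertical Cartier divisor on a semistable formal model $\fC$, and then read off the slope at $\eta$ and the local constancy near the $p_j$ from the slope formalism on the tree $\Gamma$. The paper makes exactly this identification, writing $\sigma^t - \varphi = F^\trop$ where $F$ is the divisor of $\sigma$ viewed as a rational section of $\ff^*(L)$ on the model, supported on the special fiber, and then cites \cite[Lemma 16.2]{Keel_Yu_The_Frobenius} (constancy of $F^\trop$ near $p_j$ for vertical $F$) and \cite[Lemma 16.3]{Keel_Yu_The_Frobenius} (the slope-degree identity $d_\eta(F^\trop) = c_1(L) \cdot [f]$) to conclude.

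The gap is in your middle step. You write that ``$d_\eta(g)$ equals the total degree of the divisor of $h$ on the part of the model lying over $\bbD^\circ$ \ldots this divisor is exactly the pullback of the divisor of the canonical section, whose degree is $-\deg f^*L$.'' This does not parse as stated. First, the canonical section picked out by the reduction map is by construction non-vanishing at the generic points of the special fiber, so its divisor is not what carries $c_1(L)$; the object whose divisor matters is the rational function $h$ (equivalently the divisor $F$ of $\sigma$ on the model), and that divisor is \emph{vertical}, supported on the special fiber of $\fC$, not a divisor on a curve with a degree in the usual sense. Second, $f\colon\bbD\to Y^\an$ is a map from a disk, not a complete curve, so ``$\deg f^*L$'' is not defined directly; the pairing $c_1(L)\cdot[f]$ is defined via the model and the pushforward of the proper part of the central fiber. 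The actual identity you need is a projection-formula computation: the slope of $F^\trop$ at $\eta$ along the unique incident edge equals the intersection of the vertical divisor $F$ with the horizontal component of the special fiber through $\eta$, which equals $c_1(L)\cdot[f]$. That is precisely \cite[Lemma 16.3]{Keel_Yu_The_Frobenius}, and without it (or an equivalent in-line argument) your slope computation has a hole. Your ``harmonicity away from the divisor'' heuristic is the right instinct, but it needs to be run on the 2-dimensional formal model, not on $\Gamma$ as an abstract metric tree, and it needs to track the intersection numbers of vertical divisors with components of the special fiber rather than ``degrees.'' Similarly, for the vanishing near $p_j$, your claim that $h$ is a unit near $p_j$ glosses over the fact that the $p_j$ are punctures mapping into the boundary $D\subset Y$, where the canonical section could a priori behave nontrivially; the clean reason, which the paper uses, is that $F$ is vertical and the special fiber of a sufficiently refined model has no components near $p_j$ contributing, which is \cite[Lemma 16.2]{Keel_Yu_The_Frobenius}.
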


\begin{remark}
	Note that $\sigma^t$ depends on the choice of the non-vanishing sections $\sigma_i$.
	Nevertheless, any other choice of $\sigma_i$ differs by an invertible function on $\bbD$, which has constant tropicalization by the maximum modulus principle.
	Hence the derivative of $\sigma^t$ is independent of choices of $\sigma_i$.
	This independence is also a consequence of \cref{lem:class_via_varphi}.
\end{remark}

\begin{proof}[Proof of \cref{lem:class_via_varphi}]
	It is enough to prove the case when $N$ has rank one, as both sides commute with projection.
	So we may assume $N$ is generated by a single line bundle $S$.
	Let $L\to Y$ be the dual line bundle.
	Note that the projection $\pi_M\colon N_1(Y,\bbZ)\to M$ is given by coupling with $c_1(S)=-c_1(L)$.
	So the statement (in this rank one case) is equivalent to
	\[-d_\eta(g) = c_1(L) \cdot [f\colon \bbD \to Y^\an].\]
	Let $\ff\colon\fC\to\widehat Y_{\kc}$ be a strictly semistable formal model of the structure disk up to passing to a base field extension.
	We view $\sigma$ as a rational section of $\ff^*(L)$ on $\fC$, and let $F$ be the associated Cartier divisor, which by assumption is supported on the central fiber.
	Taking valuation, we obtain $F^\trop\colon\Gamma^\circ\to\bbR$ (see \cite[Construction 15.1]{Keel_Yu_The_Frobenius}).
	Taking local trivialization of $\ff^*(L)$ and tracing through the definitions, we have $\sigma^t - \varphi = F^\trop$ on $\Gamma^\circ$.
		Since $F$ is supported on the central fiber, $F^\trop$ is constant on $\Gamma^\circ$ near every $p_j$ by \cite[Lemma 15.2]{Keel_Yu_The_Frobenius}, and thus its derivative is zero near every $p_j$.
	Now the result follows from \cite[Lemma 15.3]{Keel_Yu_The_Frobenius}.
\end{proof}

\begin{definition} \label{def:above}
	We say a point $P\in\Sk(\cT,\bbZ)$ is $M'$-\emph{above} $\varphi$, for some monoid $M'\subset M$, if $P-\varphi(\pi(P))\in M'$.
\end{definition}

We consider the mirror algebra $A_\cT$ for $\cT$ with all curve classes set to 0, this is independent of any compactification $\cT\subset\ocT$.
Comparing with the base extension $A_Y\otimes_{R_Y} \bbZ[M]$, we note there is a canonical identification of their $\bbZ$-bases under $\Sk(U)\times M \xrightarrow{\sim} \Sk(\cT)$, sending $(P,\gamma)\mapsto\varphi(P)+\gamma$.

Let $\NE(Y)_M \subset M$ denote the image of $\NE(Y,\bbZ)$ under the projection $\pi_M\colon\allowbreak N_1(Y,\bbZ)\allowbreak \to M$. 

\begin{theorem} \label{thm:reformulation_universal_torsor}
	The above identification of free $\bbZ$-modules gives an isomorphism of rings, $A_Y\otimes_{R_Y} \bbZ[M]\xrightarrow{\sim}A_\cT$.
	The image of $A_Y \otimes_{R_Y} \bbZ[\NE(Y)_M]$ is the sub-$\bbZ$-module of $A_\cT$ spanned by the basis elements $\theta_P$ over all $P\in\Sk(\cT,\bbZ)$ that are $\NE(Y)_M$-above $\varphi$.
\end{theorem}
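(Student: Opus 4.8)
The plan is to compare the two mirror algebras entirely through their canonical $\bbZ$-bases and the non-archimedean disk counts that define the structure constants. First I would record the identification of free $\bbZ$-modules: under the isomorphism $\Sk(U)\times M\xrightarrow{\sim}\Sk(\cT)$ of \cref{const:universal_torsor} (coming from the torus bundle structure and the section $\varphi$), the basis $(P,\gamma)$ of $A_Y\otimes_{R_Y}\bbZ[M]$ is sent to $\varphi(P)+\gamma\in\Sk(\cT,\bbZ)$. So as $\bbZ$-modules there is nothing to prove; the content is that this identification is multiplicative, and that it matches up the distinguished submodules. For multiplicativity, I would unwind \eqref{eq:multiplication} on both sides. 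On the $A_\cT$ side the structure constants count non-archimedean analytic disks in $\cT^\an$; on the $A_Y\otimes\bbZ[M]$ side they count disks in $Y^\an$ weighted by $z^{\pi_M(\gamma)}$. Since $\cT\to Y$ is a torus bundle, a disk $f\colon[\bbD,(p_1,\dots,p_n,s)]\to Y^\an$ lifts (after trivializing the pulled-back line bundles, which is possible by \cite[Theorem 3.7.2]{Fresnel_Rigid_analytic_geometry_and_its_applications}) to a disk in $\cT^\an$, and the fiberwise tropical data of the lift is governed precisely by the function $g=\sigma^t-\varphi$ of \cref{lem:class_via_varphi}. That lemma computes $d_\eta(g)=\pi_M([f])$, which is exactly the $M$-degree shift that the toric coordinate $z^{\pi_M(\gamma)}$ contributes; so the disk-count structure constants agree term by term and the map is a ring homomorphism. (Here one uses that the essential skeleton, the retraction, and the disk counts for $\cT$ depend only on $U$, not on the compactification $\ocT$, so setting curve classes to zero on the $\cT$ side is harmless.)

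Next I would handle the submodule statement. By definition $A_Y\otimes_{R_Y}\bbZ[\NE(Y)_M]$ is the span of the $(P,\gamma)$ with $\gamma\in\NE(Y)_M\subset M$. Under the identification its image is the span of the $\theta_{\varphi(P)+\gamma}$ with $\gamma\in\NE(Y)_M$, and since $\pi(\varphi(P)+\gamma)=P$ and $\varphi(\pi(\varphi(P)+\gamma))=\varphi(P)$, such a basis element $\theta_Q$ with $Q=\varphi(P)+\gamma$ satisfies $Q-\varphi(\pi(Q))=\gamma\in\NE(Y)_M$, i.e.\ $Q$ is $\NE(Y)_M$-above $\varphi$ in the sense of \cref{def:above}. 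Conversely every $Q\in\Sk(\cT,\bbZ)$ that is $\NE(Y)_M$-above $\varphi$ arises this way with $P=\pi(Q)$ and $\gamma=Q-\varphi(P)\in\NE(Y)_M$. So the image is exactly the described span as a $\bbZ$-submodule; that it is a subring follows from the ring isomorphism already established together with the fact that $\NE(Y)_M$ is a submonoid of $M$ (a quotient-image of the monoid $\NE(Y,\bbZ)$) and that $\NE(Y,\bbZ)$-positivity of exponents is preserved under the multiplication \eqref{eq:multiplication} in $A_Y$.

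The main obstacle is the first half: carefully justifying that the ring structure of $A_\cT$ is obtained from that of $A_Y$ by the torsor base change, i.e.\ that counting disks upstairs in $\cT^\an$ is the same as counting disks downstairs in $Y^\an$ with a toric bookkeeping of the line-bundle degrees. Concretely one must check that every structure disk for $\cT$ projects to a structure disk for $Y$, that the fiber directions of the lift contribute only the linear tropical data measured by $g$, and that the boundary incidence conditions and the point $s$ transform correctly — in particular that $d(g)$ vanishes near each puncture $p_j$, which is the second assertion of \cref{lem:class_via_varphi} and is what guarantees the lift is again an honest structure disk with the right asymptotics. Once this dictionary between disks in $\cT^\an$ and decorated disks in $Y^\an$ is in place, matching \eqref{eq:multiplication} on the two sides is bookkeeping, and the submodule claim is immediate from the definitions.
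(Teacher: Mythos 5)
Your proposal follows essentially the same strategy as the paper: identify bases via $\Sk(U)\times M\xrightarrow{\sim}\Sk(\cT)$, then match disk counts on the two sides using \cref{lem:class_via_varphi} by lifting structure disks from $Y^\an$ to $\cT^\an$ after trivializing the pulled-back line bundles, with the $\NE(Y)_M$-above statement then following by unwinding the definitions. The paper organizes the multiplicativity check a little more carefully — splitting it into the $\bbZ[M]$-linearity claim $\theta_{\varphi(P)+\alpha}\cdot\theta_{\varphi(0)+\beta}=\theta_{\varphi(P)+\alpha+\beta}$ (proved via a toric/wall-disjointness argument) and a moduli-space bijection over a general incidence point $b$ pinned down by the condition $\sigma(f)(s)=b$ and by choosing the $\sigma_i$ to have divisor supported at the marked point $z$ — but these are precisely the ingredients you would need to make your "bookkeeping" precise, and the conceptual route is the same.
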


\begin{proof}
	The ring isomorphism in the theorem follows from the two equalities below:
	\begin{enumerate}
	\item For any $P\in\Sk(U,\bbZ)$, $\alpha,\beta \in M$,
	\[\theta_{\varphi(P)+\alpha} \cdot \theta_{\varphi(0)+\beta} = \theta_{\varphi(P)+\alpha+\beta},\]
	\item For any $P_1,P_2,Q\in\Sk(U,\bbZ)$, $\gamma\in M$,
	\[\sum_{\substack{\gamma'\in\NE(Y,\bbZ)\\ \pi_M(\gamma')=\gamma}}\chi(P_1,P_2,Q,\gamma')=\sum_{\delta\in\NE(\ocT,\bbZ)}\chi(\varphi(P_1),\varphi(P_2),\varphi(Q)+\gamma,\delta).\]
\end{enumerate}

Fix an open algebraic torus $T\subset U$.
Choose an snc compactification $\cT\subset\ocT$ so that $\pi\colon\cT\to U$ extends to $\pi\colon\ocT\to Y$.
For statement (1):
Let $f\colon[\bbD,(p_1,p_2,s)]\to\ocT^\an$ be a general structure disk responsible for any structure constant for the multiplication $\theta_{\varphi(P)+\alpha}\cdot\theta_{\varphi(0)+\beta}$ in $A_\cT$, see \cite[Definition 14.5]{Keel_Yu_The_Frobenius}.
Let $g$ be the composition of $f$ with $\pi\colon\ocT\to Y$.
Then $g$ is a structure disk responsible for a structure constant for the multiplication $\theta_P\cdot\theta_0$ in $A_Y$.
So the spine in $\Sk(U)$ associated to $g$ is disjoint from any walls (with respect to the fixed open algebraic torus $T \subset U$).
In particular, the spine does not have any bending points, and there are no twigs attached.
It follows that the image $g(\bbD\setminus p_1)$ lies in $T^\an\subset U^\an$, hence the image $f(\bbD\setminus p_1)$ lies in $\pi^{-1}(T)^\an$ where $\pi\colon\ocT\to Y$.
Now the equality (1) follows from the multiplication rule in the toric case, see \cite[Lemma 9.4]{Keel_Yu_The_Frobenius}.

Next we prove statement (2):
Applying \cite[(14.2)]{Keel_Yu_The_Frobenius} (and its notation) to $U\subset Y$, $P_1,P_2,Q\in\Sk(U,\bbZ)$ and $\gamma'\in\NE(Y,\bbZ)$, we obtain
\[\cN(U\subset Y,P_1,P_2,Q,\gamma')\xrightarrow{\ \Phi^\an\ }V_\cM\times V_Q\subset(\cM_{0,4}\times U)^\an,\]
whose degree gives $\chi(P_1,P_2,Q,\gamma')$.
Similarly, applying to $\cT\subset\ocT$, $\varphi(P_1),\varphi(P_2),\allowbreak \varphi(Q)\allowbreak +\gamma\in\Sk(\cT,\bbZ)$ and $\delta\in\NE(\ocT,\bbZ)$, we obtain
\[\cN\big(\cT\subset\ocT,\varphi(P_1),\varphi(P_2),\varphi(Q)+\gamma,\delta\big)\xrightarrow{\ \Psi^\an\ }V_\cM\times V_{\varphi(Q)+\gamma}\subset(\cM_{0,4}\times\cT)^\an,\]
whose degree gives $\chi(\varphi(P_1),\varphi(P_2),\varphi(Q)+\gamma,\delta)$.

Pick any $\mu\in V_\cM$ and let
\[b\coloneqq(\mu,\varphi(Q)+\gamma)\in V_\cM\times V_{\varphi(Q)+\gamma}\subset(\cM_{0,4}\times\cT)^\an.\]
Make a base field extension so that $k$ becomes algebraic closed and $b\in(\cM_{0,4}\times\cT)^\an(k)$.
Then by \cite[Lemma 9.11]{Keel_Yu_The_Frobenius}, the fiber
\[\cN\big(\cT\subset\ocT,\varphi(P_1),\varphi(P_2),\varphi(Q)+\gamma,\delta\big)_b\]
is just a finite set, whose cardinality gives $\chi(\varphi(P_1),\varphi(P_2),\varphi(Q)+\gamma,\delta)$.
The point $b$ projects to a point $b'\in(\cM_{0,4}\times U)^\an(k)$, and the fiber
\[\cN(U\subset Y,P_1,P_2,Q,\gamma')_{b'}\]
is also a finite set, whose cardinality gives $\chi(P_1,P_2,Q,\gamma')$.

By \cref{lem:class_via_varphi}, the projection $\pi\colon\cT\to U$ induces a map
\begin{multline*}
\Pi\colon\coprod_{\delta\in\NE(\ocT,\bbZ)}\cN\big(\cT\subset\ocT,\varphi(P_1),\varphi(P_2),\varphi(Q)+\gamma,\delta\big)_b\\
\longto\coprod_{\substack{\gamma'\in\NE(Y,\bbZ)\\ \pi_M(\gamma')=\gamma}}\cN(U\subset Y,P_1,P_2,Q,\gamma')_{b'}.
\end{multline*}
Now it remains to show that $\Pi$ is a bijection.
We construct the inverse of $\Pi$ as follows.
Let $S_1,\dots,S_m$ be a basis of $\Pic(Y)$, and let $L_i$ denote the dual of $S_i$.
Let
\[[C,(p_1,p_2,z,s),f\colon C\to Y^\an]\in\cN(U\subset Y,P_1,P_2,Q,\gamma')_{b'},\]
with $\gamma'\in\NE(Y,\bbZ)$, $\pi_M(\gamma')=\gamma$.
For every $i=1,\dots,m$, we choose a rational section $\sigma_i$ of $f^*L_i$ whose associated divisor is supported at $z$;
note the choice is unique up to multiplication by a scalar in $k^\times$.
The rational sections induce a lift $\sigma(f)\colon C\to\ocT^\an$, and we choose the scalars uniquely so that $\sigma(f)(s)=b$.
Note that $\sigma(f)(C\setminus z)\in G^\an$ (where $G\to Y$ is the torsor associated to $N \subset \Pic(Y)$).
Then \cref{lem:class_via_varphi} implies
\[\sigma(f)\in\cN\big(\cT\subset\ocT,\varphi(P_1),\varphi(P_2),\varphi(Q)+\gamma,\delta\big)_b\]
for a unique $\delta\in\NE(\ocT)$.
The assignment $f\mapsto\sigma(f)$ gives the inverse of $\Pi$, completing the proof.
\end{proof}

\begin{remark}  \label{rem:wave}
	We note that in \cite[Remark 17.7]{Keel_Yu_The_Frobenius}, the mirror algebra is defined for any normal projective compactification $U \subset Y$, not necessarily snc.
	\cref{thm:reformulation_universal_torsor} holds as stated in that generality.
	Indeed, the mirror algebra in general is constructed by choosing an snc resolution $q\colon \tY \to Y$ which is an isomorphism over $U$, and then taking $A_Y \coloneqq A_{\tY} \otimes_{R_Y} R_{\tY}$ (which turns out to be independent of the resolution).
	We pullback the universal torsor $G \to Y$ to $\tY$.
	Note that $\cT$ (and in particular, $A_{\cT}$) does not change.
	So the result for $\tY$ implies the result for $Y$.
\end{remark} 

\section{Central fiber as an umbrella} \label{sec:umbrella}

In this section we describe the central fiber of the affine mirror family as the spectrum of a (generalized) Stanley-Reisner ring, which we call an \emph{umbrella}.
This explicit description will be useful in the proof of the main theorem.

We continue the setting of \cref{sec:rephrase}, where $U$ is a smooth affine log Calabi-Yau variety containing an open split algebraic torus, $U\subset Y$ a normal crossing compactification, and $D\coloneqq Y\setminus U$.
We have the mirror algebra $A_Y$ from \cite{Keel_Yu_The_Frobenius} over the monoid ring $R_Y\coloneqq\bbZ[\NE(Y,\bbZ)]$.
Let $\cV\coloneqq\Spec A_Y$ and $\cV_0$ the fiber over the unique 0-stratum of the base toric variety $\TV(\Nef(Y))\simeq\Spec R_Y$, which is given by the maximal monomial ideal $\fm_Y$ in $R_Y$.
Our goal here is to compute the fiber $\cV_0$.

First let us define the \emph{dual complex} $\Delta(E)$ of any normal crossing divisor $E\subset Y$, which generalizes the usual dual complex in the simple normal crossing case.
We use the terminology of $\Delta$-complex from \cite[\S 2.1]{Hatcher_Algebraic_topology}, which is a generalization of simplicial complex.

\begin{definition} \label{def:dual_complex}
	A \emph{stratum} of $E$ is an irreducible component of an iterated singular locus $\Sing(\dots\allowbreak\Sing(E))$ of $E$.
	We take an $n$-simplex for each codimension-$n$ stratum of $E$, and glue them according to the way the strata of $E$ fit together.
	The resulting $\Delta$-complex $\Delta(E)$ is call the \emph{dual complex} of $E$.
	Let $\Delta'(E)$ denote the collection of simplexes before the gluing.
	Let $\Sigma_{(Y,E)}$ denote the cone over $\Delta(E)$,  and similarly for $\Sigma'_{(Y,E)}$.
	We call $\Sigma_{(Y,E)}$ the \emph{dual cone complex}.
\end{definition}

\begin{definition} \label{def:umbrella}
	Let $D^\ess\subset D$ be the union of essential divisors, i.e.\ irreducible components where the volume form has a pole.
	Let $S'$ be the Stanley-Reisner ring for $\Delta'(D^\ess)$ (see \cite[Definition 1.6]{Miller_Combinatorial_commutative_algebra}).
	As an abelian group, it is free with basis the integer points $\Sigma'_{(Y,D^\ess)}(\bbZ)$.
	Let $q\colon\Sigma'_{(Y,D^\ess)}\to\Sigma_{(Y,D^\ess)}$ denote the quotient map.
	Let $S\subset S'$ be the subgroup with bases
	\[\theta_P\coloneqq\sum_{P'\in q\inv(P)}\theta_{P'}\]
	over all $P\in\Sigma_{(Y,D^\ess)}(\bbZ)$.
	We call $\Spec S$ an \emph{umbrella}.
\end{definition}

\begin{proposition} \label{prop:umbrella}
	We have $A_Y\otimes_{R_Y} R_Y/\fm_Y\simeq S$, identifying the basis elements via the canonical isomorphism $\Sk(U,\bbZ)\simeq\Sigma_{(Y,D^\ess)}(\bbZ)$.
	\end{proposition}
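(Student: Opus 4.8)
The plan is to make both sides into free $\bbZ$-modules on a common index set and match structure constants; the content is that the mirror-algebra product \emph{in curve class zero} is precisely the generalized Stanley--Reisner product that defines the umbrella. First, specializing the multiplication formula \eqref{eq:multiplication} along $R_Y\to R_Y/\fm_Y\simeq\bbZ$ (which kills $z^\gamma$ for $\gamma\neq 0$ and sends $z^0\mapsto 1$) exhibits $A_Y\otimes_{R_Y}R_Y/\fm_Y$ as the free $\bbZ$-module on $\Sk(U,\bbZ)$ with product $\theta_{P_1}\cdots\theta_{P_n}=\sum_Q\chi(P_1,\dots,P_n,Q,0)\,\theta_Q$; so only the curve-class-$0$ structure constants remain. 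Under the canonical identification $\Sk(U)\simeq|\Sigma_{(Y,D^\ess)}|$ of \cite[Lemma 9.5]{Keel_Yu_The_Frobenius} (restricted to integer points, identifying $\Sk(U,\bbZ)$ with $\Sigma_{(Y,D^\ess)}(\bbZ)$), the claim is then that these curve-class-$0$ structure constants are exactly those of the product on $S$ obtained, as in \cref{def:umbrella}, from the Stanley--Reisner product of $S'$ (namely $\theta_{P'}\theta_{Q'}=\theta_{P'+Q'}$ when $P',Q'$ lie in the cone over a common simplex of $\Delta'(D^\ess)$ and $0$ otherwise) via $\theta_P=\sum_{P'\in q^{-1}(P)}\theta_{P'}$. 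In particular this contains the assertion, implicit in \cref{def:umbrella}, that $S$ is a subring of $S'$.

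The identification of structure constants will be established by analyzing the non-archimedean analytic disks that compute $\chi$. A structure disk $f\colon[\bbD,(p_1,\dots,p_n,s)]\to Y^\an$ contributing to $\chi(P_1,\dots,P_n,Q,\gamma)$ has a spine in $\Sk(U)$, and the bending of this spine at the walls, the twigs hanging off it, and any positive-area components all contribute nonzero effective curve classes; hence $\gamma=0$ forces an unbent spine, no twigs, no positive-area components, so that the spine lies in a single cone $\Cone(\tau)$ of the cone complex $\Sigma'_{(Y,D^\ess)}$ and $f(\bbD)$ maps into the analytic preimage of the corresponding closed stratum $W\subset D^\ess$. Near $W$ the normal crossing pair $(Y,D^\ess)$ is étale-locally toric, with the local branches of $D^\ess$ through $W$ corresponding to the vertices of the simplex $\tau=\tau_W$, i.e.\ to the fibre of $q$ over an interior point of $\tau$; on this toric model the multiplication is the toric one $\theta_{P_1'}\cdots\theta_{P_n'}=\theta_{P_1'+\dots+P_n'}$ of \cite[Lemma 10.4]{Keel_Yu_The_Frobenius}, with $P_i'\in\Cone(\tau_W)$ the local incarnation of $P_i$. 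Thus every curve-class-$0$ contribution to $\theta_{P_1}\cdots\theta_{P_n}$ comes from a choice of stratum $W$ together with compatible lifts $(P_1',\dots,P_n')\in\prod_i q^{-1}(P_i)$ into the common cone $\Cone(\tau_W)$, producing the monomial $\theta_{P_1'+\dots+P_n'}$ of $S'$; summing these and collecting by the image under $q$ of the output is exactly the recipe by which the product on $S$ is assembled from that on $S'$. Matching the two gives the proposition.

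The main obstacle is the disk analysis of the second paragraph, together with the bookkeeping of the non-snc combinatorics. One must show rigorously that a curve-class-$0$ structure disk is ``concentrated at a single stratum'' --- unbent spine, no twigs, no positive-area components, hence no interaction between distinct local branches of $D^\ess$ --- and then carry out the reduction to the toric multiplication so that the combinatorics of $q^{-1}$ (the local branches of a possibly non-snc divisor) is matched precisely with the passage $S'\to S$; one must also check the symmetry that makes the resulting product well-defined on the $\theta_P$, which is what makes $S\subset S'$ a subring. The inputs needed --- the structural description of spines and twigs and their curve-class contributions, the toric multiplication \cite[Lemma 10.4]{Keel_Yu_The_Frobenius}, the étale-local toric model of $(Y,D^\ess)$ along a stratum, and the identification of $\Sk(U,\bbZ)$ with $\Sigma_{(Y,D^\ess)}(\bbZ)$ --- are all available from \cite{Keel_Yu_The_Frobenius}. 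Finally, when $Y$ is not snc one defines $A_Y$ by base change from an snc resolution $\widetilde Y\to Y$ that is an isomorphism over $U$ (cf.\ \cref{rem:wave}); one should then either run the argument directly on $Y$ as above, or on $\widetilde Y$ while tracking which curve classes are contracted by $\widetilde Y\to Y$ --- the disks wrapping the exceptional curves over the non-snc strata survive the passage to $R_Y/\fm_Y$, and it is precisely these extra contributions that collapse the umbrella of $\widetilde Y$ onto the (coarser) umbrella $S$ attached to $\Delta(D^\ess)$.
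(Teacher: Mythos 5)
Your outline matches the paper's: specialize along $R_Y\to R_Y/\fm_Y\simeq\bbZ$ to kill all nonzero curve classes, observe that the only remaining structure constants come from class-$0$ disks whose spines are balanced and confined to a single maximal cell of $\Sigma_{(Y,D^\ess)}$, and identify these with the Stanley--Reisner structure constants. The paper packages the last step as the clean intermediate statement \cref{prop:count_balanced_spine} (a balanced spine in a maximal cell has $N(h,0)=1$ and $N(h,\gamma)=0$ otherwise), which it proves via \cite[Prop.\ 16.13]{Keel_Yu_The_Frobenius} in general (under an assumption) and via \cref{prop:count_balanced_spine_dim2} in dimension two.

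The genuine gap in your sketch is the reduction to ``toric multiplication'' via an \'etale-local toric model of $(Y,D^\ess)$ along a stratum $W$. The counts $N(h,\gamma)$ and the structure constants $\chi$ are global invariants of the pair $U\subset Y$, and \cite[Lemma 10.4]{Keel_Yu_The_Frobenius} is stated for an honest open algebraic torus $T\subset U$, not for an \'etale chart; an \'etale-local identification does not automatically transport moduli spaces of structure disks or their counts, so one cannot conclude without a local-to-global argument. The paper's two-dimensional route is to cut the spine into pieces using the gluing formula \cite[Thm.\ 13.4]{Keel_Yu_The_Frobenius} and then, for each piece, exhibit a genuine open torus $T\subset U$ whose analytification contains the relevant tube (\cref{lem:torus_atlas}, \cref{lem:enough_tori}); in higher dimensions it leans on \cite[Prop.\ 16.13]{Keel_Yu_The_Frobenius}. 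Minor slips: the spine lives in $\Sk(U)\simeq|\Sigma_{(Y,D^\ess)}|$, not literally in $\Sigma'_{(Y,D^\ess)}$ (the lift to $\Sigma'$ is additional data furnished by the disk); ``$f(\bbD)$ maps into the analytic preimage of $W$'' should say a \emph{tube} around $W$; and ``positive-area components'' is log/symplectic terminology that does not describe non-archimedean structure disks. Your last paragraph correctly identifies the $q\colon\Sigma'\to\Sigma$ bookkeeping as what makes $S\subset S'$ a subring, but note that in this section $Y$ is already a normal-crossing compactification; the $\Delta$-complex subtlety comes from self-intersecting components of $D^\ess$, not from a non-snc $Y$ requiring a resolution (that variant is the content of \cref{rem:wave}, a different reduction).
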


In order to prove \cref{prop:umbrella}, we need to compute the multiplication rule on $A_Y\otimes_{R_Y} R_Y/\fm_Y$.
The only structure disks which contribute (modulo $\fm_Y$) have zero curve classes.
From \cite[Definition 7.1]{Keel_Yu_The_Frobenius}, the spine associated to a general structure disk of class 0 must map to the interior of a maximal cell of $\Sigma_{(Y,D)}$ and must be balanced.
So \cref{prop:umbrella} follows from the following:

\begin{proposition} \label{prop:count_balanced_spine}
	Let $h\colon\Gamma\to\oSk(U)$ be a spine in the essential skeleton $\Sk(U)$ (see \cite[Definition 9.1]{Keel_Yu_The_Frobenius}).
	Assume that $h(\Gamma)\cap\Sk(U)$ lies in the interior of a maximal cell of $\Sigma_{(Y,D^\ess)}$, and that $h$ is balanced (at the interior vertices of $\Gamma$).
	Then the count $N(h,0)=1$, and $N(h,\gamma)=0$ for all $\gamma\neq 0$.
\end{proposition}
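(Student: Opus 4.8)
The plan is to reduce to the enumerative geometry of non-archimedean analytic disks developed in \cite{Keel_Yu_The_Frobenius}, using the strong constraint imposed by the hypothesis that the spine lands in the interior of a single maximal cell of $\Sigma_{(Y,D^\ess)}$. First I would recall that a count $N(h,\gamma)$ is defined (as in \cite[\S 8]{Keel_Yu_The_Frobenius}) as a degree of a map from a moduli space of stable maps realizing the spine $h$ with prescribed curve class $\gamma$; since we are working modulo $\fm_Y$, the only classes that survive are $\gamma = 0$, so it suffices to handle the $\gamma = 0$ case and observe that no disk of positive class can have its spine lie in the interior of a maximal cell (its spine would have to pass through a wall, or be non-compact toward the boundary, contradicting the interior hypothesis). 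This is the step where I would lean on the combinatorial description of spines and twigs from \cite[\S 8, \S 10]{Keel_Yu_The_Frobenius}: a balanced spine with all vertices interior to one maximal cell and no attached twigs forces the associated analytic disk to map entirely into the open torus orbit corresponding to that cell.

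The heart of the argument is then a local/toric computation. Since $h(\Gamma)$ lies in the interior of a maximal cell $\sigma$ of $\Sigma_{(Y,D^\ess)}$, the corresponding $0$-stratum of $D^\ess$ has a neighborhood in $Y$ which is (formally, or analytically) toric, with the $\ess$-divisors through that stratum playing the role of the toric boundary. In this toric local model, counting balanced spines reduces to the toric multiplication rule, and by \cite[Lemma~10.4]{Keel_Yu_The_Frobenius} (the toric case of the mirror algebra, where $\theta_P \cdot \theta_Q = \theta_{P+Q}$) there is exactly one disk, of class $0$, realizing a given balanced tropical configuration, with multiplicity one. Concretely I would: (i) identify the relevant étale/formal toric chart around the $0$-stratum; (ii) match the spine $h$ with a fan tropical curve in that chart; (iii) invoke the toric count to conclude $N(h,0)=1$. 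The balancing hypothesis is exactly what guarantees the tropical curve closes up into an honest (fan) curve, and the interior hypothesis is what guarantees we never leave the toric chart.

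The main obstacle I anticipate is step (ii)–(iii): making precise that the non-archimedean disk count $N(h,0)$ computed in $U^\an$ genuinely agrees with the toric count in the local chart, i.e.\ that no ``extra'' disks contribute from outside the toric model and that the deformation-theoretic multiplicity is exactly $1$. This requires controlling the moduli space $\cN$ of \cite[\S 15]{Keel_Yu_The_Frobenius} over the relevant base point: one must show it is a reduced single point. I expect this to follow by combining the fact that a class-$0$ disk with spine in a maximal cell is rigid (its image is forced by the balancing condition, as in the toric analysis of \cite[\S 10]{Keel_Yu_The_Frobenius}) with a transversality/smoothness statement for $\cN$ at that point, again reducing to the toric situation where everything is explicit. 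Once that identification is in place, $N(h,0)=1$ and the vanishing $N(h,\gamma)=0$ for $\gamma \neq 0$ are immediate, and \cref{prop:umbrella} follows because the multiplication on $A_Y \otimes_{R_Y} R_Y/\fm_Y$ becomes the Stanley--Reisner multiplication $\theta_P \cdot \theta_Q = \theta_{P+Q}$ when $P,Q$ span a common cone and $0$ otherwise.
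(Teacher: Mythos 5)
Your outline is in the right spirit, but the step you yourself flag as the ``main obstacle'' is a genuine gap, and it is not resolved the way you hope. You want to pass to a (formal/analytic) toric chart around the $0$-stratum of $D^\ess$ and invoke the toric multiplication rule of \cite[Lemma 10.4]{Keel_Yu_The_Frobenius}, but $N(h,\gamma)$ is a degree of a map from a moduli space of disks into $U^\an$ \emph{globally}, not into a chart. You acknowledge that one must exclude contributions from disks whose image leaves the chart, and you propose to close this by a rigidity/transversality statement ``reducing to the toric situation where everything is explicit.'' That reduction is precisely what you are trying to justify, so as sketched the argument is circular; there is no obvious local deformation-theoretic shortcut.

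The paper closes this differently. For the general (higher-dimensional) statement it simply cites \cite[Proposition 16.13]{Keel_Yu_The_Frobenius} under Assumption 2.3 of that paper, and offers no fresh argument. For the two-dimensional case, which is all that is needed later, it does not localize near the $0$-stratum of $D$ at all. Instead, Lemma \ref{lem:torus_atlas} produces, for any primitive $E\in\Sk(U,\bbZ)$, a \emph{global} Zariski-open algebraic torus $T\subset U$ together with a toric model $Y\to\oY$ whose exceptional locus is disjoint from $E$; Lemma \ref{lem:enough_tori} then shows $\tau^{-1}(\operatorname{star}(E))\subset T^\an\subset U^\an$, i.e.\ every analytic point retracting near $E$ already lies in (the analytification of) an open torus in $U$. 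One then cuts $h$ into small pieces via the gluing formula \cite[Theorem 13.4]{Keel_Yu_The_Frobenius} so that each piece has $\tau^{-1}$-preimage inside some such $T^\an$, and applies the balanced toric count \cite[Lemma 10.4]{Keel_Yu_The_Frobenius} for that torus (this is Proposition \ref{prop:count_balanced_spine_dim2}). The ingredients missing from your sketch are thus (i) the existence of a suitable open torus in $U$ with $\tau^{-1}$-preimage control --- a global fact specific to dimension two (or to the torus-atlas hypothesis), using that $0\in\Sk(U)$ is the only singularity of the $\bbZ$-affine structure --- and (ii) the gluing formula to reduce to pieces where (i) applies. A formal neighborhood of a $0$-stratum of $D$ cannot supply (i), because $\tau^{-1}$ of a cone is a large analytic domain in $U^\an$, not a tube around that stratum.
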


The two propositions above hold under \cite[Assumption 2.4]{Keel_Yu_The_Frobenius} by Proposition 15.12 (see also Lemmas 9.4 and 14.7) in loc.\ cit..
It is possible to remove that Assumption 2.3, but for the simplicity of exposition and for the purpose of this paper, let us explain only the 2-dimensional case, which is the content of \cref{prop:count_balanced_spine_dim2}.

\subsection{Counts of balanced spines in dimension two} \label{sec:counts_balanced}

In this subsection, we assume moreover that $U$ is 2-dimensional.
Recall that any minimal snc compactification $U\subset Y$ gives a Berkovich retraction $\tau\colon U^\an\to\Sk(U)$.
The retraction does not change if we blowup 0-strata of $D$, and any two minimal snc compactifications of $U$ are related by such blowups.
Therefore the retraction $\tau$ is canonical.
Moreover, the retraction is an affinoid torus fibration outside $0\in\Sk(U)$, which induces a canonical \Zaffine structure on $\Sk(U)\setminus 0$ (see \cite[Proposition 3.6]{Yu_Enumeration_of_holomorphic_cylinders_I}, see also \cite{Nicaise_Xu_Yu_The_non-archimedean_SYZ_fibration}).

\begin{lemma} \label{lem:torus_atlas}
	Let $E \in \Sk(U,\bbZ)$ be a non-zero primitive integer point.
	Then there is an snc compactification $U \subset Y$ where $E$ has divisorial center, and a toric model (see \cite[Definition 1.18]{Gross_Mirror_symmetry_for_log_Calabi-Yau_surfaces_I_v1}) $\pi\colon Y\to\oY$ whose exceptional locus is disjoint from (the divisor corresponding to) $E$.
\end{lemma}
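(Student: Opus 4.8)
The plan is to establish the two assertions in turn. First I would arrange, by a sequence of toric (corner) blow-ups, that $E$ is a boundary divisor of an snc compactification; this is the easy half. Then, over the resulting compactification $Y$, I would produce a toric model $\pi\colon Y\to\oY$ that is an isomorphism in a neighbourhood of that divisor, which is the real content.

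For the first step, fix a minimal snc compactification $U\subset Y_0$ with $D_0\coloneqq Y_0\setminus U$ a cycle of rational curves; all its components are essential, so by the canonical identification $\Sk(U,\bbZ)\simeq\Sigma_{(Y_0,D_0)}(\bbZ)$ (see \cref{prop:umbrella} and \cite[\S 2]{Keel_Yu_The_Frobenius}) the essential skeleton $\Sk(U)$ is the dual cone complex, i.e.\ the cone over the cycle $\Delta(D_0)$. Its maximal cones $\sigma_i=\Cone([D_{0,i}],[D_{0,i+1}])$ are smooth because $Y_0$ is, so the primitive integer point $E$ either lies on a ray $[D_{0,i}]$ — in which case $D_{0,i}$ already has divisorial center $E$ — or in the interior of some $\sigma_i$, where $E=a[D_{0,i}]+b[D_{0,i+1}]$ with coprime $a,b\geq 1$. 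In the latter case, iterating corner blow-ups at the node $D_{0,i}\cap D_{0,i+1}$ and its successors subdivides $\sigma_i$ along the Stern--Brocot tree, so after finitely many steps the ray $\bbR_{\geq 0}E$ appears: $E$ is realized as a boundary component $D_E$ of an snc compactification $U\subset Y_1$.

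For the second step I must find a toric model $\pi\colon Y\to\oY$, for a suitable further blow-up $Y$ of $Y_1$, with $\ex(\pi)\cap D_E=\emptyset$ — equivalently, no curve contracted by $\pi$, whether an interior $(-1)$-curve or a boundary curve blown down at a corner, meets $D_E$. I would proceed by (i) performing corner blow-ups only at nodes of the boundary disjoint from $D_E$, making every boundary component other than (the strict transform of) $D_E$, as well as all newly created ones, have arbitrarily negative self-intersection while leaving $D_E$ — hence the valuation $E$ — untouched; and (ii) invoking the existence of a toric model after corner blow-ups \cite[Prop.\ 1.19]{Gross_Mirror_symmetry_for_log_Calabi-Yau_surfaces_I_v1} and its proof, choosing the target toric pair $(\oY,\oD)$ so that $\oD_E^2=D_E^2$, i.e.\ so that no interior blow-up of the model lies on the component $\oD_E$ corresponding to $E$. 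The point is that the negativity created in (i) is exactly what allows the constraint $\oD_i^2=D_i^2+d_i$, $d_i\geq 0$, for a toric boundary cycle to be solved with the prescribed value $d_E=0$. Granting this, $\pi$ is an isomorphism near $D_E$ by construction and the remaining assertions of the lemma are immediate.

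The hard part will be making (ii) precise: one must check that the toric-model algorithm of \cite{Gross_Mirror_symmetry_for_log_Calabi-Yau_surfaces_I_v1} can always be run with $D_E$ left alone — i.e.\ that whenever the current Looijenga pair is not toric there is an interior $(-1)$-curve disjoint from $D_E$, or a corner blow-up at a node away from $D_E$ creating one — and that the blow-ups in (i) really do provide enough room. Here one uses that $U$ is affine (so the exceptional interior curves of the model genuinely appear and must be contracted), the bookkeeping of the interior blow-up numbers $d_i$, and the classification of smooth toric boundary cycles; I expect this to come down to a finite, somewhat delicate, combinatorial verification.
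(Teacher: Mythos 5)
Your approach diverges substantially from the paper's, and the key step is left unproven; you flag this yourself. Step~(ii) --- that after enough corner blowups away from $D_E$ the toric-model algorithm of \cite[Prop.\ 1.19]{Gross_Mirror_symmetry_for_log_Calabi-Yau_surfaces_I_v1} can always be run with no interior blowup landing on $\oD_E$, i.e.\ $d_E=0$ --- is precisely the content that must be supplied, and it is not clear it holds. The constraint $\oD_i^2 = D_i^2 + d_i$ interacts with the global relation $\sum_i \oD_i^2 = 12 - 3n$ for a smooth toric cycle of length $n$, and corner blowups both shift the target of this relation (by increasing $n$) and increase the $d_j$ on the newly created components, so making the other $D_i^2$ very negative does not obviously buy you $d_E=0$. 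You explicitly defer this to ``a finite, somewhat delicate, combinatorial verification''; without that verification the proposal is a strategy, not a proof.

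The paper avoids all of this by using the standing hypothesis (inherited from \cref{sec:rephrase} into \cref{sec:counts_balanced}) that $U$ contains an open split torus $T_M$, which your argument never invokes. The identification $\Sk(U,\bbZ)\simeq M$ lets one choose a ruled toric compactification $\oY$ of $T_M$ (a Hirzebruch surface) with the boundary divisors for $E$ and $-E$ as disjoint sections; after refining the fan one gets a minimal snc compactification $Y\supset U$ with a regular toric model $\pi\colon Y\to\oY$, no boundary component $\pi$-exceptional, and $E$ automatically of divisorial center. Then if some $\pi$-exceptional divisor meets the $E$-section at a point $p$, an elementary transformation of the ruling $\oY\to\bbP^1$ at $\pi(p)$ yields a new toric target $\oY'$ for which $\pi'\colon Y\to\oY'$ is still regular but no longer has an exceptional divisor over that point, and iterating removes them all. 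There is no global self-intersection accounting; the ruling lets one repair the toric model one point at a time. Using the open-torus hypothesis is what collapses your two delicate steps into a short constructive argument.
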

\begin{proof}
	Let $T_M \subset U$ be an open algebraic torus with cocharacter lattice $M$.
	The choice identifies $\Sk(U,\bbZ) \simeq \Sk(T_M,\bbZ) \simeq M$.
	Let $T_M \subset \oY$ be a toric compactification with a ruling $\oY\to\bbP^1$, on which (the boundary divisors corresponding to) $E,-E \in M$ are disjoint sections.
	Then after replacing $\oY$ by a toric blowup (i.e.\ refining the fan), there is a minimal snc compactification $U \subset Y$, such that $E$ has divisorial center (which we denote also by $E$), $\pi\colon Y\dasharrow\oY$ is regular, and no component of $D\coloneqq Y\setminus U$ is $\pi$-exceptional.
	Suppose there is a $\pi$-exceptional divisor meeting $E$ at $p$.
	Let $\oY'$ be the elementary transformation of $\oY\to\bbP^1$ at $\pi(p)$ (i.e.\ blowup $\pi(p)$ and blowdown the strict transform of the fiber through $\pi(p)$), which is again toric (but for a different copy of $T_M \subset U$).
	Then $\pi'\colon Y \dasharrow \oY'$ is again regular, but there is no $\pi'$-exceptional divisor meeting $E$ at $p$.
	We replace $\oY$ by $\oY'$ and repeat the process until the $\pi$-exceptional locus is disjoint from $E$.
\end{proof}

Our main interest in \cref{lem:torus_atlas} is:

\begin{lemma} \label{lem:enough_tori}
	There is a finite covering of $\Sk(U)\setminus 0$ by open cones, such that $\tau^{-1}$ of each cone is contained in $T^\an\subset U^\an$ for some open algebraic torus $T\subset U$, where $\tau\colon U^\an\to\Sk(U)$ is the canonical retraction (in dimension two).
\end{lemma}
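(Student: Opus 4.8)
The plan is to cover $\Sk(U) \setminus 0$ using the toric charts provided by \cref{lem:torus_atlas}. First I would fix a finite set of primitive integer points $E_1, \dots, E_r \in \Sk(U,\bbZ)$ such that every ray of $\Sk(U)$ (with its canonical cone complex structure coming from a minimal snc compactification) is spanned by one of the $E_j$; since $\Sk(U) \setminus 0$ is a finite fan in dimension two, finitely many suffice. For each $E_j$, apply \cref{lem:torus_atlas} to obtain an snc compactification $U \subset Y_j$ in which $E_j$ has divisorial center, together with a toric model $\pi_j \colon Y_j \to \oY_j$ whose exceptional locus is disjoint from the divisor $E_j \subset Y_j$. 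Let $T_j \subset U$ denote the open algebraic torus coming from this toric model.

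Next I would exhibit the open cone around $E_j$. Because $\pi_j$ is a toric model and its exceptional locus avoids $E_j$, near the divisor $E_j$ the compactification $(Y_j, D)$ agrees with the toric compactification $(\oY_j, \partial \oY_j)$; consequently, the Berkovich retraction $\tau \colon U^\an \to \Sk(U)$ agrees near the ray $\bbR_{\ge 0} E_j$ with the toric retraction onto the fan of $\oY_j$, which over the interior of the appropriate maximal cone $\sigma_j$ of that fan has $\tau^{-1}$ landing in $T_j^\an$. So let $W_j \subset \Sk(U) \setminus 0$ be the relative interior of $\sigma_j$ (or the cone structure on $\Sk(U)$ refined so that $\bbR_{\ge 0}E_j$ is a ray and $W_j$ is the union of the open stars of that ray's adjacent maximal cones in the $\oY_j$-fan that still agree with $(Y_j,D)$). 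Then $\tau^{-1}(W_j) \subset T_j^\an \subset U^\an$, using that $\tau$ is canonical (independent of the minimal snc compactification) as recalled just before \cref{lem:torus_atlas}. Finally, since each $E_j$ lies in $W_j$ and the $E_j$ meet every ray, the union $\bigcup_j W_j$ is an open neighborhood of every ray of $\Sk(U)$; shrinking or subdividing slightly one checks these open cones actually cover all of $\Sk(U) \setminus 0$ (every point lies on some ray's star), and we are done.

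The main obstacle I expect is the compatibility claim in the middle step: verifying that over $W_j$ the canonical retraction $\tau$ really does factor through the toric retraction of $\oY_j$, i.e.\ that $\tau^{-1}(W_j) \subset T_j^\an$. This requires knowing that blowing down the $\pi_j$-exceptional locus — which is disjoint from $E_j$ and hence from the strata of $D$ lying over $W_j$ — does not alter the retraction over that region, which follows from the fact (recalled in the text) that $\tau$ is unchanged under blowups of $0$-strata of $D$ and that any two minimal snc compactifications differ by such blowups, combined with a local analysis near $E_j$ where $(Y_j,D)$ is already toric. Everything else is bookkeeping with the finitely many cones of the two-dimensional skeleton.
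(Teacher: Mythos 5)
Your overall strategy matches the paper's: for suitable primitive integer points $E$, invoke \cref{lem:torus_atlas} to get a toric model $\pi\colon Y\to\oY$, observe that the exceptional locus of $\pi$ being disjoint from $E$ makes the retraction $\tau$ agree with the toric one near the star of $E$, and then assemble these stars into a cover. That part is right and is exactly what the paper does.

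The place where you diverge, and where there is a real gap, is the covering argument. You fix one minimal snc compactification $Y_0$, pick one primitive point $E_j$ on each of its finitely many rays, and claim these suffice. But the open cone $W_j$ you control is the star of $E_j$ computed in the fan $\Sigma_{(Y_j,D_j)}$ coming out of \cref{lem:torus_atlas} — which is a \emph{different} minimal compactification from $Y_0$, with potentially different adjacent rays to $E_j$ and hence a different (possibly narrower) star. So "the $E_j$ meet every ray of $Y_0$" does not give you that the $W_j$ cover $\Sk(U)\setminus 0$: you are mixing up the star in $\Sigma_{(Y_0,D_0)}$ (which covers) with the star in $\Sigma_{(Y_j,D_j)}$ (which is the one you have inclusion for). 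Your phrase "shrinking or subdividing slightly" is acknowledging this but does not resolve it. The paper sidesteps the problem by allowing \emph{all} primitive integer points $E$ (for each, the star $\operatorname{star}(E)$ of \cref{lem:torus_atlas}'s compactification is an open cone containing $E$), asserting that these stars cover $\Sk(U)\setminus 0$, and then extracting a finite subcover by compactness of a cross-section. That is the cleaner route: you should not try to specify the finite set up front, only prove it exists. If you want to keep your version, you need an additional argument that, for each 2-cone $\sigma_0$ of $\Sigma_{(Y_0,D_0)}$, the interior of $\sigma_0$ is covered by the stars coming from the endpoints of $\sigma_0$ together with (if necessary) finitely many more interior $E$'s, which in effect reduces to the paper's compactness step.
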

\begin{proof}
	For any non-zero primitive integer point $E\in\Sk(U,\bbZ)$, let $\pi\colon Y\to\oY$ be as in \cref{lem:torus_atlas}, and let $\operatorname{star}(E)$ be the union of open cones of $\Sigma_{(Y,D)}$ whose closure contains $E$.
	Then \cref{lem:torus_atlas} implies that $\tau\inv(\operatorname{star}(E))\subset T^\an$, for $T$ the structure torus of $\oY$.
	All such $\operatorname{star}(E)$ cover $\Sk(U)\setminus 0$, and we extract a finite covering by compactness.
\end{proof}

\begin{proposition} \label{prop:count_balanced_spine_dim2}
	Fix an snc compactification $U\subset Y$ with $D\coloneqq Y\setminus U$.
	Let $h\colon\Gamma\to\oSk(U)$ be a spine in $\Sk(U)$ (see \cite[Definition 9.1]{Keel_Yu_The_Frobenius}).
	Assume it is transverse to the dual cone complex $\Sigma_{(Y,D)}$.
	Let $f\colon C\to Y^\an$ be a map from a compact quasi-smooth curve such that the associated spine is $h$.
	Let $Z(f)\in Z_1(Y)$ be the algebraic 1-cycle associated to $f$ as in \cite[Definition 7.1]{Keel_Yu_The_Frobenius}.
	Then $h$ is balanced (at the interior vertices) with respect to the \Zaffine structure on $\Sk(U)\setminus \{0\}$ 
	if and only if $Z(f)$ is supported on boundary 1-strata of $Y$.
	
	Next assume $h$ is balanced (at the interior vertices).
	Define
	\[Z(h)\coloneqq\sum_{x\in h\inv(\Sigma^1_{(Y,D)})} \abs{d_x h \wedge e_{h(x)}} Z_{h(x)} \in Z_1(Y),\]
	where $\Sigma^1_{(Y,D)}\subset\Sigma_{(Y,D)}$ denotes the union of 1-dimensional cones, $d_x h$ denotes the derivative at $x$, $e_{h(x)}$ denotes the primitive integral vector on the ray containing $h(x)$, $\abs{\cdot}$ denotes the lattice length of the wedge product, and $Z_{h(x)}$ denotes the boundary 1-stratum of $Y$ corresponding to the ray containing $h(x)$.
	Then the count
	\[N(h,\gamma)=
	\begin{cases}
	1 &\text{ if }\gamma=Z(h)\in\NE(Y,\bbZ),\\
	0 &\text{ otherwise}.
	\end{cases}.\]
\end{proposition}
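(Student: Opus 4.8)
The strategy is to localize on $\Sk(U)\setminus\{0\}$ via the toric charts of \cref{lem:enough_tori}, reduce both assertions to the toric surface case --- where \cite[Assumption 2.3]{Keel_Yu_The_Frobenius} is vacuous --- and then globalize over the tree $\Gamma$. Concretely: a structure spine avoids the cone point $0$, so $h(\Gamma)$ is compact in $\Sk(U)\setminus\{0\}$, and \cref{lem:enough_tori} gives a finite cover by open cones $\sigma_\alpha$ with $\tau^{-1}(\sigma_\alpha)\subset T_\alpha^\an$ for open algebraic tori $T_\alpha\subset U$. After subdividing $\Gamma$ we may assume every vertex and every edge of $\Gamma$ maps into some $\sigma_\alpha$, and after passing to an snc model obtained by blowing up $0$-strata of $D$ --- harmless for both statements, and affecting neither $\tau$, $\Sk(U)$, nor its $\bbZ$-affine structure --- that each relevant $T_\alpha$ extends over $\sigma_\alpha$ to a smooth toric compactification dominated by $Y$ whose fan is refined by $\Sigma_{(Y,D)}$ there. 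On such a chart, identifying $\sigma_\alpha$ with a region of $\Sk(T_\alpha)=M_{\alpha,\bbR}$, the canonical $\bbZ$-affine structure is the standard integral one, the transversality hypothesis puts each interior vertex of $\Gamma$ in the interior of a maximal (two-dimensional) cone and makes each edge cross $\Sigma^1_{(Y,D)}$ transversally at interior points, and balancedness of $h$ at an interior vertex $v$ --- the vanishing of $\sum_{e\ni v}d_vh|_e$ in $M_\alpha$ --- can be tested inside the single chart containing $v$.

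\emph{First assertion.} The cycle $Z(f)$ of \cite[Definition 8.1]{Keel_Yu_The_Frobenius} splits as a boundary part, supported on irreducible components of $D$, plus an interior part, supported on curves not contained in $D$. Restricting $f$ over a chart around an interior vertex $v$ puts us in the toric situation, where the local structure of genus-$0$ disks in a toric surface --- the analysis behind the toric multiplication rule \cite[Lemmas 10.4 and 15.7]{Keel_Yu_The_Frobenius}, now needed only in the chart --- shows that the boundary part acquires multiplicity $\abs{d_xh\wedge e_{h(x)}}$ along the ray through $h(x)$ for each crossing point $x$, while the interior part localized at $v$ is an effective cycle that vanishes precisely when the balancing relation holds at $v$. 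Every interior component of $Z(f)$ arises in this way at a unique interior vertex; summing over all interior vertices shows that $Z(f)$ is supported on boundary $1$-strata if and only if $h$ is balanced at every interior vertex, and in that case $Z(f)=Z(h)$, which is effective so lies in $\NE(Y,\bbZ)$.

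\emph{The count.} Assume $h$ balanced. For $\gamma\neq Z(h)$, the first assertion applied to any hypothetical disk with spine $h$ shows that the moduli space of disks with spine $h$ and class $\gamma$ is empty, so $N(h,\gamma)=0$. For $\gamma=Z(h)$, the count is the cardinality of the finite moduli space $\cN$ of structure disks with spine $h$ and class $Z(h)$, cf.\ \cite[(15.2), Lemma 10.9]{Keel_Yu_The_Frobenius}. By the vertex-edge decomposition of $\Gamma$, every such disk is glued from local disks, one per vertex in its chart, along the internal edges; since $\Gamma$ is a tree there is no gluing obstruction, each internal edge imposes a transverse matching of the appropriate boundary points of the two adjacent local curves (using transversality of $h$ to $\Sigma_{(Y,D)}$), and each local moduli space is a single reduced point by the toric analysis. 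Hence $\cN$ is a fiber product of reduced points along a tree, so a single reduced point, and $N(h,Z(h))=1$. (Equivalently, one inducts on the number of vertices of $\Gamma$, peeling off a leaf vertex and applying the toric rule \cite[Lemma 10.4]{Keel_Yu_The_Frobenius}.)

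\emph{Main difficulty.} The crux is the gluing step: proving that the global non-archimedean moduli space $\cN$ genuinely is the fiber product of the toric-chart-local moduli spaces with transverse edge matchings, not merely a space dominating it. This amounts to decomposing a genus-$0$ analytic curve along $\Gamma$ across the chart overlaps, excluding hidden components or automorphisms, and checking transversality of the matchings --- which is precisely where the tree structure of $\Gamma$ and the transversality of $h$ to $\Sigma_{(Y,D)}$ take over the role played in general by \cite[\S16]{Keel_Yu_The_Frobenius} and its \cite[Assumption 2.3]{Keel_Yu_The_Frobenius}.
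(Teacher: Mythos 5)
Your strategy — reduce to the toric case via \cref{lem:enough_tori} and apply the toric multiplication rule \cite[Lemma 10.4]{Keel_Yu_The_Frobenius} — is the same one the paper uses. But there is a genuine gap in the middle, and you flagged it yourself: the ``gluing step'' that you call ``the crux'' is asserted, not proved. You claim that the global moduli space $\cN$ of structure disks with spine $h$ and class $Z(h)$ decomposes as a fiber product of chart-local moduli spaces along the internal edges of $\Gamma$, each factor a reduced point, and that the tree structure plus transversality give an unobstructed gluing. You do not establish this decomposition; you merely describe what such an argument would have to accomplish and remark that it replaces the role of \cite[\S16]{Keel_Yu_The_Frobenius}. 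This is exactly the hard content. The paper does not reprove it: it cites the gluing formula \cite[Theorem 13.4]{Keel_Yu_The_Frobenius}, which is the tool that allows one to cut $h$ into small pieces, each landing inside a single torus chart by \cref{lem:enough_tori}, and then multiply the local counts. Your proposal does not invoke Theorem 13.4 and appears not to know it exists; without it (or an independent proof of the fiber-product statement) the count is not established.

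Secondary differences: for the first assertion, the paper's argument is a one-line global observation --- since $0\in\Sk(U)$ is the only singularity of the $\bbZ$-affine structure, the spine is balanced at interior vertices iff the full tropical curve of $f$ has no twigs, and twigs are precisely what create components of $Z(f)$ off the boundary $1$-strata. Your chart-by-chart version of this (interior part of $Z(f)$ ``localized at $v$'' vanishes iff balanced at $v$) is plausible but considerably less clean, and the claim that every interior component of $Z(f)$ arises at a unique interior vertex is not justified --- twigs can attach along edges, and you have not explained why this does not matter. Also, your pointer to \cite[\S16]{Keel_Yu_The_Frobenius} and Assumption 2.3 as ``what the tree structure replaces'' misidentifies where the relevant machinery lives (\S13, not \S16, and Assumption 2.3 is orthogonal to the gluing formula). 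The bottom line: the localization to toric charts is right and is the paper's approach, but the gluing step needs \cite[Theorem 13.4]{Keel_Yu_The_Frobenius} (or a proof of the fiber-product claim), which you did not supply.
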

\begin{proof}
	Since $ 0 \in \Sk(U)$ is the only singularity of the \Zaffine structure, the spine $h$ is balanced (at the interior vertices) if and only if the tropical curve associated to $f$ has no twigs (i.e.\ branches attached to the spine).
	Since only twigs can contribute to components of $Z(f)$ not supported on boundary 1-strata of $Y$, we deduce the first assertion of the proposition.
	
	For the second assertion, using the gluing formula (\cite[Theorem 12.4]{Keel_Yu_The_Frobenius}), we can cut $h$ into small pieces, and then by \cref{lem:enough_tori}, we can assume $\tau\inv(h(\Gamma))\subset T^\an$ for an open algebraic torus $T\subset U$, where $\tau \colon U^\an\to\Sk(U)$ is the canonical retraction map.
	Then the spine $h$ is also balanced with respect to the \Zaffine structure given by $T$, so we can conclude by \cite[Lemma 9.4]{Keel_Yu_The_Frobenius} using this torus $T$.
\end{proof}

Let us deduce a corollary from \cref{prop:count_balanced_spine_dim2} which will be useful in \cref{sec:proof_of_stability}.

\begin{definition} \label{def:rigid}
	We say an effective algebraic 1-cycle $\alpha$ on a projective variety is rigid if it is the only effective cycle in its numerical equivalence class.
\end{definition}

\begin{corollary} \label{cor:count_rigid}
	Notation as in \cref{prop:count_balanced_spine_dim2}.
	Let $\gamma$ be a rigid algebraic 1-cycle supported on boundary 1-strata of $D$.
	If the count $N(h,\gamma)\neq 0$, then $h$ is balanced and $\gamma=Z(h)$.
	In this case, $N(h,\gamma)=1$.
\end{corollary}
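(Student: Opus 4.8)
The plan is to deduce the corollary formally from \cref{prop:count_balanced_spine_dim2}, using the rigidity hypothesis only to upgrade numerical equivalence of effective cycles to equality of cycles. First I would unwind the assumption $N(h,\gamma)\neq 0$: by the definition of the count (\cite[Definition 8.1]{Keel_Yu_The_Frobenius}) there is at least one map $f\colon C\to Y^\an$ from a compact quasi-smooth curve whose associated spine is $h$ and with $[Z(f)]=\gamma$ in $\NE(Y,\bbZ)$. Since $Z(f)$ is then an effective algebraic $1$-cycle numerically equivalent to the rigid cycle $\gamma$, we get $Z(f)=\gamma$ as cycles; in particular $Z(f)$ is supported on the boundary $1$-strata of $D$.

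Next I would invoke the first assertion of \cref{prop:count_balanced_spine_dim2} for this $f$: having $Z(f)$ supported on boundary $1$-strata forces $h$ to be balanced at its interior vertices. With $h$ now known to be balanced, the second assertion of \cref{prop:count_balanced_spine_dim2} applies verbatim, giving $N(h,\gamma')=1$ for $\gamma'=Z(h)$ and $N(h,\gamma')=0$ otherwise. Since $N(h,\gamma)\neq 0$, this yields simultaneously $N(h,\gamma)=1$ and $\gamma=Z(h)$ in $\NE(Y,\bbZ)$; and because $Z(h)$ is itself an effective cycle supported on boundary $1$-strata (its coefficients $\abs{d_x h\wedge e_{h(x)}}$ are lattice lengths, hence non-negative, and the $Z_{h(x)}$ are boundary $1$-strata) numerically equivalent to $\gamma$, rigidity of $\gamma$ upgrades this to the equality of cycles $\gamma=Z(h)$.

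I do not expect a genuine obstacle: the corollary is essentially a bookkeeping consequence of \cref{prop:count_balanced_spine_dim2}, the only substantive extra input being the observation that a rigid effective cycle is pinned down by its numerical class. The one point that deserves a careful sentence is that a nonzero count really does supply a geometric representative $f$ with associated spine exactly $h$ and $Z(f)$ lying in the class $\gamma$ — but this is immediate from the way $N(h,\gamma)$ is defined in \cite{Keel_Yu_The_Frobenius}.
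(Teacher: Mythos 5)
Your proposal is correct and follows essentially the same route as the paper's proof: use rigidity to force $Z(f)=\gamma$ for any contributing $f$, then apply the two assertions of \cref{prop:count_balanced_spine_dim2} to conclude balancedness and the value of the count. The paper states this more tersely, but the reasoning is identical; your expansion (e.g.\ noting that $Z(h)$ is effective and supported on boundary $1$-strata so that rigidity pins down the cycle-level equality) simply makes explicit what the paper leaves to the reader.
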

\begin{proof}
	Since $\gamma$ is rigid, if $f$ contributes to $N(h,\gamma)$ then the algebraic 1-cycle $Z(f)$ associated to $f$ is necessarily $\gamma$.
	Now the corollary follows from \cref{prop:count_balanced_spine_dim2}.
\end{proof}

We remark that in the 2-dimensional case, \cref{prop:umbrella} can also be deduced from \cite{Gross_Mirror_symmetry_for_log_Calabi-Yau_surfaces_I_v1} via the following comparison result:

\begin{proposition} \label{prop:alg_equal}
	For $(Y,D)$ 2-dimensional, our mirror algebra $A_Y$ coincides with the mirror algebra of \cite{Gross_Mirror_symmetry_for_log_Calabi-Yau_surfaces_I_v1}.
\end{proposition}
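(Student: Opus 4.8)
The plan is to compare the two algebras structure constant by structure constant. Both $A_Y$ and the Gross--Hacking--Keel mirror algebra of \cite{Gross_Mirror_symmetry_for_log_Calabi-Yau_surfaces_I_v1} are free modules over $R_Y=\bbZ[\NE(Y,\bbZ)]$ with basis indexed by $\Sk(U,\bbZ)$; after fixing an open algebraic torus $T\subset U$ with cocharacter lattice $M$, the canonical Berkovich retraction identifies $\Sk(U,\bbZ)\setminus\{0\}$ with $M\setminus\{0\}$ compatibly with the integral \Zaffine structures, and this is exactly the indexing set of theta functions used in \cite{Gross_Mirror_symmetry_for_log_Calabi-Yau_surfaces_I_v1}. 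Since both algebras are commutative and associative, it suffices to match the binary products $\theta_{P_1}\cdot\theta_{P_2}$, i.e.\ the structure constants $\chi(P_1,P_2,Q,\gamma)$ of \eqref{eq:multiplication}. Both sets of these constants are intrinsic --- independent of $T$ --- so the comparison may be performed in the fixed chart $M_\bbR$.

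Next I would exhibit both collections of structure constants as broken-line counts in a consistent scattering diagram over $R_Y$ living in $M_\bbR$ with singular locus only at the origin. For \cite{Gross_Mirror_symmetry_for_log_Calabi-Yau_surfaces_I_v1} this is the canonical scattering diagram $\fD^{\can}_{(Y,D)}$, essentially by construction. For $A_Y$, a general structure disk $f\colon[\bbD,(p_1,p_2,s)]\to Y^\an$ contributing to $\chi(P_1,P_2,Q,\gamma)$ has an associated tropical curve --- a spine with twigs --- which we may take transverse to $\Sigma_{(Y,D)}$; applying the gluing formula \cite[Theorem 13.4]{Keel_Yu_The_Frobenius} to cut this tropical curve into elementary pieces and evaluating each piece via \cref{prop:count_balanced_spine_dim2}, the count localizes onto its tropical combinatorics. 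Assembling the pieces that avoid the constraint point yields a scattering diagram $\fD^{\mathrm{NA}}$ whose wall along a ray $\rho$ carries the generating series of non-archimedean disk counts with spine $\rho$, and in which $\chi(P_1,P_2,Q,\gamma)$ is recovered from broken lines with asymptotic directions $P_1$ and $P_2$ ending near the $Q$-direction; the consistency of $\fD^{\mathrm{NA}}$ is equivalent to the associativity of $A_Y$, which is already known from \cite{Keel_Yu_The_Frobenius}.

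It then remains to prove $\fD^{\mathrm{NA}}=\fD^{\can}_{(Y,D)}$. Both are consistent scattering diagrams with the same \emph{incoming} walls: these are the rays of $\Sigma_{(Y,D)}$ carrying the boundary-divisor classes, and on the non-archimedean side the order-one part of the corresponding wall function has leading coefficient $1$, coming from the unique degenerate disk of that class (the base case of \cref{prop:count_balanced_spine_dim2}), matching the initial data of \cite{Gross_Mirror_symmetry_for_log_Calabi-Yau_surfaces_I_v1}. By the uniqueness of the consistent completion of a scattering diagram, the two diagrams coincide, and hence so do all structure constants. The hard part is exactly this last identification: it amounts to matching the non-archimedean disk counts defining the walls of $\fD^{\mathrm{NA}}$ with the logarithmic $\bbA^1$-curve counts defining $\fD^{\can}$ --- equivalently, to checking that $\fD^{\mathrm{NA}}$ is the consistent scattering diagram determined by that finite initial data --- which is a non-archimedean/logarithmic enumerative correspondence in dimension two and carries the real content of the proposition.
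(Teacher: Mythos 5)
Your approach is genuinely different from the paper's, and it is incomplete in exactly the place you flag.

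The paper's proof is two lines: since both $A_Y$ and the Gross--Hacking--Keel algebra are free $R_Y$-modules with the same theta-function basis, a multiplication rule is determined by the Frobenius trace pairing once that pairing is non-degenerate; the non-degeneracy is \cite[Theorem~1.2(1)]{Keel_Yu_The_Frobenius}, and the equality of Frobenius pairings for the two constructions is \cite[Prop.~6.1]{Mandel_Theta_bases}, where the enumerative content is the equality between certain virtual and naive relative Gromov--Witten invariants. This is a ``duality'' trick: instead of matching structure constants $c^R_{PQ}$ directly, one matches the symmetric trace $\langle\theta_P\cdot\theta_Q,\theta_R\rangle$, which has a cleaner enumerative description and for which the comparison result already exists in the literature. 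What this buys is a complete proof with essentially no new enumerative work; what it costs is that it is not self-contained --- it leans on Mandel.

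Your scattering-diagram route is the more ``direct'' comparison and would yield a stronger statement (equality of scattering diagrams, not just of algebras), but as you acknowledge it is not a proof. The specific gaps are: (i) you have not established that the non-archimedean structure constants are computed by a broken-line formalism in the first place --- i.e., that $\fD^{\mathrm{NA}}$ exists as a consistent scattering diagram whose wall functions are generating series of your disk counts; the gluing formula and \cref{prop:count_balanced_spine_dim2} give you tropical localization, but assembling that into walls carrying the precise GHK-type functions is a separate structural theorem; (ii) the claim that ``consistency is equivalent to associativity'' presupposes (i); and (iii) the matching of incoming wall functions beyond the leading term is not a base case --- it is the full non-archimedean/logarithmic correspondence, which is exactly the content that the paper outsources to Mandel's Proposition 6.1. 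So you have correctly identified what the real issue is, but you have neither proved it nor reduced it to a citable result; and you have additionally introduced the unproved intermediate step (i) that the Frobenius-pairing route avoids entirely.
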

\begin{proof}
	This follows from the non-degeneracy of the Frobenius pairing (see \cite[Theorem 1.2(1)]{Keel_Yu_The_Frobenius}), and the comparison of Frobenius pairings (see \cite[Prop.\ 6.1]{Mandel_Theta_bases}, the issue is the equality between certain virtual and naive relative Gromov-Witten invariants).
\end{proof}

\section{Toric fiber bundles} \label{sec:toric_fiber_bundle}

In this section we describe a general construction of toric stacks that will be used in \cref{sec:extension_full}.
Along the way, we correct a mistake in \cite{Fulton_Introduction_to_toric_varieties} concerning toric fiber bundles.

\begin{notation} \label{nota:TV_Z}
	We denote by $\TV(\Delta,N)$ the toric scheme (over $\bbZ$) given by a fan $\Delta$ in a lattice $N$, writing $\TV(\Delta)$ if there is no ambiguity about $N$.
	We denote by $T_N$ the algebraic torus (over $\bbZ$) with cocharacter lattice $N$.
	We denote by $\TV(\Delta,N)_\bbC$ and $T_{N,\bbC}$ the base changes to $\bbC$.
	We will build the mirror family over $\bbZ$ in \cref{sec:family_over_secondary_fan}, but will need to base change everything to $\bbC$ in Sections \ref{sec:singularities} and \ref{sec:modularity}, where the subscript $\bbC$ denoting the base change will be dropped (cf.\ \cref{nota:base_change}).
\end{notation}

\begin{construction} \label{const:tfb}
	Let $\Delta$ be a rational polyhedral fan in a lattice $N$ and $\Delta' \subset \Delta$ a subfan.
	Let $L \subset N$ be a subgroup (the quotient $N/L$ is not necessarily torsion-free).
	Assume that for each $\sigma \in \Delta$, we have $\sigma=\sigma_1+\sigma_2$ where $\sigma_1 \subset L_\bbR$, $\sigma_2 \in \Delta'$, and $L_\bbR \cap \braket{\sigma_2}_\bbR=0$ (so that in particular $\sigma_1 \times \sigma_2 \rightarrow \sigma$ is a bijection).
	
	Note that $\sigma_1,\sigma_2$ can be recovered from $\sigma$, i.e.\ $\sigma_1 = \sigma \cap L_\bbR$, and $\sigma_2 \subset \sigma$ is the unique face containing every face whose span has zero intersection with $L_\bbR$.
	The collection of cones $\sigma_1 \subset L_\bbR$ over all $\sigma \in \Delta$ forms a fan $\Delta_L$ in $L$; and the analogous collection of $\sigma_2 \subset N_\bbR$
	forms a subfan of $\Delta'$.
	Let $\tN \coloneqq L \oplus N$, and let $\tDelta$ be the collection of cones $\sigma_1 + \sigma_2 \subset \tN_\bbR$, together with their faces.
	This is a subfan of the product fan $\Delta_L \times \Delta'$.
	
	We have an exact sequence
	$$0 \rightarrow L \xrightarrow{a} L \oplus N \xrightarrow{b} N \rightarrow 0$$
	where $a(l)=(-l,l)$ and $b(l,n)=l+n$.
	Define $\cTV(\Delta,N)\coloneqq[\TV(\tDelta,\tN)/T_L]$ as a stack where we use the inclusion
	$$T_L \subset T_L \times T_N, \quad t \mapsto (t^{-1},t)$$
	induced by $a$.
	The map $b$ gives a map of fans $\tDelta \to \Delta$, and so a map $\TV(\tDelta,\tN) \to \TV(\Delta,N)$.
	This is $T_L$-invariant by the exact sequence and so induces
	\[b\colon \cTV(\Delta,N)=[\TV(\tDelta,\tN)/T_L] \longto \TV(\Delta,N).\]
	
	The projection $\tN=L \oplus N \rightarrow N$ gives a map of fans $\tDelta \to \Delta'$, and so a map $\pi_N\colon\TV(\tDelta,\tN) \to \TV(\Delta',N)$.
	This is $T_L$-equivariant (where $L\subset\tN$ via $a$) and so induces a canonical representable map
	$$
	\widebar\pi_N\colon \cTV(\Delta,N) = [\TV(\tDelta,\tN)/T_L] \longrightarrow [\TV(\Delta',N)/T_L].
	$$
\end{construction}

\begin{proposition}\label{prop:tfb}
	The following hold:
	\begin{enumerate}
		\item $\cTV(\Delta,N)$ is a Deligne-Mumford stack, and $b\colon \cTV(\Delta,N) \to \TV(\Delta,N)$ is its coarse moduli space.
		\item The map $b$ is an isomorphism over $\TV(\Delta',N)\subset\TV(\Delta,N)$.
		\item The composition
		$$
		\TV(\Delta',N) \subset \cTV(\Delta,N) \xrightarrow{\ \widebar\pi_N\ } [\TV(\Delta',N)/T_L]
		$$
		is the canonical quotient map.
	\end{enumerate}
\end{proposition}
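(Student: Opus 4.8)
\emph{Strategy and Step~1 (the key lemma).} The whole proposition reduces to one combinatorial fact about the map of fans $b\colon\tDelta\to\Delta$; once that is in hand, (1)--(3) are formal consequences of standard toric and invariant‑theoretic arguments. Write an arbitrary cone of $\tDelta$ as $\tau=\rho_1\times\rho_2$, where $\rho_1\subset L_\bbR$ is a face of some $\sigma_1$ and $\rho_2$ is a face of the corresponding $\sigma_2\in\Delta'$. Then $\braket{\tau}_\bbR=\braket{\rho_1}_\bbR\oplus\braket{\rho_2}_\bbR$ with $\braket{\rho_1}_\bbR\subset L_\bbR$ and $\braket{\rho_2}_\bbR\cap L_\bbR=0$ (the latter because cones of $\Delta'$ are $L_\bbR$-transverse by hypothesis). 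From this I extract: (i) $a(L_\bbR)\cap\braket{\tau}_\bbR=0$, since $(-l,l)\in\braket{\tau}_\bbR$ forces $l\in\braket{\rho_2}_\bbR\cap L_\bbR=0$; (ii) $b$ restricts to a linear isomorphism $\braket{\tau}_\bbR\xrightarrow{\ \sim\ }\braket{b(\tau)}_\bbR$, and $b(\tau)=\rho_1+\rho_2$ is a cone of $\Delta$, namely a face of the cone $\sigma=\sigma_1+\sigma_2$ from which $\tau$ arises; (iii) using the canonical description $\sigma_1=\sigma\cap L_\bbR$, the assignment $\tau\mapsto b(\tau)$ is a dimension‑ and face‑preserving bijection from the cones of $\tDelta$ to the cones of $\Delta$. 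Consequently, for each $\sigma=\sigma_1+\sigma_2\in\Delta$ one gets $b^{-1}\bigl(\TV(\sigma,N)\bigr)=\TV(\tau,\tN)$ for the unique preimage $\tau=\sigma_1\times\sigma_2$ (here $\TV(\sigma,N)$ denotes the affine chart), so $b$ is an affine morphism.

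\emph{Step~2 (part (1)).} By (i), the stabilizer in $T_L$ of a point of the orbit $O_\tau\subset\TV(\tDelta,\tN)$ is $a^{-1}(T_{\tN_\tau})$, $\tN_\tau\coloneqq\tN\cap\braket{\tau}_\bbR$, a diagonalizable group scheme whose cocharacter lattice is $a^{-1}(\braket{\tau}_\bbR)\cap L=0$; hence the $T_L$-action has finite stabilizers and $\cTV(\Delta,N)$ is Deligne--Mumford (over a characteristic‑zero base, which is all that is needed in this paper; cf.\ \cref{nota:TV_Z}). For the coarse moduli assertion I would argue affine‑locally. By Step~1, $b^{-1}(\TV(\sigma,N))=\TV(\tau,\tN)$ with $\tau=\sigma_1\times\sigma_2$, and $T_L$ acts on a monomial $z^m$ ($m\in\tM\coloneqq\tN^*$) with weight $a^*(m)\in L^*$, so $\cO(\TV(\tau,\tN))^{T_L}=\bbZ\bigl[\{m\in\tau^\vee\cap\tM: a^*(m)=0\}\bigr]$. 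The exact sequence $0\to M\xrightarrow{b^*}\tM\xrightarrow{a^*}L^*\to0$ (split since $N$ is free) identifies $\ker a^*$ with $b^*(M)$, and since $b(\tau)=\sigma$ one has $\tau^\vee\cap b^*(M_\bbR)=b^*(\sigma^\vee)$; hence the invariants equal $\bbZ[b^*(\sigma^\vee\cap M)]$, which $b^\#$ identifies with $\bbZ[\sigma^\vee\cap M]=\cO(\TV(\sigma,N))$. Thus $\TV(\sigma,N)=\TV(\tau,\tN)/\!\!/T_L$ is a good quotient, and since the action has finite stabilizers this good quotient is the coarse moduli space of $[\TV(\tau,\tN)/T_L]$; gluing over $\Delta$ shows $b$ is the coarse moduli space of $\cTV(\Delta,N)$. (Over $\bbC$, each fibre of $b$ is in fact a single $T_L$-orbit: on $O_\tau\to O_\sigma$ the kernel torus acts simply transitively on fibres and $a(T_L)$ maps onto it on $\bbC$-points, the cokernel on cocharacter lattices being finite.)

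\emph{Step~3 (parts (2) and (3)).} Each $\sigma'\in\Delta'$ is $L_\bbR$-transverse, so $\sigma'_1=0$ and its preimage in $\tDelta$ is $\{0\}\times\sigma'$; by Step~1, $b^{-1}(\TV(\sigma',N))=\TV(\{0\}\times\sigma',\tN)$, and here $b$ matches the lattices $\tN\cap\braket{\{0\}\times\sigma'}_\bbR$ and $N\cap\braket{\sigma'}_\bbR$ \emph{exactly} (no finite index, because the cone lies entirely in the $N$-direction), so $b\colon\TV(\{0\}\times\sigma',\tN)\to\TV(\sigma',N)$ is an $a(T_L)$-torsor. Gluing, $b$ is an isomorphism over $\TV(\Delta',N)$, which realizes $\TV(\Delta',N)$ as the open substack $[\TV(\{0\}\times\Delta',\tN)/T_L]\subset\cTV(\Delta,N)$ (with $\{0\}\times\Delta'$ the image of $\Delta'$ under $N\hookrightarrow\tN$) --- this is the inclusion appearing in (2) and (3). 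For (3): $\TV(\{0\}\times\Delta',\tN)\cong\TV(\Delta',N)\times T_L$, on which $T_L$ acts by $s\cdot(y,t)=(s\cdot y,\,s^{-1}t)$ (with $s$ acting on the first factor through the canonical $T_L\hookrightarrow T_N$), while $b(y,t)=t\cdot y$ and $\pi_N(y,t)=y$. Chasing $\TV(\Delta',N)\xleftarrow{\ b\ }\TV(\Delta',N)\times T_L\xrightarrow{\ \pi_N\ }\TV(\Delta',N)$ through the $T_L$-quotient, a point $x$ lifts to the $T_L$-orbit of $(x,e)$ ($e\in T_L$ the identity), whose image under $\widebar\pi_N$ is $[\pi_N(x,e)]=[x]$, the class for the $T_L$-action through $T_L\hookrightarrow T_N$ --- exactly the canonical quotient map. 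This proves (3), and (2) has been established along the way.

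\emph{Main obstacle.} The delicate point is the coarse‑moduli half of (1): the finite $T_L$-stabilizers are honest finite group schemes (e.g.\ $\mu_k$ when $L$ sits non‑saturated in $N$), so $b$ is not a quotient by a finite group in any naive étale‑local sense. The way through is precisely to exploit that $b^{-1}(\TV(\sigma,N))=\TV(\tau,\tN)$ is affine and to recognize $\TV(\sigma,N)$ as the GIT quotient $\TV(\tau,\tN)/\!\!/T_L$, then invoke that for a Deligne--Mumford stack the good moduli space coincides with the coarse moduli space. Everything else --- the fan combinatorics of Step~1 and the torsor and diagram arguments of Step~3 --- is routine once Step~1 is set up correctly.
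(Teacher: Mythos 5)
Your proof is correct and follows essentially the same route as the paper's: work affine chart by chart, observe that each cone $\tau=\rho_1\times\rho_2$ of $\tDelta$ meets $a(L)_\bbR$ trivially so that the $T_L$-stabilizers are finite, and identify $\TV(\sigma,N)$ as the coarse (GIT) quotient of $\TV(\tau,\tN)$, with (2) coming from the torsor structure over the cones of $\Delta'$. You simply supply the details the paper elides — the verification that $\tau\cap a(L)_\bbR=0$ for all $\tau\in\tDelta$, the explicit invariant-ring computation showing $\cO(\TV(\tau,\tN))^{T_L}=\cO(\TV(\sigma,N))$, and the diagram chase for (3) which the paper dismisses as obvious.
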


\begin{proof}
	Let $\tsigma\subset \tN_\bbR$ be a cone and $\sigma \subset N_\bbR$ its image under $b_\bbR$.
	Assume $\tsigma \cap a(L)_\bbR = 0$ so that $\tsigma \rightarrow \sigma$ is a bijection.
	Then $T_L$ acts with finite stabilizers on $\TV(\tsigma,\tN)$, and the coarse moduli space is $\TV(\sigma,N)$.
	This shows (1).
	
	For (2), observe that
	\[b^{-1}(\TV(\Delta',N))\simeq\TV(\{0\}\times\Delta',\tN)\simeq T_L\times\TV(\Delta',N),\]
	so $T_L$ acts freely on $b^{-1}(\TV(\Delta',N))$, hence (2) holds.
	(3) is obvious.
\end{proof}

\begin{remark} \label{rem:L}
	If we replace $L$ by a finite index subgroup $L'\subset L$, then the quotient stack $\cTV(\Delta,N)=[\TV(\tDelta,\tN)/T_L]$ will change, while its coarse moduli space $\TV(\Delta,N)$ stays the same.
\end{remark} 

\begin{remark} \label{rem:stabilizer}
	The stabilizer $A$ along the toric stratum of $\TV(\tDelta,\tN)$ corresponding to a cone $\tsigma=\sigma_1 \times \sigma_2 \in \tDelta$ is the kernel of the composition
	$$T_L \rightarrow T_L \times T_N \rightarrow T_{L/N_1} \times T_{N/N_2}$$
	where $N_1 \subset L$ and $N_2 \subset N$ are the subgroups generated by $\sigma_1 \cap L$ and $\sigma_2 \cap N$ respectively.
	Thus $A \simeq T_{N_1} \cap T_{N_2} \subset T_N$.
	
	If we base change to $\bbC$, the group $A_\bbC$ is isomorphic to the torsion group of the quotient $N /(N_1 \oplus N_2)$ (recall that by assumption $N_1 \cap N_2 = \{0\}$).
	Indeed, applying $(\cdot )\otimes_{\bbZ} \bbC^\times$ to the exact sequence
	$$0 \rightarrow N_1 \oplus N_2 \rightarrow N \rightarrow N/(N_1 \oplus N_2) \rightarrow 0$$
	gives
	\[A_\bbC\simeq \Tor^1_{\bbZ}(N/(N_1 \oplus N_2),\bbC^\times)\simeq\Tors N/(N_1\oplus N_2).\]
	\end{remark}

The following corollary corrects a mistake in the toric fiber bundle construction of \cite[p.\ 41]{Fulton_Introduction_to_toric_varieties}.
The fiber bundle claim in loc.\ cit.\ is wrong when the stabilizer group in \cref{rem:stabilizer} is non-trivial.

\begin{corollary}
	Let $0\to\ L\to N\to\oN\to 0$ be a short exact sequence of lattices, and let $\Delta_L$, $\Delta$ and $\oDelta$ be fans in $L$, $N$ and $\oN$ respectively that are compatible with the maps of lattices.
	Suppose there is a fan $\Delta'$ in $N$ that lifts $\oDelta$ such that the cones $\sigma\in\Delta$ are exactly $\sigma_1+\sigma_2$ with $\sigma_1\in\Delta_L$ and $\sigma_2\in\Delta'$.
	Then we may apply \cref{const:tfb}.
	In this case, the stack $[\TV(\Delta',N)/T_L]$ is a Deligne-Mumford stack with coarse moduli space $\TV(\oDelta,\oN)$, and the representable map $\widebar\pi_N\colon \cTV(\Delta,N) \rightarrow [\TV(\Delta',N)/T_L]$ is a $\TV(\Delta_L,L)$-bundle (for the étale topology).
\end{corollary}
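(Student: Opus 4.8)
The plan is to verify that \cref{const:tfb} applies in this situation and then read off the three assertions; the first two are bookkeeping with torus quotients and the third is the real content. First I would check applicability: given $\sigma\in\Delta$, write $\sigma=\sigma_1+\sigma_2$ with $\sigma_1\in\Delta_L$ and $\sigma_2\in\Delta'$ as in the hypothesis, so that $\sigma_1\subset L_\bbR$ automatically; since $\Delta'$ lifts $\oDelta$, the quotient map $N_\bbR\to\oN_\bbR$ carries $\langle\sigma_2\rangle_\bbR$ isomorphically onto $\langle\bar\sigma_2\rangle_\bbR$, where $\bar\sigma_2\in\oDelta$ is the image, whence $L_\bbR\cap\langle\sigma_2\rangle_\bbR=0$ because $L_\bbR=\ker(N_\bbR\to\oN_\bbR)$. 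So \cref{const:tfb} applies. Moreover, by hypothesis the cones $\sigma_1+\sigma_2$ exhaust the product fan (faces of $\sigma_1\times\sigma_2$ are again products of faces of cones of $\Delta_L$ and $\Delta'$), so in fact $\tDelta=\Delta_L\times\Delta'$ and hence $\TV(\tDelta,\tN)=\TV(\Delta_L,L)\times_\bbZ\TV(\Delta',N)$, with $T_L$ (acting via $a\colon l\mapsto(-l,l)$) acting by the inverse torus action on the first factor and through $T_L\subset T_N$ on the second.

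For the Deligne-Mumford claim and the coarse space I would argue exactly as in the proof of \cref{prop:tfb}. The $T_N$-orbits of $\TV(\Delta',N)$ are indexed by $\sigma_2\in\Delta'$, and the stabilizer in $T_L$ of a point of the orbit $O_{\sigma_2}$ is $\ker(T_L\to T_{N/N_{\sigma_2}})$ where $N_{\sigma_2}=\langle\sigma_2\rangle_\bbR\cap N$; since $L\cap N_{\sigma_2}\subset L_\bbR\cap\langle\sigma_2\rangle_\bbR=0$ the map $L\to N/N_{\sigma_2}$ is injective, so this stabilizer is finite, and $[\TV(\Delta',N)/T_L]$ is Deligne-Mumford. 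Its coarse space is then the categorical $T_L$-quotient of $\TV(\Delta',N)$, which I would compute chart by chart: on $\TV(\sigma_2,N)=\Spec\bbZ[\sigma_2^\vee\cap M]$ with $M=N^*$ the $T_L$-action is the grading by the image of $M$ under the surjection $M\twoheadrightarrow L^*$ dual to $L\hookrightarrow N$ (surjective since $\oN$ is torsion-free), with kernel $\oM\coloneqq\oN^*$; so the invariants are $\bbZ[\sigma_2^\vee\cap\oM]$. For $m\in\oM_\bbR$ the pairing of $m$ with $\sigma_2$ equals that of $m$ with its image $\bar\sigma_2$, hence $\sigma_2^\vee\cap\oM=\bar\sigma_2^\vee\cap\oM$ and the ring of invariants is $\cO(\TV(\bar\sigma_2,\oN))$. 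These identifications glue along the face-preserving bijection $\Delta'\to\oDelta$, $\sigma_2\mapsto\bar\sigma_2$, so the coarse space is $\TV(\oDelta,\oN)$, with structure map the toric morphism induced by $N\to\oN$.

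Finally, for the bundle statement---which I expect to be the only delicate point---I would regard the atlas $P\coloneqq\TV(\Delta',N)\to[\TV(\Delta',N)/T_L]$ as the tautological $T_L$-torsor. By the first step $\cTV(\Delta,N)=[(P\times\TV(\Delta_L,L))/T_L]$ with $T_L$ acting diagonally, and, tracing through \cref{const:tfb}, the map $\bar\pi_N$ is the one induced by the projection onto the $P$-factor; that is, $\bar\pi_N\colon\cTV(\Delta,N)\to[\TV(\Delta',N)/T_L]$ is exactly the structure map of the associated fiber bundle $P\times^{T_L}\TV(\Delta_L,L)$. Since a $T_L$-torsor is a smooth surjection, it admits sections étale-locally on its base; over any such étale chart $W$ the torsor $P$ trivializes, and there $\bar\pi_N$ becomes the projection $W\times\TV(\Delta_L,L)\to W$. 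Hence $\bar\pi_N$ is a $\TV(\Delta_L,L)$-bundle for the étale topology, as claimed. The subtlety is precisely the passage from the "twisted product" presentation of $\TV(\tDelta,\tN)$ to the recognition of $\bar\pi_N$ as an associated bundle of the tautological torsor over the (stacky) base; once that is set up, étale-local triviality is formal from the smoothness of $T_L$.
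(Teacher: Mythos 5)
Your proof is correct and follows essentially the same route as the paper's: finite stabilizers give the Deligne-Mumford property, the coarse space is computed chart by chart to be $\TV(\oDelta,\oN)$, and the bundle statement comes from recognizing $\pi_N\colon\TV(\tDelta,\tN)\to\TV(\Delta',N)$ as a (trivial) $\TV(\Delta_L,L)$-bundle and then passing to the $T_L$-quotient. The paper condenses the last step to ``taking quotient by $T_L$ on both sides''; your associated-bundle/torsor argument is just the explicit version of that, and your observation that the hypotheses force $\tDelta=\Delta_L\times\Delta'$ (so the bundle upstairs is globally a product) is a useful sharpening but not a different method.
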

\begin{proof}
	Since $L\cap\Delta'=0$, $T_L$ acts with finite stabilizers on $\TV(\Delta',N)$ and the quotient stack $[\TV(\Delta',N)/T_L]$ is a Deligne-Mumford stack with coarse moduli space $\TV(\oDelta,\oN)$.
	Next, note that the $T_L$-equivariant morphism $\pi_N\colon\TV(\tDelta, \tN) \rightarrow \TV(\Delta',N)$ is a $\TV(\Delta_L,L)$-bundle (for the Zariski topology), which is trivial over $\TV(\sigma,N)$ for each cone $\sigma \in \Delta'$.
	Taking quotient by $T_L$ on both sides, we deduce the second statement of the corollary, completing the proof.
\end{proof}

\begin{corollary} \label{cor:extend}
	Notation as in \cref{prop:tfb}, consider the $T_L$-action on $\cTV(\Delta,N)\allowbreak\to\TV(\Delta,N)$ induced by $L\hookrightarrow L\oplus N$, $l\mapsto(0,l)$.
	Then any $T_L$-equivariant family of varieties $\cX' \rightarrow \TV(\Delta',N)$ extends to a $T_L$-equivariant representable morphism $\cX \rightarrow \cTV(\Delta,N)$ of Deligne-Mumford stacks, where
	\[\cX \coloneqq \cTV(\Delta,N) \times_{[\TV(\Delta',N)/T_L]} [\cX'/T_L].\]
	
	Furthermore, assume there is a $T_L$-equivariant line bundle $\cL' \to \cX'$ with a section $\sigma'$ which is a $T_L$-eigensection of weight $\chi \in L^*$.
	Then there is a line bundle $\cL \to \cX$ extending $\cL'$, and $\sigma'$ extends to a section of $\cL$ if $\chi \in H^0(T_L,\cO)$ extends to $\TV(\Delta_L,L)$.
\end{corollary}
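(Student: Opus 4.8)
The plan is to get everything by descent from the two quotient presentations in \cref{const:tfb}, and then to produce the section by multiplying the pullback of $\sigma'$ with an explicit monomial function whose regularity is precisely the hypothesis on $\chi$.

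\textbf{The family.} Recall that a $T_L$-equivariant family $\cX'\to\TV(\Delta',N)$ is the same datum as a representable morphism $[\cX'/T_L]\to[\TV(\Delta',N)/T_L]$; the given $\cX=\cTV(\Delta,N)\times_{[\TV(\Delta',N)/T_L]}[\cX'/T_L]$ is then the pullback of that morphism along $\widebar\pi_N$, so $\cX\to\cTV(\Delta,N)$ is representable and a family of varieties. To see that $\cX$ really extends $\cX'$, recall from \cref{prop:tfb}(2),(3) that the open substack $\TV(\Delta',N)\subset\cTV(\Delta,N)$ is carried by $\widebar\pi_N$ to $[\TV(\Delta',N)/T_L]$ by the canonical quotient map; since $[\cX'/T_L]\times_{[\TV(\Delta',N)/T_L]}\TV(\Delta',N)\cong\cX'$ (the standard fact that pulling back the universal family of a quotient stack along $B\to[B/T_L]$ recovers the $T_L$-space), restricting $\cX$ to this open substack gives back $\cX'$. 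For the $T_L$-equivariance, the point is that $\widebar\pi_N$ is $T_L$-invariant for the action in the statement: that action is the one induced on $\TV(\tDelta,\tN)$ by $l\mapsto(0,l)\in T_L\times T_N$, which $\pi_N$ carries to the action of $T_L\subset T_N$ on $\TV(\Delta',N)$, and this becomes trivial after composing with $\TV(\Delta',N)\to[\TV(\Delta',N)/T_L]$. The pullback of a family along a $T_L$-invariant morphism inherits a canonical $T_L$-equivariant structure, via the identification $\mu^*\cX\cong\mathrm{pr}_2^*\cX$ induced by $\widebar\pi_N\circ\mu=\widebar\pi_N\circ\mathrm{pr}_2$, so $\cX\to\cTV(\Delta,N)$ is $T_L$-equivariant.

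\textbf{The line bundle and the section.} The same recipe applied to the $T_L$-equivariant $\cL'$ produces $\cL\to\cX$ (pull back $[\cL'/T_L]$ along $\cX\to[\cX'/T_L]$), with $\cL|_{\cX'}\cong\cL'$ and a canonical $T_L$-equivariant structure. For the section, write $\cX=[\cZ/T_L]$ with $\cZ\coloneqq\TV(\tDelta,\tN)\times_{\TV(\Delta',N)}\cX'$ carrying the diagonal $T_L$-action through the quotient (``$a$'') action on the first factor and the equivariant structure on $\cX'$; then $\cL$ is the descent of the pullback $\cL_\cZ$ of $\cL'$, and $\sigma'$ pulls back to a section $\sigma'_\cZ$ of $\cL_\cZ$ of $T_L$-weight $\chi$. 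Now observe that $(\chi,0)\in\tN^*=L^*\oplus N^*$ satisfies $a^*(\chi,0)=-\chi$, and that $(\chi,0)$ is non-negative on every cone $\sigma_1+\sigma_2$ of $\tDelta$ (with $\sigma_1\in\Delta_L$, $\sigma_2\in\Delta'$) precisely when $\chi$ is non-negative on every cone of $\Delta_L$, i.e.\ exactly when $z^\chi$ extends from $T_L$ to $\TV(\Delta_L,L)$. Under that hypothesis $z^{(\chi,0)}$ extends to a regular function $f$ on $\TV(\tDelta,\tN)$; pulling $f$ back to $\cZ$ and multiplying, $\sigma'_\cZ\cdot f_\cZ$ has $T_L$-weight $\chi+(-\chi)=0$, hence is invariant and descends to a section $\sigma$ of $\cL$ over $\cX$. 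Finally one checks $\sigma|_{\cX'}=\sigma'$: over $b^{-1}(\TV(\Delta',N))=T_L\times\TV(\Delta',N)$ one has $\cZ\cong T_L\times\cX'$ with $f$ restricting to the coordinate $z^\chi$ on the $T_L$-factor and $\sigma'_\cZ$ to the pullback of $\sigma'$, so evaluating the invariant section on the slice $\{1\}\times\cX'$ returns $\sigma'$.

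I expect the only real work to be bookkeeping: keeping the two distinct $T_L$-actions separate (the ``$a$''-action used to form the stacky quotients versus the action $l\mapsto(0,l)$ used for equivariance), verifying that $\widebar\pi_N$, $\cL$ and $\sigma$ are all equivariant for the latter, and pinning down signs so that the $T_L$-weight $\chi$ of $\sigma'$ is cancelled by exactly the monomial $z^{(\chi,0)}$ whose extendability is hypothesized. There is no deeper geometric obstacle: once $z^{(\chi,0)}$ is available as a regular function on $\TV(\tDelta,\tN)$, the extension of $\sigma'$ is forced.
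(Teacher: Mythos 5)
Your argument is correct and is essentially the paper's proof: form $\cX$ (and $\cL$) by pulling back $[\cX'/T_L]\to[\TV(\Delta',N)/T_L]$ along $\widebar\pi_N$, then produce the section by pulling $\sigma'$ back to the atlas and multiplying by the monomial $z^{(\chi,0)}$ to cancel the weight before descending. You are in fact slightly more careful than the paper at one point: the paper writes $\pi_N^*\sigma'\cdot\chi\in H^0(\TV(\tDelta,\tN),\pi_N^*\cL')$, which is a small abuse since $\cL'$ lives on $\cX'$ and its pullback lives on $\cZ=\TV(\tDelta,\tN)\times_{\TV(\Delta',N)}\cX'$ rather than on $\TV(\tDelta,\tN)$, exactly as you set it up; and you spell out the distinction between the quotient (``$a$'') action and the $l\mapsto(0,l)$ action and the resulting weight cancellation, which the paper leaves implicit.
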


\begin{proof}
	By the $T_L$-equivariance, $\cX' \to \TV(\Delta',N)$ is pulled back from $[\cX'/T_L] \to [\TV(\Delta',N)/T_L]$.
	Therefore, the formula of $\cX$ in the corollary gives the required extension.	
	
	The same construction works for the total space of $\cL' \to \cX'$.
	Recall the projection $\pi_N\colon\TV(\tDelta,\tN)\allowbreak\to\TV(\Delta',N)$.
	If $\chi\in H^0(T_L,\cO)$ extends to $\TV(\Delta_L,L)$, we define $\tsigma \coloneqq \pi_N^*(\sigma') \cdot \chi \in H^0\big(\TV(\tDelta,\tN),\allowbreak \pi_N^*\cL'\big)$.
	This is $T_L$-invariant (where $L\subset\tN$ via $a$), so it descends to a section $\sigma\in H^0(\cTV(\Delta,N),\cL)$, which by construction extends the given section $\sigma'$.
\end{proof}

\section{The mirror family over the toric variety for the secondary fan } \label{sec:family_over_secondary_fan}

In this section we construct the mirror family over the toric variety for the secondary fan $\Sec(K)$.
First we construct the mirror algebra $A_{K_\alpha}$ for every SQM $K\dasharrow K_\alpha$ (see \cref{prop:A_K}), and we relate $A_K$ with $A_Y$ in \cref{prop:relation_AKAY}.
Next we glue the mirror algebras $A_{K_\alpha}$ to get a family over the toric variety for $\MovSec(K)$, the moving part of the secondary fan.
This is achieved by rephrasing the mirror algebras using universal torsor as in \cref{sec:rephrase}.
Finally we extend the mirror family over the toric variety for the full secondary fan, using the equivariant boundary torus action, as well as the toric fiber bundle construction of \cref{sec:toric_fiber_bundle}.

\subsection{The mirror algebra for \texorpdfstring{$K$}{K}} \label{sec:A_K}

We follow \cref{nota:K} and \cref{rem:Gamma}.
We have natural identifications
\[
\NE(Y,\bbZ) \simeq \NE(K,\bbZ) \simeq \NE(\cP/\ocP,\bbZ) \subset \NE(\cP,\bbZ)
\]
generated by curves on $Y\simeq Y_0\subset\cP$, and so natural subrings
\[R_Y\coloneqq\bbZ[\NE(Y,\bbZ)]\simeq R_K\coloneqq\bbZ[\NE(K,\bbZ)]\subset \bbZ[\NE(\cP,\bbZ)].\]
For any SQM $K\dasharrow K_\alpha$ over $\oK$, we denote $R_{K_\alpha}\coloneqq\bbZ[\NE(K_\alpha,\bbZ)]$.

\begin{proposition} \label{prop:A_K}
	For any SQM $K\dasharrow K_\alpha$ over $\oK$, consider the multiplication rule for the mirror algebra $A_{\cP_\alpha}$ associated to $V\subset \cP_\alpha$.
	For any $P_1,\dots,P_n\in \Gamma(\bbZ)$, $Q\in\Sk(V,\bbZ)$ and $\gamma\in\NE(\cP_\alpha,\bbZ)$ such that the structure constant $\chi(P_1,\dots,P_n,Q,\gamma) \neq 0$, we have $Q \in \Gamma(\bbZ)\subset\Sk(V,\bbZ)$ and $\gamma\in \NE(K_\alpha,\bbZ) \subset \NE(\cP_\alpha,\bbZ)$.
	Consequently, the free $R_{K_\alpha}$-submodule of $A_{\cP_\alpha}$ with basis $\theta_P$, $P\in\Gamma(\bbZ)$ is an $R_{K_\alpha}$-subalgebra, which we denote by $A_{K_\alpha}$.
\end{proposition}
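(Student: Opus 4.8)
The plan is to reduce the statement to a tropical/combinatorial fact about the spines of the structure disks that contribute to the mirror algebra of $V \subset \cP_\alpha$. By \cite[Definition 1.5]{Keel_Yu_The_Frobenius} the structure constant $\chi(P_1,\dots,P_n,Q,\gamma)$ is a count of non-archimedean analytic disks in $V^\an$ (we may use $V^\an$ rather than $\cP_\alpha^\an$ since the disks responsible for nonzero structure constants lie in the log Calabi-Yau locus), and the associated spine $h$ in $\Sk(V)$ has endpoints $P_1,\dots,P_n$ and $Q$ and curve class $\gamma = Z(f) \in \NE(\cP_\alpha,\bbZ)$. First I would recall from \cref{rem:Gamma} that $\Gamma \simeq C(\Lambda)$ is the support of the subcone complex $\Sigma_{(V\subset K_\alpha)} \subset \Sigma_{(V \subset \cP_\alpha)}$ cut out by $\{\delta^\trop \ge 0\}$, equivalently $\{d^\trop \ge 0\}$ after using $d = $ the regular function on $K_\alpha$ given by $D$ (note $K^\times \subset K_\alpha$, so $d^\trop$ makes sense on $\Sk(V)$ and $\Gamma = \{d^\trop \ge 0\}$, $\Lambda = \{d^\trop = 1\}$).

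The key point is a convexity/positivity property of $d^\trop$ along spines. Since $d$ extends to a regular function on $\cP_\alpha$ with principal divisor $p^*D + \delta$ supported on the boundary, and since $P_1,\dots,P_n \in \Gamma(\bbZ)$ means $d^\trop(P_i) \ge 0$, I would argue that $d^\trop \circ h$ is a \emph{concave} piecewise-linear function on the spine (this is the tropicalization of $\ord$ of a global function on the disk: by the maximum modulus principle / \cite[Lemma 16.2, 16.3]{Keel_Yu_The_Frobenius}, $d^\trop$ is harmonic away from the marked points and has the correct sign of outgoing slopes at the unbounded edges, so it attains its minimum over the compact part of the spine at the endpoints). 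Hence $d^\trop(Q) \ge \min_i d^\trop(P_i) \ge 0$, which gives $Q \in \Gamma(\bbZ)$. Then, since the structure disk contributing to $\chi$ actually lands (up to the marked fibers) in $V^\an$ and in fact meets only the boundary strata of $K_\alpha$ inside $\Gamma$ — here I would invoke \cref{prop:count_balanced_spine_dim2} (or its higher-dimensional analogue under the running assumptions) identifying $Z(h)$ as a sum of boundary $1$-strata of $K_\alpha$ weighted by lattice lengths at the rays of $\Sigma_{(V\subset K_\alpha)}$ — I conclude $\gamma = Z(f) = Z(h)$ is an effective combination of curves lying in $K_\alpha$, i.e. $\gamma \in \NE(K_\alpha,\bbZ) \subset \NE(\cP_\alpha,\bbZ)$.

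With the two containments $Q \in \Gamma(\bbZ)$ and $\gamma \in \NE(K_\alpha,\bbZ)$ in hand, the "consequently" is formal: the multiplication rule \eqref{eq:multiplication} for $A_{\cP_\alpha}$, restricted to inputs $P_1,\dots,P_n \in \Gamma(\bbZ)$, only produces outputs $\theta_Q$ with $Q \in \Gamma(\bbZ)$ and coefficients $z^\gamma$ with $\gamma \in \NE(K_\alpha,\bbZ)$; thus the $R_{K_\alpha}$-span of $\{\theta_P : P \in \Gamma(\bbZ)\}$ is closed under multiplication and contains $\theta_{[Y]}$ (or the relevant unit), giving the $R_{K_\alpha}$-subalgebra $A_{K_\alpha}$, which is free over $R_{K_\alpha}$ on this basis because $A_{\cP_\alpha}$ is free over $R_{\cP_\alpha}$ on $\Sk(V,\bbZ)$ and $R_{K_\alpha} \subset R_{\cP_\alpha}$ is a subring (faithfully flat, so the $R_{K_\alpha}$-submodule on a subset of the basis is free).

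The main obstacle I anticipate is making the concavity argument for $d^\trop \circ h$ fully rigorous in the generality of \cref{prop:A_K}, i.e. controlling the outgoing slopes of $d^\trop$ along the unbounded edges of the spine (those pointing to the marked points $p_1,\dots,p_n$) so as to genuinely force the minimum to the endpoints $P_i$ — this is where one needs the precise local structure of structure disks from \cite[\S 15--16]{Keel_Yu_The_Frobenius} and, in the higher-dimensional case, the running \cite[Assumption 2.3]{Keel_Yu_The_Frobenius}. In the two-dimensional case this is exactly governed by \cref{prop:count_balanced_spine_dim2} and its corollary \cref{cor:count_rigid}, so there the argument is clean; in general it requires the analogous (currently more technical) input, which is why I would phrase the proof to lean on those earlier propositions rather than reprove the disk count.
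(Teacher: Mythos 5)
Your proposal gets the right high-level flavor (tropical/harmonicity arguments controlling how structure disks interact with the boundary divisor $\delta$), but it contains a genuine error in the $\gamma$-direction and a gap in the $Q$-direction, both of which the paper handles differently.

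For the assertion $\gamma \in \NE(K_\alpha,\bbZ)$, you invoke \cref{prop:count_balanced_spine_dim2} to identify $\gamma = Z(h)$ as an effective combination of boundary $1$-strata. This is not correct in general: \cref{prop:count_balanced_spine_dim2} concerns \emph{balanced} spines (no twigs), and the class of such a spine is indeed supported on boundary $1$-strata, but the spine of a generic structure disk contributing to a mirror structure constant is typically unbalanced, with twigs attached, and the associated cycle $Z(f)$ is usually \emph{not} supported on the boundary of $\cP_\alpha$ at all. (The entire enumerative content of the mirror algebra lies in these non-boundary curve classes.) The actual statement to prove is considerably weaker than what you assert: one only needs $Z(f)$ to avoid the $\infty$-section $Y_\infty \subset \cP_\alpha$, i.e.\ to be supported in $K_\alpha$. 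The paper establishes this via \cref{lem:structure_disk_in_K}, which shows that the special fiber of a strictly semistable formal model of the disk maps into $K_\alpha$. The engine there is \cref{lem:non-decreasing_semiample}, a maximum-modulus/harmonicity argument applied to the pullback of the Cartier divisor $\delta = Y_0 - Y_\infty$ (using that $\delta^* = -\delta$ is semi-ample on $\cP$, because $K_Y$ is antiample). So yes, harmonicity is the key tool, but it is applied to the divisor $\delta$ and a formal model, not to a balanced-spine cycle count, and the conclusion is positional (the disk stays inside $K_\alpha$) rather than combinatorial (the class equals a boundary cycle). The curve class is then read off from the special fiber via \cite[Lemma 15.6]{Keel_Yu_The_Frobenius}.

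For the assertion $Q\in\Gamma(\bbZ)$, your concavity argument is underdetermined at precisely the point that matters. The maximum modulus principle says that $d^\trop\circ h$ is minimized at the ends of the spine. But the Gauss point of the disk also maps to $Q$ (see the proof of \cref{lem:structure_disk_in_K}), so $Q$ is \emph{also} an end of the spine, and concavity alone does not distinguish $Q$ from the $P_i$: it could a priori be that $d^\trop(Q) < \min_i d^\trop(P_i)$. The paper instead reduces to $K_\alpha = K$ via \cite[Lemma 9.22(1)]{Keel_Yu_The_Frobenius} (structure disks miss codimension-two subsets, so the relevant counts are independent of the SQM), and then applies \cite[Theorem 16.8(2)]{Keel_Yu_The_Frobenius} using that $-\delta$ is nef on $\cP$. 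The reduction to $K$ is not a cosmetic step: the nefness hypothesis holds on $\cP$ but can fail for other $\cP_\alpha$, so one cannot apply the nef criterion directly on $\cP_\alpha$ as you implicitly try to do. In your write-up, no input of this kind appears, so the inequality $d^\trop(Q)\ge 0$ is not actually established.

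A minor point on the ``consequently'' part: the unit of the algebra is $\theta_0$ (the origin of $\Gamma$), not $\theta_{[Y]}$; and the freeness of the $R_{K_\alpha}$-submodule is immediate from the free $R_{\cP_\alpha}$-module structure and the inclusion of monoid rings — ``faithfully flat'' is not the relevant reason here.
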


For the proof, we introduce Lemmas \ref{lem:balancing}-\ref{lem:structure_disk_in_K}.

\begin{lemma} \label{lem:balancing}
	Let $C$ be a connected quasi-smooth $k$-analytic curve and $f_1,\dots,f_n$ regular functions on $C$.
	Let $F\coloneqq \min_i\{-\log\abs{f_i}\}\colon C\to(-\infty,+\infty]$.
	Let $S\subset C$ be the convex hull of $\partial C$.
	If $F|_{S}$ is not constant, then there exists a point $v\in\partial C$ and an edge of $S$ connected to $v$ such that $F$ achieves its minimum at $v$ and that the derivative $d_v F|_e > 0$.
\end{lemma}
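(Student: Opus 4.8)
The plan is to use a single non-archimedean input — that each $-\log\abs{f_i}$ is \emph{superharmonic}, which holds precisely because $f_i$ is a regular function, so $\operatorname{div}(f_i)$ is effective and the slope formula gives a non-positive total outgoing slope at every point in the interior of $C$ — and then to argue combinatorially on the finite graph $S$.

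After dropping any $f_i\equiv 0$, the function $F$ is continuous with values in $(-\infty,+\infty]$ and, $C$ being compact, attains a finite minimum $m=\min_C F$. Being a minimum of the superharmonic functions $-\log\abs{f_i}$, $F$ is itself superharmonic on $C$ (the pointwise estimate $d_vF|_e\le d_v(-\log\abs{f_{i_0}})|_e$, for a fixed index $i_0$ realizing $F(v)$, reduces the slope-sum inequality for $F$ to the one for $-\log\abs{f_{i_0}}$). Applying the minimum principle for superharmonic functions to the closure of each connected component of $C\setminus S$, whose boundary lies in $S$, gives $\min_S F=\min_C F=m$. On $S$ the function $F$ is piecewise linear with integer slopes, so after refining the cell structure we may assume $F|_S$ is affine on each edge; set $W\coloneqq\{x\in S:F(x)=m\}$, a nonempty subcomplex.

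Now suppose $F|_S$ is not constant, so $W\neq S$; since $S$ is connected, its boundary $\partial_S W$ in $S$ is nonempty. Fix $v\in\partial_S W$ and an edge $e$ of $S$ at $v$ along which $F$ is not locally constant near $v$ (such an $e$ exists, otherwise $v$ would be interior to $W$). Because $F|_e$ is affine with $F|_e\geq m$ and $F(v)=m$, its slope at $v$ is strictly positive: $d_vF|_e>0$. It remains to see $v\in\partial C$. If not, $v$ lies in the interior of $C$, and superharmonicity gives $\sum_{e'}d_vF|_{e'}\leq 0$, the sum over all tangent directions $e'$ of $C$ at $v$. But $F(v)=m=\min_C F$ forces $d_vF|_{e'}\geq 0$ for every $e'$, while $d_vF|_e>0$; summing contradicts $\leq 0$. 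Hence $v\in\partial C$, $F$ achieves its minimum $m$ there, and $e$ is the desired edge.

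The only real obstacle is the clean use of non-archimedean potential theory: the slope (Poincaré–Lelong) formula on quasi-smooth Berkovich curves and the resulting superharmonicity of $-\log\abs{f_i}$ for regular $f_i$, together with the minimum principle. It is exactly here that the hypothesis ``regular'' — as opposed to merely meromorphic — enters. The remaining steps (pushing the global minimum onto $S$, refining the PL structure, and locating a boundary vertex of the minimum set with a strictly increasing edge) are elementary once those facts are granted.
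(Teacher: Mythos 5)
Your proof is correct and takes essentially the same approach as the paper: both rest on the superharmonicity of each $-\log|f_i|$ on the interior of $C$ (you via the slope/Poincaré--Lelong formula, the paper via Ducros's balancing result [Ducros, 4.5.21]), combined with a maximum/minimum principle to push the global minimum of $F$ onto the boundary $\partial C$, and then the same combinatorial argument on the piecewise-linear structure of $F|_S$ to locate the boundary vertex $v$ with a strictly increasing edge. The only cosmetic difference is that the paper invokes the maximum modulus principle to place $\min_C F$ directly on $\partial C$, while you apply a minimum principle to components of $C\setminus S$ to place it on $S$ first, then use superharmonicity to force $v\in\partial C$; both routes are sound.
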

\begin{proof}
	Denote $F_i\coloneqq-\log\abs{f_i}$.
	By \cite[(4.5.21)]{Ducros_La_structure_des_courbes_analytiques}, there exists a subgraph $\Gamma\subset C$ containing $S$ such that each $F_i$ is balanced (aka harmonic) at every finite vertex $v$ of $\Gamma\setminus\partial C$, i.e.\
	\[\sum_{e\ni v} d_v F_i|_e =0\]
	summing over all edges of $\Gamma$ containing $v$.
	Hence
	\begin{equation} \label{eq:balancing}
	\sum_{e\ni v} d_v F|_e \le 0.
	\end{equation}
	By the maximum modulus principle, $F$ achieves its minimum $F_\mathrm{\min}$ at $\partial C$.
	Let $T\subset S$ be the locus where $F|_S=F_\mathrm{min}$.
	Since $F|_S$ is not constant, there exists a domain of affineness $e\subset S$ of $F|_S$ with one endpoint $v\in T$ and another endpoint $w\notin T$.
	Then by \eqref{eq:balancing}, $v$ belongs to $\partial C$, completing the proof.
\end{proof}

\begin{lemma} \label{lem:non-decreasing}
	Notation as in \cref{lem:balancing}.
	Assume $C$ is rational.
	Fix $r\in\partial C$ and assume that for any $v\in\partial C\setminus r$ and any edge $e$ of $S$ containing $v$, we have $d_v F|_e\le 0$.
	Then $F$ is non-decreasing along any simple path from $r$ to any $x\in S$.
	In particular $F$ attains its minimum at $r$.
\end{lemma}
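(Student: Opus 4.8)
The plan is to reformulate the conclusion as a statement about oriented slopes of $F$ on the finite tree $S$, and then establish it by an induction that sweeps from the leaves toward $r$. Since $C$ is rational, $S$ (the convex hull of the finite set $\partial C$) is a finite metric tree; refine its vertex set so that $F|_S$ is affine on every edge, and root $S$ at $r$. For an edge $e$ of $S$ with endpoints $u$ (nearer $r$) and $v$ (farther from $r$), write $s(e)\coloneqq d_uF|_e=-d_vF|_e$ for the slope of $F$ along $e$ in the direction away from $r$. Because $F$ is affine on each edge, the statement ``$F$ is non-decreasing along every simple path from $r$'' is equivalent to ``$s(e)\ge 0$ for every edge $e$ of $S$''. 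Granting this, for each $p\in\partial C\subset S$ the path from $r$ to $p$ gives $F(r)\le F(p)$, and combined with the maximum modulus principle (which gives $\min_C F=\min_{\partial C}F$, exactly as in the proof of \cref{lem:balancing}) this shows that $F$ attains its minimum at $r$.

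Two ingredients feed the induction. First, at a leaf $v\in\partial C$ with $v\neq r$, the hypothesis applied to the unique edge $e$ at $v$ reads $d_vF|_e\le 0$, i.e.\ $s(e)\ge 0$. Second, I claim $F|_S$ is superharmonic at every interior vertex $v\in S\setminus\partial C$, in the sense that $\sum_{e\ni v}d_vF|_e\le 0$. To see this, I would reuse the graph $\Gamma\supset S$ produced in the proof of \cref{lem:balancing}, for which inequality \eqref{eq:balancing} gives $\sum d_vF|_e\le 0$ over the edges of $\Gamma$ at $v$; it then suffices to check that each edge of $\Gamma$ at $v$ that does not lie along $S$ contributes a non-negative term. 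Such an edge determines a connected component $C'$ of $C\setminus\{v\}$, and $\overline{C'}$ contains no point of $\partial C$: otherwise, for $p\in\partial C\cap C'$, the segment $[p,v]$ — which lies in the convex hull $S$ by definition — would reach $v$ along that very edge, forcing the edge to lie along $S$. Hence the Berkovich boundary of $\overline{C'}$ is $\{v\}$, so applying the maximum modulus principle to $F=\min_i(-\log\abs{f_i})$ on $\overline{C'}$ shows $F$ attains its minimum there at $v$, whence $d_vF\ge 0$ along that edge; subtracting these non-negative terms from \eqref{eq:balancing} yields the claim.

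The induction itself is then routine: process the edges of $S$ in order of decreasing distance of their far endpoint from $r$. For an edge $e=[u,v]$ with far endpoint $v$, if $v$ is a leaf $\neq r$ we are done by the first ingredient; if $v$ is an interior vertex, let $c_1,\dots,c_m$ be the remaining edges at $v$, each having $v$ as near endpoint and far endpoint strictly farther from $r$, so $d_vF|_{c_j}=s(c_j)\ge 0$ for all $j$ by the inductive hypothesis, and superharmonicity at $v$ gives $d_vF|_e\le -\sum_j d_vF|_{c_j}\le 0$, i.e.\ $s(e)=-d_vF|_e\ge 0$. This closes the induction and proves the lemma. The step I expect to be the real obstacle is the superharmonicity of $F$ along the spine $S$ — specifically the Berkovich-geometric point that an off-spine branch at a point of $S$ carries no point of $\partial C$, so that the maximum-modulus argument can be applied to it; the tree induction is elementary bookkeeping.
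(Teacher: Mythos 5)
Your argument is correct but takes a genuinely different route from the paper's. The paper's proof works one edge of $S$ at a time: given a domain of affineness $e$, it picks an interior point $v$ of $e$, cuts $S$ into the two pieces $S_1\ni r$ and $S_2$, sets $C_2$ to be the preimage of $S_2$ under the retraction $C\to S$, and then applies \cref{lem:balancing} \emph{as a black box} to the curve $C_2$. Since $\partial C_2=\{v\}\cup(\partial C\cap C_2)$ and the hypothesis rules out the conclusion of \cref{lem:balancing} at any $w\in\partial C\cap C_2\subset\partial C\setminus r$, the conclusion must be realized at $v$ (or else $F|_{S_2}$ is constant), which directly gives $d_vF|_{e_2}\ge 0$. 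Your route instead re-opens the proof of \cref{lem:balancing}, reuses the graph $\Gamma$ and the inequality \eqref{eq:balancing}, and upgrades them to a superharmonicity inequality for $F$ at interior vertices of $S$; the key added step — which you correctly flag as the crux — is that off-spine branches of $\Gamma$ at a point of $S$ contain no point of $\partial C$ (since $S$ is a subtree, a path between two of its points stays in it), so the maximum modulus principle on each such branch gives a non-negative outgoing slope, allowing you to discard those terms from \eqref{eq:balancing}. You then finish by a leaf-to-root tree induction. Both proofs ultimately rest on the same two analytic facts (harmonicity of $\log\abs{f}$ on $\Gamma$ plus maximum modulus); the paper's version is shorter because it reuses \cref{lem:balancing} wholesale, while yours buys a purely combinatorial induction at the cost of going inside that lemma's proof. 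One small point to tighten in your write-up: a vertex of $S$ can lie in $\partial C$ without being a leaf of $S$, so the dichotomy "leaf $\neq r$" versus "interior vertex in $S\setminus\partial C$" is not exhaustive; but in the missing case the hypothesis applies directly (for $v\in\partial C\setminus r$ and the incoming edge $e$, $d_vF|_e\le 0$ gives $s(e)\ge 0$), so the fix is immediate.
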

\begin{proof}
	Since $F$ is piecewise affine on $S$, it suffices to prove that for any domain of affineness $e$ of $F|_S$, the derivative of $F|_e$ pointing away from $r$ is non-negative.
	Choose any point $v$ in the interior of $e$ and cut $S$ at $e$.
	Since $C$ is rational, $S$ is a tree, so we obtain two parts $S_1$ and $S_2$, say $S_1\ni r$.
	Let $e_2$ be the edge of $S_2$ containing $v$.
	Then it suffices to show that the derivative $d_v F|_{e_2}\ge 0$.
	Let $C_2$ be the preimage of $S_2$ by the retraction map $C\to S$.
	We achieve the proof by applying \cref{lem:balancing} to $C_2$.
\end{proof}

\begin{lemma} \label{lem:non-decreasing_semiample}
	Assume $k$ has non-trivial valuation, let $\fC$ be a rational semistable formal curve over the ring of integers $\kc$ and $C\coloneqq\fC_\eta$.
	Let $\fL$ be a line bundle on $\fC$ and $t$ a nonzero rational section of $\fL$ that is regular on $C$.
	Note that $F\coloneqq-\log\abs{t}\colon C\to (-\infty,+\infty]$ is well-defined, since different local trivializations of $\fL$ do not change the norm $\abs{t}$.
	Let $S\subset C$ be the convex hull of $\partial C$ and fix $r\in\partial C$.
	Assume that $\fL^*$ is semi-ample, and that for any $v\in\partial C\setminus r$ and any edge $e$ of $S$ containing $v$, we have $d_v F|_e\le 0$.
	Then $F$ is non-decreasing along any simple path from $r$ to any $x\in S$.
	In particular $F$ attains its minimum at $r$.
\end{lemma}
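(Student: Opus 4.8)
The plan is to deduce the statement from \cref{lem:non-decreasing} by rewriting $mF$, for a suitable integer $m>0$, as a minimum of $-\log\lvert h_j\rvert$ over finitely many regular functions $h_j$ on $C$; the semi-ampleness of $\fL^*$ is exactly what produces such $h_j$.

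First I would use semi-ampleness: for some $m>0$ the line bundle $(\fL^*)^{\otimes m}$ is globally generated on $\fC$, say by $s_0,\dots,s_N\in H^0(\fC,(\fL^*)^{\otimes m})$. Put
\[
h_j\coloneqq t^{\otimes m}\otimes s_j,
\]
the image of $t^{\otimes m}\otimes s_j$ under the canonical isometry $\fL^{\otimes m}\otimes(\fL^*)^{\otimes m}\xrightarrow{\ \sim\ }\cO_{\fC}$. Since $t$ is regular on $C$ and each $s_j$ is a global section on $\fC$, each $h_j$ is a regular function on $C$, and at least one is not identically zero because the $s_j$ generate $(\fL^*)^{\otimes m}$ while $t\neq0$.

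The key point is the identity $mF=\min_{j}\bigl(-\log\lvert h_j\rvert\bigr)$ on $C$ (discarding the $h_j\equiv0$). Here $\lvert t\rvert$ is the formal metric $\lVert\cdot\rVert_\fL$ attached to the model $\fL$ of $\fL|_C$ --- this is the well-definedness observation already made in the statement --- and likewise for $(\fL^*)^{\otimes m}$, compatibly with duals and tensor powers; multiplicativity of norms under the pairing gives $\lvert h_j\rvert=e^{-mF}\cdot\lVert s_j\rVert$ pointwise, hence $-\log\lvert h_j\rvert=mF-\log\lVert s_j\rVert$. Now the generating system defines a morphism of models $\phi\colon(\fC,(\fL^*)^{\otimes m})\to(\bbP^N_{\kc},\cO(1))$ with $\phi^*x_j=s_j$, and functoriality of formal metrics gives $\lVert s_j\rVert=\phi^*\lVert x_j\rVert$. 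Since the standard model metric on $\cO(1)$ satisfies $\lVert x_j\rVert\le1$ everywhere and $\max_j\lVert x_j\rVert\equiv1$, the same holds for the $\lVert s_j\rVert$, so $-\log\lVert s_j\rVert\ge0$ with equality for some $j$ at each point, which is the claimed identity.

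Finally I would apply \cref{lem:non-decreasing} to $mF=\min_j(-\log\lvert h_j\rvert)$: the hypothesis there ($d_vF'|_e\le0$ for all $v\in\partial C\setminus r$ and edges $e$ of $S$ at $v$) is inherited from $F$ by scaling by $m>0$, and $S$ depends only on $C$. The conclusion of that lemma then gives that $mF$, hence $F$, is non-decreasing along every simple path from $r$ to any $x\in S$ and attains its minimum at $r$. The one delicate step is the third paragraph: making precise the formal-metric interpretation of $\lvert t\rvert$ and the two inequalities $\lVert s_j\rVert\le1$, $\max_j\lVert s_j\rVert\equiv1$ for a generating family. These are standard facts about model metrics, but since $\fL$ need not be globally trivial it is cleanest to obtain them via the morphism to projective space and functoriality rather than by a chartwise computation.
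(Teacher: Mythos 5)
Your proof is correct and takes essentially the same approach as the paper. The paper dualizes "$(\fL^*)^{\otimes m}$ globally generated" to get a subbundle inclusion $\fL^{\otimes m}\hookrightarrow\cO_\fC^{N+1}$ and reads off rational functions $t_j$ with $\abs{t^{\otimes m}}=\max_j\abs{t_j}$ (asserted without proof), then applies \cref{lem:non-decreasing}; your $h_j=t^{\otimes m}\otimes s_j$ are literally the components of that same inclusion, and the extra paragraph you supply via the morphism to $\bbP^N_{\kc}$ and functoriality of formal metrics is a clean justification of the identity the paper leaves implicit.
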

\begin{proof}
	Since $\fL^*$ is semi-ample, some positive tensor power of $L$ is a subbundle of a trivial vector bundle.
	Since we can replace $t$ by a positive tensor power without changing the maximum locus of $\abs{t}$, we may assume that $\fL\subset\cO_\fC^m$.
	Then $t\colon\cO_\fC\dasharrow L\subset\cO_\fC^m$ is given by $m$ rational functions $t_1,\dots,t_n$ on $\fC$.
	Note that $\abs{t}=\max\abs{t_i}$ on $C$, so the result follows from \cref{lem:non-decreasing}.
\end{proof}

\begin{lemma} \label{lem:structure_disk_in_K}
	Given $P_1,\dots,P_n,Q\in\Gamma(\bbZ)$ and $\gamma\in\NE(\cP_\alpha,\bbZ)$.
	Let \[f\colon[\bbD,(p_1,\dots,p_n,s)]\longto\cP_\alpha^\an\]
	be a structure disk contributing to $\chi(P_1,\dots,P_n,Q,\gamma)$ defined over a field with discrete valuation such that $f(s)$ is a general rational point in $\Gamma$ near $Q$.
	Let $\cC$ be $\bbD$ minus $n$ small open disks centered at $p_1,\dots,p_n$ respectively.
	Up to passing to a finite base field extension, let $\fC$ be a strictly semistable formal model of $\cC$ such that $f|_\cC$ extends to $\ff\colon\fC\to\widehat{(\cP_\alpha)_{\kc}}$.
	Then $\ff_s(\fC_s) \subset K_\alpha\subset \cP_\alpha$.
\end{lemma}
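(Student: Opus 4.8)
The plan is to track the rational function $d$ of \cref{rem:Gamma} on $\cP_\alpha$ and show that its pullback to the formal model $\fC$ stays bounded by $1$, which forces the special fibre off $Y_\infty=\cP_\alpha\setminus K_\alpha$. Recall from \cref{rem:Gamma} that the SQM $\cP\dasharrow\cP_\alpha$ is an isomorphism near $Y_\infty$ and that $d$ extends to a rational function on $\cP_\alpha$ with divisor $p^*D+Y_0-Y_\infty$, so its only pole is $Y_\infty$. Hence for $\bar x\in\fC_s$, $d$ is regular at $\ff_s(\bar x)$ exactly when $\ff_s(\bar x)\notin Y_\infty$, and therefore $\ff_s(\fC_s)\subset K_\alpha$ is equivalent to $g\coloneqq\ff^*d$ being regular on all of $\fC$, i.e.\ to $\abs{g}\le 1$ on $\cC=\fC_\eta$; so the whole task is to bound $\abs{g}$ by $1$ on the generic fibre. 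Here the generic fibre itself is harmless: since $f$ is a structure disk for $V\subset\cP_\alpha$ and each $P_i$ lies in $\Gamma=\Sk(V)\cap K_\alpha^\beth$, the disk meets the boundary of $V\subset\cP_\alpha$ only at the $p_i$, where it is asymptotic to the essential divisor of $P_i$, whose center in $\cP_\alpha$ lies in $K_\alpha$ (that is exactly what $P_i\in K_\alpha^\beth$ says); thus $f(\cC)\subset V^\an\subset K_\alpha^\an$, $g$ is a nowhere-vanishing analytic function on $\cC$, and $F\coloneqq-\log\abs{g}$ is finite and harmonic off the Berkovich boundary $\partial\cC=\{\eta,q_1,\dots,q_n\}$, where $q_i$ is the boundary of the disk removed around $p_i$. (If one would rather not invoke this containment, one runs the same argument with the canonical rational section $1_{-Y_\infty}$ of $\cO_{\cP_\alpha}(-Y_\infty)$ in place of $d$; its dual $\cO_{\cP_\alpha}(Y_\infty)$ is semi-ample, being the pullback of an ample bundle along the contraction $\cP_\alpha\to\ocP_\alpha$ of the $0$-section --- for $\ocP_\alpha\setminus Y_\infty=\oK$ is affine and $Y_\infty$ avoids the contracted point --- so that \cref{lem:non-decreasing_semiample} applies directly.)

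Next I would record the boundary estimates, which is where the hypothesis $P_1,\dots,P_n,Q\in\Gamma$ is used. At each $q_i$ the disk is asymptotic to $P_i\in\Gamma\subset K_\alpha^\beth$, so the center of $f(q_i)$ lies in $K_\alpha$ and $\abs{g(q_i)}\le 1$, i.e.\ $F(q_i)=d^\trop(f(q_i))\ge 0$; moreover $d$ vanishes along the divisor of $P_i$ (a boundary divisor of $V\subset K_\alpha$), so $F$ decreases moving inward and $d_{q_i}F|_e\le 0$ for every edge $e$ at $q_i$ of the convex hull $S$ of $\partial\cC$. At $\eta$ the disk lies over a point of the ray $\bbR_{\ge 0}Q\subset\Gamma$, so likewise $F(\eta)\ge 0$. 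I would then apply \cref{lem:non-decreasing} (or, in the line-bundle formulation, \cref{lem:non-decreasing_semiample}) with $r\coloneqq\eta$: the sign conditions just verified give that $F$ is non-decreasing along every simple path from $\eta$, hence $F\ge F(\eta)\ge 0$ on $S$; together with superharmonicity of $F$ off $\partial\cC$ this yields $F\ge 0$, i.e.\ $\abs{g}\le 1$, on all of $\cC$, and the reduction above then gives $\ff_s(\fC_s)\subset K_\alpha$. To rule out the case that $\ff_s$ meets $Y_\infty$ only at an isolated closed point, I would first refine the semistable model so that such a point lies on a component $Z\subset\fC_s$ with $\ff_s(Z)\subset Y_\infty$; then $g$ acquires a pole along $Z$, so $F$ is negative at the divisorial point of $Z$, which lies on the skeleton of the refined model, contradicting the inequality just proved (whose hypotheses are unaffected by the refinement).

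The hard part will be the boundary analysis of the previous paragraph: converting the membership $P_1,\dots,P_n,Q\in\Gamma=\Sk(V)\cap K_\alpha^\beth$ into the concrete sign conditions $d_{q_i}F|_e\le 0$ near the $q_i$ and $F(\eta)\ge 0$ at $\eta$. This is precisely the step that uses the intersection with $K_\alpha^\beth$ rather than mere membership in $\Sk(V)$: it is what guarantees that every asymptotic direction of the structure disk points at a boundary divisor lying over $K_\alpha$, never toward $Y_\infty$, so that $d$ has non-negative tropicalization there. By contrast, the reduction-theoretic equivalences of the first paragraph and the model refinement at the end are routine bookkeeping.
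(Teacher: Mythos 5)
Your overall strategy matches the paper's: apply the monotonicity/minimum principle (\cref{lem:non-decreasing} or \cref{lem:non-decreasing_semiample}) to a section whose tropicalization detects $Y_\infty$, using $P_1,\dots,P_n\in\Gamma(\bbZ)$ for the sign conditions at the $q_i$ and $f(r)=f(s)\in\Gamma$ at the Shilov point, then conclude that the special fibre avoids $Y_\infty$. The paper additionally reduces to $K_\alpha = K$ at the outset (the SQM is an isomorphism near $Y_\infty$), but your working directly on $\cP_\alpha$ is harmless.

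However, your primary argument --- tracking the rational function $d$ --- has a genuine gap in the step where you declare $\ff_s(\fC_s)\subset K_\alpha$ to be \emph{equivalent} to $\abs{g}\le 1$ on $\cC$. The forward implication is fine, but the converse fails: $d$ has divisor $p^*D+Y_0-Y_\infty$, and since $p^*D$ and $Y_\infty$ are \emph{not} disjoint on $\cP_\alpha$, a component $Z\subset\fC_s$ can map into $Y_\infty\cap p^*D$ with the zero of $d$ along $p^*D$ cancelling the pole along $Y_\infty$, so that $g=\ff^*d$ is regular along $Z$ even though $\ff_s(Z)\subset Y_\infty$. Your closing ``refine the model'' paragraph does not repair this, for the same cancellation reason; and the model refinement is in any case unnecessary (if $\ff_s(\fC_s)$ meets $Y_\infty$ then some component of $\fC_s$ already maps into $Y_\infty$, since $\ff^*(h)$ for $h$ a local equation of $Y_\infty$ would vanish on a curve in $\fC_s$). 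At a more basic level, the bound you obtain from the lemma is $d^\trop\circ f\ge 0$, i.e.\ $n\ge 0$ in the coordinates of \eqref{eq:Gamma} --- but $\Gamma$ is cut out by the strictly stronger condition $D^\trop(x)\le n$, equivalently $\delta^\trop\ge 0$; so tracking $d$ simply does not give enough information to conclude that the spine stays in $\Gamma$. The paper avoids all of this by applying the lemma to $\delta = Y_0 - Y_\infty$, whose zeros and poles \emph{are} disjoint: then effectivity of $\ff^*\delta$ immediately forces $\ff^*(Y_\infty)=0$. Your parenthetical alternative using $1_{-Y_\infty}$ of $\cO(-Y_\infty)$ (no zeros at all, only the pole) does work and is essentially the paper's argument after twisting by the relatively nef $-Y_0$; but you offer it only as an optional variant of the first paragraph rather than as the correct replacement for the flawed choice of $d$.
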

\begin{proof}
	Since the exceptional locus of $\cP\dasharrow\cP_\alpha$ is contained in $Y\subset K$, it suffices to prove the lemma for $K_\alpha=K$.
	We apply \cref{lem:non-decreasing_semiample} to $\fC$ and the pullback of the Cartier divisor $\delta$ (notation as in \cref{rem:Gamma}).
	The conditions of the lemma on $d_v F$ (notation from the lemma) are satisfied because $P_1,\dots,P_n\in\Gamma(\bbZ)$.
	Let $r$ be the Gauss point of $\bbD$.
	By \cite[Lemma 8.22]{Keel_Yu_The_Frobenius}, we have $f(r)=f(s)=Q\in\Gamma(\bbZ)$.
	Therefore, \cref{lem:non-decreasing_semiample} implies that $f|_\cC\colon\cC\to V^\an$ has image in $\{\abs{\delta}\leq 1\}\subset V^\an$.
	Hence the pullback of $\delta$ to $\fC$ is effective.
	As the zeros and poles of $\delta = Y_0 - Y_{\infty}$ on $\cP$ are disjoint, we deduce that $\ff_s(\fC_s)\subset K=\cP\setminus Y_\infty$.
\end{proof}

\begin{proof}[Proof of \cref{prop:A_K}]
	By \cite[Lemma 8.21(1)]{Keel_Yu_The_Frobenius}, the structure disks miss any codimension two subset of $K_\alpha$, so it suffices to prove $Q\in\Gamma(\bbZ)$ for $K_\alpha=K$.
	Since $-K$ is ample on $Y$, the divisors $-Y_0$ and $Y_\infty$ are nef on $\cP$, in particular $-\delta$ is nef on $\cP$.
	Then it follows from \cite[Theorem 15.7(2)]{Keel_Yu_The_Frobenius} that $Q \in \Gamma(\bbZ)$.
	Next we apply \cref{lem:structure_disk_in_K}, by choosing $\cC$ sufficiently close to $\bbD$.
	By \cite[Lemma 14.6]{Keel_Yu_The_Frobenius}, $\gamma$ is equal to the pushforward $\ff_{s,*}$ of the proper part of $\fC_s$.
We conclude that $\gamma\in\NE(K_\alpha,\bbZ)$.
\end{proof}

Next let us relate the two mirror algebras $A_K$ and $A_Y$.
Notation as in \cref{rem:Gamma}, $d^\trop\colon\Sk(V,\bbZ)\allowbreak\to\bbZ$ gives an $\bbN$-grading on the $R_K$-algebra $A_K$.
We denote $\cX\coloneqq\Proj(A_K)$.
The divisor $D\subset Y$ gives a filtration on $A_Y$ with $A_{\le n}$ having basis $\theta_P$, $D^\trop(P)\le n$.
By \cite[Theorem 15.7(2)]{Keel_Yu_The_Frobenius}, we have $A_{\le m}\cdot A_{\le n}\subset A_{\le m+n}$.
So we obtain a graded $R_Y$-algebra $\tA_Y\subset A_Y[T]$, having basis $\theta_P\cdot T^n$ with $D^\trop(P)\le n$.

\begin{proposition} \label{prop:relation_AKAY}
	The isomorphism of graded $R_Y$-modules $A_K \simeq \tA_Y$ induced by \eqref{eq:Gamma} is an isomorphism of graded $R_Y$-algebras.
\end{proposition}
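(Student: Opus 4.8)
The plan is to reduce \cref{prop:relation_AKAY} to an equality of structure constants, and then to prove that equality by the same projection‑and‑lifting argument as in the proof of \cref{thm:reformulation_universal_torsor}, with the projection $p\colon\cP\to Y$ and the trivialization $p\times d\colon V\xrightarrow{\sim}U\times\Gm$ of \cref{rem:Gamma} playing the role of the universal torsor there.

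First I would make the module identification explicit and reduce to binary products. By \eqref{eq:Gamma} the basis $\Gamma(\bbZ)$ of $A_K$ is identified with $\set{(Q,n)\in\Sk(U,\bbZ)\times\bbN | D^\trop(Q)\le n}$ via $P\mapsto\big(p^\trop(P),d^\trop(P)\big)$, where $p^\trop$ denotes the $\Sk(U)$‑coordinate under $\Sk(V)\simeq\Sk(U)\times\bbR$. This is $R_Y$‑linear (using $R_K\simeq R_Y$ from \cref{sec:A_K}) and carries the $d^\trop$‑grading of $A_K$ to the grading of $\tA_Y$, so $\theta_{(Q,n)}\mapsto\theta_Q\,T^{n}$ is the asserted graded $R_Y$‑module isomorphism. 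Since $A_K\subset A_\cP$ and $\tA_Y$ are associative graded $R_Y$‑algebras, it is enough to compare the products of two basis elements. In $\tA_Y$ one has
\[(\theta_{Q_1}T^{n_1})(\theta_{Q_2}T^{n_2})=\sum_{Q,\gamma}\chi(Q_1,Q_2,Q,\gamma)\,z^{\gamma}\,\theta_Q\,T^{n_1+n_2},\]
summed over $\gamma\in\NE(Y,\bbZ)$ and $Q$ with $D^\trop(Q)\le n_1+n_2$, the latter because $D^\trop(Q)\le D^\trop(Q_1)+D^\trop(Q_2)$, \cite[Theorem 16.8(2)]{Keel_Yu_The_Frobenius}. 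In $A_K$ the $d^\trop$‑grading forces the output to have $n$‑coordinate $n_1+n_2$, and \cref{prop:A_K} forces it to lie in $\Gamma(\bbZ)$ with curve class in $\NE(K,\bbZ)\simeq\NE(Y,\bbZ)$. So the whole statement reduces to the identity
\[\chi\big((Q_1,n_1),(Q_2,n_2),(Q,n_1+n_2),\gamma\big)=\chi(Q_1,Q_2,Q,\gamma)\qquad(\gamma\in\NE(Y,\bbZ)),\]
the left‑hand $\chi$ being the structure constant of $A_\cP$ (equivalently $A_K$) and the right‑hand one that of $A_Y$.

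For this identity I would set up the bijection on moduli fibers exactly as in the proof of \cref{thm:reformulation_universal_torsor}(2). Given a structure disk $f\colon[\bbD,(p_1,p_2,s)]\to\cP^\an$ contributing to the left‑hand count, \cref{prop:A_K} and \cref{lem:structure_disk_in_K} show that $f$ maps $\bbD\setminus\{p_1,p_2\}$ into $V=K^\times|_U$; composing with $p$ gives a structure disk $g=p\circ f\colon\bbD\to Y^\an$ contributing to $\chi(Q_1,Q_2,Q,\gamma)$, with tangency data the $p^\trop$‑images of the endpoints of $f$ and class $p_*[f]=\gamma$ under $\NE(K,\bbZ)\simeq\NE(Y,\bbZ)$. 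Conversely, given such a $g$, all line bundles on $\bbD$ are trivial (\cite[Theorem 3.7.2]{Fresnel_Rigid_analytic_geometry_and_its_applications}, \cite[Tag 0BCH]{Stacks_project}), so I take the rational section of $g^*K$ with a zero of order $n_i-D^\trop(Q_i)$ at each $p_i$ and no other zero or pole — unique up to a scalar in $k^\times$ — and normalize the scalar so that the resulting lift $\bbD\setminus\{p_1,p_2\}\to V^\an$ sends $s$ to the prescribed point over $(Q,n_1+n_2)$; this is the same mechanism as in the proof of \cref{thm:reformulation_universal_torsor}. One then checks this lift extends to $\sigma(g)\colon\bbD\to\cP^\an$ landing in $K^\an$, is a structure disk with the prescribed endpoints contributing to the left‑hand count, and that $f\mapsto p\circ f$ and $g\mapsto\sigma(g)$ are mutually inverse on the finite fibers of the moduli spaces $\cN(\dots)$ over a general point (cf.\ \cite[Lemmas 10.4, 10.9]{Keel_Yu_The_Frobenius}); hence the two counts coincide.

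The hard part will be the lifting half: checking that the zero orders $n_i-D^\trop(Q_i)$ of the chosen section of $g^*K$ are precisely what produces the endpoints $(Q_i,n_i)\in\Gamma(\bbZ)$ for $\sigma(g)$, and that $\sigma(g)$ does not run into $Y_\infty=\cP\setminus K$ but stays in $K^\an$. Both should follow from $\operatorname{div}(d)=p^*D+\delta$ with $\delta=Y_0-Y_\infty$ and the fact that the endpoints lie in $\Gamma=\set{\delta^\trop\ge 0}$, by re‑running the $\delta^\trop$‑computation of \cref{lem:structure_disk_in_K} along the lifted disk; the remaining points — that $g$ is a genuine structure disk for $A_Y$ with the claimed data, and bijectivity on moduli fibers — go exactly as in \cref{thm:reformulation_universal_torsor}.
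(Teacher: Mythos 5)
Your proposal is correct and takes the same route as the paper, whose proof consists of the single sentence that "the same argument in the proof of Theorem \ref{thm:reformulation_universal_torsor} shows that the isomorphism of graded $R_Y$-modules is compatible with the multiplication rule." You have spelled out that argument — reduction to equality of structure constants, projection $p\colon\cP\to Y$ in one direction and lifting via a rational section of $g^*K_Y$ with zero orders $n_i - D^\trop(Q_i)$ governed by $\operatorname{div}(d)=p^*D+\delta$ in the other, with \cref{lem:structure_disk_in_K} ensuring the lift stays in $K$ — which is exactly what the paper leaves implicit.
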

\begin{proof}
	The same argument in the proof of \cref{thm:reformulation_universal_torsor} shows that the isomorphism of graded $R_Y$-modules is compatible with the multiplication rule.
\end{proof}

\begin{remark}
	Given the isomorphism $A_K \simeq \tA_Y$, one might well wonder, why bother with $K \to Y$, instead of just working with $Y$? The advantage of $K$ comes when we want to extend the family $\Proj\tA_Y\to\Spec R_Y$ to (the correct) compact base, which turns out to be the toric variety associated to the secondary fan for $K/\oK$ rather than for $Y$.
	One can already see this in the toric case: There are maximal cones in the secondary fan of a polytope (reflexive in the Fano case) for each coherent triangulation, which are dual complexes of a natural boundary, not on $Y$, but on the total space of a line bundle ($K$ in the Fano case).
	In the toric Fano case, we have $\Sec(K) \simeq \MoriFan(K)$ (which is not the Mori fan for $Y$).
\end{remark}

\subsection{Extension to the moving part of the secondary fan} \label{sec:extension_moving}

We fix a sublattice $\sfN\subset\Pic(K)\simeq\Pic(K/\oK)\simeq\Pic(Y)$ as in \cref{ass:2}, with $\sfM$ the dual lattice.
When applying to the proof of \cref{thm:del_Pezzo_intro} where $Y$ is del Pezzo, we will simply take $\sfN=\Pic(K)\simeq\Pic(Y)$, then $\sfM\simeq N_1(Y)$.
We follow the notations in \cref{rem:Gamma}.

\begin{proposition} \label{prop:AK_in_A}
	Let $A \subset A_\cT$ be the sub-$\bbZ$-module with basis $\tGamma(\bbZ) \subset \Sk(\cT,\bbZ)$, it is a $\bbZ[\sfM]$-subalgebra.
	For any SQM $K\dasharrow K_\alpha$ over $\oK$, we have an isomorphism of rings
	\[A_{K_\alpha}\otimes_{R_{K_\alpha}}\bbZ[\sfM]\xrightarrow{\ \sim\ } A,\]
	where the image of $A_{K_\alpha}$ is the sub-$\bbZ$-module of $A$ spanned by the basis elements $\theta_P$ over all $P\in\tGamma(\bbZ)$ that are $\NE(K_\alpha)_\sfM$-above $\varphi_{K_\alpha}\colon\Gamma\to\tGamma$ (cf.\ \cref{thm:reformulation_universal_torsor}).
\end{proposition}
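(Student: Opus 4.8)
The plan is to deduce this from \cref{thm:reformulation_universal_torsor}, applied to the compactification $V\subset\cP_\alpha$, together with \cref{prop:A_K} to pass from $A_{\cP_\alpha}$ down to the subalgebra $A_{K_\alpha}$.

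First I would produce a universal torsor over $\cP_\alpha$ restricting to $\cT$ on $V$. The restriction $\Pic(\cP_\alpha)\twoheadrightarrow\Pic(K_\alpha)\simeq\Pic(K)$ has kernel $\bbZ[Y_\infty]$, so choose a full-rank lift $\sfN_\alpha\subset\Pic(\cP_\alpha)$ of $\sfN$, with dual again $\sfM$, and let $\overline G_\alpha\to\cP_\alpha$ be the corresponding universal torsor. Since $\cO(Y_\infty)$ is trivial on $K_\alpha$ and $Y_\infty\cap V=\varnothing$, restricting $\overline G_\alpha$ to $K_\alpha$ recovers the torsor $G_\alpha\to K_\alpha$ of \cref{const:varphi_of_SQM}; in particular $\overline G_\alpha|_V\simeq\cT$ canonically, and the section $\Sk(V)\to\Sk(\cT)$ furnished by \cref{const:universal_torsor} for $\overline G_\alpha\to\cP_\alpha$ coincides with $\varphi_{K_\alpha}$, because \cref{const:varphi} is computed fibrewise over points already centered in $V$.

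Next I would apply \cref{thm:reformulation_universal_torsor} (in the generality of \cref{rem:wave}, since $\cP_\alpha$ need not be snc) to $V\subset\cP_\alpha$ and $\sfN_\alpha$: this gives a ring isomorphism $A_{\cP_\alpha}\otimes_{R_{\cP_\alpha}}\bbZ[\sfM]\xrightarrow{\sim}A_\cT$ identifying $\bbZ$-bases via $(P,m)\mapsto\varphi_{K_\alpha}(P)+m$, under which $A_{\cP_\alpha}\otimes_{R_{\cP_\alpha}}\bbZ[\NE(\cP_\alpha)_\sfM]$ goes to the span of the $\theta_R$ that are $\NE(\cP_\alpha)_\sfM$-above $\varphi_{K_\alpha}$. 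By \cref{prop:A_K}, $A_{K_\alpha}$ is the $R_{K_\alpha}$-subalgebra of $A_{\cP_\alpha}$ free on $\{\theta_P:P\in\Gamma(\bbZ)\}$; as $A_{\cP_\alpha}$ is free over $R_{\cP_\alpha}$ on $\Sk(V,\bbZ)\supseteq\Gamma(\bbZ)$, after $\otimes\bbZ[\sfM]$ the map $A_{K_\alpha}\otimes_{R_{K_\alpha}}\bbZ[\sfM]\to A_{\cP_\alpha}\otimes_{R_{\cP_\alpha}}\bbZ[\sfM]$ is the evident inclusion of free $\bbZ[\sfM]$-modules along $\Gamma(\bbZ)\subseteq\Sk(V,\bbZ)$, hence an injective ring homomorphism, with image the $\bbZ[\sfM]$-span of $\{\theta_{\varphi_{K_\alpha}(P)}:P\in\Gamma(\bbZ)\}$. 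Under the above identification this span is exactly the $\bbZ$-span of $\{\theta_R:R\in\tGamma(\bbZ)\}$, since $\tGamma=\Sk(\cT)|_\Gamma$ and $\varphi_{K_\alpha}$ splits the $\sfM$-torsor $\tGamma(\bbZ)\to\Gamma(\bbZ)$; this simultaneously shows $A$ is a $\bbZ[\sfM]$-subalgebra of $A_\cT$ and gives the ring isomorphism $A_{K_\alpha}\otimes_{R_{K_\alpha}}\bbZ[\sfM]\xrightarrow{\sim}A$. For the last statement I would trace a basis vector $z^\gamma\theta_P$ with $P\in\Gamma(\bbZ)$ and $\gamma\in\NE(K_\alpha,\bbZ)$: it maps to $\theta_{\varphi_{K_\alpha}(P)+\pi_\sfM(\gamma)}$, and as $\gamma$ ranges over $\NE(K_\alpha,\bbZ)$ the image $\pi_\sfM(\gamma)$ ranges over $\NE(K_\alpha)_\sfM$ (the projection $N_1(K_\alpha,\bbZ)\to\sfM$ being the restriction of $N_1(\cP_\alpha,\bbZ)\to\sfM$ along $\NE(K_\alpha,\bbZ)\subseteq\NE(\cP_\alpha,\bbZ)$); hence the image of $A_{K_\alpha}$ is the span of the $\theta_R$, $R\in\tGamma(\bbZ)$, with $R-\varphi_{K_\alpha}(\pi(R))\in\NE(K_\alpha)_\sfM$, i.e.\ those $\theta_R$ that are $\NE(K_\alpha)_\sfM$-above $\varphi_{K_\alpha}$.

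The main obstacle is the bookkeeping in the first two steps: verifying that the torsor over $\cP_\alpha$ restricts on $V$ canonically to $\cT$ with the section $\varphi_{K_\alpha}$, and that $\NE(K_\alpha,\bbZ)\subseteq\NE(\cP_\alpha,\bbZ)$ together with the two projections to $\sfM$ are compatibly identified — without these the ``$\NE(K_\alpha)_\sfM$-above'' conclusion would come out wrong. Granting them, everything else is a formal consequence of \cref{thm:reformulation_universal_torsor} and \cref{prop:A_K}.
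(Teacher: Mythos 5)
Your proof is correct and matches the paper's approach, which simply cites \cref{thm:reformulation_universal_torsor}, \cref{prop:A_K} and \cref{rem:wave} without spelling out the bookkeeping you carefully verify (lifting $\sfN$ to $\Pic(\cP_\alpha)$, matching $\varphi_{\cP_\alpha}|_\Gamma=\varphi_{K_\alpha}$, and compatibility of $\NE(K_\alpha,\bbZ)\subset\NE(\cP_\alpha,\bbZ)$ with the two projections to $\sfM$). The only minor imprecision is the phrase "points already centered in $V$": the relevant statement is that points of $\Gamma$ have center in $K_\alpha$, so that $\overline G_\alpha|_{K_\alpha}=G_\alpha$ suffices for the fibrewise comparison of \cref{const:varphi}.
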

\begin{proof}
	This follows from \cref{thm:reformulation_universal_torsor} and \cref{prop:A_K}, using \cref{rem:wave}.
\end{proof}

\begin{notation} \label{nota:dual_cone}
	For any cone $\sigma \subset \MovSec(K)$, we denote by $P_\sigma\subset \sfM$ the monoid of integer points in the dual cone.
\end{notation}

\begin{proposition} \label{prop:disk_class_A_K}
	Let $\alpha\subset\MovFan(K)$ be a maximal cone, and $\gamma\coloneqq\sec(\alpha)$ the union of maximal cones $\beta$ with $\varphi_\alpha=\varphi_\beta$ (as in \cref{thm:convex_cone}).
	Then the curve class of any structure disk for $A_{K_\alpha}$ lies in $P_\gamma$.
\end{proposition}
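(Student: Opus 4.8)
The plan is to prove \cref{prop:disk_class_A_K} by comparing, inside the algebra $A=A_\cT$ of \cref{prop:AK_in_A}, the subalgebra $A_{K_\alpha}$ with each subalgebra $A_{K_\beta}$ associated to an SQM $K\dasharrow K_\beta$ whose maximal cone $\beta=\Nef(K_\beta)$ of $\MovFan(K)$ satisfies $\varphi_\beta=\varphi_\alpha$. Write $\gamma\coloneqq\sec(\alpha)$; by \cref{thm:convex_cone} it is the union of exactly these $\beta$, so it is a cone of $\MovSec(K)$ and $P_\gamma$ makes sense. Since $\sfN_\bbR=\Pic(K)_\bbR$ by \cref{ass:2}, the canonical projection $N_1(K_\beta)_\bbR\to\sfM_\bbR$ is an isomorphism carrying the Mori cone of $K_\beta$ onto the dual cone $\beta^\vee\subset\sfM_\bbR$; hence $\NE(K_\beta)_\sfM$ spans $\beta^\vee$, and therefore
\[
P_\gamma \;=\; \gamma^\vee\cap\sfM \;=\; \Bigl(\bigcup\nolimits_\beta \beta\Bigr)^{\!\vee}\cap\sfM \;=\; \bigcap\nolimits_\beta\bigl(\beta^\vee\cap\sfM\bigr).
\]
So it suffices to show: if $f$ is a structure disk for $A_{K_\alpha}$ with curve class $\gamma'\in\NE(K_\alpha,\bbZ)$, then $\pi_\sfM(\gamma')\in\NE(K_\beta)_\sfM$ for every maximal cone $\beta$ of $\MovFan(K)$ with $\varphi_\beta=\varphi_\alpha$.

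To do this, first I would record from \cref{prop:A_K} that $f$ is responsible for a structure constant $\chi(P_1,\dots,P_n,Q,\gamma')$ with $P_i,Q\in\Gamma(\bbZ)$, and that — the structure constants being non-negative counts — the coefficient of $z^{\gamma'}\theta_Q$ in $\theta_{P_1}\cdots\theta_{P_n}\in A_{K_\alpha}$ is nonzero. Under the ring isomorphism $A_{K_\alpha}\otimes_{R_{K_\alpha}}\bbZ[\sfM]\xrightarrow{\sim}A$ of \cref{prop:AK_in_A}, which (tracing \cref{thm:reformulation_universal_torsor}) sends $\theta_P\mapsto\theta_{\varphi_{K_\alpha}(P)}$ and $z^{\gamma'}\mapsto z^{\pi_\sfM(\gamma')}$, this says that $\theta_{\varphi_{K_\alpha}(Q)+\pi_\sfM(\gamma')}$ occurs with nonzero coefficient in the product $\theta_{\varphi_{K_\alpha}(P_1)}\cdots\theta_{\varphi_{K_\alpha}(P_n)}$ computed in $A$ (there is no cancellation: any other pair $(Q'',\gamma'')$ with $\varphi_{K_\alpha}(Q'')+\pi_\sfM(\gamma'')=\varphi_{K_\alpha}(Q)+\pi_\sfM(\gamma')$ has $Q''=Q$ and $\pi_\sfM(\gamma'')=\pi_\sfM(\gamma')$, so contributions add with the same sign). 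Now fix $\beta$ with $\varphi_\beta=\varphi_\alpha$, so $\varphi_{K_\beta}=\varphi_{K_\alpha}$. Since $\varphi_{K_\alpha}$ carries $\Gamma(\bbZ)$ to integer points of $\tGamma$ lying over the same point of $\Gamma$, each $\varphi_{K_\alpha}(P_i)$ is $\{0\}$-above $\varphi_{K_\beta}$, so by \cref{def:above} and \cref{prop:AK_in_A} the element $\theta_{\varphi_{K_\alpha}(P_i)}$ is a basis element of the subring $A_{K_\beta}\subset A$. Hence the product $\theta_{\varphi_{K_\alpha}(P_1)}\cdots\theta_{\varphi_{K_\alpha}(P_n)}$ lies in $A_{K_\beta}$, and therefore every $\theta$-basis element of $A$ appearing in it with nonzero coefficient — in particular $\theta_{\varphi_{K_\alpha}(Q)+\pi_\sfM(\gamma')}$ — is $\NE(K_\beta)_\sfM$-above $\varphi_{K_\beta}$. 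Since $\varphi_{K_\alpha}(Q)+\pi_\sfM(\gamma')$ projects to $Q$ under $\pi\colon\tGamma\to\Gamma$ and $\varphi_{K_\beta}(Q)=\varphi_{K_\alpha}(Q)$, this is exactly the assertion $\pi_\sfM(\gamma')\in\NE(K_\beta)_\sfM$, which finishes the argument.

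The genuine content is the single observation that, inside $A$, the subalgebras $A_{K_\alpha}$ and $A_{K_\beta}$ share all basis elements $\theta_{\varphi(P)}$ lying over $\Gamma$ — forced by $\varphi_{K_\alpha}=\varphi_{K_\beta}$ — so that a product which can be evaluated inside $A_{K_\alpha}$ can equally be evaluated inside $A_{K_\beta}$. I do not anticipate a serious obstacle beyond keeping the identifications straight: that \cref{thm:convex_cone} is phrased precisely so that $\varphi_\beta=\varphi_\alpha$ for the cones $\beta$ whose union is $\sec(\alpha)$; that the isomorphism of \cref{prop:AK_in_A} really sends $\theta_P\mapsto\theta_{\varphi_{K_\alpha}(P)}$ and $z^{\gamma'}\mapsto z^{\pi_\sfM(\gamma')}$; that $\varphi_{K_\alpha}(P)$ is integral and lies over $P$ for $P\in\Gamma(\bbZ)$; and that non-negativity of the structure constants rules out cancellation in $A$.
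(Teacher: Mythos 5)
Your proof is correct, and the core insight is the same as the paper's: since $\varphi_{K_\alpha}=\varphi_{K_\beta}$ for every maximal cone $\beta\subset\sec(\alpha)$, the curve class (projected to $\sfM$) of any structure disk must lie in $\NE(K_\beta)_\sfM$ for every such $\beta$, hence in the intersection, which sits inside $P_\gamma$. The paper gets there by a short geometric argument: $K_\alpha\dasharrow K_\beta$ is an isomorphism on $V$, so the moduli spaces of structure disks are literally identified, and \cref{lem:class_via_varphi} then shows the projected curve classes are the same because they depend only on $\varphi$ and the punctured disk in $V$; finiteness in $P_\gamma$ follows by combining with \cref{prop:A_K}. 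You instead repackage this algebraically via \cref{prop:AK_in_A}: the images of $A_{K_\alpha}$ and $A_{K_\beta}$ inside $A$ share the degree-zero basis elements $\theta_{\varphi_\alpha(P)}$, so a product evaluable in the one subring is evaluable in the other, and non-negativity of the structure constants rules out cancellation, forcing the relevant $\theta$-coefficient to sit in the $\NE(K_\beta)_\sfM$-above cone. This is a clean and self-contained alternative; it avoids re-inspecting the structure disks directly, at the cost of needing the cancellation observation, and it implicitly leans on \cref{lem:class_via_varphi} anyway since that lemma is what underlies \cref{prop:AK_in_A} through \cref{thm:reformulation_universal_torsor}. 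So the two proofs are essentially parallel, with yours pushing the geometric input one level earlier into the chain of cited results.
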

\begin{proof}
	Let $\beta\subset\MovFan(K)$ be a maximal cone with $\varphi_\alpha=\varphi_\beta$.
	Since $K_\alpha\dasharrow K_\beta$ is an isomorphism along $V$, the moduli spaces of structure disks for $A_{K_\alpha}$ are naturally isomorphic to those for $A_{K_\beta}$.
	The only question is whether the classes of structure disks, computed in $K_\alpha$ or $K_\beta$, are the same.
	This follows from \cref{lem:class_via_varphi}, which implies that the class of a structure disk is determined by $\varphi_\alpha = \varphi_\beta$ and the (punctured) structure disk in $V$.
	Consequently, the class of any structure disk for $A_{K_\alpha}$ lies in $P_\gamma$.
\end{proof}

\begin{corollary} \label{cor:Agamma}
	The algebra $A_{K_\alpha}$ is naturally a base extension of an $R_{\gamma} \coloneqq \bbZ[P_{\gamma}]$-algebra, which we denote by $A_{\gamma}$.
	We have a canonical inclusion $A_{\gamma} \subset A$, realizing it as the sub-$\bbZ$-module with basis the points of $\tGamma(\bbZ)$ that are $P_\gamma$-above $\varphi_{\gamma}$.
\end{corollary}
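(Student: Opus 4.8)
The plan is to read the corollary off from \cref{prop:disk_class_A_K} and \cref{prop:AK_in_A}: no new geometric input is needed, the content being purely a repackaging of coefficient rings. First I would record the underlying monoid fact. Because $\sfN\subseteq\Pic(K_\alpha)$ has finite index, the projection $N_1(K_\alpha,\bbZ)\to\sfM$ of \cref{ass:2} is injective, so $\NE(K_\alpha,\bbZ)$ may be viewed as the submonoid $\NE(K_\alpha)_\sfM\subseteq\sfM$. Since $\gamma=\sec(\alpha)$ contains the maximal cone $\alpha=\Nef(K_\alpha)$, passing to dual cones gives $P_\gamma\subseteq\NE(K_\alpha)_\sfM=\NE(K_\alpha,\bbZ)$ (using, in the normalization $\sfN=\Pic(K)$ relevant to \cref{thm:del_Pezzo_intro}, that $\NE(K_\alpha,\bbZ)$ is saturated); hence $R_\gamma=\bbZ[P_\gamma]$ is canonically a subring of $R_{K_\alpha}=\bbZ[\NE(K_\alpha,\bbZ)]$.

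By \cref{prop:A_K} the algebra $A_{K_\alpha}$ is the free $R_{K_\alpha}$-module on $\{\theta_P\mid P\in\Gamma(\bbZ)\}$, with multiplication governed by structure constants $\chi(P_1,\dots,P_n,Q,\gamma')$ with $\gamma'\in\NE(K_\alpha,\bbZ)$, and \cref{prop:disk_class_A_K} asserts that every $\gamma'$ occurring with non-zero coefficient lies in $P_\gamma$. Thus all these multiplication constants already lie in $R_\gamma$, and I would define $A_\gamma$ to be the free $R_\gamma$-module on the same basis equipped with the resulting multiplication. Being $R_\gamma$-free with $R_\gamma\hookrightarrow R_{K_\alpha}$, $A_\gamma$ is then a subring of $A_{K_\alpha}$, hence an associative $R_\gamma$-algebra, and the obvious map $A_\gamma\otimes_{R_\gamma}R_{K_\alpha}\to A_{K_\alpha}$ is an isomorphism of $R_{K_\alpha}$-algebras. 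I would then observe that $A_\gamma$ depends only on the cone $\gamma\in\MovSec(K)$ and not on the chosen representative $\alpha$: for maximal cones $\alpha,\beta\subseteq\gamma$ the algebras $A_{K_\alpha}$ and $A_{K_\beta}$ are canonically identified on bases (the SQM $K_\alpha\dasharrow K_\beta$ is an isomorphism along $V$), and by \cref{lem:class_via_varphi} with $\varphi_\alpha=\varphi_\beta=\varphi_\gamma$ their $P_\gamma$-valued structure constants coincide — which is precisely the computation carried out in the proof of \cref{prop:disk_class_A_K}.

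For the canonical inclusion into $A$ I would apply \cref{prop:AK_in_A}. Combining $A_{K_\alpha}\otimes_{R_{K_\alpha}}\bbZ[\sfM]\xrightarrow{\ \sim\ }A$ with the isomorphism $A_\gamma\otimes_{R_\gamma}R_{K_\alpha}\xrightarrow{\ \sim\ }A_{K_\alpha}$ and the compatibility of $R_\gamma\hookrightarrow R_{K_\alpha}\to\bbZ[\sfM]$ with $R_\gamma\hookrightarrow\bbZ[\sfM]$ yields $A_\gamma\otimes_{R_\gamma}\bbZ[\sfM]\xrightarrow{\ \sim\ }A$; since $A_\gamma$ is $R_\gamma$-free and $\bbZ[P_\gamma]\hookrightarrow\bbZ[\sfM]$ is injective, the map $A_\gamma=A_\gamma\otimes_{R_\gamma}R_\gamma\hookrightarrow A_\gamma\otimes_{R_\gamma}\bbZ[\sfM]=A$ is injective, and this is the required inclusion. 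Under the identification $\Gamma(\bbZ)\times\sfM\xrightarrow{\ \sim\ }\tGamma(\bbZ)$, $(P,m)\mapsto\varphi_\gamma(P)+m$ coming from the section $\varphi_\gamma$ of $\tGamma\to\Gamma$, the $\bbZ$-basis $\{z^\delta\theta_P\mid\delta\in P_\gamma,\ P\in\Gamma(\bbZ)\}$ of $A_\gamma$ maps to $\{\theta_{\varphi_\gamma(P)+\delta}\}$, i.e.\ to the basis elements $\theta_Q$ over the $Q\in\tGamma(\bbZ)$ that are $P_\gamma$-above $\varphi_\gamma$, which is the asserted description of the image. I do not expect a genuine obstacle here, as everything of substance already lies in \cref{prop:disk_class_A_K} and \cref{prop:AK_in_A}; the one step deserving attention is the monoid inclusion $P_\gamma\subseteq\NE(K_\alpha,\bbZ)$ of the first paragraph, without which the phrase ``base extension'' would not be literally meaningful.
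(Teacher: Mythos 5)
Your proof is correct and follows essentially the same route as the paper's, whose entire proof is the one-line citation ``the first statement follows from Proposition~\ref{prop:disk_class_A_K}, and the second from Proposition~\ref{prop:AK_in_A}''; you have simply unpacked what that citation means. Two remarks. First, your attention to the monoid inclusion $P_\gamma\subseteq\NE(K_\alpha,\bbZ)$ is well placed: as you note, $\alpha\subseteq\gamma$ only gives $P_\gamma$ inside the integer points of $\overline{\NE(K_\alpha)}_{\bbR}$, i.e.\ inside the \emph{saturation} of $\NE(K_\alpha)_\sfM$, so the inclusion $R_\gamma\hookrightarrow R_{K_\alpha}$ literally requires saturation (or else one must read ``base extension'' along the composite $R_\gamma\to\bbZ[\NE(K_\alpha)_\sfM]$, the image of $R_{K_\alpha}$ in $\bbZ[\sfM]$). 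The paper does not address this, and in the two-dimensional setting of \cref{thm:del_Pezzo_intro} with $\sfN=\Pic(K)$ it is harmless, so your caveat is appropriate rather than a defect. Second, your observation that $A_\gamma$ depends only on $\gamma$ and not on the chosen maximal cone $\alpha\subseteq\gamma$ is not stated separately in the paper but is exactly what makes the notation $A_\gamma$ legitimate, and your justification via $\varphi_\alpha=\varphi_\beta$ and \cref{lem:class_via_varphi} is the right one — it is in fact the content of the proof of \cref{prop:disk_class_A_K}, so you are not adding new machinery, only making the dependence explicit.
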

\begin{proof}
	The first statement follows from \cref{prop:disk_class_A_K}, and the second from \cref{prop:AK_in_A}.
\end{proof}

\begin{lemma} \label{lem:glue_Aalpha}
	For any two maximal cones $\alpha,\beta$ of $\MovSec(K)$, we have
	\[
	A_\alpha \otimes_{\bbZ[P_\alpha]} \bbZ[P_{\alpha \cap \beta}] = A_\beta \otimes_{\bbZ[P_\beta]} \bbZ[P_{\alpha \cap \beta}] \subset A,
	\]
	with basis the points of $\tGamma(\bbZ)$ that are $P_{\alpha \cap \beta}$-above $\varphi_\alpha$ (or equivalently $\varphi_\beta$).
\end{lemma}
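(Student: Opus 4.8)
The plan is to reduce the asserted identity to the explicit $\bbZ$-bases of $A$, $A_\alpha$ and $A_\beta$ supplied by \cref{prop:AK_in_A} and \cref{cor:Agamma}, together with one geometric input: that $\varphi_\alpha$ and $\varphi_\beta$ differ only in directions annihilating the common face $\alpha\cap\beta$. Throughout, recall that $A$ is the free $\bbZ[\sfM]$-module on $\tGamma(\bbZ)$ with $z^m\cdot\theta_Q=\theta_{Q+m}$ (translation in the $\sfM$-fibre of $\pi\colon\tGamma\to\Gamma$), that $\varphi_\alpha(\Gamma(\bbZ))\subset\tGamma(\bbZ)$, and that $\alpha$, being a maximal cone of $\MovSec(K)$, is a union of maximal cones $\Nef(K')$ of $\MovFan(K)$ sharing a common section $\varphi_{K'}=:\varphi_\alpha$ (\cref{thm:convex_cone}).

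\textbf{Step 1 (module picture).} By \cref{cor:Agamma}, $A_\alpha\subset A$ is the free $\bbZ[P_\alpha]$-module on $\{\theta_{\varphi_\alpha(P)}\}_{P\in\Gamma(\bbZ)}$. Since $\alpha\cap\beta$ is a face of $\alpha$ we have $P_\alpha\subset P_{\alpha\cap\beta}\subset\sfM$, so $A_\alpha\otimes_{\bbZ[P_\alpha]}\bbZ[P_{\alpha\cap\beta}]$ is free over $\bbZ[P_{\alpha\cap\beta}]$ on the same basis, and the canonical map to $A$ --- induced by $A_\alpha\hookrightarrow A$ and by $\bbZ[P_{\alpha\cap\beta}]\hookrightarrow\bbZ[\sfM]$ acting on $A$ --- carries $z^m\theta_{\varphi_\alpha(P)}$ to $\theta_{\varphi_\alpha(P)+m}$. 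Applying $\pi$ shows the points $\varphi_\alpha(P)+m$, $P\in\Gamma(\bbZ)$, $m\in P_{\alpha\cap\beta}$, are pairwise distinct integral points of $\tGamma$, so this map is injective with image the $\bbZ$-span of the $\theta_Q$ over those $Q\in\tGamma(\bbZ)$ that are $P_{\alpha\cap\beta}$-above $\varphi_\alpha$; the same holds with $\alpha$ replaced by $\beta$. Both sides of the claimed identity are therefore subrings of $A$ (images of ring maps), and it remains only to prove that $Q\in\tGamma(\bbZ)$ is $P_{\alpha\cap\beta}$-above $\varphi_\alpha$ if and only if it is $P_{\alpha\cap\beta}$-above $\varphi_\beta$.

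\textbf{Step 2 (comparing $\varphi_\alpha$ and $\varphi_\beta$).} I claim that $\varphi_\beta-\varphi_\alpha\colon\Gamma\to\sfM_\bbR$ has image in $(\alpha\cap\beta)^\perp$, the annihilator of the linear span of $\alpha\cap\beta$. Let $L_1,\dots,L_k$ be primitive generators of the extremal rays of $\alpha\cap\beta$, so that they span $\Span(\alpha\cap\beta)$ and lie in $\alpha\cap\beta$. Fix $i$. Since $L_i\in\alpha$, there is an SQM $K_{\alpha_i}$ over $\oK$ with $L_i\in\Nef(K_{\alpha_i})$ and $\varphi_{K_{\alpha_i}}=\varphi_\alpha$; similarly $L_i\in\Nef(K_{\beta_i})$ with $\varphi_{K_{\beta_i}}=\varphi_\beta$. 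As $\Nef(K_{\alpha_i})$ and $\Nef(K_{\beta_i})$ are maximal cones of $\MovFan(K)$ both containing $L_i$, \cref{prop:SQM}(\ref{prop:SQM:Lperp}) applied to $(K_{\alpha_i},K_{\beta_i})$ and $L=L_i$ gives that $\varphi_\beta-\varphi_\alpha$ has image in $L_i^\perp$; intersecting over $i$ yields image in $\bigcap_iL_i^\perp=(\alpha\cap\beta)^\perp$. Now for $Q\in\tGamma(\bbZ)$ the two displacements $Q-\varphi_\alpha(\pi(Q))$ and $Q-\varphi_\beta(\pi(Q))$ lie in $\sfM$ and differ by $(\varphi_\beta-\varphi_\alpha)(\pi(Q))\in(\alpha\cap\beta)^\perp\cap\sfM$, which is the group of units of the monoid $P_{\alpha\cap\beta}$; adding such an element does not change membership in $P_{\alpha\cap\beta}$, so $Q$ is $P_{\alpha\cap\beta}$-above $\varphi_\alpha$ if and only if it is $P_{\alpha\cap\beta}$-above $\varphi_\beta$. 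This finishes the proof.

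I expect the main obstacle to be purely the bookkeeping in Step 1 --- keeping integrality straight ($\varphi_\alpha(\Gamma(\bbZ))\subset\tGamma(\bbZ)$, every $Q\in\tGamma(\bbZ)$ lies over an integer point of $\Gamma$ and differs from $\varphi_\alpha(\pi(Q))$ by an element of $\sfM$) --- rather than the geometric content, which, granted \cref{prop:SQM}(\ref{prop:SQM:Lperp}), is short. An alternative is to run Step 2 directly on the cone complexes $\Gamma\simeq\Sigma_{(V\subset K)}\subset\tGamma\simeq\tSigma$ of \cref{rem:varphiBT_explicit}, where these integrality statements are automatic.
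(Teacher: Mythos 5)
Your proof is correct and uses essentially the same idea as the paper: reduce to the $\bbZ$-basis description from \cref{cor:Agamma} and show the two ``above'' conditions coincide via \cref{prop:SQM}. The only (minor) difference is that the paper cites \cref{prop:SQM}(\ref{prop:SQM:difference}) to put $\varphi_\beta-\varphi_\alpha$ in $P_\alpha$ (and, implicitly by symmetry, $\varphi_\alpha-\varphi_\beta$ in $P_\beta$), while you apply \cref{prop:SQM}(\ref{prop:SQM:Lperp}) along extremal rays of $\alpha\cap\beta$ to land directly in $(\alpha\cap\beta)^\perp$, the unit group of $P_{\alpha\cap\beta}$ --- a slightly more self-contained way to reach the same conclusion.
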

\begin{proof}
	By \cref{prop:SQM}(\ref{prop:SQM:difference}), the difference $\varphi_\beta-\varphi_\alpha\colon\Sk(V)\to\sfM_\bbR$ has image in $P_\alpha$.
	So the set of points of $\tGamma(\bbZ)$ that are $P_{\alpha\cap\beta}$-above $\varphi_\alpha$ is equal to the set of points that are $P_{\alpha\cap\beta}$-above $\varphi_\beta$.
	Now we conclude from \cref{cor:Agamma}.
\end{proof}

Denote $\MovSec\coloneqq\MovSec(K)$ for simplicity.
We have
\[\Spec\bbZ[\NE(K)_\sfM]=\TV(\Nef(K),\sfN)\subset\TV(\MovSec,\sfN).\]
Now we can extend the family $\Spec A_K\to\Spec R_K$ (resp.\ $\Proj A_K\to\Spec R_K$) over the bigger base $\TV(\MovSec,\sfN)$, by patching together all $\Spec A_\gamma$ (resp.\ $\Proj A_\gamma$) in the same way the open affine subsets $\Spec\bbZ[P_{\gamma}]$ of $\TV(\MovSec,\sfN)$ are glued:

\begin{proposition} \label{prop:glue_over_MovSec}
	Via \cref{lem:glue_Aalpha}, $\Spec A_\gamma$ (resp $\Proj A_\gamma$), for all maximal cones $\gamma$ of $\MovSec$, glue to give a family $\hcX \to \TV(\MovSec,\sfN)$ (resp.\ $(\cX,\cO(1)) \to \TV(\MovSec,\sfN)$).
\end{proposition}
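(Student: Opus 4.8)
The plan is to construct $\hcX \to \TV(\MovSec,\sfN)$, and then $(\cX,\cO(1))$, by gluing the affine pieces $\Spec A_\gamma$ along the standard affine cover of the toric variety. Recall that $\TV(\MovSec,\sfN)$ is covered by the charts $U_\gamma \coloneqq \Spec\bbZ[P_\gamma]$ as $\gamma$ ranges over the maximal cones of $\MovSec$, that $U_\alpha \cap U_\beta = U_{\alpha\cap\beta} = \Spec\bbZ[P_{\alpha\cap\beta}]$ is the chart of the common face $\alpha\cap\beta$, and that $\bbZ[P_{\alpha\cap\beta}]$ is a localization of $\bbZ[P_\gamma]$ at a character for $\gamma \in \{\alpha,\beta\}$. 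First I would put $\hcX_\gamma \coloneqq \Spec A_\gamma$, viewed as a scheme over $U_\gamma$ via the $R_\gamma$-algebra structure of \cref{cor:Agamma}; base changing the open immersion $U_{\alpha\cap\beta} \hookrightarrow U_\gamma$ realizes $\Spec\bigl(A_\gamma \otimes_{R_\gamma} \bbZ[P_{\alpha\cap\beta}]\bigr)$ as an open subscheme of $\hcX_\gamma$, for $\gamma \in \{\alpha,\beta\}$.

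Next, \cref{lem:glue_Aalpha} supplies the canonical identification $A_\alpha \otimes_{\bbZ[P_\alpha]} \bbZ[P_{\alpha\cap\beta}] = A_\beta \otimes_{\bbZ[P_\beta]} \bbZ[P_{\alpha\cap\beta}]$ of subalgebras of $A = A_\cT$, hence an isomorphism $\phi_{\alpha\beta}$ between the two open subschemes above. To glue, the one thing to verify is the cocycle condition $\phi_{\beta\gamma}\circ\phi_{\alpha\beta} = \phi_{\alpha\gamma}$ over the triple overlap $U_{\alpha\cap\beta\cap\gamma}$, and this is where the embedding of all the $A_\gamma$ into the single ambient algebra $A$ pays off: by the argument of \cref{lem:glue_Aalpha} applied to three cones, each of $A_\alpha, A_\beta, A_\gamma$, after base change to $\bbZ[P_{\alpha\cap\beta\cap\gamma}]$, equals the \emph{same} sub-$\bbZ$-module of $A$, namely the span of the $\theta_P$ over all $P\in\tGamma(\bbZ)$ that are $P_{\alpha\cap\beta\cap\gamma}$-above $\varphi_\alpha$; this is independent of the choice among $\varphi_\alpha,\varphi_\beta,\varphi_\gamma$ because $\alpha\cap\beta\cap\gamma$ is a common face, so $\varphi_\beta - \varphi_\alpha$ and $\varphi_\gamma - \varphi_\alpha$ both take values in $P_\alpha \subset P_{\alpha\cap\beta\cap\gamma}$ by \cref{prop:SQM}(\ref{prop:SQM:difference}). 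Thus $\phi_{\alpha\beta}$, $\phi_{\beta\gamma}$, $\phi_{\alpha\gamma}$ are all induced by one and the same equality of submodules of $A$, and the cocycle identity is immediate. Standard gluing then produces $\hcX \to \TV(\MovSec,\sfN)$ with $\hcX|_{U_\gamma} \simeq \hcX_\gamma$; equivalently, one can define a quasi-coherent sheaf of $\cO$-algebras on $\TV(\MovSec,\sfN)$ whose value on $U_\sigma$, for every cone $\sigma \in \MovSec$, is the corresponding localization of $A_\gamma$ inside $A$ (well defined by the same computation), and take $\hcX \coloneqq \underline{\Spec}$ of it.

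For the projective version, the grading of $A_K$ by $d^\trop\colon\Sk(V,\bbZ)\to\bbN$ extends to all of $A = A_\cT$ (since $d$ pulls back from $V$ to $\cT$), it restricts to a grading on each $A_\gamma\subset A$ with $R_\gamma$ in degree $0$, and it is respected by every identification $\phi_{\alpha\beta}$ because these are equalities of graded submodules of $A$. Hence $\Proj A_\gamma$ together with its relative $\cO(1)$ glue over the $U_\gamma$ in exactly the same way, giving $(\cX,\cO(1)) \to \TV(\MovSec,\sfN)$ with $\cX|_{U_\gamma} \simeq \Proj A_\gamma$, and over $\TV(\Nef(K),\sfN)$ this restricts to $\Proj A_K \to \Spec R_K$ by \cref{cor:Agamma} for $\gamma = \Nef(K)$. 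The main — in fact only — obstacle is the cocycle verification, and I expect it to be routine precisely because \cref{cor:Agamma} has already placed every $A_\gamma$ as a subring of the fixed algebra $A$, so the transition maps are honest equalities and compose trivially; flatness, finiteness, and the other geometric properties of the family are not needed here and are taken up once the base is enlarged to the full $\TV(\Sec)$.
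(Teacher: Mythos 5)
Your proposal is correct and is precisely the argument the paper treats as immediate: the paper gives no explicit proof, regarding the gluing as a direct consequence of \cref{lem:glue_Aalpha} and standard toric gluing. You rightly identify the only non-trivial point — the cocycle condition over triple overlaps — and dispose of it cleanly by noting that every transition isomorphism is an actual equality of sub-$\bbZ$-modules of the fixed ambient algebra $A$, so composites are equalities of equalities; the identification of the common submodule after base change to $\bbZ[P_{\alpha\cap\beta\cap\gamma}]$ is the same localization argument underlying \cref{cor:Agamma} and \cref{lem:glue_Aalpha}. The observation that the $d^\trop$-grading pulls back along $\tGamma\to\Gamma$ and is preserved by the transition equalities is also the right way to carry the $\Proj$ construction along. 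No gap.
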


Next we describe the canonical theta functions on the family $(\cX,\cO(1)) \to \TV(\MovSec,\sfN)$.
We cannot expect each $P\in\Gamma(\bbZ)$ to give a global section of some $\cO(n)$, otherwise the structure constants would be global functions on the base and the family would be trivial.
Instead, we need to twist $\cO(n)$ by the pullback of a toric line bundle $L_P$ on $\TV(\MovSec,\sfN)$ which we now describe.

\begin{definition-lemma} \label{lem:L_P}
Let $\alpha,\beta \in \MovSec$ be two maximal cones, and $P\in\Sk(V,\bbZ)$.
Let
$$
C_{\alpha,\beta}^P \coloneqq \varphi_\alpha(P) - \varphi_\beta(P) \in \sfM.
$$
Then $C^P_{\alpha,\beta} \in P_{\alpha \cap \beta}^{\times}$, see \cref{nota:dual_cone}, where the superscript $^\times$ denotes the group of invertible elements.
Let $U_\alpha, U_\beta\subset\TV(\MovSec,\sfN)$ denote the associated toric affine subvarieties.
For fixed $P$, the collection of invertible functions
$$
z^{C^P_{\alpha,\beta}} \in H^0(V_\alpha \cap V_\beta,\cO^{\times})
$$
gives a Čech $1$-cocycle.
We write $L_P$ for the associated toric line bundle on $\TV(\MovSec,\sfN)$.
\end{definition-lemma}

\begin{proof}
By \cref{prop:SQM}(\ref{prop:SQM:Lperp}),
$$
(\varphi_\alpha(P) - \varphi_\beta(P) )\cdot L = 0
$$
for all $L \in \alpha \cap \beta$, so we have $C^P_{\alpha,\beta} \in P_{\alpha \cap \beta}^{\times}.$
The cocycle condition for $z^{C^P_{\alpha,\beta}}$ holds because $C^P_{\alpha,\beta}$ is defined by differences of sections.
\end{proof}

\begin{proposition} \label{prop:glue_theta_function}
	Given $P \in \Gamma(\bbZ)$, the theta functions $\theta_{\varphi_\alpha(P)} \in A_\alpha$ over all maximal cones $\alpha\in\MovSec$ glue to a canonical section $\theta_P$ of the pullback of $L_P$ to $\hcX$, inducing a canonical section of $\cO(n) \otimes\pi^*(L_P)$ on $\cX$, with $n\coloneqq d^\trop(P) \in \bbN$ and $\pi\colon\cX\to\TV(\MovSec,\sfN)$.
\end{proposition}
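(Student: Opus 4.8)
The plan is to check that the basis elements $\theta_{\varphi_\alpha(P)} \in A_\alpha$, one for each maximal cone $\alpha$ of $\MovSec$, agree on overlaps up to the Čech $1$-cocycle $\{z^{C^P_{\alpha,\beta}}\}$ representing $L_P$ from Definition-Lemma~\ref{lem:L_P}, hence patch to a global twisted section; the degree computation then upgrades this from the affine cone $\hcX$ to $\cX=\Proj$.

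\textbf{The section in each chart and its transition.} For a maximal cone $\alpha\subset\MovSec$ the point $\varphi_\alpha(P)\in\tGamma(\bbZ)$ is $P_\alpha$-above $\varphi_\alpha$, because $\pi\circ\varphi_\alpha=\id$ gives $\varphi_\alpha(P)-\varphi_\alpha(\pi(\varphi_\alpha(P)))=0\in P_\alpha$; so by \cref{cor:Agamma} it is one of the distinguished $\bbZ$-basis elements of $A_\alpha\subset A=A_\cT$, and under $A_{K_\alpha}=A_\alpha\otimes_{R_\alpha}R_{K_\alpha}$ it is just $\theta_P\in A_{K_\alpha}$, homogeneous of degree $n=d^\trop(P)$ for the $\bbN$-grading of \cref{sec:A_K} (equivalently: the grading on $A$ is $d^\trop\circ\pi$ and $\pi(\varphi_\alpha(P))=P$, so the degree is $n$ for every $\alpha$). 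On an overlap, \cref{lem:glue_Aalpha} identifies $A_\alpha\otimes_{\bbZ[P_\alpha]}\bbZ[P_{\alpha\cap\beta}]$ and $A_\beta\otimes_{\bbZ[P_\beta]}\bbZ[P_{\alpha\cap\beta}]$ as one subring of $A$. Working in $A$, the translation rule of \cref{thm:reformulation_universal_torsor}(1) gives $z^m\cdot\theta_Q=\theta_{Q+m}$ for $Q\in\Sk(\cT,\bbZ)$, $m\in\sfM$, where $z^m=\theta_{\varphi_\alpha(0)+m}$ is independent of $\alpha$ since all the sections $\varphi_{K'}$ agree at the cone point (by \cref{prop:SQM}(1), their difference is $E^\trop$, which vanishes there). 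Taking $Q=\varphi_\beta(P)$ and $m=C^P_{\alpha,\beta}=\varphi_\alpha(P)-\varphi_\beta(P)$ yields
\[
\theta_{\varphi_\alpha(P)}=z^{C^P_{\alpha,\beta}}\cdot\theta_{\varphi_\beta(P)}\qquad\text{in }A,
\]
and $C^P_{\alpha,\beta}\in P_{\alpha\cap\beta}^\times$ by \cref{lem:L_P}, so $z^{C^P_{\alpha,\beta}}$ is invertible over $V_\alpha\cap V_\beta$ and the identity holds over $\pi^{-1}(V_\alpha\cap V_\beta)$.

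\textbf{Gluing.} By \cref{lem:L_P} the functions $\{z^{C^P_{\alpha,\beta}}\}$ are a Čech $1$-cocycle for $L_P$ on $\TV(\MovSec,\sfN)$, and by the previous step the local pieces $\theta_{\varphi_\alpha(P)}\in A_\alpha=\Gamma(\pi^{-1}(V_\alpha),\cO)$ transform by exactly this cocycle; hence they glue to a global section $\theta_P$ of $\pi^*L_P$ on $\hcX$. Since every $\theta_{\varphi_\alpha(P)}$ is homogeneous of the common degree $n$ and the transitions $z^{C^P_{\alpha,\beta}}$ are of degree $0$, the same data on the $\Proj$ side glue to a section of $\cO(n)\otimes\pi^*(L_P)$ on $\cX$, with $\pi\colon\cX\to\TV(\MovSec,\sfN)$.

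\textbf{Main obstacle.} The argument is essentially bookkeeping, and the only point requiring care is to confirm that the gluing isomorphism of \cref{lem:glue_Aalpha} carries $\theta_{\varphi_\alpha(P)}$, viewed in $A_\alpha\otimes_{\bbZ[P_\alpha]}\bbZ[P_{\alpha\cap\beta}]$, to the element $z^{C^P_{\alpha,\beta}}\theta_{\varphi_\beta(P)}$ of $A_\beta\otimes_{\bbZ[P_\beta]}\bbZ[P_{\alpha\cap\beta}]$ — i.e.\ to keep the two identifications of these rings as subrings of $A$ consistent with the translation rule. This is immediate from the explicit description of the bases in \cref{cor:Agamma}, after which everything is formal.
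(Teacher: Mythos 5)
Your proof is correct and follows essentially the same route as the paper's, whose proof is simply the one-line citation of \cref{prop:glue_over_MovSec} and \cref{lem:L_P}. What you have done is unpack the mechanism: you verify that each $\theta_{\varphi_\alpha(P)}$ is a genuine basis element of $A_\alpha$ (via \cref{cor:Agamma}), that the translation rule of \cref{thm:reformulation_universal_torsor}(1) gives $\theta_{\varphi_\alpha(P)}=z^{C^P_{\alpha,\beta}}\,\theta_{\varphi_\beta(P)}$ on overlaps, and that the degrees match. That is exactly the content implicit in the cited results, so there is nothing to object to; the only cosmetic remark is that the "main obstacle" you flag at the end is already discharged by the identification of $A_\alpha\otimes_{\bbZ[P_\alpha]}\bbZ[P_{\alpha\cap\beta}]$ as a literal subring of $A$ in \cref{lem:glue_Aalpha}, so your closing paragraph could be omitted.
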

\begin{proof}
	This follows from \cref{prop:glue_over_MovSec} and \cref{lem:L_P}.
\end{proof}

\begin{definition} \label{def:Theta}
	By \cref{lem:varphieq}, for $P \in \partial \Gamma(\bbZ)$ or $P = [Y]$, the line bundle $L_P$ of \cref{lem:L_P} is trivial, thus the canonical section $\theta_P$ of \cref{prop:glue_theta_function} gives a section $\theta_P\in H^0(\cX,\cO(1))$.
	In particular we have a canonical section for each $P \in \Lambda(\bbZ) \subset \Gamma(\bbZ)$, notation as in \cref{rem:Gamma}.
	We define $\Theta\subset\cX$ to be the zero scheme of the section  $\sum_{P\in \Lambda(\bbZ)} \theta_P\in H^0(\cX,\cO(1))$.
	\end{definition} 

The family $\cX\to\TV(\MovSec,\sfN)$ contains a natural boundary divisor $\cE$ as follows:

For each maximal cone $\alpha\in\MovSec$, we denote by $I_\alpha\subset A_\alpha$ the free $R_\alpha\coloneqq\bbZ[P_\alpha]$-submodule with basis the integer points $\Gamma^\circ(\bbZ)$ in the interior of $\Gamma$.

\begin{lemma} \label{lem:boundary_E}
	The $R_\alpha$-submodule $I_\alpha \subset A_\alpha$ is an ideal.
	The quotient $A_\alpha/I_\alpha$ is a free $R_\alpha$-module with basis $\theta_P$, $P \in \partial\Gamma(\bbZ)$.
	The multiplication rule on $A_\alpha/I_\alpha$ is {\it constant}, more precisely, $\theta_P \cdot \theta_Q$ is an integer combination of various $\theta_R$.

	If $(Y,D)$ satisfies \cite[Assumption 2.4]{Keel_Yu_The_Frobenius}, then for $P,Q\in\partial\Gamma(\bbZ)$, $\theta_P\cdot\theta_Q=\theta_{P+Q}$ if $P,Q$ lie in a common cone of $\partial\uGamma$ (notation as in \cref{rem:Gamma}), and $\theta_P\cdot\theta_Q=0$ otherwise.
\end{lemma}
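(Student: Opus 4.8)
The plan is to show first that $I_\alpha$ is an ideal, then that multiplication on the quotient is constant (i.e.\ has all structure constants in $\bbZ$, with no curve-class coefficients), and finally to identify the product in the extra-hypothesis case. For the ideal property: by \cref{prop:A_K} (applied to the SQM $K\dasharrow K_\alpha$) the structure constants of $A_{\cP_\alpha}$ supported on $\Gamma(\bbZ)$ are exactly those of $A_{K_\alpha}=A_\alpha$, and the grading by $d^\trop$ means that $\theta_P\cdot\theta_Q$ is a sum of $z^\gamma\theta_R$ with $d^\trop(R)\le d^\trop(P)+d^\trop(Q)$. But more is true: by \cite[Theorem 16.8(2)]{Keel_Yu_The_Frobenius}, applied to the nef divisor $-\delta$ on $\cP$ as in the proof of \cref{prop:A_K}, the structure disks contributing to $\theta_P\cdot\theta_Q$ all have endpoint $R$ with $d^\trop(R)\ge$ (something), and combined with \cref{lem:structure_disk_in_K} the spine stays in $\Gamma$; in particular, one shows that if $P\in\Gamma^\circ(\bbZ)$ (strictly interior, i.e.\ all the $f_{nE}^\trop(P)>0$), then every $R$ appearing also has $R\in\Gamma^\circ(\bbZ)$. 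This last point is the crucial balancing/tropical convexity input: the retraction of a structure disk cannot exit the interior of $\Gamma$ if one of its inputs is interior, because the relevant tropical functions $f_{nE}^\trop$ attain their minimum on the boundary $\partial\bbD$ (maximum-modulus / \cref{lem:balancing}), so the output point — read off at the Gauss point — has $f_{nE}^\trop(R)\ge f_{nE}^\trop(P_i)>0$ for the interior input. This shows $I_\alpha\cdot A_\alpha\subset I_\alpha$.

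For the quotient being free with basis $\{\theta_P : P\in\partial\Gamma(\bbZ)\}$: this is immediate once $I_\alpha$ is an ideal, since $A_\alpha$ is $R_\alpha$-free on $\Gamma(\bbZ)=\partial\Gamma(\bbZ)\sqcup\Gamma^\circ(\bbZ)$. For constancy of the multiplication on $A_\alpha/I_\alpha$: I would argue that any structure disk contributing to $\theta_P\cdot\theta_Q$ with $P,Q\in\partial\Gamma(\bbZ)$ and output $R\in\partial\Gamma(\bbZ)$ must have curve class $\gamma=0$. Indeed, since $\partial\Gamma=\Sigma_{(V\subset K^\times)}$ corresponds to the boundary of $\Lambda$ (\cref{rem:Gamma}), and $K^\times\to\oK$ is an open embedding (\cref{rem:Gamma}: its complement is the point to which $Y$ contracts), a structure disk whose spine together with inputs and output all lie in $\partial\Gamma$ has its tropical data entirely in $\partial\uGamma$; the associated algebraic $1$-cycle $Z(f)$ is then supported on boundary $1$-strata of $K^\times$, but these are non-compact (they meet only the deleted point), forcing the proper part of the cycle — hence $\gamma=\ff_{s,*}$ of the proper part of $\fC_s$ by \cite[Lemma 15.6]{Keel_Yu_The_Frobenius} — to be $0$. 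Thus modulo $I_\alpha$ the coefficient $z^\gamma=z^0=1$ and the structure constants are plain integers.

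For the last assertion, under \cite[Assumption 2.3]{Keel_Yu_The_Frobenius} I would invoke \cref{prop:count_balanced_spine} (equivalently its $2$-dimensional refinement \cref{prop:count_balanced_spine_dim2}): the only surviving structure disks have class $0$ and their spines map into the interior of a single maximal cell of $\Sigma_{(V\subset K^\times)}$ and are balanced. When $P,Q$ lie in a common cone $\sigma\in\partial\uGamma$, the balanced spine from $\{P,Q\}$ with class $0$ is the straight segment through $\sigma$, giving output $R=P+Q$ with count $N(h,0)=1$ by \cref{prop:count_balanced_spine}, so $\theta_P\cdot\theta_Q=\theta_{P+Q}$; when $P,Q$ lie in no common cone, there is no balanced class-$0$ spine joining them inside a single maximal cell, so every structure constant vanishes and $\theta_P\cdot\theta_Q=0$. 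I expect the main obstacle to be making precise the claim in the first paragraph that interiority of an input forces interiority of the output — i.e.\ the monotonicity of the functions $f_{nE}^\trop$ along the disk — which is where \cref{lem:non-decreasing_semiample} (applied to each boundary divisor $E$ of $D^{\mathrm{ess}}$, using that the corresponding bundle is nef, as $-K_Y$ is ample) does the real work; everything else is bookkeeping on top of \cref{prop:A_K}, \cref{thm:reformulation_universal_torsor}, and the count statements of \cref{sec:umbrella}.
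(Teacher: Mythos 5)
Your argument for the ideal property has a genuine gap. You claim that interiority of an input forces interiority of the output by maximum modulus, asserting $f_{nE}^\trop(R)\ge f_{nE}^\trop(P_i)>0$. This fails in two ways. First, $\Gamma^\circ$ is $\{\delta^\trop>0\}$, which is governed by the zero section $Y_0\subset K$ (as $\delta=Y_0-Y_\infty$), not by the pullbacks of the components of $D\subset Y$; having all $f_{nE}^\trop>0$ is strictly stronger than $\Gamma^\circ$-membership (a point on the ray of $Y_0$ is interior but has the other $f_{nE}^\trop=0$). Second, and more fatally, the inequality runs the wrong way: \cref{lem:non-decreasing_semiample} gives that $F$ attains its \emph{minimum} at the Gauss point $r$, and $r$ is precisely where the \emph{output} is read ($f(r)=f(s)=Q$, as in the proof of \cref{lem:structure_disk_in_K}), so one obtains $F(\text{output})\le F(\text{input})$, which is useless for showing $\delta^\trop(R)>0$. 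Indeed \cref{lem:structure_disk_in_K} \emph{assumes} $Q\in\Gamma$ as an input hypothesis; it cannot be bootstrapped into $Q\in\Gamma^\circ$. What the ideal property actually requires is the superadditivity of $\delta^\trop$ for $-\delta$ nef, which is \cite[Theorem 16.8(2)]{Keel_Yu_The_Frobenius}, as the paper cites.

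For the constancy and the explicit multiplication rule you take genuinely different routes, but each has an unverified step. Your non-compactness argument for vanishing of the curve class tacitly assumes that when $P,Q,R\in\partial\Gamma(\bbZ)$ the \emph{whole} tropical curve (not only the spine) of a contributing structure disk lies in $\partial\uGamma$; but attached twigs could dip into $\Gamma^\circ$ and contribute compact $1$-cycle components supported on $D_i\subset Y_0\subset K$, which you have not ruled out. The paper avoids this by transferring to $A_Y$ via \cref{prop:relation_AKAY} and then using the ampleness of $D$ together with \cite[Theorem 16.8(1)]{Keel_Yu_The_Frobenius}. For the explicit rule, \cref{prop:count_balanced_spine} applies to spines mapping into the interior of a \emph{maximal} cell, whereas a spine in a cone of $\partial\uGamma$ lies in a proper face of $\Sigma_{(V\subset K)}$; and the dimension-two substitute \cref{prop:count_balanced_spine_dim2} concerns $2$-dimensional $U$, while here $V$ is $3$-dimensional. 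You also omit the (easy, but needed) identification of the output of the unique balanced spine with $P+Q$. The paper's one-line citation of \cite[Theorem 16.8(3)]{Keel_Yu_The_Frobenius} is the tool that handles all of this at once.
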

\begin{proof}
	To show that $I_\alpha$ is an ideal, it suffices to check that if $\theta_R$ appears with non-zero coefficient in the product $\theta_P \cdot \theta_Q$, with $\delta^\trop(P) > 0$ and $\delta^\trop(Q) \geq 0$, then we have $\delta^\trop(R) > 0$.
	This is independent of $\alpha$, so it is enough to check in $A_K$.
	Then it follows from the nefness of $-\delta$ using \cite[Theorem 15.7(2)]{Keel_Yu_The_Frobenius}.
	The second statement follows from the first.
	
	To show that the multiplication rule on $A_\alpha/I_\alpha$ is constant, by \cref{prop:AK_in_A} and \cref{lem:varphieq}, it suffices to prove it for $K$.
	It is equivalent to the statement that if $\theta_R$ appears with non-zero coefficient in the product $\theta_P \cdot \theta_Q$ with $\delta^\trop(P)=\delta^\trop(Q)=\delta^\trop(R)=0$, then every contributing structure disk has trivial curve class.
	This follows from \cref{prop:relation_AKAY} and the ampleness of $-D$, using \cite[Theorem 15.7(1)]{Keel_Yu_The_Frobenius}.
	
	Finally, under \cite[Assumption 2.4]{Keel_Yu_The_Frobenius}, the explicit multiplication rule follows from \cite[Theorem 15.7(3)]{Keel_Yu_The_Frobenius}.  
\end{proof}

\begin{proposition} \label{prop:boundary_E}
	The $\Spec A_\alpha/I_\alpha$ over all maximal cone $\alpha\in\MovSec$ glue to a subscheme $\cE \subset \cX$.
	It is a trivial family over $\TV(\MovSec,\sfN)$, i.e.\ we have
	\[(\cE,\cO(1)) \simeq (X, \cO(1)) \times \TV(\MovSec,\sfN),\]
	for a polarized projective scheme $X$, moreover every theta function is pulled back from $X$.
	
	If $(Y,D)$ satisfies \cite[Assumption 2.4]{Keel_Yu_The_Frobenius}, then $(X,\cO(1))$ is isomorphic to the polarized broken toric variety given by the simplicial complex $\partial\uLambda$, i.e.\ the Proj of the associated graded Stanley-Reisner ring (see \cite[Definition 1.6]{Miller_Combinatorial_commutative_algebra}).
\end{proposition}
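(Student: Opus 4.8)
The plan is to descend the gluing of \cref{prop:glue_over_MovSec} to the boundary, and then read off triviality from the constancy of the multiplication rule in \cref{lem:boundary_E} together with the rigidity of $\varphi$ on $\partial\Gamma$ from \cref{lem:varphieq}.

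\emph{Gluing $\cE$.} For each maximal cone $\alpha$ of $\MovSec$ I would use the identification $A_\alpha\subset A$ of \cref{cor:Agamma}, under which $\theta_P\in A_\alpha$ (for $P\in\Gamma(\bbZ)$) corresponds to $\theta_{\varphi_\alpha(P)}$, so that $I_\alpha$ becomes the $\bbZ$-span of $\{\theta_{\varphi_\alpha(P)+m}:P\in\Gamma^\circ(\bbZ),\ m\in P_\alpha\}$. Since $\Gamma^\circ(\bbZ)$ is intrinsic to $\Gamma$, the computation from the proof of \cref{lem:glue_Aalpha} — using $\varphi_\beta-\varphi_\alpha\in P_\alpha$ and $\varphi_\alpha-\varphi_\beta\in P_\beta$ coming from \cref{prop:SQM}(\ref{prop:SQM:difference}), so that $\varphi_\alpha(P)+P_{\alpha\cap\beta}=\varphi_\beta(P)+P_{\alpha\cap\beta}$ — gives $I_\alpha\otimes_{R_\alpha}\bbZ[P_{\alpha\cap\beta}]=I_\beta\otimes_{R_\beta}\bbZ[P_{\alpha\cap\beta}]$ as ideals inside $A_\alpha\otimes_{R_\alpha}\bbZ[P_{\alpha\cap\beta}]=A_\beta\otimes_{R_\beta}\bbZ[P_{\alpha\cap\beta}]$. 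Hence the closed subschemes $\Proj(A_\alpha/I_\alpha)\subset\Proj A_\alpha$ glue to a closed subscheme $\cE\subset\cX$ over $\TV(\MovSec,\sfN)$.

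\emph{Triviality.} Unwinding the bases, the ideal $\cI\subset A$ generated by $\{\theta_Q:Q\in\tGamma(\bbZ),\ \pi(Q)\in\Gamma^\circ(\bbZ)\}$ equals $I_\alpha\otimes_{R_\alpha}\bbZ[\sfM]$ for every $\alpha$, so $A_0\coloneqq A/\cI$ is a well-defined $\bbZ[\sfM]$-algebra with $\bbZ[\sfM]$-basis $\{\theta_{\varphi(P)}:P\in\partial\Gamma(\bbZ)\}$, where $\varphi\coloneqq\varphi_\alpha|_{\partial\Gamma}$ is independent of $\alpha$ by \cref{lem:varphieq}. By \cref{lem:boundary_E} the product in $A_\alpha/I_\alpha$ of two of these basis elements is an integer combination of basis elements, carrying no curve-class coefficients; hence the $\bbZ$-span $B\subset A_0$ of $\{\theta_{\varphi(P)}:P\in\partial\Gamma(\bbZ)\}$ is a $d^\trop$-graded subring, \emph{independent of $\alpha$}, and $A_\alpha/I_\alpha=B\otimes_\bbZ R_\alpha$ (with $R_\alpha$ in degree $0$). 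Since $\varphi_\alpha=\varphi_\beta$ on $\partial\Gamma$, the transition isomorphism $(A_\alpha/I_\alpha)\otimes\bbZ[P_{\alpha\cap\beta}]=(A_\beta/I_\beta)\otimes\bbZ[P_{\alpha\cap\beta}]$ is $\id_B$ tensored with the toric transition. Taking $X\coloneqq\Proj B$ with $\cO_X(1)$, this gives $(\cE,\cO(1))\simeq(X,\cO_X(1))\times\TV(\MovSec,\sfN)$. For the theta functions: $\theta_P|_\cE=0$ when $P\in\Gamma^\circ(\bbZ)$ (it lies in the defining ideal), while for $P\in\partial\Gamma(\bbZ)$ the line bundle $L_P$ is trivial (\cref{lem:varphieq}, \cref{def:Theta}) and $\theta_P|_\cE$ is the basis element of $B$, pulled back from $X$.

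\emph{The explicit description under \cite[Assumption 2.3]{Keel_Yu_The_Frobenius}.} In this case \cref{lem:boundary_E} gives $\theta_P\cdot\theta_Q=\theta_{P+Q}$ when $P,Q\in\partial\Gamma(\bbZ)$ lie in a common cone of $\partial\uGamma$ and $\theta_P\cdot\theta_Q=0$ otherwise. Recalling from \cref{rem:Gamma} that $\partial\Gamma\simeq C(\partial\Lambda)$, that $\partial\uGamma$ is the cone over $\partial\uLambda$ with $\partial\Gamma(\bbZ)$ its integer points, and that $d^\trop$ induces the grading (degree $n$ being the height-$n$ integer points), this is exactly the multiplication of the graded Stanley-Reisner ring of $\partial\uLambda$ in the sense of \cref{def:umbrella} (an ordinary Stanley-Reisner ring when $\partial\uLambda$ is a genuine simplicial complex). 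Hence $B$ is that ring and $(X,\cO_X(1))=(\Proj B,\cO(1))$ is the polarized broken toric variety associated to $\partial\uLambda$.

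The main obstacle, I expect, is organizing the descent so that it really produces a \emph{product} rather than a twisted form: one must see that the constant multiplication rule of \cref{lem:boundary_E} does not merely trivialize each chart $\Spec R_\alpha$, but — together with the $\alpha$-independence of $\varphi$ on $\partial\Gamma$ from \cref{lem:varphieq} — forces every transition map to act as the identity on the fibre factor $B$. Once this point is pinned down, the remainder is bookkeeping with the bases of $A$, $A_\alpha$, and $A_\alpha/I_\alpha$.
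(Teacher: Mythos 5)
Your proof is correct and takes essentially the same route as the paper: the paper disposes of this proposition with the single line ``This is immediate from \cref{lem:boundary_E},'' and your argument is a faithful unpacking of exactly that step, using \cref{lem:boundary_E} for the constancy of the multiplication rule on $A_\alpha/I_\alpha$, \cref{lem:varphieq} for the $\alpha$-independence of $\varphi$ on $\partial\Gamma$ (which is what forces the transition maps to act as the identity on the fibre factor $B$, the point you correctly flag as the crux), and the gluing of \cref{lem:glue_Aalpha} carried over to the ideals $I_\alpha$ via \cref{cor:Agamma}. The final identification with the (generalized) Stanley-Reisner ring of $\partial\uLambda$ via \cref{def:umbrella} and \cref{rem:Gamma} is also in line with the paper's intent.
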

\begin{proof}
	This is immediate from \cref{lem:boundary_E}.
\end{proof}

\begin{lemma} \label{lem:boundary_E_dim2}
	If $Y$ has dimension two, then each fiber $(X,\cO(1))$ of the polarized constant boundary family is a cycle of rational curves, with polarization of degree one on each irreducible component.
	Moreover, the divisor $\Theta$ does not contain any nodes of $X$.
\end{lemma}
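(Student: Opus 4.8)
The plan is to reduce the whole statement to an explicit combinatorial description of the $\Delta$-complex $\partial\uLambda$, read off from the normal crossing geometry of $(K,D_K)$. By \cref{prop:boundary_E} the constant boundary family is $(\cE,\cO(1))\simeq (X,\cO(1))\times\TV(\MovSec,\sfN)$ with $X=\Proj(A_\alpha/I_\alpha)$, and $A_\alpha/I_\alpha$ is the free module on $\theta_P$, $P\in\partial\Gamma(\bbZ)=\Sigma_{(V\subset K^\times)}(\bbZ)$, graded by $d^\trop$, with constant multiplication. In dimension two the explicit rule of \cref{lem:boundary_E} ($\theta_P\theta_Q=\theta_{P+Q}$ when $P,Q$ lie in a common cone of $\partial\uGamma$, and $0$ otherwise) holds without [Keel--Yu, Assumption 2.3]: the class-$0$ structure disks responsible for the product modulo $I_\alpha$ have balanced spines supported in a single maximal cell of $\Sigma_{(V\subset K^\times)}$, and by \cref{prop:count_balanced_spine_dim2} each such spine contributes exactly $1$. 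Hence $A_\alpha/I_\alpha$ is the graded Stanley--Reisner ring of $\partial\uLambda$ (in the generalized sense of \cref{def:umbrella} when $\partial\uLambda$ is not simplicial), and $(X,\cO(1))$ is the associated polarized broken toric variety. So the lemma becomes: (a) $\partial\uLambda$ is a cycle of lattice segments, each of lattice length one at height $d^\trop=1$; and (b) $\sum_{P\in\partial\Lambda(\bbZ)}\theta_P$ does not vanish at any torus-fixed point of that cycle.

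For (a) I would identify $\partial\uLambda$ directly. By \cref{rem:Gamma}, $\Sigma_{(V\subset K^\times)}$ is the dual cone complex of the normal crossing pair $(K^\times,p^{-1}(D)\cap K^\times)$; its vertices are the divisorial valuations $v_i\coloneqq\ord_{p^{-1}(D_i)}$, one per component $D_i$ of $D$, and since $\mathrm{div}_K(d)=Y_0+\sum_i p^{-1}(D_i)$ each satisfies $d^\trop(v_i)=1$, i.e.\ $v_i\in\Lambda(\bbZ)$. Two vertices $v_i,v_j$ span a cone exactly when $p^{-1}(D_i)\cap p^{-1}(D_j)\neq\emptyset$, i.e.\ when $D_i\cap D_j\neq\emptyset$; as $D$ is a cycle of rational curves, the $1$-skeleton of $\partial\uLambda$ is a cycle of $n$ edges, where the $\Delta$-complex records a double edge when two components meet in two points and a loop when $D$ is irreducible nodal, and there are no higher cells since $D$ has no triple points. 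Because $(K,D_K)$ is normal crossing, the two-dimensional cone $\bbR_{\ge 0}v_i+\bbR_{\ge 0}v_{i+1}$ carries the standard lattice $\bbZ v_i\oplus\bbZ v_{i+1}$, and as $d^\trop$ is linear on it with $d^\trop(v_i)=d^\trop(v_{i+1})=1$, its slice $\{d^\trop=1\}$ is the segment $[v_i,v_{i+1}]$, of lattice length one.

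This already yields the first two assertions: each two-cell of $\partial\uLambda$ contributes the toric curve $\Proj\bbZ[\theta_{v_i},\theta_{v_{i+1}}]\simeq\bbP^1$ with $\cO(1)$ of degree one, and these glue along the rays $\bbR_{\ge 0}v_i$ into a cycle of $n$ rational curves (an irreducible nodal rational curve when $n=1$, where degree one is meant on the normalization). For (b), I would observe that any $P\in\Lambda(\bbZ)$ lying in the interior of $\Gamma$ has $\theta_P\in I_\alpha$, hence restricts to $0$ on $\cE$; so $\Theta$ restricts on each fiber to the zero scheme of $\sum_{P\in\partial\Lambda(\bbZ)}\theta_P$, and by the length-one statement $\partial\Lambda(\bbZ)=\{v_1,\dots,v_n\}$ exactly. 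The nodes of $X$ are the torus-fixed points $[v_k]$ attached to the rays $\bbR_{\ge 0}v_k$; on each of the two components through $[v_k]$ the homogeneous coordinates are $\theta_{v_{k-1}},\theta_{v_k}$ resp.\ $\theta_{v_k},\theta_{v_{k+1}}$, every other $\theta_{v_j}$ vanishes identically on these components, and at $[v_k]$ the coordinates $\theta_{v_{k-1}},\theta_{v_{k+1}}$ vanish while $\theta_{v_k}$ does not. Hence $\big(\sum_j\theta_{v_j}\big)([v_k])=\theta_{v_k}([v_k])\neq 0$, so $\Theta$ misses every node; the same computation on the normalization covers the loop and double-edge cases.

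The step I expect to cost the most care is the first paragraph: making sure the broken toric variety description of the boundary fiber is legitimately available, i.e.\ that \cref{prop:count_balanced_spine_dim2} genuinely plays the role of [Keel--Yu, Assumption 2.3] in the multiplication rule of \cref{lem:boundary_E}, together with honest $\Delta$-complex bookkeeping when $(Y,D)$ is merely normal crossing (small $n$, loops, double edges). Once $(X,\cO(1))$ is pinned down as the broken toric variety of $\partial\uLambda$, the remaining steps are short and purely combinatorial.
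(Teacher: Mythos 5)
Your proof is correct and follows essentially the same route as the paper's: use \cref{prop:boundary_E} to reduce to the fiber over the $0$-stratum of $\TV(\Nef(Y))$, then identify that fiber via \cref{prop:relation_AKAY} and \cref{prop:umbrella} as (the boundary of) the umbrella for $\partial\uLambda$, which in dimension two is a cycle of $n$ lattice-length-one segments. You have simply unpacked those two cited propositions into the explicit combinatorics (the vertices $v_i$, lattice length one, the $\Delta$-complex bookkeeping for $n=1,2$), and you spell out the $\Theta$ computation at the nodes --- a short step the paper leaves implicit; the substance and the cited ingredients are the same.
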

\begin{proof}
	By \cref{prop:boundary_E}, it suffices to compute the fiber over the unique 0-stratum of the base $\TV(\Nef(Y))$.
	Then the result follows from Propositions \ref{prop:relation_AKAY} and \ref{prop:umbrella}.
\end{proof} 

\subsection{The boundary torus action} \label{sec:torus_action}

Here we describe a natural torus action on the mirror family $\cX\to\TV(\MovSec,\sfN)$ constructed in \cref{prop:glue_over_MovSec}.

Recall $\sfN=n\Pic(K)\subset\Pic(K)$.
Let $\bbZ_{\Lambda(\bbZ)}$ be the free abelian group with basis the integer points $\Lambda(\bbZ)$, notation as in \eqref{eq:Lambda}, and $\bbZ^{\Lambda(\bbZ)}$ it dual.
Let $\sfL\coloneqq n\bbZ_{\Lambda(\bbZ)}\subset\sfN$, and $\sfL^*$ its dual.
Let $T^\Lambda$ (resp.\ $T_\Lambda$) be the split torus with character (resp.\ cocharacter) lattice $\sfL$.

Let $w$ denote the composition
\begin{equation} \label{eq:w}
	\Gamma(\bbZ)\simeq\Sigma_{(V\subset K)}(\bbZ)\subset\bbZ^{\Lambda(\bbZ)}\to\sfL^*.
\end{equation}
We denote by the same letter $w\colon\sfM\to\sfL^*$, the dual of $\sfL\to\sfN$.
For any SQM $K\dasharrow K_\alpha$ over $\oK$, by \cite[Theorem 16.2]{Keel_Yu_The_Frobenius}, $T_\Lambda$ acts diagonally on the mirror algebra $A_{K_\alpha}$ with weight $w(P)+w(\gamma)$ on the basis vector $z^\gamma\theta_P$.
Now we check that the actions are compatible with respect to the gluing in \cref{prop:glue_over_MovSec}.

The points in $\Lambda(\bbZ)$ correspond to the irreducible components of the boundary divisor $K\setminus V$.
As in the proof of \cref{lem:recover_dual_complex}, they give rise to a function $\cT\to T^\Lambda$, which tropicalizes to $W\colon\Sk(\cT)\to\sfL^*_\bbR$; moreover, for each SQM $K \dasharrow K_\alpha$ over $\oK$, the composition $W\circ\varphi_\alpha\colon\Gamma(\bbZ)\to\sfL^*$ is equal to the weight function $w\colon\Gamma(\bbZ)\to\sfL^*$ in \eqref{eq:w}.
Consequently, we obtain the following:

\begin{proposition}
	The $T_\Lambda$-action on $A$ induced by the isomorphism $A \simeq A_{K_\alpha}\otimes_{R_{K_\alpha}}\bbZ[\sfM]$ in \cref{prop:AK_in_A} is independent of the choice of SQM $K\dasharrow K_\alpha$ over $\oK$;
	so we obtain an equivariant action of $T_\Lambda$ on the family $\cX\to\TV(\MovSec,\sfN)$.
\end{proposition}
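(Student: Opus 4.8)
The plan is to reduce the claim to an identity of $T_\Lambda$-weights that is visibly independent of the chosen SQM. First I would record, from \cref{thm:reformulation_universal_torsor} and \cref{prop:AK_in_A}, that the (fixed) ring $A$ carries the fixed $\bbZ$-basis $\{\theta_Q : Q\in\tGamma(\bbZ)\}$, and that in the presentation $A\simeq A_{K_\alpha}\otimes_{R_{K_\alpha}}\bbZ[\sfM]$ this basis is identified with $\{z^\gamma\theta_P : P\in\Gamma(\bbZ),\ \gamma\in\sfM\}$ through $z^\gamma\theta_P\leftrightarrow\theta_{\varphi_\alpha(P)+\gamma}$ (note $Q-\varphi_\alpha(\pi(Q))\in\sfM$ because $\varphi_\alpha$ is a section of $\pi\colon\Sk(\cT)\to\Sk(V)$ and $Q$ is an integer point above $\varphi_\alpha$). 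By \cite[Theorem 17.2]{Keel_Yu_The_Frobenius}, extended in the obvious way by the weight function $w$ on $\bbZ[\sfM]$, the torus $T_\Lambda$ acts diagonally on this basis, the vector $z^\gamma\theta_P$ carrying weight $w(P)+w(\gamma)\in\sfL^*$. Hence, transported to $A$, the action coming from the $\alpha$-presentation makes each $\theta_Q$ an eigenvector of weight $w(\pi(Q))+w\!\big(Q-\varphi_\alpha(\pi(Q))\big)$.

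Next I would bring in the tropicalized boundary character $W\colon\Sk(\cT)\to\sfL^*_\bbR$ attached to the components of $K\setminus V$ indexed by $\Lambda(\bbZ)$, discussed just before the statement, and use two of its properties. One: being the tropicalization of a morphism $\cT\to T^\Lambda$, and $\cT\to V$ being a principal $T_\sfM$-bundle over each point of which this morphism restricts to a homomorphism $T_\sfM\to T^\Lambda$ with cocharacter map $w\colon\sfM\to\sfL^*$, the function $W$ is affine along every fibre of $\pi$ with linear part $w$; equivalently $W(x)-W(y)=w(x-y)$ whenever $\pi(x)=\pi(y)$. Two: $W\circ\varphi_\alpha=w$ on $\Gamma(\bbZ)$ for every SQM, by the definition of $w$ in \eqref{eq:w}. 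Substituting the second into the first, for $P\in\Gamma(\bbZ)$ and maximal cones $\alpha,\beta$ of $\MovSec$ one gets
\[
w\!\big(\varphi_\beta(P)-\varphi_\alpha(P)\big)=W(\varphi_\beta(P))-W(\varphi_\alpha(P))=w(P)-w(P)=0,
\]
whence, for $Q\in\tGamma(\bbZ)$ and $P\coloneqq\pi(Q)$,
\[
w\!\big(Q-\varphi_\beta(P)\big)=w\!\big(Q-\varphi_\alpha(P)\big)-w\!\big(\varphi_\beta(P)-\varphi_\alpha(P)\big)=w\!\big(Q-\varphi_\alpha(P)\big).
\]
So the $T_\Lambda$-weight of $\theta_Q$ computed in the first paragraph is the same for all SQMs (in fact it equals $W(Q)$), and the $T_\Lambda$-action on the ring $A$ is independent of the presentation.

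To conclude, the gluing isomorphisms of \cref{prop:glue_over_MovSec} are just the inclusions $A_\gamma\subset A$ of \cref{cor:Agamma}; as $T_\Lambda$ acts diagonally on the basis $\tGamma(\bbZ)$ of $A$, each $A_\gamma$ is preserved, and by the previous paragraph the induced action coincides with the intrinsic $T_\Lambda$-action on $A_\gamma$ inherited from $A_{K_\alpha}$. Hence the actions are compatible with the gluing and assemble into a $T_\Lambda$-action on $\hcX$ and on $\cX$. On the base, $T_\Lambda$ acts on $\TV(\MovSec,\sfN)$ through the homomorphism $T_\Lambda\to T_\sfN$ dual to $w\colon\sfM\to\sfL^*$ (i.e.\ the inclusion $\sfL\hookrightarrow\sfN$ on cocharacter lattices), compatibly with the chart-wise action on $\Spec\bbZ[P_\gamma]$ under which $z^m$ has weight $w(m)$; so $\cX\to\TV(\MovSec,\sfN)$ is $T_\Lambda$-equivariant, as required.

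I expect the one genuine point to be the first property of $W$: that its restriction to the $\pi$-fibres is affine with linear part exactly $w\colon\sfM\to\sfL^*$. Establishing this amounts to unwinding the definition of the universal torsor $G\to K$ and of the invertible functions on $\cT$ cut out by the components indexed by $\Lambda(\bbZ)$, whose product is the morphism $\cT\to T^\Lambda$ --- a computation of the same flavour as those in the proofs of \cref{lem:recover_dual_complex} and \cref{lem:class_via_varphi}. The rest is bookkeeping with the identifications of \cref{thm:reformulation_universal_torsor} and \cref{prop:AK_in_A}.
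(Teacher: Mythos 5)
Your proof follows the same approach as the paper's own (very terse) argument — the key points are the intrinsic tropicalized boundary function $W\colon\Sk(\cT)\to\sfL^*_\bbR$ and the identity $W\circ\varphi_\alpha=w$, both of which the paper states in the paragraph preceding the proposition. What you add is the explicit verification that the weight of $\theta_Q$ transported from the $\alpha$-presentation is exactly $W(Q)$, by using that $W$ restricted to $\pi$-fibres is affine with linear part $w$ — a fact the paper silently subsumes in its ``Consequently''. This extra step is genuinely needed (the paper's statement $W\circ\varphi_\alpha=w$ only controls the weight of the embedded $\theta_P$, not of an arbitrary $\theta_Q=\theta_{\varphi_\alpha(P)+\gamma}$), and your justification of it — that $f_{nE}$ is a $T^\sfN$-eigenfunction of weight $nE$, hence $f_{nE}^\trop$ translates by $\langle nE,\,\cdot\,\rangle$ along fibres — is the right one and is indeed of the same flavour as \cref{lem:recover_dual_complex}. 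Your remark about the base, namely that $T_\Lambda$ acts on $\TV(\MovSec,\sfN)$ through the inclusion $\sfL\hookrightarrow\sfN$ of cocharacter lattices so that the family is $T_\Lambda$-equivariant rather than merely $T_\Lambda$-invariant fibrewise, is a correct and useful clarification that the paper leaves implicit.
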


\subsection{Extension to the full secondary fan} \label{sec:extension_full}

In \cref{sec:extension_moving}, we constructed the mirror family $(\cX,\cE,\Theta)$ over the toric variety associated to the moving part of the secondary fan $\MovSec(K)$.
Next we further extend this family to the toric variety associated to the full secondary fan $\Sec(K)$.

Recall from \cref{lem:bogus_K} that each bogus cone of $\Sec(K)$ is of the form $\hgamma\coloneqq\gamma+\bbR_{\ge 0}[Y]$ for a cone $\gamma$ of $\MovSec(K)$ lying in the boundary of the moving cone $\Mov(K)$.

Here is the basic idea for the extension over a bogus cone $\hgamma$.
Assume for simplicity that $\sfN=\Pic(K)$.
Let $\braket{\gamma}_\bbR\subset\sfN_\bbR\simeq\Pic(K)_\bbR$ denote the span of $\gamma$, and let $\braket{\gamma}(\bbZ)\coloneqq\braket{\gamma}_\bbR\cap\sfN$.
If $\braket{\gamma}(\bbZ)$ and $[Y]$ generate $\sfN$,
then the affine toric open subset of $\TV(\Sec(K))$ corresponding to the bogus cone is isomorphic to
\[
\TV(\gamma) \times \TV(\bbR_{\ge 0} [Y]) \simeq \TV(\gamma) \times \bbA^1.
\]
The structure torus of the $\bbA^1$-factor is $T_{\braket{[Y]}} \subset T_\Lambda$ (notation as in \cref{sec:torus_action}). 
By the $T_\Lambda$-equivariance, the mirror family restricted to $\TV(\gamma) \times \Gm$ is isomorphic to a product $(\cX_\gamma,\cE_\gamma)\times\Gm$.
Under the $T_{\braket{[Y]}}$-action, $\theta_{[Y]}$ scales by $\lambda\in\Gm\simeq T_{\braket{[Y]}}$, while the other terms of $\Theta$ of \cref{def:Theta} remain constant.
Hence we can extend the family over $\TV(\gamma)\times\{0\}$ by the product $(\cX_\gamma,\cE_\gamma)\times\{0\}$, with the $\theta_{[Y]}$ summand of $\Theta$ set to zero.

However, in general, $\braket{\gamma}(\bbZ)$ and $[Y]$ generate only a finite-index sublattice of $\Pic(K)$, so the toric open subset of $\TV(\Sec(K))$ corresponding to the bogus cone is not literally a product,
and moreover the base toric varieties are defined with respect to the sublattice $\sfN \subset \Pic(K)$ instead of the whole Picard group.
To overcome these issues, we apply the general toric stack construction of \cref{sec:toric_fiber_bundle}, specifically \cref{cor:extend}, with $\Delta = \Sec(K)$, $\Delta' = \MovSec(K)$, $N = \sfN$ and $L=\braket{n[Y]}\subset\sfN$ (recall $\sfN = n \Pic(K) \subset \Pic(K)$, and see \cref{rem:L} regarding the choice of $L$).
Hence we obtain the extended mirror family $(\cX,\cE,\Theta)$ over the toric stack $\cTV(\Sec(K),\sfN)$.
For the purpose of this paper, i.e.\ the study of \cref{conj:main}, the stacky structure of $\cTV(\Sec(K),\sfN)$ does not play any roles.
So for simplicity, in the absence of ambiguities, we will not distinguish $\cTV(\Sec(K),\sfN)$ from its coarse moduli space $\TV(\Sec(K),\sfN)$.

For use in the proof of Theorem \ref{thm:stable}, we describe more explicitly the extension above.
For any fan $\Delta$ in a lattice $N$ and any cone $\gamma\in\Delta$, we denote by $S^\circ(\gamma,N)\subset\TV(\Delta,N)$ the open stratum corresponding to $\gamma$.
Denote $\Sec\coloneqq\Sec(K)$ for simplicity.
The open stratum of $\TV(\Sec,\sfN)$ corresponding to the bogus cone $\hgamma=\gamma+\bbR_{\ge 0}[Y]$ above is $S^\circ(\hgamma,\sfN)$.
Pick a rational hyperplane $C_\bbR \subset \Pic(K)_\bbR$ containing $\gamma$, complementary to $\bbR [Y]$, and denote $C \coloneqq C_\bbR \cap \sfN$.
Then
$$
g\colon S^\circ(\gamma, C)\longto S^\circ(\gamma,\sfN) \longto \TV(\MovSec,\sfN)
$$
is an immersion, and $S^\circ(\gamma, C) \to S^\circ(\hgamma,\sfN)$ is a finite surjection.
The composition $S^\circ(\gamma,C) \to S^\circ(\hgamma,\sfN) \subset \TV(\Sec,\sfN)$ factors through $\cTV(\Sec,\sfN) \to \TV(\Sec,\sfN)$ giving quasi-finite $h\colon S^\circ(\gamma,C)\allowbreak \to \cTV(\Sec,\sfN)$, with image the stacky open stratum for $\hgamma$.

By the construction above, the $h$ pullback of the extension $(\cX,\cE,\Theta)$ to $\cTV(\Sec,\sfN)$ is equal to the $g$ pullback of
$$
(\cX,\cE,\Theta')|_{S^\circ(\gamma \in \Delta',\sfN)}
$$
where $\Theta' \coloneqq Z(\sum_{P\in \Lambda(\bbZ)\setminus[Y]} \theta_P)$.

For example: when $\gamma=0\in\sfN$, i.e.\ the corresponding divisorial contraction is just $K \to \oK$, we have $S^\circ(\gamma,\sfN) \simeq T_\sfN$, $S^\circ(\gamma,C) \simeq T_C \subset T_\sfN$, and the finite surjection $S^\circ(\gamma,C)\to S^\circ(\hgamma,\sfN)$ is an isomorphism.
The extended family over the corresponding new open stratum $S^\circ(\hgamma,\sfN)$ is equal to $(\cX,\cE,\Theta')|_{T_C}$, with $\Theta'$ obtained from $\Theta$ by setting the $\theta_{[Y]}$-coefficient to zero.

\subsection{An inductive structure on the mirror family} \label{sec:induction}

In this subsection, we assume $Y$ is a smooth del Pezzo surface.
In the proof of \cref{thm:stable}, we will study the mirror family restricted to different strata of $\TV(\Sec(K))$.
We will show in \cref{prop:restriction_SQM} that over certain strata, the restriction is isomorphic to the mirror family for some blowdown $Y'$ of $Y$ plus some simple constant pieces.
This gives an inductive structure on the mirror family, i.e.\ we will be able to reason by induction on the Picard number.

We follow the notations of \cref{lem:SQM_of_K}.

\begin{lemma} \label{lem:contract_Y1}
	There is a regular divisorial contraction $c\colon K' \to \oK'$ (over $\oK$), with exceptional locus $Y' \subset K'$, contracting $Y'$ to a point.
	Under the identification of \cref{lem:Mori_fan}(4), the cone $\Nef(\oK')$ in $\MovFan(K)$ corresponds to the face $\braket{\ex(b_Y)}$ of $\Eff(Y)$.
\end{lemma}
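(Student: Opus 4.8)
The plan is to construct $c \colon K' \to \oK'$ by invoking the Mori theory of the relative Mori dream space $K'/\oK$ (equivalently, of $K/\oK$, since an SQM does not change $\Mov$). First I would identify the candidate contraction on the level of nef cones: by \cref{lem:Mori_fan}(4) (in the two-dimensional case the refinement there is an equality), the fan $\MovFan(K)$ on $\Mov(K) \simeq \Eff(Y)$ agrees with $\MoriFan(Y)$. The SQM $b \colon K \dasharrow K'$ corresponds under this identification to a maximal cone of $\MoriFan(Y)$, namely $\Nef(Y')$, where $Y' = b(Y)$ and $b_Y \colon Y \to Y'$ is the regular blowdown of the disjoint $(-1)$-curves of $\ex(b)$ established in \cref{lem:SQM_of_K}(3). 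The face $\braket{\ex(b_Y)}$ of $\Eff(Y) \simeq \Mov(K)$ spanned by the $b_Y$-exceptional divisors is then a face of the cone $\Nef(Y') = \Nef(K')$ (restricted to $\oK$), and I claim that a rational point $A$ in the relative interior of this face is semi-ample on $K'$ and defines the desired contraction $c$.

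Concretely, the steps are: (1) By \cref{lem:Mori_fan}(1), write $A = p^*A_{Y'}$ for a line bundle $A_{Y'}$ on $Y'$; since $A$ lies in $\braket{\ex(b_Y)}$, the class $A_{Y'}$ is an effective combination of $(-1)$-curves on $Y'$ that are the images of the exceptional curves of $b_Y$ — but more to the point, $A_{Y'}$ lies in $\braket{\ex(b_Y)}$ viewed inside $\Eff(Y)$, so its pullback $b_Y^* A_{Y'}$ on $Y$ is supported on the exceptional locus of $b_Y$ and has negative-definite intersection matrix there; hence on $Y'$ itself $A_{Y'}$ is nef and big (it is nef as it lies in $\Nef(Y')$, and big as it is in the interior of a top-dimensional face in a big cone) but with stable base locus... — here I would instead argue directly via the relative setting: (2) Since $K'/\oK$ is a Mori dream space and $A$ is semi-ample on $K'$ over $\oK$ (being a point of a face of $\Nef(K')$, using that $K'$ is a relative Mori dream space so every nef class is semi-ample), let $c \colon K' \to \oK'$ be the associated contraction over $\oK$. (3) Identify $\ex(c)$: a curve $C \subset K'$ is contracted by $c$ iff $A \cdot C = 0$ iff $C$ lies in the linear span orthogonal to the face; since the interior of $\braket{\ex(b_Y)}$ is cut out exactly by vanishing against the strict transforms in $Y'$ of the exceptional curves of $b_Y$ together with fiber curves of $p$, one deduces that $\ex(c)$ is exactly the zero-section $Y' \subset K'$. (4) Finally, $\oK'$ has Picard rank one less than the number of $(-1)$-curves blown down... more simply, $\Nef(\oK') = c^* \Nef(\oK')$ inside $\Pic(K')_\bbR$ equals the face $\braket{\ex(b_Y)}$ by construction — this is the content of \cref{lem:NefZ} applied to $c$, or directly the defining property of the contraction. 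The statement that $Y'$ contracts to a \emph{point} follows because $Y' \subset K'$ is the zero-section of the line bundle $K_{Y'} \to Y'$ by \cref{lem:SQM_of_K}(4), and contracting a zero-section of a line bundle down to the base of... no: we are contracting $Y'$ itself, and since $A = p^* A_{Y'}$ with $A_{Y'}$ big on $Y'$, the contraction $c$ restricted to $Y'$ is generically finite, hence (as $Y'$ is a surface and $A_{Y'}$ is an effective sum of $(-1)$-curves whose complement... ) — cleanest: $A_{Y'} \in \braket{\ex(b_Y)}$ means every curve on $Y'$ meeting the support of the contracted locus non-trivially is contracted, and since $Y'$ is covered by such curves (it is a rational surface and $\braket{\ex(b_Y)}$ being a proper face means $A_{Y'}$ is not big unless... ) I would instead just note $A|_{Y'} = A_{Y'}$ and $A_{Y'}^2 = 0$ because $A_{Y'}$ lies on the boundary $\partial\Eff(Y')$...

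Let me restructure: the cleanest route uses \cref{lem:span} and \cref{lem:bogus_K}. I would argue: $\braket{\ex(b_Y)}$ is the linear span of a face of $\Mov(K)$ lying in $\partial\Mov(K)$, and by \cref{lem:span} (second part) there is a birational contraction $p \colon K' \dasharrow Z$ with $Z$ $\bbQ$-factorial and $\Pic(Z)_\bbR \subset \Pic(K')_\bbR$ the linear span of the minimal face of $\Mov(K)$ containing it. Taking $Z = \oK'$ and using \cref{lem:NefZ} to identify $\bigcap_i C_i^\perp \cap \Nef(K') = \Nef(\oK')$, where $C_i$ range over curves through general points of the exceptional divisors of $K' \to \oK'$, we get that $\Nef(\oK')$ is exactly the face $\braket{\ex(b_Y)}$ of $\Mov(K) = \Eff(Y)$, which is the asserted identification in $\MovFan(K)$. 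That the exceptional locus is precisely $Y'$ and that $Y'$ contracts to a point follows from \cref{lem:pure_codimension_one} (the exceptional locus is a divisor, hence $Y'$ since by the argument in \cref{rem:Gamma} the only contractible divisor over $\oK$ is the zero-section) together with the observation that $\oK'$, being obtained from the affine $\oK$ by a small modification followed by a divisorial contraction of the \emph{only} divisor, is again affine near the image point, forcing $Y'$ to map to a single point (as in \cref{nota:K}, the zero-section is contracted to the cone point). I expect the main obstacle to be pinning down rigorously that the image of $Y'$ is a single point rather than a positive-dimensional locus; this requires knowing that $A_{Y'} = p^* A|_{Y'}$ is \emph{numerically trivial} on $Y'$, which holds because $A$ being in $\braket{\ex(b_Y)}$ means $A|_{Y'}$ is a non-negative combination of classes that are by construction trivial on $Y'$ — equivalently, $A$ restricted to the zero-section equals $A_{Y'}$ and $A_{Y'}$ lies in the boundary of $\Eff(Y')$ in a direction orthogonal to all curves in $Y'$ contracted by $b_Y \to$ absolute; making this last numerical claim precise via the projection formula $p_* p^* = \id$ and the structure of $\braket{\ex(b_Y)}$ is the one genuinely technical point, and I would handle it by reducing to the case of a single $(-1)$-curve and using \cref{lem:SQM_of_K}(4), then taking disjoint unions.
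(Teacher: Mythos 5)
The overall framework you use is the same as the paper's: locate $\oK'$ as the birational contraction of $K$ corresponding to the cone $\braket{\ex(b_Y)}\subset\Eff(Y)\simeq\Mov(K)$, and then analyse the induced regular contraction $c\colon K'\to\oK'$. But there is a real confusion in the first pass, and the key technical step is flagged rather than carried out.

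First, the cone identification. You write that $b\colon K\dasharrow K'$ corresponds to ``$\Nef(Y')$'' and that $\braket{\ex(b_Y)}$ is a face of ``$\Nef(Y')=\Nef(K')$.'' But under \cref{lem:Mori_fan}(4) the maximal cone of $\MoriFan(Y)$ corresponding to $\Nef(K')$ is the \emph{bogus} cone $b_Y^*\Nef(Y')+\braket{\ex(b_Y)}$, not $b_Y^*\Nef(Y')$ (whose dimension is only $\rho(Y')<\rho(Y)$). The cone $\braket{\ex(b_Y)}$ is a face of this bogus cone, but it meets $b_Y^*\Nef(Y')$ only at the origin. The paper's proof gets this right: ``$\braket{\ex(b_Y)}$ is a face of the bogus cone corresponding to $c_Y\colon Y\to Y'$, its pullback $p^*\braket{\ex(b_Y)}$ is a face of $b^*(\Nef(K'))$.''

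Second, and more seriously, the crux is to show that $c$ contracts $Y'$ to a single point and that $\ex(c)$ is exactly $Y'$ — and this is where the proposal only gestures. The paper's route is short and explicit: $c$ is the contraction defined by $\abs{m\, b(p^*E)}$ for $m\gg0$, where $E=\ex(b_Y)$ and $J\coloneqq b(p^*E)$ is the strict transform of $p^{-1}(E)$; by the flop description in \cref{lem:SQM_of_K}(4), $J$ is disjoint from $Y'$, so $J\cdot C=0$ for every curve $C\subset Y'$, and hence $c$ collapses $Y'$ to a point. Moreover, each flopped curve meets $J$ in exactly one point, so none of them is contracted, and since $K'\to\oK$ is an isomorphism outside $Y'\cup\{\text{flopped curves}\}$ this forces $\ex(c)=Y'$. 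Your numerical-triviality claim ``$A|_{Y'}=0$'' is the same statement, but you stop at ``I would handle it by \ldots reducing to a single $(-1)$-curve and using \cref{lem:SQM_of_K}(4)'' without executing it. Also, the alternative route you sketch via \cref{lem:span} and \cref{lem:pure_codimension_one} is shaky: those lemmas require the target to be $\bbQ$-factorial, and $\oK'$ is generally \emph{not} $\bbQ$-factorial (e.g.\ $\oK$ itself is the anticanonical cone, which for $\rho(Y)>1$ is not $\bbQ$-factorial). You would need a different argument to exclude contracted curves other than those in $Y'$ — exactly the step the paper does by checking the flopped curves against $J$.
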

\begin{proof}
	Consider the cone $p^*\braket{\ex(b_Y)}$ in $\MovFan(K)$ under the identification of \cref{lem:Mori_fan}(4), and the corresponding contraction $K\dasharrow\oK'$, which is necessarily birational with exceptional locus contained in $Y \subset K$, as we are working relatively over $\oK$.
	Since $\braket{\ex(b_Y)}$ is a face of the bogus cone corresponding to $c_Y\colon Y\to Y'$, its pullback $p^*\braket{\ex(b_Y)}$ is a face of $b^*(\Nef(K'))$;
	so it induces a regular birational contraction $c\colon K' \to \oK'$, given by the linear system $\abs{mb(p^*E)}$ for $m\gg 0$, where $E\coloneqq\ex(b_Y)$ and $b(p^*E)$ denotes the strict transform.
	By the description of $b$ in \cref{lem:SQM_of_K}, we can compute $b(p^*E)$ explicitly, which is disjoint from $Y'$, so $c$ contracts $Y'$ to a point.
	Furthermore, the flopped curves in $K'$ are the only proper curves except those contained in $Y'$, and each meets $b(p^*E)$ at exactly one point, so they are not contracted by $c$.
	It follows that $Y'$ is the exception locus of $c$.
\end{proof}

\begin{lemma} \label{lem:S_gamma}
	Let $\gamma\coloneqq \Nef(\oK')\in \MovFan(K)$.
	It is also a cone of $\MovSec(K)$.
	The corresponding closed stratum $S(\gamma)$ in $\TV(\MoriFan(K))$ (resp.\ in $\TV(\Sec(K))$) is canonically identified with $\TV(\MoriFan(K_{Y'}))$ (resp.\ $\TV(\Sec(K_{Y'})$). 
\end{lemma}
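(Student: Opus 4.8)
The plan is to reduce everything to the standard toric description of a closed orbit closure and then run the explicit blow-down structure of $b\colon K\dasharrow K'$ from \cref{lem:SQM_of_K} and \cref{lem:contract_Y1}. Recall that for a fan $\Delta$ in a lattice $N$ and a cone $\gamma\in\Delta$, the closed stratum $S(\gamma)\subset\TV(\Delta,N)$ is canonically $\TV(\Star(\gamma),N/(N\cap\braket{\gamma}_\bbR))$, where $\Star(\gamma)$ is the fan of images of the cones $\tau\in\Delta$ with $\gamma\subseteq\tau$. So the statement breaks into: (a) $\gamma=\Nef(\oK'/\oK)$ is a cone of $\MoriFan(K)$, hence of $\Sec(K)$; (b) a lattice isomorphism $\Pic(K)/(\Pic(K)\cap\braket{\gamma}_\bbR)\xrightarrow{\sim}\Pic(K_{Y'})$; and (c) an identification of $\Star(\gamma)$, computed inside $\MoriFan(K)$ (resp.\ $\Sec(K)$), with $\MoriFan(K_{Y'})$ (resp.\ $\Sec(K_{Y'})$), compatible with the isomorphism in (b).

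For (b): by \cref{lem:SQM_of_K}, $b_Y\colon Y\to Y'$ is the contraction of the disjoint $(-1)$-curves $E_1,\dots,E_k$, i.e.\ the blow-up of $Y'$ at $k$ distinct points, so $\Pic(Y)=b_Y^*\Pic(Y')\oplus\bigoplus_i\bbZ E_i$. Combined with $\Pic(K)\simeq\Pic(Y)$, $\Pic(K_{Y'})\simeq\Pic(Y')$ (\cref{lem:Mori_fan}) and $\braket{\gamma}_\bbR=\braket{\ex(b_Y)}_\bbR=\braket{E_1,\dots,E_k}_\bbR$ (\cref{lem:contract_Y1}), this shows that the projection $\Pic(K)_\bbR\to\Pic(K)_\bbR/\braket{\gamma}_\bbR$ is split by $b_Y^*$ and induces the desired lattice isomorphism. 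By the blow-up canonical bundle formula $K_Y=b_Y^*K_{Y'}+\sum_i E_i$, the divisor class $[Y]$, which corresponds to $K_Y$ under $\Pic(K)\simeq\Pic(Y)$, projects to $[Y']\in\Pic(K_{Y'})$; this will be used for the bogus cones.

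For (a) and the moving part of (c), use $\MovFan(K)=\MoriFan(Y)$ in dimension two (\cref{lem:Mori_fan}). A maximal cone $g^*\Nef(Z)+\braket{\ex(g)}$ of $\MoriFan(Y)$, attached to a birational contraction $g\colon Y\to Z$, contains $\gamma=\braket{E_1,\dots,E_k}$ iff every $[E_i]$ lies in it; since the $E_i$ span extremal rays of $\Eff(Y)$ and have negative self-intersection (so are not nef), each $[E_i]$ must be a generator of $\braket{\ex(g)}$, i.e.\ $g$ contracts all the $E_i$ and therefore factors as $g=h\circ b_Y$ for a birational contraction $h\colon Y'\to Z$. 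Then $g^*\Nef(Z)+\braket{\ex(g)}=b_Y^*(h^*\Nef(Z)+\braket{\ex(h)})+\gamma$, so passing to the quotient by $\braket{\gamma}_\bbR$ and using $b_Y^*$ identifies this cone with the maximal cone $h^*\Nef(Z)+\braket{\ex(h)}$ of $\MoriFan(Y')$, and $g\leftrightarrow h$ is a bijection of the relevant sets of cones. In particular $\gamma$ is a face of the maximal cone $b_Y^*\Nef(Y')+\gamma$, hence a cone of $\MoriFan(K)$; lying on $\partial\Mov(K)$ and spanned by extremal rays of $\Eff(Y)$, it is unaffected by the coarsening from $\MoriFan(K)$ to $\Sec(K)$, so it is a cone of $\Sec(K)$ as well. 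The bogus cones of $\MoriFan(K)$ are $\tau+\bbR_{\geq0}[Y]$ with $\tau$ a cone of $\MovFan(K)$ on $\partial\Mov(K)$ (cf.\ \cref{lem:bogus_K}); such a cone contains $\gamma$ iff $\tau$ does (because $[Y]=K_Y$ points into the interior of $\Eff(Y)$ while $\tau$ is a boundary face), and under the projection it goes to $(\tau/\braket{\gamma})+\bbR_{\geq0}[Y']$, which by (b) is exactly a bogus cone of $\MoriFan(K_{Y'})$. Hence $\Star(\gamma)$ inside $\MoriFan(K)$ is isomorphic to $\MoriFan(K_{Y'})$, giving the first identification $S(\gamma)\simeq\TV(\MoriFan(K_{Y'}))$.

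Finally, to upgrade to $\Sec$, invoke the explicit coarsening rule of \cref{prop:secondary_fan_del_Pezzo} and \cref{rem:Sec_on_PicY}: maximal cones of $\MovSec(K)$ arise by merging the maximal cones of $\MoriFan(Y)$ whose contractions have the same set of boundary exceptional $(-1)$-curves, with the bogus cones built accordingly. Since $Y$ is a smooth del Pezzo surface it has no $(-2)$-curves, so no blown-up point of $b_Y$ can lie on a $(-1)$-curve of $Y'$; therefore $b_Y$ induces a bijection between the $(-1)$-curves of $Y'$ and the $(-1)$-curves of $Y$ disjoint from all the $E_i$, preserving the property of being a component of the boundary. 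Consequently, for $g=h\circ b_Y$, the set of boundary exceptional $(-1)$-curves of $g$ is the fixed set $\{E_i : E_i\subset D\}$ together with the $b_Y$-lifts of the boundary exceptional $(-1)$-curves of $h$; so under $g\leftrightarrow h$ the two merging rules correspond, yielding $\Star(\gamma)$ inside $\Sec(K)$ isomorphic to $\Sec(K_{Y'})$, and the second identification $S(\gamma)\simeq\TV(\Sec(K_{Y'}))$ follows. I expect the main obstacle to be precisely this last matching of the coarsening data across a change of Picard number --- showing that the boundary/internal dichotomy for $(-1)$-curves, which defines $\Sec$, transports along $b_Y$ --- together with the bookkeeping needed to see that $\gamma$ is genuinely a cone of $\Sec(K)$ and not merely of $\MoriFan(K)$; the rest is the orbit-closure formula plus the blow-up Picard splitting.
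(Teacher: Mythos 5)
Your proof follows the same route as the paper's (very terse) argument: identify $\gamma$ as a face of $\Mov(K)$ via \cref{lem:contract_Y1}, then match $\Star(\gamma)$ with the secondary fan for $K_{Y'}$ by tracking the boundary/internal $(-1)$-curve data from \cref{prop:secondary_fan_del_Pezzo} across the Picard splitting $\Pic(Y)=b_Y^*\Pic(Y')\oplus\bigoplus\bbZ E_i$. The paper simply outsources the whole identification to \cref{prop:secondary_fan_del_Pezzo} in one line, so your write-up is really an expansion of that proof rather than a different one.

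One small correction: the parenthetical ``because $[Y]=K_Y$ points into the interior of $\Eff(Y)$'' is wrong. Under $\Pic(K)\simeq\Pic(Y)$ the class $[Y]$ is $\cN_{Y/K}\simeq K_Y$, which on a del Pezzo is anti-ample, hence anti-effective, so it points into the interior of $-\Eff(Y)$, not $\Eff(Y)$. The conclusion you want (that the bogus cone $\tau+\bbR_{\ge0}[Y]$ contains $\gamma$ iff $\tau$ does) is still correct, but the right justification is that this bogus cone and the moving cones fit into the single fan $\Sec(K)$, so $(\tau+\bbR_{\ge0}[Y])\cap\Mov(K)=\tau$, and since $\gamma\subset\Mov(K)$ the claim follows; or, equivalently, use that $\bbR_{\ge 0}[Y]$ lies strictly outside $\Mov(K)\simeq\Eff(Y)$. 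Similarly, your assertion that $\gamma$ ``is unaffected by the coarsening'' is better justified by noting directly (as the paper does) that $\gamma$ is a face of the common support $\Mov(K)$ of $\MovFan(K)$ and $\MovSec(K)$, hence automatically a face of whichever cone of $\MovSec(K)$ contains it.
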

\begin{proof}
	Since the cone $\braket{\ex(b_Y)}$ is a face of $\Eff(Y)$,
		by \cref{lem:contract_Y1}, $\gamma$ is a face of $\Mov(K)$, hence it is also a cone of $\MovSec(K)$.
	The second statement follows from the explicit description in \cref{prop:secondary_fan_del_Pezzo}.
	\end{proof}

\begin{lemma} \label{lem:cone_induct}
	Let $\alpha$ be a cone of $\MoriFan(K)$ that does not contain any $\gamma$ as in \cref{lem:S_gamma} (for any $SQM$ $K \dasharrow K'$ as in \cref{lem:SQM_of_K}).
	Then $\alpha$ is either a cone in $\Nef(K)$ or a bogus cone adjacent to $\Nef(K)$.
\end{lemma}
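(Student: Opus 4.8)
The plan is to transport the statement into the fan $\MoriFan(Y)$ on $\Pic(Y)_\bbR$ by means of \cref{lem:Mori_fan}, and then to run a short convex‑geometry argument exploiting the product structure of Mori chambers on a del Pezzo surface. First I would make the following reductions. Under the identification $\Pic(K/\oK)_\bbR\simeq\Pic(Y)_\bbR$, the fan $\MovFan(K)$ becomes $\MoriFan(Y)$, $\Mov(K)$ becomes $\Eff(Y)$, and $\Nef(K)$ becomes $\Nef(Y)$; moreover, arguing as in \cref{lem:bogus_K} — over $\oK$ the only contractible prime divisor of $K$ is the zero section — every cone of $\MoriFan(K)$ not contained in $\Mov(K)$ has the form $\alpha'+\rho_0$, where $\rho_0\coloneqq\bbR_{\ge0}[Y]$ and $\alpha'$ is a cone of $\MoriFan(Y)$ lying in $\partial\Eff(Y)$ (here $[Y]$, viewed in $\Pic(Y)_\bbR$, is the anti‑ample class $K_Y$). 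Finally, by \cref{lem:SQM_of_K} and \cref{lem:contract_Y1}, as $K\dasharrow K'$ runs over the nontrivial SQMs over $\oK$ — equivalently, over the nonempty sets $\{C_1,\dots,C_r\}$ of pairwise disjoint $(-1)$‑curves of $Y$ — the cones $\gamma=\Nef(\oK')$ of \cref{lem:S_gamma} run over exactly the cones $\braket{C_1,\dots,C_r}\subseteq\Eff(Y)$, each of which is a cone of $\MoriFan(K)$; hence the hypothesis on $\alpha$ is equivalent to asserting that no ray of $\alpha$ is spanned by a $(-1)$‑curve of $Y$ (if $\gamma=\braket{C_1,\dots,C_r}\subseteq\alpha$ then $\braket{C_1}$ is a ray of $\alpha$, and conversely $\braket{C}$ for a single $(-1)$‑curve $C$ is itself such a $\gamma$).

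The heart of the argument is the claim: \emph{if $\beta$ is a cone of $\MoriFan(Y)$ with $\beta\not\subseteq\Nef(Y)$, then some ray of $\beta$ is spanned by a $(-1)$‑curve of $Y$.} To see this, note $\beta$ is a face of some maximal cone $c$ of $\MoriFan(Y)$, necessarily $c\ne\Nef(Y)$; by the description recalled in \cref{sec:moving}, $c=f^*\Nef(Y')+\braket{\ex(f)}$ for a nontrivial birational contraction $f\colon Y\to Y'$, which on the del Pezzo surface $Y$ is the blow‑down of pairwise disjoint $(-1)$‑curves $E_1,\dots,E_m$ with $m\ge1$, so that $c=f^*\Nef(Y')\oplus\braket{E_1,\dots,E_m}$ is a sum of cones spanning complementary subspaces. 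A face of such a sum is $F_1\oplus\braket{E_i\mid i\in S}$ for $F_1$ a face of $f^*\Nef(Y')$ and $S\subseteq\{1,\dots,m\}$; since $f^*\Nef(Y')\subseteq\Nef(Y)$, the condition $\beta\not\subseteq\Nef(Y)$ forces $S\ne\varnothing$, and then $\braket{E_{i_0}}$ is a ray of $\beta$ for any $i_0\in S$, proving the claim.

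Granting this, I would conclude as follows. If $\alpha\subseteq\Mov(K)=\Eff(Y)$, then either $\alpha\subseteq\Nef(Y)=\Nef(K)$ or, by the claim, $\alpha$ has a $(-1)$‑curve ray, contradicting the hypothesis. If $\alpha\not\subseteq\Mov(K)$, write $\alpha=\alpha'+\rho_0$ as above; every ray of $\alpha'$ is a ray of $\alpha$, so $\alpha'$ has no $(-1)$‑curve ray, whence $\alpha'\subseteq\Nef(Y)$ by the claim. Being a convex cone inside $\partial\Eff(Y)$, $\alpha'$ lies in a facet $F=\ell^\perp\cap\Eff(Y)$ of $\Eff(Y)$ with $\ell\in\Nef(Y)$ nonzero, so $\braket{F}_\bbR=\ell^\perp$; and since $\ell\cdot[Y]=-\,\ell\cdot(-K_Y)<0$ (as $-K_Y$ is ample), the class $[Y]$ is not in $\braket{\alpha'}_\bbR$. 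Therefore $\dim\alpha=\dim\alpha'+1$ and $\alpha'$ is a facet of $\alpha$ contained in $\Nef(K)$; that is, $\alpha$ is a face of a maximal bogus cone adjacent to $\Nef(K)$ (one of those attached to the divisorial contractions $K'\to\oK'$ that blow down the zero section), degenerating to $\alpha=\rho_0=\bbR_{\ge0}[Y]$ itself when $\alpha'=\{0\}$. This exhausts all cases.

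The step I expect to be the main obstacle is the structural input in the first paragraph: establishing that the bogus cones of $\MoriFan(K)$ really take the stated form $\alpha'+\bbR_{\ge0}[Y]$ with $\alpha'\subseteq\partial\Eff(Y)$ (the analogue for $\MoriFan(K)$ of \cref{lem:bogus_K}, relying on \cref{lem:span} and the fact that $Y$ is the unique contractible divisor over $\oK$), and then matching the resulting description of the cones "adjacent to $\Nef(K)$" with the explicit extension of the mirror family over the corresponding boundary stratum given in \cref{sec:extension_full}, as used in the proof of \cref{thm:stable}. The remaining ingredients are just the elementary facts about Mori chambers on del Pezzo surfaces recalled above.
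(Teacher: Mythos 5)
Your argument is correct and takes essentially the same route as the paper's very terse one‑sentence proof: under the identifications of \cref{lem:Mori_fan} one observes that the cones $\gamma$ of \cref{lem:S_gamma} are exactly the faces $\braket{\ex(b_Y)}\subset\Eff(Y)$ spanned by disjoint $(-1)$‑curves, that every Mori chamber of the del Pezzo $Y$ splits as $f^*\Nef(Y')\oplus\braket{E_1,\dots,E_m}$ (so any cone with no $(-1)$‑curve ray lies in $\Nef(Y)$), and that the bogus cones split off a $\bbR_{\ge 0}[Y]$‑factor with the remaining part in $\Nef(Y)\cap\partial\Eff(Y)$. Your write‑up supplies the convex‑geometry and Picard‑lattice details (including the Hodge‑index observation that $[Y]=K_Y\notin\ell^\perp$ for nonzero nef $\ell$) that the paper leaves implicit behind "So the result follows," but the underlying reasoning coincides.
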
 
\begin{proof}
	Under the isomorphisms of cones in \cref{lem:Mori_fan}(3), a cone $\gamma$ as in Lemma \ref{lem:S_gamma} is either a face of $\Eff(Y)$ spanned by the exceptional divisors of a divisorial contraction, or a face of $\Nef(Y)$ corresponding to fibrations.
		So the result follows.
\end{proof}

Let $\Gamma$, $\uGamma$, $\Lambda$ and $\uLambda$ be as in \cref{rem:Gamma}.
The SQM $b\colon K\dasharrow K'$ over $\oK$ induces a new triangulation $\uLambda'$ of $\Lambda$ by flopping the edges of $\uLambda$ corresponding to the boundary components of $\ex(b)$.
By \cref{lem:SQM_of_K}(\ref{lem:SQM_of_K:J}), the dual cone complex $\Sigma_{(V\subset K_{Y'})}$ is a sub-complex of $\Sigma_{(V\subset K')}$.
Let $\Gamma_{Y'}$, $\uGamma_{Y'}$, $\Lambda_{Y'}$ and $\uLambda_{Y'}$ denote the counterparts of $\Gamma$, $\uGamma$, $\Lambda$ and $\uLambda$ for $Y'$.
Then $\uLambda_{Y'}$ is a sub-complex of $\uLambda'$.

Let $\uLambda_\gamma$ be the coarsening of $\uLambda'$ where we remove all the edges internal to $\Lambda_{Y'}$, thus $\uLambda_\gamma$ consists of the polytope $\Lambda_{Y'}$ together with one triangle for each boundary component of $\ex(b)$.
Let $\Sigma_\gamma\coloneqq C(\uLambda_\gamma)$, the cone complex over $\uLambda_\gamma$, which has support $\Gamma$, and is a coarsening of $\Sigma_{(V\subset K')}$.

\begin{proposition} \label{prop:restriction_SQM}
	Consider the restriction
	$$
	(\cX,\cE)|_{S(\gamma)} \to S(\gamma)
	$$
	together with the canonical theta functions.
	The following hold:
	\begin{enumerate}
	\item The product $\theta_P\cdot\theta_Q = 0$ unless $P, Q$ lie in a same maximal cone $\tau$ of $\Sigma_\gamma$.
	If $\tau$ is not the maximal cone $\Gamma_{Y'}\subset\Sigma_\gamma$, then $\theta_P\cdot\theta_Q=\theta_{P+Q}$.
	\item Let $\cX' \subset \cX$ be defined by the vanishing of $\theta_P$ for all $P\in\Sigma_\gamma\setminus\Gamma_{Y'}$.
	Let $\cE' \subset \cX'$ be defined by the vanishing of $\theta_P$ for all $P\in\Gamma^\circ_{Y'}\subset\Sigma_\gamma$.
	Then
	$$
	(\cX', \cE') \to S(\gamma)
	$$
	together with the restriction of the theta functions $\theta_P$, $P\in\Gamma_{Y'}$, is canonically identified with the mirror family $(\cX,\cE)_{(Y',D')}$ for the pair $(Y',D'\coloneqq Y'\setminus U)$ over $\TV(\MoriFan(K_{Y'}))$ (resp.\ $\TV(\Sec(K_{Y'}))$ as in \cref{lem:S_gamma}) with its theta functions.
	\end{enumerate}
\end{proposition}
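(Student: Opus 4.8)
The plan is to compute the restriction of the mirror algebra to the stratum $S(\gamma)$ term by term, determine which structure constants survive, and recognise the ``interesting'' surviving block as the mirror algebra $A_{K_{Y'}}\simeq\tA_{Y'}$ of \cref{prop:relation_AKAY}, built from non-archimedean disks in the canonical bundle $K_{Y'}\to Y'$. By \cref{lem:S_gamma} the stratum $S(\gamma)$ is the closed toric stratum attached to the cone $\gamma=\Nef(\oK')$, identified with $\TV(\MoriFan(K_{Y'}))$ (resp.\ $\TV(\Sec(K_{Y'}))$); under the identifications of \cref{lem:Mori_fan} and \cref{lem:contract_Y1} one has $\gamma=\braket{\ex(b_Y)}\subset\sfN_\bbR$, so $\gamma^\perp\cap\sfM=N_1(Y')\subset N_1(Y)$ is the span of classes of curves contained in $Y'\subset K'$, i.e.\ of curves contracted by $c\colon K'\to\oK'$. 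Working in the charts of $\TV(\MovSec(K),\sfN)$ adjacent to $\gamma$ and using \cref{prop:AK_in_A}, \cref{lem:glue_Aalpha} and \cref{prop:glue_over_MovSec}, restriction to $S(\gamma)$ is the quotient that kills every $z^\delta$ with $\delta\notin\gamma^\perp$; equivalently, by \cref{prop:relation_AKAY} and \cref{prop:A_K} applied to the smooth model $K'$ of \cref{lem:SQM_of_K}, it retains only those structure constants $\chi(P_1,\dots,P_n,Q,\delta)$ of $A_{K'}$ whose curve class $\delta$ is supported on $Y'\subset K'$. Since $Y'\subset K'\setminus J$ is the zero section of $K_{Y'}\to Y'$ by \cref{lem:SQM_of_K}(\ref{lem:SQM_of_K:J}), such classes are exactly those in $\NE(Y',\bbZ)\simeq\NE(K_{Y'},\bbZ)$.

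For part (1) I would combine two inputs, as in \cref{lem:boundary_E} and \cref{prop:count_balanced_spine_dim2}. First, ampleness of $-K_{Y'}$ on $Y'$ together with \cite[Theorem 16.8(2)]{Keel_Yu_The_Frobenius} forces the spine of a surviving structure disk to meet only one maximal cell of the triangulation $\uLambda'$ of $\Lambda$ unless the disk has a curve class of positive intersection against the boundary $1$-stratum dual to a separating wall; for a wall internal to $\Lambda_{Y'}$ that class meets an exceptional curve of $c$ nontrivially, hence is not supported on $Y'$ and does not survive the restriction --- this is exactly why $\uLambda_\gamma$ is defined by erasing those internal edges, leaving the single maximal cell $\Gamma_{Y'}$ together with one triangle per boundary component of $\ex(b)$. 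Second, for the remaining walls (those on $\partial\uLambda'$, or separating an attached triangle) the relevant class is rigid and supported on boundary $1$-strata, so \cref{cor:count_rigid} gives count $1$ when the balanced spine exists and $0$ otherwise. Together these give $\theta_P\cdot\theta_Q=0$ unless $P,Q$ lie in a common maximal cone $\tau$ of $\Sigma_\gamma=C(\uLambda_\gamma)$, and $\theta_P\cdot\theta_Q=\theta_{P+Q}$ when $\tau\neq\Gamma_{Y'}$ (the ``toric'' case, all contributing disks of trivial class, by \cref{prop:count_balanced_spine_dim2} and \cite[Theorem 16.8(1),(3)]{Keel_Yu_The_Frobenius}).

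For part (2), part (1) shows that the $\bbZ$-submodule spanned by $\theta_P$, $P\in\Sigma_\gamma\setminus\Gamma_{Y'}$, is an ideal and (as in \cref{lem:boundary_E}) that the $\theta_P$, $P\in\Gamma^\circ_{Y'}$, span an ideal of the quotient, so $\cX'$ and $\cE'$ are $\Spec$ of the quotient algebra $A'$ with $\bbZ$-basis $\theta_P$, $P\in\Gamma_{Y'}(\bbZ)$, and of the corresponding further quotient. It remains to identify $A'$, as a graded algebra over $S(\gamma)\simeq\TV(\MoriFan(K_{Y'}))$ (resp.\ $\TV(\Sec(K_{Y'}))$) with its theta functions, with the mirror family of $(Y',D'\coloneqq Y'\setminus U)$. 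The crucial point is that a structure disk of $A_{K'}$ contributing to a surviving structure constant among the $\theta_P$, $P\in\Gamma_{Y'}(\bbZ)$, maps into $K'\setminus J=K_{Y'}$: this is a ``disks avoid $J$'' statement of the same nature as \cref{lem:structure_disk_in_K}, proved by running the convexity argument of \cref{lem:balancing}--\cref{lem:non-decreasing_semiample} against a semi-ample divisor on $K'$ supported on $J$ produced from the nef face $\gamma=\Nef(\oK')$, confining the tropicalization of the disk to $\Gamma_{Y'}$. Granting this, the moduli of structure disks for the $\Gamma_{Y'}$ block coincide with those for $(Y',D')\subset K_{Y'}$, with curve classes matching via $\NE(Y')\hookrightarrow\NE(K')$ and $d^\trop$-grading matching via $\uLambda_{Y'}\subset\uLambda'$ (\cref{rem:Gamma}); their counts agree, and the remaining bookkeeping --- the base identification of \cref{lem:S_gamma}, the matching $\Gamma_{Y'}(\bbZ)\simeq\Sigma_{(V\subset K_{Y'})}(\bbZ)$, and the triviality and compatibility of the twisting line bundles $L_P$ --- is handled as in \cref{thm:reformulation_universal_torsor}, \cref{lem:glue_Aalpha} and \cref{lem:class_via_varphi}, gluing over $\TV(\MoriFan(K_{Y'}))$ and extending to $\TV(\Sec(K_{Y'}))$ by the recipe of \cref{sec:extension_full}.

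The main obstacle is the comparison in the previous paragraph: establishing the ``disks avoid $J$'' confinement (finding the right semi-ample divisor supported on $J$ out of the geometry of $c\colon K'\to\oK'$ and running the convexity estimate), and verifying that the class-bookkeeping under $b_Y\colon Y\to Y'$ induced on the surviving disks coincides with the one built into the toric identification $S(\gamma)\simeq\TV(\MoriFan(K_{Y'}))$ --- which is where \cref{lem:class_via_varphi} and \cref{prop:SQM}, governing how disk classes change between birational models, must be invoked once more.
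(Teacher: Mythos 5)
Your overall plan is sound, and your first paragraph correctly identifies that restriction to $S(\gamma)$ kills all monomials $z^\delta$ with $\delta$ not supported on $Y'\subset K'$; this matches the paper's introduction of the ideal $I_S\subset R_{K'}$ generated by curves not contained in $Y'$. Your part (1) argument also lands in roughly the right place, though you take a slightly circuitous route; the paper simply notes that \cref{lem:structure_disk_in_K} forces the associated cycle into $K'$, hence (mod $I_S$) into $Y'$, which by the relation between cycles and spines (\cite[Lemma~8.2]{Keel_Yu_The_Frobenius}) confines the spine to a single maximal cone of $\Sigma_\gamma$, and then \cite[Lemma~10.4]{Keel_Yu_The_Frobenius} handles the toric cells.

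The gap is in part (2), in the step you yourself flag as ``the main obstacle'': you propose to show that the surviving structure disks map into $K_{Y'}=K'\setminus J$ by running the convexity argument of \cref{lem:balancing}--\cref{lem:non-decreasing_semiample} against a semi-ample divisor on $K'$ supported on $J$ ``produced from the nef face $\gamma=\Nef(\oK')$''. Two problems. First, no such divisor is available: $\Nef(\oK')$ consists of classes pulled back from $\oK'$, which vanish on curves in $Y'$, whereas the exceptional locus of $c\colon K'\to\oK'$ is $Y'$ itself, not $J$; there is no reason for a nef or semi-ample line bundle on $K'$ supported on $J$ to exist (the components of $J$ are not the exceptional locus of any contraction in play, and generically have negative normal directions). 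Second, even if you had it, the convexity argument only controls the \emph{spine} of the disk (its tropicalization), and would at best tell you the spine lies in $\Gamma_{Y'}$; it does not control where the full analytic image goes, so it cannot rule out the disk hitting $J_{\interior}$, which is an interior divisor meeting $V$. The paper handles this differently: it decomposes $J=J_{\bdry}\cup J_{\interior}$, observes that since $J$ is disjoint from $Y'$ the surviving disk class has \emph{zero intersection} with $J_{\interior}$, and then invokes \cite[Proposition~8.5]{Keel_Yu_The_Frobenius} (a positivity statement) to conclude the disk image actually misses $J_{\interior}$; avoidance of $J_{\bdry}$ is automatic because those are boundary components, and the punctured disk lives in $V^\an$ with the marked points landing on divisorial points of $K_{Y'}$. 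That zero-intersection plus \cite[Proposition~8.5]{Keel_Yu_The_Frobenius} is the missing ingredient, and it is a genuinely different mechanism from the convexity estimate you are trying to reuse.
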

\begin{proof}
	It suffices to study the restriction to the open stratum $S^\circ(\gamma)\subset S(\gamma)$,
	the extension to the closed stratum $S(\gamma)$ then follows by tracing through the extensions of Sections \ref{sec:extension_moving} and \ref{sec:extension_full}.
	Indeed, the extension over the moving cone only concerns curve classes, which are read off from $\varphi$, while the extension over the bogus cones is determined by the boundary torus action.
	
	Let $I_S \subset R_{K'}=\bbZ[\NE(K')]$ be the ideal generated by (monomials associated to) curves \emph{not} contained in $Y'\subset K'$, i.e.\ not contracted by $K' \to \oK'$.
	Let $f$ be a structure disk in a general position contributing to the multiplication rule of $A_{K'}$ modulo $I_S$.
	By \cref{lem:structure_disk_in_K}, we can assume that the cycle associated to $f$ is supported on $K'$, then it is just supported on $Y'$, so the spine $\Sp(f)$ (minus the marked points) lies in a same maximal cone of $\Sigma_\gamma$ (see \cite[Lemma 7.2]{Keel_Yu_The_Frobenius}).
	This implies that $\theta_P\cdot\theta_Q=0$ modulo $I_S$ unless $P,Q$ lie in a same maximal cone $\tau$ of $\Sigma_{\gamma}$.
	If $\tau$ is not $\Gamma_{Y'}\subset\Sigma_\gamma$, then the fact that the cycle associated to $f$ is supported on $Y'$ implies moreover that the whole tropical curve $\Trop(f)$ (minus the marked points) lies in the interior of $\tau$.
	So the structure disk lies in a toric locus, and we deduce from \cite[Lemma 9.4]{Keel_Yu_The_Frobenius} that $\theta_P\cdot\theta_Q=\theta_{P+Q}$ modulo $I_S$.
	This shows the first statement.
	
	It follows that the $R_{K'}$-submodule of $A_{K'}$ with basis $\theta_P$, $P\in\Gamma_{Y'}(\bbZ)$ gives a subalgebra modulo $I_S$.
	So we see that the coordinate rings for $\cX'|_{S^\circ(\gamma)}\to S^\circ(\gamma)$ and $\cX_{(Y',D')}|_{T_{\Pic(Y')}}\to T_{\Pic(Y')}\subset\TV(\Nef(Y'))$ have the same theta function basis, thus it remains to check that the multiplication rules are the same.
	We show that the structure disks responsible for the structure constants in the two cases are exactly the same.
	Let $J\subset K'$ be as in \cref{lem:SQM_of_K}(\ref{lem:SQM_of_K:J}), we have $K_{Y'}\simeq K'\setminus J$.
	Note $J$ is disjoint from $Y'$, and is a disjoint union of irreducible components, one for each exceptional curve of $Y \to Y'$.
	Write $J= J_\bdry \cup J_\interior$, according to whether the associated exceptional curve is boundary or interior (see \cref{prop:secondary_fan_del_Pezzo}).
	Note $J_\bdry$ is a union of components of $V \subset K'$, while $J_\interior$ contains no boundary strata.
	Let $f$ be the structure disk as above, with $\Sp(f)$ lying in the cone $\Gamma_{Y'}\subset\Sigma_\gamma$.
	Since $J$ is disjoint from $Y'$, $J_\interior$ has zero intersection with the class of $f$.
	It follows by \cite[Proposition 7.5]{Keel_Yu_The_Frobenius} that the image of $f$ is disjoint from $J_\interior$, so the structure disk (minus the marked points) lies in $K_{Y'}$, and is thus a structure disk for the mirror algebra $A_{K_{Y'}}$ of $K_{Y'}$.
	This identifies the structure disks in the two cases, completing the proof.
\end{proof}

\section{Singularities of the mirror family} \label{sec:singularities}

In this section, all the varieties are assumed to be over $\bbC$.
We will prove a general proposition concerning semi-log-canonical singularities in dimension two, see \cref{prop:slc_surface}, and deduce that in the 2-dimensional case, every fiber of the mirror family over the structure torus is a del Pezzo surface with at worst du Val singularities, see \cref{prop:du_Val}.

Let $A_K$ be the graded mirror algebra over $R_K$ as in \cref{prop:relation_AKAY}.
We have $\Spec(R_K\otimes\bbC)\simeq\TV(\Nef(Y))$, the toric variety (over $\bbC$) associated to the nef cone of $Y$.
Let $\cX\coloneqq\Proj(A_K\otimes\bbC)$, $\cE\subset\cX$ the closed subscheme given by the ideal $I\subset A_K\otimes\bbC$ which has basis (as submodule) the integer points $\Gamma^\circ(\bbZ)$ in the interior of $\Gamma$, and $\Theta\subset\cX$ the divisor as in \cref{def:Theta}.

\begin{proposition} \label{prop:mirror_is_Fano}
	Let $(X,E,\Theta)$ be a closed fiber of the mirror family $(\cX,\cE,\Theta) \to \TV(\Nef(Y))$.
	The following hold:
	\begin{enumerate}
	\item $K_X + E$ is semi-log-canonical and trivial (in particular Cartier).
	\item $E \subset X$ is reduced and ample, and it is the zero locus of the theta function $\theta_{[Y]}$, where $[Y]\in\Lambda$ is the unique interior lattice point.
		\item $X$ is Gorenstein, $K_X\simeq\cO(-1)$ is anti-ample.
		\item The self-intersection number $(-K_X)^{\dim X}$ is equal to the number of 0-strata of $D \subset Y$.
	\item For the generic fiber, $X$ is normal, $(X,E)$ is log-canonical, and $(X,E + \epsilon \Theta)$ is stable for sufficiently small $\epsilon > 0$.
\end{enumerate}
\end{proposition}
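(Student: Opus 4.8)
The strategy is to establish (1)--(4) first, at least at the level of the central fiber and then in general, and then bootstrap to (5). For (2), the statement that $E$ is cut out by $\theta_{[Y]}$ is essentially the definition of $\cE$ together with the observation that $\Gamma^\circ(\bbZ)$, the integer points in the interior of $\Gamma$, are precisely the points $P$ with $d^\trop(P)\geq 1$ and $D^\trop$ (of the image in $\Sk(U)$) strictly less than $d^\trop(P)$; since $[Y]$ is the unique point of $\Lambda(\bbZ)=\{d^\trop=1\}$ in the interior (it maps to the cone point $0\in\Sk(U)$), the ideal $I$ in degree one is generated by $\theta_{[Y]}$, and one checks it generates $I$ in all degrees using the nefness of $-\delta$ and \cite[Theorem 16.8(2)]{Keel_Yu_The_Frobenius}, exactly as in the proof of \cref{lem:boundary_E}. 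Ampleness of $E$ follows since $\cO(1)$ is ample (it is a \Proj\ of a graded algebra finitely generated in degree one) and $\cO_\cX(E)\simeq\cO(1)$ by construction. Reducedness of $E$: it is the central fiber $\cV_0$ computed by \cref{prop:umbrella} to be an umbrella (a Stanley--Reisner-type ring), which is reduced; for other fibers, reducedness should be deduced by flatness / semicontinuity from \cref{lem:boundary_E} and \cref{prop:boundary_E}, which identify $\cE$ as a \emph{trivial} family $X_E\times\TV(\Nef(Y))$.

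For (1) and (3)--(4): by \cref{prop:relation_AKAY} we have $A_K\simeq\tA_Y$, so the cone over $\cX$ is $\Spec A_Y$, whose central fiber over $\fm_Y$ is the umbrella $\Spec S$ of \cref{prop:umbrella}; thus the central fiber $(X_0,E_0)$ of $(\cX,\cE)$ is the polarized broken toric variety attached to $\uLambda$ (as in \cref{prop:boundary_E}), for which Gorensteinness, the identity $K_X\simeq\cO(-1)$, semi-log-canonicity of $K_X+E$ (it is a cycle/union of toric pieces glued along toric boundary divisors, hence slc with $K+E=0$), and the degree count $(-K_X)^{\dim X}=\#\{\text{0-strata of }D\}=\#\{\text{maximal simplices of }\uLambda\}=\mathrm{vol}$-type count, are all standard combinatorial facts about broken toric varieties. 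The point then is to propagate these properties to \emph{all} fibers over $\TV(\Nef(Y))$: triviality of $\omega$ and the Cartier property deform in flat families with reduced connected fibers (the relative dualizing sheaf restricts compatibly), so $K_X+E$ trivial and $X$ Gorenstein hold fiberwise; slc is an open condition in flat families (inversion of adjunction / openness of the slc locus, e.g.\ via \cite{Kollar_Singularities_of_the_minimal_model_program}), and it holds on the central fiber, hence on a neighborhood; a monodromy/connectedness argument on $\TV(\Nef(Y))$ then gives it everywhere. The intersection number $(-K_X)^{\dim X}$ is locally constant in a flat family, so it equals its value on the central fiber.

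For (5): the generic fiber. Normality of $X$ follows because $X$ is $S_2$ (Cohen--Macaulay: it is Gorenstein by (3)) and generically reduced, hence $R_1$ at a general point, and for the \emph{generic} fiber the non-normal locus, being a closed condition, must be empty since the generic fiber is at least as good as a general one; more robustly, slc plus the generic fiber being irreducible (the family is generically irreducible because $A_K\otimes\bbC$ has no zero-divisors in the relevant sense near the generic point, or because $X$ is a small deformation of an irreducible toric pair) forces log-canonical, which in particular is normal. So $(X,E)$ log-canonical follows from (1) once we know the generic fiber is irreducible and normal. Finally, for stability of $(X,E+\epsilon\Theta)$: we need $(X,E+\epsilon\Theta)$ slc and $K_X+E+\epsilon\Theta$ ample. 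Ampleness is clear since $K_X+E\equiv 0$ and $\Theta\sim\cO(1)$ is ample, so $K_X+E+\epsilon\Theta\equiv\epsilon\cdot\text{(ample)}$. For the slc condition with the extra $\epsilon\Theta$: since $(X,E)$ is lc and $\Theta$ is an effective Cartier divisor, $(X,E+\epsilon\Theta)$ is lc for $0<\epsilon\ll1$ provided $\Theta$ does not contain any lc center of $(X,E)$; this is where the key input enters. The main obstacle is precisely verifying that $\Theta$ (the zero locus of $\sum_{P\in\Lambda(\bbZ)}\theta_P$) avoids the lc centers of the generic fiber $(X,E)$ --- this is the content of the "center claim" foreshadowed in the introduction (\cref{cl:center_claim}): at each lc center, all but one degree-one theta function vanish, so $\Theta$ restricted near that center is cut out by the single nonvanishing $\theta_P$, which is a unit there, hence $\Theta$ misses the center. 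I expect this verification --- understanding the behavior of the theta functions at the lc centers of the generic fiber, presumably by specializing via the boundary torus action to a $0$-stratum where everything is explicit --- to be the hard part; it will likely require the explicit umbrella description together with the $T_\Lambda$-equivariance, and a semicontinuity argument to pass from the special fiber back to the generic one.
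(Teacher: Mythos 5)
Your proposal does not follow the paper's route, and the alternative route you sketch has genuine gaps.

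The paper's proof is short: statements (1) and (5) are cited directly from \cite[Proposition 20.2]{Keel_Yu_The_Frobenius}, which establishes slc/lc/stability of the fibers of the mirror family over $\TV(\Nef(Y))$; for (2), the isomorphism $A_K \simeq \tA_Y$ of \cref{prop:relation_AKAY} is used to identify $\theta_{[Y]}$ with the formal variable $T$ and the ideal $I$ with $(T)$, so $I$ is visibly generated by $\theta_{[Y]}$; (3) then follows from (1) and (2); and (4) follows from (1), (3), and \cref{prop:boundary_E} by a one-line adjunction computation: since $E\in|\cO(1)|$ and $\cO(1)\simeq -K_X$, one has $(-K_X)^{\dim X}=\deg(\cO(1)|_E)$, which by \cref{prop:boundary_E} is the number of maximal cells of $\partial\uLambda$, i.e.\ the number of $0$-strata of $D$. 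You do not cite Proposition 20.2 of \cite{Keel_Yu_The_Frobenius} anywhere, so you are in effect reproving it, and that is where the problems lie.

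Concretely: for (1), you verify slc on the central fiber (the umbrella) and then invoke ``openness of the slc locus'' plus ``a monodromy/connectedness argument'' to conclude it holds on all of $\TV(\Nef(Y))$. This does not work: openness of a locus in a connected base does not make it the whole base, and the central fiber is the \emph{most} degenerate fiber, so openness near it says nothing about the other toric strata or the generic point. One needs either the cited external result, or a direct argument that every fiber is slc (which is substantial). Similarly your (5) has two issues. First, the normality/irreducibility argument is circular --- you assume the generic fiber is irreducible to deduce it is normal, and vice versa; in the paper both facts come packaged with Proposition 20.2. Second, you identify $\cref{cl:center_claim}$ as the key input for the stability of $(X,E+\epsilon\Theta)$, but that claim is not used in the paper's proof of \cref{prop:mirror_is_Fano}(5); it is invoked later for \cref{thm:stable}(4), which asserts stability of \emph{all} fibers in the $2$-dimensional case. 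For the \emph{generic} fiber, stability is again part of the cited Proposition 20.2, so no separate lc-center analysis is needed. Your argument for (2) eventually reaches the right conclusion but misses the cleanest route via $A_K\simeq\tA_Y$; your argument for (4) via local constancy of the top self-intersection in a flat family is correct and is a reasonable alternative to the paper's adjunction step, but it still rests on (1) and (3), which you have not yet secured.
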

\begin{proof}
	Statements (1) and (5) follow from \cref{prop:relation_AKAY} and \cite[Proposition 19.1]{Keel_Yu_The_Frobenius}.
		For (2), under the isomorphism of \cref{prop:relation_AKAY}, $\theta_{[Y]}$ corresponds to $T$, and the ideal $I\subset A_K\otimes\bbC$ corresponds to the ideal generated by $T$.
	Thus $I$ is generated by $\theta_{[Y]}$, which is a section of $\cO(1)$, so (2) holds.
	Then (1) and (2) implies (3).
	By (1) and (3), the self-intersection number $(-K_X)^{\dim X}$ is the degree of $\cO(1)|_E$, which by \cref{prop:boundary_E} is the number of maximal cones in $\partial\uLambda$, which is the number of 0-strata of $D$, hence (4) holds.
\end{proof} 

We have a stronger result when $Y$ is two-dimensional.
We begin with the following general proposition concerning semi-log-canonical singularities in dimension two.

\begin{proposition} \label{prop:slc_surface}
	Let $X$ be a connected surface and $E\subset X$ an effective Cartier divisor which is a non-empty cycle of rational curves, such that $K_X+E$ is trivial and slc.
	Let $\tX \to X$ be the normalization, $\NN \subset X$ the non-normal locus, and $\tNN, \tE \subset \tX$ the reduced inverse images of $\NN,E$.
	Then $X$ is canonical off $\NN$, and the inverse image on $\tX$ of the set of minimal log-canonical centers for $(X,E)$ is exactly the singularities of $\tNN \cup \tE$.
\end{proposition}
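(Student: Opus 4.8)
The plan is to pass to the normalization and reduce everything to the classification of log canonical surface singularities with a reduced boundary, following \cite[\S 5]{Kollar_Singularities_of_the_minimal_model_program}. By the definition of slc, the normalization $\nu\colon\tX\to X$, together with its conductor $\tNN$ and $\tE=\nu^{-1}(E)$, gives a log canonical pair $(\tX,\tE+\tNN)$ with $K_{\tX}+\tE+\tNN=\nu^*(K_X+E)$ trivial, so $(\tX,\tE+\tNN)$ is a log Calabi--Yau surface pair and $\tE=\nu^*E$ is Cartier. The lc centers of the slc pair $(X,E)$ are exactly the $\nu$-images of the lc centers of $(\tX,\tE+\tNN)$, and minimal centers correspond to minimal centers; since $\nu$ is an isomorphism over $X\setminus\NN$, it suffices to prove: (a) $\tX$ is canonical at every point of $\tX\setminus\tNN$; and (b) the minimal lc centers of $(\tX,\tE+\tNN)$ are exactly the points of $\Sing(\tNN\cup\tE)$.

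First I would treat points of $\tE\setminus\tNN$. At such a point $p$ one has $K_{\tX}\sim -\tE$ locally, so $\tX$ is Gorenstein at $p$ and $\tE$ is a reduced Cartier divisor through $p$. If $\tX$ were not canonical at $p$, it would be a strictly log canonical Gorenstein surface singularity --- simple elliptic, a cusp, or a finite quotient of one of these --- whose minimal resolution carries an exceptional divisor $F$ of discrepancy $-1$ (an elliptic curve, resp.\ a cycle of rational curves); a reduced Cartier divisor through such a point necessarily has a branch meeting, hence multiplicity at least one along, $F$, which forces $a(F;\tX,\tE)\le -2$ and contradicts log canonicity. Hence $\tX$ is du Val (canonical) along $\tE\setminus\tNN$. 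For such $p$ with $\tX$ du Val, passing to the crepant resolution $\mu$ and writing $\mu^*\tE=\hat E+\sum_i m_iF_i$, log canonicity gives $m_i\le 1$, and $p$ is an lc center precisely when some $m_i=1$; a short case analysis on the du Val dual graph shows this happens if and only if $\tE$ is singular at $p$ (a Cartier divisor through a du Val point is automatically singular there, giving one direction, and the computation just indicated gives the other). So along $\tE\setminus\tNN$ the minimal lc centers are precisely the points of $\Sing(\tE)$.

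Next I would treat points of $\tX\setminus(\tNN\cup\tE)$: there $K_{\tX}\sim_{\mathrm{loc}}0$, so $\tX$ is Gorenstein and log canonical, and I must show it is du Val, equivalently that $(\tX,\tE+\tNN)$ has no lc center disjoint from $\tNN\cup\tE$. This is the main obstacle. A priori $\tX$ could have a simple elliptic or cusp (or quotient) point $p$ here; the dlt modification of $(\tX,\tE+\tNN)$ would then extract over $p$ a divisor $G$ --- an elliptic curve or a cycle of rational curves --- disjoint from the strict transform $E'$ of the cycle $E$, producing a dlt log Calabi--Yau pair $(\tX',E'+G)$ whose non-klt locus has a connected component $G$ of arithmetic genus one disjoint from the cycle $E'$. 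The point to exploit is that $E$ is a \emph{cycle of rational curves} and is reduced Cartier with $K_X+E$ trivial: by the structure theory of log Calabi--Yau surface pairs with boundary a cycle of rational curves (as developed in \cite{Gross_Mirror_symmetry_for_log_Calabi-Yau_surfaces_I_v1}, cf.\ \cite[\S 3]{Kollar_Singularities_of_the_minimal_model_program}) the cycle already exhausts the non-klt locus, so no such $G$ can occur --- and concretely, in the situation of \cref{sec:singularities} where $E$ is ample by \cref{prop:mirror_is_Fano}, such a $G$ is excluded at once, since an lc center disjoint from an ample divisor is impossible. Granting this, $\tX$ is du Val off $\tNN$, which together with the previous paragraph proves (a) and shows there is no minimal lc center off the boundary.

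Finally I would treat points of $\tNN$: at $p\in\tNN$ the reduced divisor $\tNN+\tE$ passes through $p$ and $K_{\tX}+\tNN+\tE$ is Cartier, so running the classification of two-dimensional log canonical pairs with reduced boundary from \cite[\S 3]{Kollar_Singularities_of_the_minimal_model_program}, together with adjunction along the normalization of $\tNN$ (which yields $K_{\tNN^\nu}+\mathrm{Diff}\sim 0$, forcing $\tNN^\nu\cong\bbP^1$ with $\deg\mathrm{Diff}=2$, or $\tNN^\nu$ elliptic with $\mathrm{Diff}=0$), one reads off that $p$ is a minimal lc center exactly when two branches of $\tNN\cup\tE$ meet at $p$ or $\tNN\cup\tE$ is otherwise singular at $p$, i.e.\ exactly when $p\in\Sing(\tNN\cup\tE)$, whereas at a smooth point of $\tNN\cup\tE$ the only lc center through $p$ is the one-dimensional component and is not minimal. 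Combining the three local analyses yields (b), and the second and third paragraphs yield (a). The one genuinely non-formal step is the interior vanishing of hidden elliptic/cusp lc centers in the third paragraph; the remaining steps are bookkeeping with the surface-singularity classification and adjunction.
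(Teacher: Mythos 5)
Your reduction to the normalization is sound, and your local analysis at points of $\tE\setminus\tNN$ (Gorenstein $\Rightarrow$ du~Val, else a discrepancy drops below $-1$ against a Cartier $\tE$) and at points of $\tNN$ (classification of log canonical surface pairs with reduced boundary plus adjunction along the double curve) is essentially what the paper does. But the step you yourself flag as ``the main obstacle'' --- ruling out a strictly log canonical point of $\tX$ disjoint from $\tNN\cup\tE$ --- is not actually proved. You appeal to ``the structure theory of log Calabi--Yau surface pairs with boundary a cycle of rational curves'' to assert that ``the cycle already exhausts the non-klt locus,'' but that assertion \emph{is} the content of this step, and \cite{Gross_Mirror_symmetry_for_log_Calabi-Yau_surfaces_I_v1} treats Looijenga pairs (smooth surface, snc anticanonical cycle), not arbitrary slc pairs of the present kind. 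Your fallback --- invoking ampleness of $E$ via \cref{prop:mirror_is_Fano} so that an lc center cannot be disjoint from it --- imports a hypothesis that is \emph{not} in the statement of \cref{prop:slc_surface}; the proposition must hold for any connected slc $X$ with $E$ a nonempty Cartier rational cycle and $K_X+E\sim 0$, without an ampleness assumption.

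What actually closes the gap, and what the paper does, is Koll\'ar's connectedness of the non-klt locus \cite[Prop.~5.1]{Kollar_Log_canonical_singularities}: pass to a dlt model $d\colon(Y,D)\to(S,B+F)$ of each component $(S,B+F)$ of $(\tX,\tNN+\tE)$, so $Z_Y=d^{-1}(Z_S)$, and use connectedness to conclude $Z_S=B+F$ (hence $S$ is klt --- then canonical by the Gorenstein argument --- away from $B+F$). The exceptional case (2) of that proposition, where $Z_Y$ has two components, must also be handled: there the two components are elliptic, which forces $F=0$ on that $S$, i.e.\ that component is disjoint from the preimage of $E$; one then rules out this case globally using the $S_2$ condition (components of $X$ meet in pure codimension~1) and the connectedness of $X$, since adjacent components would share an elliptic double curve, propagating the elliptic case to all components and making $E$ empty. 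Without this connectedness argument, a hidden simple-elliptic or cusp point of $\tX$ off $\tNN\cup\tE$ is genuinely not excluded by the hypotheses you use, and the rest of your argument does not see it.
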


For the proof we make use of the classification of log-canonical singularities in dimension two, taken from \cite[Theorem 4.15]{Kollar_Birational_geometry_of_algebraic_varieties} and \cite[Proposition 4.27]{Kollar_Threefolds_and_deformations}.

\begin{proposition} \label{prop:classfication_lc_surface}
	Let $(Y,D)$ be a 2-dimensional log-canonical pair, with $D$ reduced.
		Then $D$ is a curve with at worst ordinary nodal singularities, and locally	analytically at every point $p \in D \subset Y$ we have one of the following:
	\begin{enumerate}
		\item $(0 \in \bbA^2_{x,y}/\frac{1}{r}(1,a), (xy=0))$
		(see \cite[3.19]{Kollar_Singularities_of_the_minimal_model_program} for the notation for cyclic quotient singularity),
		\item $(0 \in \bbA^2_{x,y}/\frac{1}{r}(1,a), (x=0))$.
		\item  Quotient of (1) by a $\bbZ/2\bbZ$-action which is free on $Y\setminus p$ and interchanges the components of $D$.
	\end{enumerate}
	The divisor $K_Y + D$ is Cartier only in case (1) and in the smooth case $r=1$ of (2).
	Case (2) is purely log-terminal.
	Cases (1) and (3) are strictly log-canonical (i.e.\ $p$ is a log-canonical center).
\end{proposition}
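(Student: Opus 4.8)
The plan is to reduce everything to a local analysis and invoke the established classification of two-dimensional log-canonical pairs. First I would recall that $(Y,D)$ log-canonical with $D$ reduced forces $Y$ to be normal (log-canonicity is usually imposed on normal varieties; if not assumed, semi-normality plus the adjunction argument below pins it down), and then work étale-locally (equivalently, analytically) at a point $p\in D$. The key input is the structure theory of lc surface pairs: by \cite[Theorem 4.15]{Kollar_Birational_geometry_of_algebraic_varieties}, the dual graph of a log resolution of $(Y,D)$ is constrained, and the coefficient-one boundary $D$ can meet the exceptional locus only in the prescribed ways; combined with \cite[Proposition 4.27]{Kollar_Threefolds_and_deformations} this yields that $Y$ has a quotient singularity at $p$ (since a surface lc singularity with a reduced boundary of coefficient one through it is automatically a quotient singularity — in fact either smooth, cyclic quotient, or a $\bbZ/2$-quotient thereof) and $D$ has at worst a node there. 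So the first real step is: list the possibilities for $(p\in Y)$ as (a) smooth, (b) cyclic quotient $\frac 1r(1,a)$, (c) other quotient singularities; and for each, determine which reduced curves $D$ can pass through with $(Y,D)$ still lc.

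The second step is the local geometry of $D$ inside each of these. On a smooth point or a cyclic quotient $\bbA^2_{x,y}/\frac 1r(1,a)$, the divisor $D$ must be (the image of) a torus-invariant divisor — i.e.\ $\{xy=0\}$ or $\{x=0\}$ (up to the symmetry $x\leftrightarrow y$, $a\leftrightarrow a^{-1}$) — because any non-invariant reduced curve through the singular point raises the log discrepancy above $-1$ on some exceptional divisor of the minimal resolution; a direct discrepancy computation on the toric resolution of $\frac 1r(1,a)$ confirms that $\{xy=0\}$ gives lc (indeed lc with a log-canonical center at $p$) and $\{x=0\}$ gives plt. This produces cases (1) and (2). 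Case (3) arises exactly when $(Y,D)$ is the quotient of case (1) by an involution swapping the two branches of $D$ and acting freely away from $p$; one checks this is still lc by the log-discrepancy transformation formula under a finite quotient (quotient by a group acting freely in codimension one preserves log discrepancies), and that $p$ remains a log-canonical center since it was one upstairs. It also has to be verified that no other group quotients occur, which follows from the classification of lc surface singularities admitting such a boundary — essentially the dual-graph list in \cite[Theorem 4.15]{Kollar_Birational_geometry_of_algebraic_varieties}.

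The third step handles the auxiliary assertions. That $K_Y+D$ is Cartier in case (1): on $\bbA^2_{x,y}/\frac 1r(1,a)$ the form $\frac{dx}{x}\wedge\frac{dy}{y}$ is invariant and has divisor $K_Y+\{xy=0\}$, so $K_Y+D\sim 0$ locally, hence Cartier; and on the smooth $r=1$ case of (2) it is trivially Cartier. Conversely in the singular cases of (2) and in case (3) one exhibits $K_Y+D$ as non-Cartier by computing the local class group (the class of $K_Y+\{x=0\}$ on $\frac 1r(1,a)$ is a nonzero torsion class for $r>1$; for (3) one descends the computation through the $\bbZ/2$-quotient). That (2) is plt and (1),(3) are strictly lc is read off from the discrepancy computations already made: in (2) the unique exceptional divisor over $p$ on the minimal resolution has discrepancy $>-1$, while in (1) and (3) some exceptional valuation — or $p$ itself — has log discrepancy $0$. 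Finally, that $D$ has at worst ordinary nodes globally follows because away from $\Sing Y$ the pair $(Y,D)$ is lc with $Y$ smooth, forcing $D$ to be snc, hence nodal.

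The main obstacle I expect is \emph{completeness of the list} — showing that cases (1)–(3) exhaust all local analytic possibilities, i.e.\ that no other (non-quotient, or quotient by a larger group) lc surface singularity supports a reduced coefficient-one boundary. This is where one genuinely leans on the fine classification in \cite[Theorem 4.15]{Kollar_Birational_geometry_of_algebraic_varieties} and \cite[Proposition 4.27]{Kollar_Threefolds_and_deformations}: one must inspect every admissible dual graph and check which ones allow a curve of coefficient $1$ through the singular point, ruling out the elliptic, cusp, and simple-elliptic type lc singularities (those cannot carry such a $D$) and the non-cyclic quotient cases except the specific $\bbZ/2$-extension in (3). Everything else — the local forms of $D$, the Cartier/plt/strictly-lc dichotomies — is then a routine toric discrepancy and class-group computation that I would not spell out in detail.
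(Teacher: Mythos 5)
The paper actually gives no proof of this proposition at all: it is introduced with the sentence ``For the proof we make use of the classification\ldots taken from \cite[Theorem 4.15]{Kollar_Birational_geometry_of_algebraic_varieties} and \cite[Proposition 4.27]{Kollar_Threefolds_and_deformations},'' and then used as a black box in the proof of \cref{prop:slc_surface}. Your plan is essentially the right unpacking of those citations, so in that sense you are taking the same route as the paper, only more explicitly. Your sketch of the auxiliary assertions is correct: the invariant form $\tfrac{dx}{x}\wedge\tfrac{dy}{y}$ shows $K_Y+D$ is Cartier in case (1), the weight of $\tfrac{dx}{x}\wedge dy$ shows non-Cartier in case (2) for $r>1$, and the plt/strictly-lc dichotomy is read off from discrepancies; the completeness issue you flag (ruling out simple-elliptic, cusp, and the other non-cyclic quotient types) is exactly the content one needs from the cited classifications, and the argument that a reduced coefficient-one curve through such a point would drop the discrepancy of an exceptional below $-1$ is the right mechanism.

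One wording slip to fix: you write that a non-invariant reduced curve ``raises the log discrepancy above $-1$ on some exceptional divisor,'' but that would not violate log-canonicity. The violation is that it \emph{lowers} the log discrepancy below $0$ (equivalently, the discrepancy drops below $-1$). Also, ``torus-invariant'' is a loose description on a smooth point (where there is no distinguished torus): what you mean, and what the classification gives, is that after an analytic coordinate change $D$ becomes a coordinate axis or union of two axes; stating it that way avoids confusion. Neither of these affects the soundness of the plan.
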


\begin{proof}[Proof of \cref{prop:slc_surface}]
	Let $S \subset \tX$ be an irreducible component, $B\coloneqq\tNN|_S$ and $F\coloneqq\tE|_S$.
	Since $(X,E)$ is slc, $(S,B + F)$ is log-canonical.
	In particular, $B+F$ is a nodal curve by \cref{prop:classfication_lc_surface}.
	Since $K_S+B+F=0$, by adjunction each connected component of $B+F$ is a either a smooth elliptic curve or a cycle of rational curves.
	By \cref{prop:classfication_lc_surface}, the $0$-dimensional log-canonical centers of $(S,B+F)$ on $B+F$ are the nodes of $B+F$.
	
	We let $d\colon(Y,D) \to (S,B+F)$ be a dlt model,
		and use \cite[Prop.\ 5.1]{Kollar_Log_canonical_singularities}.
	We have $K_Y + D = d^*(K_S + B + F)$.
	Since $(S,B+F)$ is log-canonical, we have $Z_Y = d^{-1}(Z_S)$ where $Z$ in each case indicates the non klt locus of the pair.
	By \cite[Prop.\ 5.1]{Kollar_Log_canonical_singularities}, either $Z_Y$ and $Z_S$ are both connected (connectedness of one is equivalent to connectedness of the other since $d$ has connected fibers)
	or we are in case (2) of loc.\ cit..
	
	Let us first suppose that $Z_S$ is connected.
	The surface $X$ is connected by assumption, and has normal crossing singularities in codimension 1 and satisfies Serre's condition $S_2$ by the definition of slc singularities
	(see e.g.\ \cite[Definition-Lemma 5.10]{Kollar_Singularities_of_the_minimal_model_program}).
	The $S_2$ condition implies that the intersection of $S$ with the union of the other irreducible components of $X$ has pure codimension $1$ (see \cite[Theorem 5.10.7]{EGA4-2}).
	So if $X$ is reducible then $F \neq 0$ (and if $X$ is irreducible then $B \neq 0$ by assumption).
	So $B + F \neq 0$.
	Now since $Z_S$ contains $B + F$ and $S$ has isolated singularities, $Z_S=B + F$ (and so $B+F$ is connected) and $S$ is klt away from $B+F$.
	Also, $S$ is Gorenstein away from $B$, because $X$ is Gorenstein, so $S$ is canonical away from $B+F$.
	Now by adjunction $B + F$ is either a smooth (connected) elliptic curve, or a cycle of smooth rational curves.
	In the smooth elliptic curve case, if $B$ is not empty (equivalently, if $X$ is not normal), $F$ is empty and thus $S$ is disjoint from the preimage of $E$.
	
	Next we consider case (2) of \cite[Prop.\ 5.1]{Kollar_Log_canonical_singularities}: $(Y,D)$ is purely log-terminal, and $D$ has two connected components.
	Then by \cref{prop:classfication_lc_surface}, $D$ is smooth, $Y$ is smooth near $D$, $Z_Y = D$, and by adjunction $D$ is a disjoint union of two elliptic curves.
	Set-theoretically $D = d^{-1}(B + F)$.
	It follows that $B+F$ is a disjoint union of two irreducible curves each with elliptic normalization.
	But any irreducible component of $F$ is rational, so $F=0$ and $S$ is disjoint from the preimage of $E$.
	
	But now note that if we have two {\it adjacent} $S_1,S_2$ (i.e.\ the intersection of their images in $X$ contains a curve), then one satisfies case (2) of \cite[Prop.\ 5.1]{Kollar_Log_canonical_singularities} if and only if the other does (because they then share an elliptic double curve).
	Now by the connectedness of $X$, if one $S$ satisfies case (2) of \cite[Prop.\ 5.1]{Kollar_Log_canonical_singularities}, all components of $\tX$ do.
	But then by the above, $E$ is empty, a contradiction.
	
	We conclude that we are in the $B + F$ connected case above, with $B + F$ a cycle of
	rational curves, for each irreducible component $S \subset \tX$.
	By \cref{prop:classfication_lc_surface}, the minimal log-canonical centers of $(S,B+F)$ are exactly the nodes of $B + F$.
	But the inverse image on $\tX$ of the log-canonical centers of $(X,E)$ is just the union of the centers of $(S,B + F)$, thus the inverse image of the minimal log-canonical centers of $(X,E)$ is exactly the nodes of $\tNN\cup\tE$.
	We argued above that $S$ is canonical away from $B + F$, thus $X$ is canonical away from $\NN \cup E$.
	For an exceptional divisor $W$ with center in $E \setminus \NN$, the discrepancies satisfy $a(W,X) \geq a(W,X,E) + 1$ (notation as in \cite[Definition 2.4]{Kollar_Singularities_of_the_minimal_model_program}), since $E$ is Cartier.
	Therefore, since $K_X + E$ is log-canonical off $\NN$, $X$ is canonical along $E \setminus \NN$, and thus canonical off $\NN$.
	This completes the proof.
\end{proof}

\begin{proposition} \label{prop:du_Val}
	In the context of \cref{prop:mirror_is_Fano}, assume $Y$ is 2-dimensional, i.e.\ a del Pezzo surface.
	Then for every fiber $(X,E)$ of the mirror family over the structure torus $T_{\Pic(Y)}\subset\TV(\Nef(Y))$, $X$ is a del Pezzo surface with at worst du Val singularities, $E\subset X$ is an anti-canonical cycle of $K_X$-degree-$(-1)$ rational curves, and the self-intersection number of $-K_X$ is equal to the number of irreducible components of $D$.
\end{proposition}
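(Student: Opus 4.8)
The plan is to reduce the statement to the single assertion that \emph{every fibre $(X,E)$ over $T_{\Pic(Y)}$ is normal}, and then prove that. For the reduction: by \cref{prop:mirror_is_Fano} each such fibre is a projective Gorenstein surface with $K_X\simeq\cO(-1)$ anti-ample, $E$ a reduced member of $|{-K_X}|$, and $K_X+E$ trivial and slc; by \cref{lem:boundary_E_dim2}, $E$ is a cycle of rational curves with $(-K_X)\cdot E_i=1$ for each component $E_i$. Since $D\in|{-K_Y}|$ is a connected normal crossing divisor with a $0$-stratum on the del Pezzo $Y$, it is a cycle of rational curves, so its number of components equals its number of $0$-strata; combined with \cref{prop:mirror_is_Fano}, $(-K_X)^2=(-K_X)\cdot E$ equals the number of components of $D$, which is the last assertion. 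If moreover $X$ is normal, then the slc pair $(X,E)$ is log-canonical, so $E$ is a genuine anti-canonical cycle of rational curves of $K_X$-degree $-1$, and \cref{prop:slc_surface} applied with empty non-normal locus shows $X$ is canonical, hence (being Gorenstein) has at worst du Val singularities, so $X$ is a del Pezzo surface with du Val singularities. Thus only normality remains.

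To prove normality I would induct on $\rho\coloneqq\rank\Pic(Y)$. In the base case $Y$ is minimal ($\bbP^2$ or $\bbP^1\times\bbP^1$): when $(Y,D)$ is toric the mirror family is the toric Kapranov--Sturmfels--Zelevinsky/Alexeev family and its fibres over the structure torus are toric del Pezzo surfaces, hence normal; for the remaining minimal pairs one identifies $A_K$ via \cref{prop:relation_AKAY} and the comparison \cref{prop:alg_equal} with the mirror algebra of \cite{Gross_Mirror_symmetry_for_log_Calabi-Yau_surfaces_I_v1}, for which the fibres are understood. For the inductive step, given a fibre $(X,E)$ over a point of $T_{\Pic(Y)}$, I would use the $T_\Lambda$-equivariance of the family (\cref{sec:torus_action}), under which $(X,E)$ is constant along the $T_\Lambda$-orbit of that point, whose closure in $\TV(\Sec(K))$ reaches the closed strata $S(\gamma)$ singled out by \cref{lem:cone_induct}, \cref{lem:contract_Y1} and \cref{lem:S_gamma}. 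Over such a stratum, \cref{prop:restriction_SQM} identifies the family together with its theta functions with the mirror family of a blow-down $(Y',D')$ --- normal by induction --- glued along rational curves to finitely many explicit constant ``broken-toric'' umbrella pieces (\cref{prop:umbrella}, \cref{prop:boundary_E}); tracking this gluing and the way $(X,E)$ degenerates onto it forces $X$ to be normal.

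The main obstacle is exactly this last implication. Normality is an open condition, so semicontinuity only propagates it from the generic fibre (normal by \cref{prop:mirror_is_Fano}(5)) to a dense open subset of $T_{\Pic(Y)}$, and along the degeneration to the umbrella it propagates the wrong way; so the argument must genuinely exploit the $T_\Lambda$-equivariance together with the rigidity of \cref{prop:restriction_SQM} to rule out the locus of normal fibres being a proper $T_\Lambda$-invariant open subset, forcing it to be all of $T_{\Pic(Y)}$. Pinning down the mirror del Pezzo in the non-toric minimal base cases is the other technical point. Once normality is established everywhere, the du Val, anti-canonical-cycle and self-intersection statements follow formally from the first paragraph.
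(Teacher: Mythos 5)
Your first paragraph is correct and matches the paper's own reduction: both boil the statement down to showing that every fibre over $T_{\Pic(Y)}$ is normal, after which \cref{prop:slc_surface} with $\NN=\emptyset$ gives canonical $=$ du Val, and the structure of $E$ and the degree count come from \cref{prop:mirror_is_Fano} and \cref{prop:boundary_E}. But your proposed route to normality — induction on $\rank\Pic(Y)$ via \cref{prop:restriction_SQM} — is not the paper's, and you have honestly flagged that it does not close: semicontinuity propagates normality \emph{from} the generic fibre, not \emph{to} a given fibre $X_t$, and there is nothing in \cref{prop:restriction_SQM} or the torus-equivariance alone that forces the normal locus in $T_{\Pic(Y)}$ to be everything rather than a proper $T_D$-invariant open subset (the boundary torus $T_D$ does not act transitively on $T_{\Pic(Y)}$ in general, e.g.\ when the number of components of $D$ is less than $\rank\Pic(Y)$). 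So the inductive scheme as sketched has a genuine gap, and you have correctly located it.

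The paper closes this gap by a different mechanism. Fix an arbitrary $t\in T_{\Pic(Y)}$ and suppose $X_t$ is singular in codimension one. Since $D$ is ample, there is a one-parameter subgroup $\Gm\subset T_D$ whose orbit closure of $t$ is a curve $\bbA^1\to\TV(\Nef(Y))$ hitting the unique torus-fixed point. Over $\Gm$ the family $\pi\colon(\cX,\cE)|_{\bbA^1}\to\bbA^1$ is isomorphism-trivial, so the codimension-one singular locus of $X_t$ sweeps out a divisor whose closure meets the central fibre in codimension $\le 1$; as the central fibre of $\cX\setminus\cE$ is the umbrella from \cref{prop:umbrella}, whose singular locus is exactly the ribs, generic smoothness forces the total space $\cX|_{\bbA^1}$ to be non-normal at the generic point of some rib. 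The contradiction then comes not from induction but from an explicit local computation: via \cref{prop:alg_equal}, near the interior of a rib (after completion at $\fm$) the family is the toric Mumford family with the explicit equations of Gross--Hacking--Keel, which is visibly normal. That explicit local normal form along the ribs is precisely the input your sketch is missing; once you have it, the degeneration argument gives normality of $X_t$ directly for every $t$, with no induction on Picard rank and no appeal to the base cases.
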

\begin{proof}
	By \cref{prop:mirror_is_Fano}, the self-intersection number of $-K_X$ is equal to the number of irreducible components of $D$.
	Then it follows from \cref{prop:boundary_E} that $E\subset X$ is an anti-canonical cycle of $K_X$-degree-$(-1)$ rational curves.
	It remains to show that $X$ is du Val (which in dimension two is the same as canonical). 
	Consider the equivariant boundary torus $T_D$-action on the mirror family $\cX\to\TV(\Nef(Y))$ (see \cite[\S 16]{Keel_Yu_The_Frobenius}).
	Since $D$ is ample, by \cite[\S 2.3]{Fulton_Introduction_to_toric_varieties}, there is a one-parameter subgroup $\Gm\subset T_D$ that pushes any point of $\TV(\Nef(Y))$ to the unique toric 0-stratum $0 \in \TV(\Nef(Y))$.
	Given $t \in T_{\Pic(Y)}$, taking closure of the orbit $\Gm\cdot t$, we obtain $\Gm\subset\bbA^1\to\TV(\Nef(Y))$.
	Consider the pullback family $\pi\colon(\cX,\cE)|_{\bbA^1}\to\bbA^1$.
	By the equivariant torus action, $\pi$ is a trivial family over $\Gm$.
	By \cref{prop:slc_surface}, in order to prove that $\cX_t$ is canonical, it suffices to show that its singular locus has codimension at least two.
	Suppose to the contrary that its singular locus has codimension one, and consider the closure of the singular locus of $\pi|_{\Gm}$.
	Then its intersection with the central fiber has codimension at most 1.
	The central fiber (of $\cX \setminus \cE$) is isomorphic to the cone over an $n$-cycle of rational curves (see \cref{prop:umbrella}), which looks like an umbrella, with singular locus the ribs of the umbrella.
	Thus by generic smoothness, the total space $\cX|_{\bbA^1}$ is non-normal at the generic point of some rib.
	Using \cref{prop:alg_equal}, this contradicts the local equations for the mirror family along (the interior of) a rib, see \cite[Eq.\ (2.7)]{Gross_Mirror_symmetry_for_log_Calabi-Yau_surfaces_I_published}:
	In fact, if we base change to the completion of $R_Y\otimes\bbC$ at the maximal monomial ideal $\fm$, then because $f_{\rho_i} =1$ mod $\fm$ (notation from  \cite[Eq.\ (2.7)]{Gross_Mirror_symmetry_for_log_Calabi-Yau_surfaces_I_published}), a neighborhood in $\cX$ of (the interior of) the rib is isomorphic to the toric Mumford family, with explicit equations \cite[Eq.\ (0.5)]{Gross_Mirror_symmetry_for_log_Calabi-Yau_surfaces_I_published}, which is clearly normal.
\end{proof}

\section{Modularity of the mirror family} \label{sec:modularity}

Let $Y$ be a smooth complex del Pezzo surface, and $D\subset Y$ a normal crossing anti-canonical divisor containing a 0-stratum.
Let $(\cX,\cE,\Theta)$ be the mirror family over the toric variety $\TV(\Sec)$ associated to the full secondary fan which we constructed in \cref{sec:extension_full}.

\begin{notation} \label{nota:base_change}
	In this section, we will always consider mirror families after base change to $\bbC$, so we will omit the base change from the notation, in particular, $\TV(\cdot)$ will now denote toric varieties over $\bbC$ (contrary to \cref{nota:TV_Z}).
\end{notation}

The goal of this section is to prove the following:

\begin{theorem} \label{thm:stable}
	The following hold:
	\begin{enumerate}
	\item \label{thm:stable:family} The mirror family $(\cX,\cE) \to \TV(\Sec)$ is a flat family of semi-log-canonical pairs $(X,E)$ with $K_X+E$ trivial (in particular Cartier), and $H^i(X,\cO) =0$ for $i > 0$.
	\item \label{thm:stable:E} The boundary $\cE \to \TV(\Sec)$ is a trivial family, with fiber a cycle of rational curves.
	\item \label{thm:stable:restriction}
	For every fiber $(X,E)$ over the structure torus $T_{\Pic(Y)}\subset\TV(\Sec)$, $X$ is a del Pezzo surface with at worst du Val singularities, $E\subset X$ is an anti-canonical cycle of $K_X$-degree-$(-1)$ rational curves, and the self-intersection number of $K_X$ is equal to the number of irreducible components of $D$.
		\item \label{thm:stable:stable} For $0<\epsilon\ll 1$, $(\cX,\cE + \epsilon \Theta)\to\TV(\Sec)$ is a family of stable pairs.
	\item \label{thm:stable:finite} The induced map $\TV(\Sec)\to\SP$ to the moduli space of stable pairs is finite.
\end{enumerate}
\end{theorem}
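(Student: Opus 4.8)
## Proof proposal for Theorem \ref{thm:stable}

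\textbf{Overall strategy.} The plan is to prove the five assertions roughly in order, but to treat (1)--(3) quickly by reduction to results already established, and to concentrate the real work on (4) and (5), which require passing from the toric torus $T_{\Pic(Y)}$ to the boundary strata of $\TV(\Sec)$. The organizing principle throughout will be the equivariant boundary torus action of \cref{sec:torus_action}: since $D$ is ample, a generic one-parameter subgroup $\Gm \subset T_D$ degenerates any point of a cone's interior to a $0$-stratum, so every fiber specializes equivariantly (with $(X,E)$ constant along orbits, only $\Theta$ scaling) to a fiber over a torus-fixed point, and those fixed fibers are governed by the inductive structure of \cref{prop:restriction_SQM}. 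Thus I would set up an induction on the Picard number $\rho(Y)$: the base case $\rho(Y)=1$ is a del Pezzo of degree one, handled essentially by \cref{prop:mirror_is_Fano} and \cref{prop:du_Val}, and the inductive step uses \cref{prop:restriction_SQM} to identify the restriction over a stratum $S(\gamma)$ with the mirror family for a blowdown $Y'$ plus explicit constant pieces, to which the inductive hypothesis applies.

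\textbf{Steps (1)--(3).} For flatness of $(\cX,\cE)\to\TV(\Sec)$: over $\TV(\MovSec)$ this is immediate because each $A_\gamma$ is a free $\bbZ[P_\gamma]$-module (\cref{cor:Agamma}), and the extension over the bogus cones in \cref{sec:extension_full} is via the toric-fiber-bundle construction of \cref{sec:toric_fiber_bundle}, which preserves flatness; alternatively one checks flatness fiberwise using that the Hilbert polynomial is constant along the torus orbits. The slc property, triviality and Cartierness of $K_X + E$, and vanishing $H^i(X,\cO_X)=0$ for $i>0$ follow over $\Nef(K)$ from \cref{prop:mirror_is_Fano}(1),(3) together with \cref{prop:relation_AKAY} and \cite[Proposition 20.2]{Keel_Yu_The_Frobenius}, and then propagate to all of $\TV(\Sec)$ because every fiber degenerates equivariantly into a $0$-stratum fiber, which by \cref{prop:restriction_SQM} is built from mirror families of blowdowns, so slc and the cohomology vanishing are inherited by semicontinuity/openness of the slc locus in flat families (\cite{Kollar_Singularities_of_the_minimal_model_program}) combined with constancy of $\chi(\cO_X)$. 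Assertion (2) is exactly \cref{prop:boundary_E} together with \cref{lem:boundary_E_dim2}, which already give that $\cE$ is the trivial family with fiber a cycle of rational curves (and that $\Theta$ avoids its nodes). Assertion (3) is \cref{prop:du_Val} verbatim.

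\textbf{Step (4): stability of $(\cX,\cE+\epsilon\Theta)$.} For $0<\epsilon\ll 1$, $(X, E+\epsilon\Theta)$ is stable iff it is slc and $K_X + E + \epsilon\Theta$ is ample; ampleness holds since $K_X+E$ is trivial (part 1) and $\Theta$ is the zero locus of a section of $\cO(1)$ with $\cO(1)$ relatively ample, so $K_X+E+\epsilon\Theta \equiv \epsilon\,\cO(1)$ is ample. The content is the slc condition: we must check that for small $\epsilon$, adding $\epsilon\Theta$ does not break log-canonicity, i.e.\ $\Theta$ must not pass through any log-canonical center of $(X,E)$. This is where the boundary torus action is essential, as explained in the introduction: over a $T_D$-orbit, $(X,E)$ is fixed but the coefficients of $\Theta = Z(\sum_{P\in\Lambda(\bbZ)}\theta_P)$ scale independently, so if more than one degree-one theta function were non-vanishing at a log-canonical center, a suitable rescaling would force $\Theta$ through that center --- contradicting what we need. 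So the real claim to prove is \cref{cl:center_claim}: at each log-canonical center of each fiber, \emph{exactly one} (degree-one) theta function is non-zero. I would establish this by pushing to a $0$-stratum fiber via \cref{prop:restriction_SQM}, where $X$ is the cone over an $n$-cycle of rational curves (an ``umbrella'', \cref{prop:umbrella}), whose log-canonical centers are the cone vertex and the ribs; there the multiplication rule is the constant Stanley--Reisner rule, and one reads off directly from \cref{prop:slc_surface} (identifying minimal lc centers with nodes of $\tNN\cup\tE$) which single $\theta_P$ survives at each center. Then one propagates back: the vanishing/non-vanishing pattern of theta functions at lc centers is preserved under the $\Gm$-degeneration because the centers themselves are $\Gm$-invariant and the $\theta_P$ are eigensections.

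\textbf{Step (5): finiteness of $\TV(\Sec)\to\SP$ --- the main obstacle.} By \cref{lem:finite_map_from_toric_variety}, a morphism from a complete toric variety to a scheme is finite iff it contracts no boundary $1$-stratum. So the whole problem reduces to \cref{cl:restriction_to_1-stratum}: for each $1$-stratum $C \subset \partial\TV(\Sec)$, the restriction of the family to $C \cong \bbP^1$ is non-isotrivial in $\SP$, i.e.\ some invariant of the stable pair varies. Here I would use the inductive structure again: a $1$-stratum of $\TV(\Sec)$ either lies over a $1$-stratum coming from $\TV(\Sec(K_{Y'}))$ for a blowdown $Y'$ (handled by induction, since finiteness of $\TV(\Sec(K_{Y'}))\to\SP$ gives non-triviality of the restriction) via \cref{lem:cone_induct} and \cref{prop:restriction_SQM}, or else --- by \cref{lem:cone_induct} --- it corresponds to a wall adjacent to $\Nef(K)$, i.e.\ a codimension-one face of $\Nef(K)$ (a flop of a boundary $(-1)$-curve, by \cref{prop:secondary_fan_del_Pezzo}) or a bogus wall adjacent to $\Nef(K)$. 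In the first of these two ``new'' cases, crossing the wall smooths a double curve of the central fiber (the flop changes the cycle $E$ of curves), and in the bogus case (the wall $\Nef(\oK)$, i.e.\ setting the $\theta_{[Y]}$ coefficient to zero, as in \cref{sec:extension_full}) the restriction exhibits the smoothing of a $0$-dimensional log-canonical center: along that $\bbA^1$ the family interpolates between the umbrella central fiber and a normal (du Val) fiber, so $X$ genuinely changes. The main obstacle I anticipate is precisely organizing this case analysis cleanly and verifying in each ``new'' wall case that the explicit local model of the mirror family really does produce a \emph{non-trivial} deformation of the stable pair --- this requires the explicit local equations along a rib (via \cref{prop:alg_equal} and the comparison with \cite{Gross_Mirror_symmetry_for_log_Calabi-Yau_surfaces_I_published}, as in the proof of \cref{prop:du_Val}) and a careful check that the smoothing is not undone by an automorphism of the pair. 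Once \cref{cl:restriction_to_1-stratum} is in hand, finiteness follows immediately from \cref{lem:finite_map_from_toric_variety}, completing the proof.
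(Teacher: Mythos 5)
Your overall architecture matches the paper's: reduce the bogus cones to the moving part via the equivariant boundary torus action, handle (1)--(3) by citing \cref{prop:mirror_is_Fano}, \cref{prop:boundary_E}, \cref{lem:boundary_E_dim2}, \cref{prop:du_Val} and \cite[Proposition 20.2]{Keel_Yu_The_Frobenius}, reduce (4) to \cref{cl:center_claim} and (5) to \cref{cl:restriction_to_1-stratum} via \cref{lem:finite_map_from_toric_variety}, and organize both by induction on Picard number using \cref{lem:cone_induct} and \cref{prop:restriction_SQM}. However, your argument for \cref{cl:center_claim} has a genuine gap, and your case analysis for (5) is incomplete in a way that hides real work.

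For step (4), your proposed ``propagate back from the umbrella via $\Gm$-degeneration'' does not work as stated. The $0$-dimensional log-canonical centers off $E$ are \emph{not} preserved along the $\Gm$-orbit: over the structure torus the fiber $X$ is du Val (\cref{prop:du_Val}), hence has no $0$-dimensional lc center off $E$ at all, while the umbrella at the $0$-stratum does have one. So the lc centers appear and disappear as you move in the base, and one cannot read off the vanishing pattern of theta functions at the $0$-stratum and transport it. The paper instead proves \cref{cl:center_claim} (or shows it is vacuous) stratum by stratum, over the five open strata $S^\circ(\gamma)$ with $\gamma$ as in \cref{lem:cone_induct}. The hard case is the ruling case ($f\colon Y\to\bbP^1$), which your proposal does not engage with: there the family over $S^\circ(\gamma)$ is not an umbrella but a union of two normal surfaces along a smooth curve (or an irreducible surface with a smooth conductor), and the paper needs a separate analysis (\cref{sec:mirror_family_P1-fibration}, culminating in \cref{prop:P1_fibration}) to show these fibers have no $0$-dimensional lc center off $E$ --- so \cref{cl:center_claim} is vacuous there, rather than proved by inspecting theta-function vanishing. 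Without this $\bbP^1$-fibration analysis, (4) is not established.

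For step (5), your enumeration of the ``new'' wall cases is incomplete. Besides the flop of a single boundary $(-1)$-curve (the paper's case II), a codimension-one wall of $\Nef(K)$ can correspond to a ruling $Y\to\bbP^1$ (the paper's case III), and the adjacent bogus $1$-strata split into two genuinely different situations depending on whether the associated divisorial contraction has target a point (case IV, forcing $\rho(Y)=2$) or $\bbP^1$ (case V). Case V requires the most delicate argument in the whole proof: one has a single singular fiber $F=E_1+E_2$, one shows at least one $E_i$ is a boundary component (otherwise the two adjacent $\MovFan$ cones would be merged in $\MovSec$, contradicting the dimension count), flops it, and tracks a double curve through the flop to exhibit the smoothing along $S^\circ(\gamma)$. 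Your text acknowledges the need to ``organize this case analysis cleanly'' and verify non-triviality in each case, but does not supply the ruling case or the flop argument in case V, so as written the proof of finiteness is not complete.
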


From \cref{thm:stable} we deduce the next theorem, which is a detailed version of \cref{thm:del_Pezzo_intro}.

\begin{theorem} \label{thm:del_Pezzo}
	Let $Y$ be a smooth complex del Pezzo surface and $D\subset Y$ an anti-canonical cycle of $(-1)$-curves.
	Then the generic fiber of the mirror family over $\TV(\Sec)$ is smooth, and one fiber is the original $(Y,D)$.
	The image of the finite map $\TV(\Sec) \to \SP$ in \cref{thm:stable}(\ref{thm:stable:finite}) is equal to the closure $\oQ\subset \SP$, where $Q$ is the moduli space in \cref{conj:smooth} for the pair $(Y,D)$.
\end{theorem}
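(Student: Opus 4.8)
The plan is to deduce \cref{thm:del_Pezzo} from \cref{thm:stable} by a dimension count. Write $n$ for the number of irreducible components of $D$, so that $D$ is a cycle of $n$ $(-1)$-curves and $(-K_Y)^2=D^2=n$, hence $\rho(Y)=10-K_Y^2=10-n$. The fan $\Sec(K)$ is a coarsening of $\MoriFan(K)$, so it is complete with support $\Pic(K)_\bbR\cong\Pic(Y)_\bbR$ (\cref{lem:Mori_fan}), and therefore $\TV(\Sec)$ is a complete irreducible variety of dimension $10-n$. Since the map $f\colon\TV(\Sec)\to\SP$ of \cref{thm:stable}(\ref{thm:stable:finite}) is finite, its image $T'\coloneqq f(\TV(\Sec))$ is a closed irreducible subvariety of $\SP$ with $\dim T'=10-n$. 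So it suffices to show $T'\subseteq\oQ$ and that $\oQ$ is irreducible of dimension $10-n$; then $T'=\oQ$, and the two remaining assertions follow.

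First I would check $T'\subseteq\oQ$. By parts (\ref{thm:stable:restriction}) and (\ref{thm:stable:stable}) of \cref{thm:stable}, the fiber over a general point of $T_{\Pic(Y)}\subset\TV(\Sec)$ is a stable pair $(X,E+\epsilon\Theta)$ with $X$ a del Pezzo surface having at worst du Val singularities, $E$ an anticanonical cycle of $(-1)$-curves with $K_X^2=n$, and $\Theta\in|-K_X|$ missing every log canonical center. Any such $(X,E)$ degenerates from a smooth del Pezzo of the same numerical type (one smooths the du Val singularities; this is where the comparison \cref{prop:alg_equal} with the Gross--Hacking--Keel construction, together with the irreducibility of the relevant deformation family, is used), so $[(X,E+\epsilon\Theta)]$ lies in the closure $\oQ$ of the locus $Q$ of \emph{smooth} such triples. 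Since $f$ thus maps a dense open of $\TV(\Sec)$ into $\oQ$, we get $T'=\overline{f(\TV(\Sec))}\subseteq\oQ$.

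Next I would compute $\dim Q$ and check irreducibility. The deformations of the log pair $(X,E)$ are controlled by the locally free sheaf $\mathcal{T}_X(-\log E)$, for which $c_1=-(K_X+E)=0$, $\int_X c_2=\chi_{\mathrm{top}}(X\setminus E)=(12-K_X^2)-n=12-2n$, and $H^2(X,\mathcal{T}_X(-\log E))\cong H^0(X,\Omega^1_X(\log E)(-E))^\vee=0$ (the latter sheaf embeds in $\Omega^1_X$ and $X$ is rational); logarithmic Riemann--Roch gives $\chi(\mathcal{T}_X(-\log E))=2-c_2=2n-10$. Because $H^1(X,-K_X)=0$, the moduli of triples is, as a smooth unobstructed stack, a $|-K_X|$-bundle over the moduli of pairs, so $\dim Q=\big(h^1-h^0\big)(\mathcal{T}_X(-\log E))+\dim|-K_X|=-\chi(\mathcal{T}_X(-\log E))+K_X^2=(10-2n)+n=10-n$. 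Moreover $Q$ is irreducible: the moduli of degree-$n$ del Pezzo surfaces is irreducible, the anticanonical cycles of $(-1)$-curves on them form a single orbit under the Weyl group monodromy, and $|-K_X|$ is irreducible (equivalently, the moduli stack is smooth, so the coarse space is normal). Hence $\oQ=\overline Q$ is irreducible of dimension $10-n$.

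Now $T'\subseteq\oQ$ is a closed irreducible subvariety of the same dimension as the irreducible variety $\oQ$, so $T'=\oQ$. For the remaining assertions: $(Y,D,\Theta_0)\in Q\subseteq\oQ=T'$ for any $\Theta_0\in|-K_Y|$ avoiding the finitely many nodes of $D$, so for $s_0\in f^{-1}([(Y,D+\epsilon\Theta_0)])$ the fiber $(\cX_{s_0},\cE_{s_0}+\epsilon\Theta_{s_0})$ is isomorphic to $(Y,D+\epsilon\Theta_0)$ as a pair, i.e.\ $(Y,D)$ is a fiber of the mirror family; and since $Q$ is a dense open of $\oQ=T'$ and $f$ is finite and surjective, the fibers over a dense open of $\TV(\Sec)$ are smooth, so the generic fiber is smooth. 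The main obstacle is the control of $\oQ$: its irreducibility, equivalently the statement that all smooth del Pezzo pairs $(X,E)$ of this type form one deformation family (transitivity of the Weyl group on anticanonical cycles of $(-1)$-curves, plus irreducibility of the moduli of del Pezzo surfaces), and, relatedly, the passage from du Val fibers to smooth ones in the inclusion $T'\subseteq\oQ$. These are precisely the points where one leaves the non-archimedean/combinatorial framework of the rest of the paper and must invoke the classical theory of del Pezzo surfaces and the Gross--Hacking--Keel picture.
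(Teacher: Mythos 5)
Your overall strategy — reduce to \cref{thm:stable} and argue via a dimension count — is the same as the paper's, but the details differ in an important way, and one of your steps has a genuine gap.

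The paper's proof invokes \cref{prop:dimcount}, which shows two things: (i) $Q_n$ (smooth fibers) is irreducible of dimension \emph{at most} $10-n$, and (ii) $Q_n^{\dV}$ (du Val but not smooth fibers) has dimension at most $9-n$. By \cref{thm:stable}(\ref{thm:stable:restriction}), every fiber over $T_{\Pic(Y)}$ lies in $Q_n\cup Q_n^{\dV}$. Since the image $T'$ of the finite map is irreducible of dimension exactly $10-n$, bound (ii) forces $T'\not\subset\overline{Q_n^{\dV}}$, hence $T'\subset\overline{Q_n}$, and bound (i) then gives $T'=\overline{Q_n}=\oQ$. This argument never needs to smooth a du Val fiber: it gets the containment $T'\subset\oQ$ for free from the fact that the non-smooth locus is too low-dimensional to contain a $(10-n)$-dimensional irreducible variety.

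You instead try to prove $T'\subset\oQ$ directly, by asserting that any du Val pair $(X,E)$ occurring as a fiber ``degenerates from a smooth del Pezzo of the same numerical type,'' and hence sits in the closure of the smooth locus. This is where your argument has a real gap. It is not automatic that the du Val surface $X$ smooths \emph{together} with the cycle $E$ of $K_X$-degree-$(-1)$ curves so as to land in $Q$; one must deform the whole triple (and keep $E$ an anticanonical cycle of $(-1)$-curves missing singularities, and $\Theta$ avoiding lc centers), and in general the components of $E$ may meet the du Val singular points of $X$ away from the nodes of $E$. You acknowledge this point and gesture at \cref{prop:alg_equal} and ``the irreducibility of the relevant deformation family,'' but that is not a proof — it is precisely the delicate content that the paper's $\dim Q_n^{\dV}\leq 9-n$ bound is designed to sidestep.

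Your exact computation $\dim Q = 10-n$ via logarithmic Riemann--Roch on $\cT_X(-\log E)$ is a more refined statement than the paper's upper bound, and the $c_2$ and $\chi$ arithmetic is correct, but it carries its own caveats: for small $n$ the pair $(X,E)$ has positive-dimensional automorphism group (e.g.\ the 2-torus for $n=6$), so $h^0(\cT_X(-\log E))\neq 0$ and your formula $\dim Q=(h^1-h^0)+n$ needs to be justified as the dimension of the \emph{coarse} space of triples, which requires observing that the automorphism group of the triple (with $\Theta$ included) is finite. That is fixable, but the paper sidesteps it by only needing the upper bound from Looijenga's classification for $n\le 5$ and uniqueness for $n=6$. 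In short: your route can be completed, but the smoothability of the du Val fibers is the missing piece, and the paper's bound on $\dim Q_n^{\dV}$ is the cheaper way to close the argument.
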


\begin{proof}
	Suppose $D$ has $n$ irreducible components, then $n$ is also the degree of $Y$, and thus $\Pic(Y)$ has rank $10-n$.
	Therefore by the dimension count in \cref{prop:dimcount}, the image of the finite map $\TV(\Sec)\to\SP$ is equal to the closure $\oQ\subset\SP$.
	In particular, the generic fiber of the mirror family must be smooth, and one fiber is the original $(Y,D)$.
\end{proof}

\begin{proposition} \label{prop:dimcount}
	For $n=1,\dots,9$, let $Q_n$ (resp.\ $Q_n^\dV$) be the moduli space of triples $(X,E,\Theta)$ where $X$ is a smooth (resp.\ du Val) del Pezzo surface of degree\footnote{By \emph{degree} we mean the self-intersection number of $K_X$.} $n$, $(X,E)$ is a log-canonical pair, $E\in\abs{-K_X}$ is a cycle of $K_X$-degree-$(-1)$ curves, and $\Theta\in\abs{-K_X}$ does not pass through any singular point of $E$.
	Then $Q_n$ is empty unless $n\le 6$;
	in this case it is irreducible of dimension at most $10-n$.
	On the other hand, $Q_n^\dV$ has dimension at most $9-n$, (and is possibly reducible).
\end{proposition}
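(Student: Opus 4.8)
The plan is to realize $Q_n$ and $Q_n^{\dV}$ as open substacks of projective bundles over moduli stacks of anticanonical log Calabi--Yau pairs $(X,E)$, controlling dimensions through the log tangent sheaf, after first handling the emptiness assertions by a lattice computation.

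\emph{Number of components and emptiness.} I would first observe that, since $E\in\abs{-K_X}$ and each component $E_i$ of the cycle has $(-K_X)\cdot E_i=1$, pairing $E$ with $-K_X$ gives $(-K_X)^2=\sum_i(-K_X)\cdot E_i$, so $E$ has exactly $n=(-K_X)^2$ components $E_1,\dots,E_n$. For $n\ge 3$ each $E_i$ is a smooth rational curve with $E_i^2=-1$ by adjunction, and the Gram matrix of $E_1,\dots,E_n$ is the $n\times n$ circulant with diagonal $-1$ and cyclic sub/super-diagonal $1$, whose eigenvalues $2\cos(2\pi j/n)-1$ show it has rank $n$ unless $6\mid n$ and rank $n-2$ when $6\mid n$. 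These classes span a sublattice of $\Pic(X)$, while $\rho(X)\le\rho(\tX)=10-n$ for the minimal resolution $\mu\colon\tX\to X$ (a weak del Pezzo of degree $n$); hence $n\le 6$, and moreover $X$ is forced smooth when $n\in\{5,6\}$ (the cases $n\le 2$ being immediate). Thus $Q_n=Q_n^{\dV}=\emptyset$ for $n\ge 7$ and $Q_n^{\dV}=\emptyset$ for $n\ge 5$, and from now on $1\le n\le 6$.

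\emph{Dimension.} Let $M$ be the moduli stack of pairs $(X,E)$ as in the statement. Since $-K_X$ is an ample Cartier divisor and $X$ has (at worst du Val, hence rational) singularities, Kawamata--Viehweg vanishing gives $h^0(X,-K_X)=1+n$ and $h^{>0}(X,-K_X)=0$, so $\pi_*\cO(-K)$ is a rank-$(n+1)$ bundle on $M$ and $Q_n$, resp.\ $Q_n^{\dV}$, is an open substack of its projectivization; in particular $\dim Q_n\le\dim M+n$ (an equality, since $Q_n\hookrightarrow\SP$ forces finite automorphisms). To bound $\dim M$ I would pass to $(\tX,\tE)$ with $\tE\coloneqq\mu^*E$: by \cref{prop:classfication_lc_surface} the singularities of $X$ are $A_m$-points at nodes of $E$, so $\tE$ is again an anticanonical cycle, now of length $d=n+k$ where $k\ge 0$ is the number of exceptional $(-2)$-curves (with $k\ge 1$ precisely when $X$ is singular), and the deformation functor of $(X,E)$ is identified with that of $(\tX,\tE)$ (the $(-2)$-curves are components of $\tE$, persist under deformation, and can be blown down in families). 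For the smooth pair one has $T_{\tX}(-\log\tE)\cong\Omega^1_{\tX}(\log\tE)$ (rank two with trivial determinant $\cO(K_{\tX}+\tE)$), and the residue sequence $0\to\Omega^1_{\tX}\to\Omega^1_{\tX}(\log\tE)\to\bigoplus_j\cO_{\tE_j}\to 0$, using $H^{>0}(\cO_{\tE_j})=0$ and $H^2(\Omega^1_{\tX})=0$, yields $H^2(T_{\tX}(-\log\tE))=0$ (so $M$ is smooth) and $h^1(T_{\tX}(-\log\tE))-h^0(T_{\tX}(-\log\tE))=\rho(\tX)-d$. Hence $\dim M=\rho(\tX)-d=(10-n)-(n+k)=10-2n-k$, so $\dim Q_n\le 10-n-k$: for $Q_n$, $k=0$ gives $\dim Q_n\le 10-n$; for $Q_n^{\dV}$, taking the maximum over the finitely many singularity types (all with $k\ge 1$) gives $\dim Q_n^{\dV}\le 9-n$.

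\emph{Irreducibility of $Q_n$, and the main obstacle.} For $1\le n\le 6$ the moduli stack of smooth degree-$n$ del Pezzo surfaces is irreducible, each such surface being $\mathbb{P}^2$ blown up at $9-n$ points in general position; over it the anticanonical cycles of $(-1)$-curves form a single orbit under the monodromy Weyl group acting on $\Pic$, hence sweep out an irreducible finite cover, and $Q_n$ is open in a projective bundle over that cover, so it is irreducible. The hard part will be exactly this transitivity of the Weyl group on anticanonical $n$-cycles of $(-1)$-curves: for small $n$ there are many such cycles, and one needs either a direct root-system argument or an appeal to the known connectedness of the relevant component of the moduli space of anticanonical pairs. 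The only other point requiring care, to make the resolution argument rigorous, is the identification of the deformation functor of the du Val pair $(X,E)$ with that of $(\tX,\tE)$ together with the blow-down of the rigid $(-2)$-curves in families.
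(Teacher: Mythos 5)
Your proof takes a genuinely different route from the paper on two of the three points, and the comparison is worth making explicit.

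For emptiness when $n\ge 7$, the paper's argument is short and avoids your subtlety about self-intersections: setting $a=E_1+E_2+E_3$, $b=E_5+E_6$ gives $a^2=1$, $b^2=0$, $a\cdot b=0$, and $b\not\equiv 0$, contradicting the Hodge index theorem directly on the smooth $X$. Your circulant-rank argument also works on a smooth $X$, but note that your claim ``$E_i^2=-1$ by adjunction'' fails on a singular du Val $X$: the $E_i$ there are Weil but not Cartier at the $A_m$-nodes of $E$, so their self-intersections are rational and not $-1$. The paper never needs this (it only applies the Hodge-index argument to smooth $X$), but you do use it to derive $Q_n^{\dV}=\emptyset$ for $n\ge 5$, which is an extra unneeded claim whose proof is therefore flawed as written; you should either drop it or run the rank argument on $(\tX,\tE)$ where the components have self-intersections $-1$ and $-2$ and the circulant computation must be redone.

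For the dimension bound, your approach — identifying $M$ with an unobstructed log-smooth deformation functor, computing $\chi(T_{\tX}(-\log\tE))=d-\rho(\tX)$ via the residue sequence and $T_{\tX}(-\log\tE)\simeq\Omega^1_{\tX}(\log\tE)$, and then adding $n$ for $\bbP(\pi_*\cO(-K))$ — is genuinely different from and more conceptual than the paper's. The paper instead uses the explicit picture of $\tX$ as a blowup of $\bbP^2$ at $9-n$ points which, for singular $X$, are \emph{not} in general position, so they move in at most $2(5-n)-1$ moduli (after fixing $4$ points via $\operatorname{PGL}_3$), and then adds $n$ for $|-K_X|$; for smooth $X$ and $n\le 5$ the paper simply cites Looijenga's structure theorem, and for $n=6$ uses toric uniqueness. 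Your calculation recovers the same numbers $10-n$ and $9-n$ without the explicit parametrization and gives a uniform explanation. The two places you flag — identifying deformations of $(X,E)$ with those of $(\tX,\tE)$ and blowing down the rigid $(-2)$-chains in families, and the Weyl-group transitivity needed for irreducibility of $Q_n$ — are genuine gaps. The first is plausible but requires an argument (it is precisely the point that $\tE$ is fixed and contains the exceptional $(-2)$-configuration, which therefore persists and remains contractible; and one has to check the blowdown map induces an isomorphism on the equisingular locus of $M$). The second is the real issue: the paper does not prove Weyl-transitivity but rather gets irreducibility for free from Looijenga's concrete classification in degrees $\le 5$ and from toric uniqueness in degree $6$; without one of these inputs your irreducibility claim remains open. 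Finally, the parenthetical ``an equality, since $Q_n\hookrightarrow\SP$ forces finite automorphisms'' is a red herring — finiteness of automorphisms of the triple $(X,E,\Theta)$ does not control $h^0(T_{\tX}(-\log\tE))$, which enters the dimension of the stack $M$ of pairs — but you only use the inequality, so this does not damage the proof.
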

\begin{proof}
	Let us first consider the smooth case $Q_n$:
	To see that $Q_n$ is empty for $n\ge 7$, suppose to the contrary that $D = D_1 + \dots D_n$ with $n \geq 7$.
	Let $a \coloneqq D_1 + D_2 + D_3$ and $b \coloneqq D_5 + D_6$.
	Then $a^2 >0$, $b^2 = 0$ and $a \cdot b =0$, which contradicts the Hodge index theorem.
	For $ n \leq 5$ the result follows from \cite[Theorem 1.1]{Looijenga_Rational_surfaces}, which gives a concrete description of all pairs $(X,E)$.
	If $n =6$, then $(X,E)$ is toric and unique (see \cite[Lemma 1.3]{Gross_Mirror_symmetry_for_log_Calabi-Yau_surfaces_I_v1}) and the result is obvious.
	Next we estimate the dimension of $Q_n^\dV$.
	Let $p\colon\tX \to X$ be a minimal resolution, which is crepant by the du Val assumption.
	Note that $-K_{\tX} = p^*(-K_X)$ is nef and big, but not ample.
	So $\tX$ is a blowup of $\bbP^2$ at $9-n$ (possibly infinitely near) points, which \emph{are not} in general position (otherwise $-K_{\tX}$ is ample), i.e.\ they satisfy the divisorial conditions as in the first paragraph of \cite[Appendix A]{Naruki_Cross_ratio_variety}.
	We can fix $4$ of the points by automorphisms of $\bbP^2$, thus the number of moduli for the points on $\bbP^2$ is at most $2 \cdot (5 - n) -1$.
	The linear system $\abs{-K_{\tX}} = \abs{-K_X}$ has dimension $n$.
	Each irreducible component of the strict transform $\tE$ of $E$ is one of the finitely many $(-1)$-curves on $\tX$, so this adds no moduli.
	Therefore, $Q_n^\dV$ has dimension at most $9-n$, completing the proof.
\end{proof}

\begin{remark} \label{rem:duval}
	Given $(Y,D= D_1 + \dots D_n)$ as in the context of \cref{thm:stable}, let $d_i \coloneqq -K_X \cdot D_i$.
	Modify slightly the above definition of $Q_n^{\dV}$ adding the conditions that $E= E_1 + \dots E_n$ (where we cyclically order the components $E_i$ and $D_i$) and that $X$ has an $A_{d_i -1}$ singularity at the node $E_i \cap E_{i+1}$.
	Then by a more involved dimension count and some finer analysis of the mirror family, we can prove that the image of the finite map $\TV(\Sec)\to\SP$ of \cref{thm:stable}(\ref{thm:stable:finite}) is the closure of $Q_n^{\dV}$ in $\SP$.
\end{remark}

\subsection{Proof of the main theorem} \label{sec:proof_of_stability}

Now we turn to the proof of \cref{thm:stable}.

Since the extension of the mirror family over the bogus cones are done via the equivariant boundary torus action (see \cref{sec:extension_full}), it suffices to prove (\ref{thm:stable:family}) for the restriction to the moving part of the secondary fan.
That reduces to the study of singularities of the mirror family associated to every SQM $K\dasharrow K_\alpha$, which follows from \cite[Proposition 19.1]{Keel_Yu_The_Frobenius}.

Statements (\ref{thm:stable:E}) and (\ref{thm:stable:restriction}) follow from Propositions \ref{prop:boundary_E} and \ref{prop:du_Val} respectively.

Next we prove (\ref{thm:stable:stable}).
By (\ref{thm:stable:family}) it is enough to check that for every fiber $(X,E,\Theta)$, $\Theta$ is disjoint from any minimal (under inclusion) log-canonical centers of $(X,E)$, which are described by \cref{prop:slc_surface}, in particular they are all 0-dimensional.
Moreover by \cref{lem:boundary_E_dim2}, we only need to consider 0-dimensional log-canonical centers of $X \setminus E$.

Since the secondary fan is a coarsening of the Mori fan, each cone $\gamma_0\in\Sec(K)$ contains a (not necessarily unique) cone $\gamma\in\MoriFan(K)$ of the same dimension.
Let $S^\circ(\gamma)\subset S(\gamma)\subset\TV(\MoriFan(K))$ denote the associated open and closed strata.
Then it suffices to prove (\ref{thm:stable:stable}) for the restriction of the mirror family over all such strata $S^\circ(\gamma)$.
If $\gamma$ is of the form in \cref{lem:S_gamma}, let $\cX'\subset\cX$ be as in \cref{prop:restriction_SQM} and $X'\coloneqq X\cap\cX'$.
For log-canonical centers contained in $\overline{X\setminus X'}$, the statement follows from \cref{prop:restriction_SQM}(1) because the irreducible components of this closure are toric projective varieties, and the theta functions restrict to toric monomials.
Then by \cref{prop:restriction_SQM}(2), we can proceed by induction on the Picard number of $Y$.
The cones of $\MoriFan(K)$ that do not contain any $\gamma$ as in \cref{lem:S_gamma} are described in \cref{lem:cone_induct}.
Therefore, it remains to consider the restriction of the mirror family over every open stratum $S^\circ(\gamma)$ where $\gamma$ is $c^*\Nef(K')$ for a regular contraction $c\colon K\to K'$, or the associated bogus cone $c^*\Nef(K') + \bbZ_{\geq 0} [Y]$.
We refer to these two cases as the nef cone case and the bogus cone case.
We will enumerate the possibilities for $c\colon K\to K'$, which will also be used later in the proof of (\ref{thm:stable:finite}).
Let $f\colon Y \to Y'$ denote the contraction induced by $c\colon K\to K'$.

Consider a fiber $(X,E,\Theta)$ over the open stratum $S^\circ(\gamma)$ as above, and a 0-dimensional log-canonical center $z$ of $(X,E)$ in $X\setminus E$.
In order to show that $\Theta$ does not contain $z$, it is enough to establish the following:

\begin{claim} \label{cl:center_claim}
	Exactly one of the theta functions $\theta_P$, $P\in \Lambda(\bbZ)$ is non-vanishing at $z$.
\end{claim}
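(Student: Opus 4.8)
The plan is to prove \cref{cl:center_claim} by using the equivariant boundary torus $T_D$-action to reduce to the fiber over the closed stratum $S(\gamma)$, and then to analyze that fiber via the inductive structure of \cref{prop:restriction_SQM} and the explicit toric description. First I would observe, exactly as in the proof of \cref{prop:du_Val}, that the set of degree-one theta functions $\{\theta_P : P \in \Lambda(\bbZ)\}$ is a $T_D$-eigenbasis of $H^0(X,\cO(1))$, with the weight of $\theta_P$ being the coordinate on the boundary torus indexed by $P$; consequently the vanishing locus $\{\theta_P = 0\}$ is $T_D$-invariant, and in particular whether $\theta_P$ vanishes at a point is constant along $T_D$-orbits, hence can be checked after degenerating $z$ to the fiber over the closed stratum $S(\gamma)$. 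Since $z$ is a $0$-dimensional log-canonical center, it is preserved (as a set of centers, using the slc classification of \cref{prop:slc_surface}) under this degeneration, so it suffices to prove the claim there.

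Next I would split into the nef cone case and the bogus cone case for $\gamma$, following the enumeration of contractions $c\colon K\to K'$ already set up in the proof of \cref{thm:stable}(\ref{thm:stable:stable}). In the nef cone case, I would invoke \cref{prop:restriction_SQM}: the restriction $(\cX',\cE')\to S(\gamma)$ is canonically identified with the mirror family for the blowdown $(Y',D')$, so by induction on the Picard rank of $Y$ the claim holds for log-canonical centers lying on $X'$; for centers lying on the complementary locus $\overline{X\setminus X'}$, \cref{prop:restriction_SQM}(1) says the theta functions restrict to honest toric monomials on each irreducible component (which is a toric projective variety, in fact a broken/toric piece), so the vanishing at $z$ is read off the toric combinatorics: a monomial $\theta_P$ is nonzero at a torus-fixed point of a toric stratum precisely when $P$ lies in the corresponding maximal cone of $\Sigma_\gamma$, and exactly one point of $\Lambda(\bbZ)$ — namely $[Y]$ if the center is the apex of the umbrella, or the appropriate vertex otherwise — does so. The base case of the induction is when $\Pic(Y)$ has rank one (so $Y=\bbP^2$, degree $9$), or more relevantly the minimal degree situations handled in \cref{prop:dimcount}; there $(X,E)$ is toric and the claim is the standard statement that at each torus-fixed point of the umbrella (the cone over $\partial\uLambda$) exactly one degree-one monomial survives, which is immediate from \cref{prop:umbrella}.

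In the bogus cone case, where $\gamma = c^*\Nef(K') + \bbR_{\ge 0}[Y]$, I would use the explicit description of the extension over bogus cones from \cref{sec:extension_full}: over the new stratum the mirror family is obtained from the moving-cone family by setting the $\theta_{[Y]}$-coefficient of $\Theta$ to zero, i.e.\ by passing to $\Theta' = Z(\sum_{P\in\Lambda(\bbZ)\setminus[Y]}\theta_P)$, and the underlying $(X,E)$ is a constant family (a product $(\cX_\gamma,\cE_\gamma)\times\{0\}$ up to the toric stack correction). So the claim over this stratum follows from the corresponding claim over the adjacent nef-type stratum, except that one must also check the newly-appearing $0$-dimensional log-canonical centers (if any) of the degenerate fiber — here I would again use \cref{prop:slc_surface} to locate them (on $\tNN\cup\tE$) and the toric/umbrella description to verify that exactly one $\theta_P$ survives.

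The main obstacle I anticipate is bookkeeping in the inductive step: one must be sure that every $0$-dimensional log-canonical center $z$ of every fiber $(X,E)$ over every stratum actually \emph{does} degenerate, under the chosen one-parameter subgroup $\Gm\subset T_D$, to a center that is either visibly toric (lies on $\overline{X\setminus X'}$, where it is a torus-fixed point of a toric stratum) or lies on $X'$ where induction applies — in other words, that the list of strata in \cref{lem:cone_induct} together with \cref{lem:S_gamma} genuinely exhausts all cases and that the degeneration is compatible with the $\cX'/\cE'$ decomposition of \cref{prop:restriction_SQM}. This is where the careful use of $S_2$-ness and the slc classification (as in \cref{prop:slc_surface}) is needed, to guarantee that no center is "lost" or "created" in a way that escapes both the toric analysis and the inductive hypothesis. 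Once the reduction is in place the computation itself — counting which $P\in\Lambda(\bbZ)$ lies in a given maximal cone of the relevant $\Delta$-complex — is routine.
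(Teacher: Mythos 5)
The degeneration step at the heart of your proposal has a gap that I don't think can be repaired as stated. You argue that because the theta functions form a $T_D$-eigenbasis, the vanishing locus $\{\theta_P = 0\}$ is $T_D$-invariant, "hence" whether $\theta_P$ vanishes at $z$ can be checked after degenerating $z$ along a one-parameter subgroup $\Gm \subset T_D$ to the fiber over a more degenerate stratum. But this inference is false: invariance of $\{\theta_P = 0\}$ means the \emph{orbit} of $z$ is entirely inside or entirely outside that locus; it says nothing about the \emph{orbit closure}. Concretely, if $w_P$ denotes the weight of $\theta_P$, then $\theta_P(\lambda\cdot z)=\lambda^{w_P}\theta_P(z)$, and in projective coordinates the limit $z_0=\lim_{\lambda\to 0}\lambda\cdot z$ satisfies $\theta_P(z_0)\neq 0$ only when $w_P$ is minimal among $\{w_P: \theta_P(z)\neq 0\}$. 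So the degeneration can \emph{create} vanishing: even if you establish that exactly one theta function is nonzero at $z_0$, it does not follow that exactly one is nonzero at $z$ itself. This is precisely the direction of implication your reduction needs and does not have.

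Separately, you are missing what turns out to be the main content of the paper's argument. The proof proceeds by direct case analysis of the cone $\gamma$ (after the reduction via \cref{prop:restriction_SQM} and \cref{lem:cone_induct} that you correctly invoke), where $\gamma=c^*\Nef(K')$ for a regular contraction $c\colon K\to K'$, or the associated bogus cone. In the paper's Cases I, III, IV, V (Picard rank one, $\bbP^1$-fibrations, and the two bogus counterparts), the key point is that the fibers $(X,E)$ over $S^\circ(\gamma)$ have \emph{no} zero-dimensional log-canonical centers off $E$ at all --- by \cref{prop:du_Val} in Case I and by the explicit $\bbP^1$-fibration analysis of \cref{prop:P1_fibration} in Case III, with the bogus cases reducing to these --- so the claim is vacuous there. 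The only case in which $z$ actually exists is Case II (small contraction along boundary $(-1)$-curves), where the restriction of $\cV$ to $S(\gamma)$ is a purely toric Mumford partial smoothing, $z$ is the apex of the umbrella, and $\theta_0 = \theta_{[Y]}$ is the unique nonvanishing theta. Your sketch conflates the case distinction and never identifies that the fibration case requires the separate, rather delicate \cref{prop:P1_fibration} to rule out centers; this cannot be subsumed by the toric-monomial reasoning you propose for the $\overline{X\setminus X'}$ locus.

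So: the inductive skeleton via \cref{prop:restriction_SQM} is right, and the toric computation for Case II is essentially right, but (a) the $T_D$-degeneration reduction is logically invalid as stated, and (b) the proposal misses the central observation that in all cases except the toric Mumford one there is nothing to check because the centers do not exist.
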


Let us prove \cref{cl:center_claim}.
Let $\cV\to\TV(\Nef(Y))\simeq\Spec R_Y$ denote the restriction of $\cX\setminus\cE$ to $\TV(\Nef(Y))$.
Recall from \cref{prop:relation_AKAY} that $\cV\simeq\Spec(A_Y)$.

\smallskip
\textbf{Case I}: the nef cone case where $Y'$ is a point.
Then the open stratum $S^\circ(\gamma)$ is the structure torus $T_{\Pic(Y)}\subset\TV(\Nef(Y))$.
By \cref{prop:du_Val}, there are no log-canonical center $z$ of $(X,E)$ in $X\setminus E$.
So this case does not occur.

\smallskip
\textbf{Case II}: the nef cone case where $f\colon Y \to Y'$ is birational, i.e.\ $c\colon K\to K'$ is small.
By the choice of $\gamma$, it is contained in a cone $\gamma_0\in\Sec(K)$ of the same dimension.
We claim that the exceptional locus of $f$ is a disjoint union of boundary $(-1)$-curves.
Indeed, factor $f$ through $f_1\colon Y\to Y_1$ which contracts exactly the boundary exceptional divisors of $f$, and let $\gamma_1\coloneqq f_1^*(\Nef(Y_1))$.
Then by \cref{rem:Sec_on_PicY}, the set of cones of $\Sec(K)$ containing $\gamma$ is the same as the set of those containing $\gamma_1$.
The intersection of those cones is $\gamma_0$.
Since $\gamma$ and $\gamma_0$ have the same dimension, we deduce that $\gamma=\gamma_1$ and $f=f_1$.

The ideal $J\subset \bbZ[\NE(Y,\bbZ)]$ associated to the closed stratum $S(\gamma)$ is the monomial ideal generated by all curves other than the $f\colon Y \to Y'$ exceptional curves.
Since the exceptional locus is contained in $D$, and any effective cycle supported on the exceptional locus is rigid in the sense of \cref{def:rigid},
it follows from \cref{cor:count_rigid} that the restriction of $\cV$ to $S(\gamma)$ is the purely toric Mumford partial smoothing of $V_{\Sigma_{(Y,D)}}$ to $V_{\Sigma_{(Y',D')}}$ (see \cite[\S 1.2]{Gross_Mirror_symmetry_for_log_Calabi-Yau_surfaces_I_v1}).
Then \cref{cl:center_claim} holds because $z$ must be the center of the umbrella, and $\theta_0$ is the only theta function that does not vanish there.

\smallskip
\textbf{Case III}: the nef cone case where $f$ is a ruling $Y \to \bbP^1$.
We will show that over $S^\circ(\gamma)$ there are no log-canonical center $z$ of $(X,E)$ in $X\setminus E$.
So this case does not occur.

Again, the ideal $J \subset \bbZ[\NE(Y,\bbZ)]$ associated to the closed stratum $S(\gamma)$ is generated by all curves not contained in a fiber of $f$.
Decompose $D = D_H + D_F$, where $D_F$ are the irreducible components contracted by $f$, and $D_H$ are the horizontal components.
Let $\Sigma_H$ be the coarsening of $\Sigma_{(Y,D)}$ retaining only the rays corresponding to the horizontal components.

We factor $f$ through $f_1\colon Y \to Y_1$ contracting $(-1)$-curves in fibers of $f$ which meet $D_H\setminus D_F$.
Note that the tropicalization of any structure disk (that contributes modulo $J$) cannot cross rays of $\Sigma_H$, otherwise the disk class will contain a component of $D_H$ and so be trivial modulo $J$.
It follows that such structure disks are disjoint from $f_1$-exceptional divisors, so the mirror family over $S^\circ(\gamma)$ is obtained by base extension from the analogous family for $Y_1$.
Therefore, replacing $Y$ by $Y_1$, we can assume there are no such $(-1)$-curves.

Now we refer to \cref{sec:mirror_family_P1-fibration} to finish the proof of case III.
If $D_H$ is irreducible, we can invoke \cref{prop:P1_fibration} to deduce the absence of 0-dimensional log-canonical centers.
If $D_H$ is reducible, note that $\theta_P\cdot\theta_Q=0 \mod J$ unless $P,Q$ lie in a same cell of $\Sigma_H$, and the family $\cV$ restricted to $S^\circ(\gamma)$ is the union of two irreducible components, each given by the vanishing of all $\theta_P$ with $P$ lies in a single cone of $\Sigma_H$.
Write $X\eqqcolon X_1\cup X_2$ and $F\coloneqq X_1\cap X_2$.
Then each $(X_i, F)$ is of the form $(V',C')$ in \cref{prop:P1_fibration}.
By \cref{prop:slc_surface}, any 0-dimension log-canonical center of $X$ must be a log-canonical center of $(X_i,F)$ for $i=1$ or $2$.
So we can again conclude by \cref{prop:P1_fibration}.

\medskip
Next we consider the bogus cone case.
Then $c\colon K \to K'$ is a divisorial contraction, so the corresponding $f\colon Y\to Y'$ is a fibration.
We distinguish \textbf{case IV} where $Y'$ is a point, and \textbf{case V} where $Y'\simeq\bbP^1$.
Recall from \cref{sec:extension_full} that the fibers $(X,E)$ over $S^\circ(\gamma)$ are all isomorphic to fibers over $S^\circ(c^*\Nef(K'))$.
But by the analysis we just made in cases I and III, there are no 0-dimensional log-canonical centers off $E$ in fibers over $S^\circ(c^*\Nef(K'))$, so cases IV et V also do not occur.
This completes the proof of (\ref{thm:stable:stable}).

\medskip
Now we turn to the proof of (\ref{thm:stable:finite}), the finiteness of the map $\TV(\Sec)\to\SP$.

\begin{lemma} \label{lem:finite_map_from_toric_variety}
	Let $X$ be a complete toric variety.
	A map $f\colon X \to Y$ to any other variety is finite if and only if no toric 1-stratum of $X$ is contracted to a point.
\end{lemma}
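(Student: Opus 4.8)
The \textbf{only if} direction is immediate and I would dispose of it first: a finite morphism has finite fibres, whereas a toric $1$-stratum of the complete toric variety $X$ is the closure of a one-dimensional torus orbit, hence a complete rational curve ($\cong\bbP^1$), which cannot be collapsed to a point.

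For the \textbf{if} direction the plan is to argue by contraposition: assuming $f$ is not finite, I would produce a $1$-stratum contracted by $f$. Since $X$ is complete and $Y$ separated, $f$ is proper, and a proper quasi-finite morphism is finite; so if $f$ is not finite some fibre has positive dimension, and by semicontinuity of fibre dimension it contains an irreducible complete curve $C$ which $f$ contracts to a point. Both the hypothesis and the conclusion are unaffected by extending $k$ and by replacing $Y$ with the complete closed subvariety $f(X)$ --- which I may take to be projective (this is automatic in the cases of interest, e.g.\ $Y=\SP$) --- so I would reduce to the situation where $k$ is algebraically closed, $f$ is surjective, and $Y$ is projective.

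The core step is then a Mori-cone argument. I would invoke the standard fact that for a complete toric variety $X$ the cone of curves $\overline{\NE(X)}\subset N_1(X)_\bbR$ is a rational polyhedral cone generated by the classes of the torus-invariant curves, namely the $1$-strata $C_1,\dots,C_m$ (Reid's decomposition of toric morphisms; Cox--Little--Schenck). Fixing an ample divisor $H$ on $Y$ and putting $L\coloneqq f^*H$, which is nef on $X$ and satisfies $L\cdot C=0$ by the projection formula (as $f_*C=0$), I would write $[C]=\sum_i a_i[C_i]$ with $a_i\ge 0$; since $X$ is projective $[C]\neq 0$, so $a_j>0$ for some $j$, and then $0=L\cdot C=\sum_i a_i\,(L\cdot C_i)$ with all summands $\ge 0$ forces $L\cdot C_j=0$. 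Hence $H$ has degree $0$ on $f(C_j)$, so $f(C_j)$ is a point, i.e.\ $f$ contracts the $1$-stratum $C_j$, which is the desired contradiction.

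I expect the main obstacle --- really the only substantive input --- to be the generation of $\overline{\NE(X)}$ by the classes of the invariant curves; this is where the toric hypothesis does its work, and it is what lets a single contracted curve force a contracted $1$-stratum. A minor secondary point needing care is the reduction to a projective target, handled through $f(X)$ as indicated; the remaining ingredients (properness, proper\,$+$\,quasi-finite $\Rightarrow$ finite, semicontinuity of fibre dimension, and compatibility of $1$-strata with base field extension) are purely formal.
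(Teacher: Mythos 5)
Your approach is genuinely different from the paper's.  You run a Mori-cone argument: pull back an ample $H$ from $Y$, note $L\coloneqq f^*H$ is nef with $L\cdot C=0$, write $[C]$ as a non-negative combination of invariant $1$-strata $[C_i]$, and conclude some $C_j$ satisfies $L\cdot C_j=0$.  The paper instead degenerates the curve to a torus-invariant one inside the Hilbert scheme and uses the rigidity lemma twice.  Your idea is the right instinct if $X$ and $Y$ are both projective, but as written it has two real gaps that the paper's route is specifically designed to avoid.

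First, you write ``since $X$ is projective $[C]\neq 0$''.  But the lemma assumes only that $X$ is a \emph{complete} toric variety, and complete toric varieties need not be projective; when they are not, $\overline{\NE}(X)$ is not strictly convex and can contain lines, so an irreducible complete curve can very well be numerically trivial, in which case your argument produces nothing.  The standard fix is to pass to a projective toric resolution $b\colon\tX\to X$ (a refinement of the fan) and lift $C$ to $\tC\subset\tX$ dominating it --- this is exactly what the paper does.  However, your argument would then only hand you a contracted $1$-stratum $\tC_j\subset\tX$, and you would still have to argue that its image $b(\tC_j)\subset X$ is a $1$-stratum and not a point; the paper handles this last step with a second application of the rigidity lemma, a step your argument has no analogue of.

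Second, the reduction to $Y$ projective.  You replace $Y$ by $f(X)$ and assert it ``may be taken'' projective, conceding parenthetically that this is only automatic in the applications of interest.  But $f(X)$ is merely a complete variety, and complete varieties need not be projective, so there is no ample $H$ to pull back and your key inequality $L\cdot C_j=0\Rightarrow f(C_j)=\text{pt}$ has no starting point.  It is also not clear how to recover a projective target via a Stein factorization or Chow's lemma in a way that both keeps $f$ a morphism and preserves the $1$-strata being contracted.  The paper's proof is constructed precisely to make \emph{no} hypothesis on $Y$: the family of curves $\cC\to S$ inside $\Hilb(\tX)$ has one fiber contracted by $g=f\circ b$, so by rigidity \emph{all} fibers are contracted, including the torus-fixed degenerate curve; then rigidity for $b\colon\cC\to X$ shows that fiber is not contracted by $b$, giving the contracted $1$-stratum of $X$.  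This is a more robust mechanism than the nef-intersection bookkeeping, at the cost of invoking the Hilbert scheme.  If you want to salvage the Mori-cone argument you would have to restrict the lemma to projective $X$ and $Y$, which is strictly weaker than what is stated and used.
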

\begin{proof}
	The ``only if'' direction is obvious.
	For the ``if'' direction, assume there is an irreducible curve $C \subset X$ contracted by $f$, and we want to find a contracted toric 1-stratum.
	Choose a toric resolution $b\colon\tX \to X$ with $\tX$ projective,
		and $\tC \subset \tX$ an irreducible curve mapping onto $C$.
	Consider the closure
	\[
	P\coloneqq\overline{T \cdot \{\tC\}} \subset \Hilb(\tX)
	\]
	in the Hilbert scheme of curves in $\tX$, where $T \subset \tX$ is the structure torus.
	This closure is proper so contains a torus fixed point $p$ (e.g.\ its normalization is a projective toric variety on which $T$ has a dense open orbit,
		just take the image of a torus fixed point on the normalization).
	Then the corresponding curve is supported on a 1-stratum.
	Choose an irreducible curve $S\subset P$ connecting $\{\tC\}$ and $p$, and let $\cC \to S$ be the corresponding family.
	Consider $g\coloneqq f \circ b\colon \cC \to Y$.
	By assumption the fiber $\tC \subset \cC$ is contracted by $g$, so by the rigidity lemma (\cite[Lemma 1.6]{Kollar_Birational_geometry_of_algebraic_varieties}), every fiber is contracted, in particular, $f(b(\cC_p))$ is a point.
	Again by the rigidity lemma, $b\colon\cC\to X$ does not contract any fibers, in particular $b(\cC_p)$ has support a toric 1-stratum, completing the proof.
\end{proof}

By \cref{lem:finite_map_from_toric_variety}, for the proof of \cref{thm:stable}(\ref{thm:stable:finite}), it suffices to show that no toric 1-stratum of $\TV(\Sec)$ is contracted (to a point).
We will show more:

\begin{claim} \label{cl:restriction_to_1-stratum}
	The restriction of the mirror family $(\cX,\cE)$ to every toric 1-stratum $S$ of $\TV(\Sec)$ is non-trivial.
	More precisely, we will exhibit a 0-stratum $s\in\partial S$ and a double curve in the fiber $\cV_s$ which disappears over $S^\circ$, except in case III below it will be a 0-dimensional log-canonical center which disappears.
\end{claim}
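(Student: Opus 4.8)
The plan is to prove \cref{cl:restriction_to_1-stratum} by running through the classification of toric $1$-strata $S$ of $\TV(\Sec)$ that we already set up for part (\ref{thm:stable:stable}), namely by looking at the cone $\gamma$ of codimension one in $\Sec$ cutting out $S$. Every such $\gamma$ contains a codimension-one cone of $\MoriFan(K)$, and by \cref{lem:cone_induct} (combined with the inductive reduction via \cref{prop:restriction_SQM}, exactly as in the proof of (\ref{thm:stable:stable})) we may assume $\gamma$ is a facet of $\Nef(K)$ or the associated bogus cone. A facet of $\Nef(K)$ corresponds, under \cref{lem:Mori_fan}, to a facet of $\Nef(Y)$, hence to a contraction $f\colon Y\to Y'$ which is either the flop-type contraction of a single $(-1)$-curve $C\subset Y$ (small over $\oK$) or a $\bbP^1$-fibration $Y\to\bbP^1$; the bogus-cone case likewise corresponds to $f$ a divisorial contraction with $Y'$ a point or $\bbP^1$. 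I would organize the proof into the same five cases I-V used above, and in each case produce the $0$-stratum $s\in\partial S$ and the promised vanishing cycle.

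Next I would carry out each case concretely. In the small contraction case where $C$ is an \emph{internal} $(-1)$-curve, the two maximal cones adjacent along $\gamma$ lie in a common maximal cone of $\MovSec$ by \cref{prop:secondary_fan_del_Pezzo}, so actually $\gamma$ is not a facet of a maximal cone of $\Sec$ and no such $1$-stratum exists — this case is vacuous. If $C$ is a \emph{boundary} $(-1)$-curve, then by \cref{lem:SQM} flopping $C$ changes $\varphi$ at $[E]$, so the two cones genuinely glue to distinct maximal cones of $\Sec$ meeting along $S$; I would take $s$ to be the $0$-stratum $0\in\TV(\Nef(Y))$, where by the rigidity analysis of \cref{cor:count_rigid} (Case II above) the family is the purely toric Mumford smoothing, and exhibit the double curve of $\cV_s$ corresponding to the edge of $\partial\uLambda$ that gets flopped/smoothed — it is a genuine double curve of the cycle $E$ only if the edge is a boundary edge, but one checks directly using \cref{lem:SQM_of_K} that the relevant edge is incident to the ray $[E]$ which is boundary, and this double curve smooths over $S^\circ$. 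In the $\bbP^1$-fibration cases (III and V), I would instead produce a $0$-dimensional log-canonical center which disappears over $S^\circ$: over the deepest $0$-stratum the fiber $\cV_s$ is the umbrella (an $n$-cycle cone), whose apex is a log-canonical center, and moving into $S^\circ$ along the direction dual to the fibration class partially smooths the umbrella so that the apex is no longer a log-canonical center — this is where I would appeal to the explicit description of the mirror family over a $\bbP^1$-fibration stratum in \cref{sec:mirror_family_P1-fibration} (\cref{prop:P1_fibration}). For the divisorial case IV with $Y'$ a point, the extension over the bogus cone is by the boundary torus action (\cref{sec:extension_full}), so the fiber over $S^\circ$ is isomorphic to a fiber over $S^\circ(c^*\Nef(K'))$ with one coordinate of $\Theta$ zeroed out; there the relevant double curve or center disappearance is inherited from Case I/III analysis, giving non-triviality of $(\cX,\cE)$ on $S$.

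The main obstacle I expect is bookkeeping: matching up, for each facet $\gamma$, precisely which $0$-stratum $s\in\partial S$ to choose so that the fiber $\cV_s$ has an explicitly computable central-fiber model (umbrella or Mumford smoothing), and then verifying that the specific double curve or $0$-dimensional log-canonical center one names really does disappear over the open stratum $S^\circ$ rather than merely deforming within the family. This requires tracing carefully through \cref{rem:Gamma}, \cref{lem:SQM_of_K}, and \cref{prop:restriction_SQM} to identify which edge of $\uLambda$ (resp.\ which rib of the umbrella) is affected by the degeneration corresponding to $S$. A secondary subtlety is that $S$ is a $1$-stratum of $\TV(\Sec)$, not of $\TV(\MoriFan)$, so one must be careful that the coarsening does not identify two would-be distinct $1$-strata into something that \emph{is} contracted — but this is precisely ruled out by the fact (\cref{thm:convex_cone}, \cref{prop:secondary_fan_del_Pezzo}) that cones glue only when $\varphi$ agrees, i.e.\ only when the induced families genuinely agree, so a surviving $1$-stratum of $\Sec$ always records a genuine change in $\varphi$, hence (by \cref{lem:recover_dual_complex} in the del Pezzo case) a genuine change in the dual complex, forcing the promised non-trivial degeneration. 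Once \cref{cl:restriction_to_1-stratum} is established, \cref{lem:finite_map_from_toric_variety} gives \cref{thm:stable}(\ref{thm:stable:finite}) and the proof of \cref{thm:stable} is complete.
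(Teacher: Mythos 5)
Your overall strategy — running the same five-case analysis as in the proof of part (\ref{thm:stable:stable}), using \cref{lem:cone_induct} and \cref{prop:restriction_SQM} to reduce to facets of $\Nef(K)$ or bogus cones adjacent to $\Nef(K)$ — is exactly the paper's, and your treatments of Cases I, II, III and IV are in line with what the paper does (Case I: normal fiber over $S^{\circ}(\gamma)$ vs.\ the non-normal umbrella over the $0$-stratum; Case II: rib of the umbrella corresponding to the contracted boundary $(-1)$-curve smooths by the toric Mumford smoothing; Case III: umbrella apex is a $0$-dimensional log-canonical center that disappears by \cref{prop:P1_fibration}; Case IV: normal vs.\ non-normal fibers via the boundary torus action and the $\bbP^1$-fibration analysis).

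However, there is a genuine gap in Case V, which is actually the delicate one. You propose to treat Cases III and V together by exhibiting the umbrella apex as a $0$-dimensional log-canonical center that disappears, but this fails for Case V. Note first that the statement of the claim itself flags Case III as the unique case where one uses a log-canonical center; the double-curve argument is needed in the other four. Concretely: in Case V, $\gamma = c^*\Nef(K') + \bbR_{\geq 0}[Y]$ with $f\colon Y\to\bbP^1$ having a unique reducible fiber $E_1+E_2$, and $S(\gamma)\simeq\bbP^1$ has only two $0$-strata, corresponding to the two maximal bogus cones $\gamma_{E_i} = c_i^*\Nef(K'_i) + \bbR_{\geq 0}[Y]$. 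Neither of these $0$-strata carries the umbrella as fiber — the fiber there is (via the boundary torus action of \cref{sec:extension_full}) a fiber over $S^\circ(c_i^*\Nef(K'_i))$, hence already a partial smoothing. Worse, the fibers over $S^\circ(\gamma)$ are themselves non-normal (unions of two normal surfaces, by the Case III analysis), so a normal-vs.-non-normal or lc-center-vs.-no-lc-center dichotomy cannot detect non-triviality here. What is actually needed (and what the paper does) is to track a \emph{specific} double curve: one shows (via the convexity of $\MovSec$, \cref{prop:secondary_fan_del_Pezzo}, and \cref{rem:Sec_on_PicY}) that at least one of $E_1,E_2$ is a boundary component of $D$, say $E_1\subset D$, flops it to get $K\dasharrow K_1$ with flopped curve $E'_1$, and observes that $E'_1$ contributes a double curve (a new edge of the retriangulated dual complex $\uLambda_1$) in the fiber over the $0$-stratum $S(\gamma_{E_1})$, which smooths over $S^\circ(\gamma)$ because $E'_1$ is contracted by $K'_1\to K'$. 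This requires the explicit geometry of \cref{lem:SQM_of_K} and the identification from \cref{sec:extension_full}; none of it is an appeal to \cref{prop:P1_fibration} or to an umbrella apex.

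Two smaller points: in Case II your reference to ``the edge of $\partial\uLambda$'' should be to the \emph{ray} (equivalently the vertex of $\partial\uLambda$) corresponding to $E$ — ribs of the umbrella correspond to components of $D$, not to nodes — though your intent is clearly right; and Case I is never explicitly dispatched in your write-up (it is the degenerate situation where $\Pic(Y)$ has rank one, which does not fit the ``facet of $\Nef(K)$'' framing of your first paragraph).
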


So we no longer care about the boundary $\cE$ and the divisor $\Theta$.
As in the proof of (\ref{thm:stable:stable}), using \cref{prop:restriction_SQM} and \cref{lem:cone_induct}, by induction on the Picard number of $Y$, it is enough to consider toric 1-strata of the form $S(\gamma)$ along the five cases as before.

\smallskip
\textbf{Case I}:
the nef cone case where $Y'$ is a point.
Then the open stratum $S^\circ(\gamma)$ is the structure torus (and since we are assuming this is a 1-stratum, $Y$ has Picard number one).
The generic fiber over $S^\circ(\gamma)$ is normal by \cref{prop:mirror_is_Fano}, while the fiber over the 0-stratum associated to $\Nef(K)$ is the non-normal umbrella by \cref{prop:umbrella}.
So \cref{cl:restriction_to_1-stratum} holds in this case.

\smallskip
\textbf{Case II}: the nef cone case where $f\colon Y\to Y'$ is birational.
Since $S(\gamma)$ is 1-dimensional, $\gamma\subset\Pic(K)_\bbR$ has codimension 1, so $f$ is the contraction of a single boundary $(-1)$-curve $E\subset D\subset Y$.
The fiber over the 0-stratum associated to $\Nef(K)$ is the umbrella, and the double curve corresponding to the ray of $\Sigma_{(Y,D)}$ given by $E$ smooths along $S(\gamma)$ by the toric Mumford smoothing.
So \cref{cl:restriction_to_1-stratum} holds in this case.

\smallskip
\textbf{Case III}: the nef cone case where $f$ is a ruling $Y\to\bbP^1$.
Since $\gamma\subset \Pic(K)_\bbR$ has codimension one, $f\colon Y\to\bbP^1$ is a smooth $\bbP^1$-bundle,
and $\gamma\subset \Nef(K)$
gives a 0-stratum in $S(\gamma)$, with fiber the umbrella $V_{\Sigma_{(Y,D)}}$, which has a 0-dimensional log-canonical center (i.e.\ $0 \in V_{\Sigma_{(Y,D)}})$.
By \cref{prop:P1_fibration}, the fibers over $S^{\circ}(\gamma)$ no longer have 0-dimensional log-canonical centers, completing the proof in this case.

\smallskip
\textbf{Case IV}: the bogus cone case where $Y'$ is a point.
We have $\gamma = 0 + \bbZ_{\geq 0} [Y]$.
Since $\gamma$ has codimension one, $Y$ has Picard number two.
There are two possibilities for the del Pezzo, in either case there is a ruling $r\colon Y \to \bbP^1$.
By \cref{sec:extension_full}, the fibers of $\cX$ over $S^\circ(\gamma)$ occur as fibers over $S^\circ(0) = T_{\Pic(K)}$, which are normal by \cref{prop:du_Val}.
But the ruling determines a 0-stratum in $\partial S(\gamma)$, given by the cone $\gamma_r\coloneqq c_r^*\Nef(K_r) + \bbZ_{\geq 0}[Y]$, where $c_r\colon K \to K_r$ is the divisorial contraction with exceptional locus $r\colon Y \to \bbP^1$.
Again by \cref{sec:extension_full}, the fibers of $\cX$ over $S^\circ(\gamma_r)$ occurs as fibers over $S^\circ(c^*\Nef(K_r))$.
Recall from the analysis of case III in the proof of (\ref{thm:stable:stable}) that all fibers over $S^\circ(c^*\Nef(K_r))$ are non-normal, completing the proof in this case.

\smallskip
\textbf{Case V}: the bogus cone case where $Y'\simeq\bbP^1$.
We have $\gamma = c^*\Nef(K') + \bbZ_{\geq 0}[Y]$.
Since $\gamma$ has codimension one and $\Pic(K'/\oK)\simeq\Pic(\bbP^1)$ has rank 1, $\Pic(Y)$ has rank 3; it follows that $f\colon Y\to\bbP^1$ has a unique singular fiber.
We consider the restriction to the closed stratum $(\cX,\cE) \to S(\gamma) = \bbP^1$, and show that it is not constant.
By \cref{sec:extension_full} and case III in the proof of (\ref{thm:stable:stable}), the fibers over the open stratum $S^\circ(\gamma)$ are unions of two normal surfaces, meeting along a smooth $\bbP^1$
(except when $D_H$ is irreducible, then the normalization is integral and the conductor is a smooth $\bbP^1$).
Let $F = E_1 + E_2 \subset Y$ be the singular fiber of $f$.
We can flop either $E_i$ and obtain $K \dasharrow K_i$, which induces $Y \to Y_i$ contracting $E_i$.
There is a regular divisorial contraction $c_i\colon K_i \to K'_i$ with exceptional locus $r_i\colon Y_i \to \bbP^1$, a smooth $\bbP^1$-bundle, and then a regular contraction $K'_i \to K'$, with exceptional locus $\bbP^1$, the flopped curve $E_i' \subset K_i$.
Then $\gamma \subset \gamma_{E_i} \coloneqq c_i^*\Nef(K'_i) + \bbZ_{\geq 0}[Y]$, where $\gamma_{E_i}$ are maximal cones and the corresponding 0-strata are the 0-strata in the closed $1$-stratum $S$.

We claim that at least one of the $E_i \subset Y$ is a boundary divisor (i.e.\ an irreducible component of $D \subset Y$).
Otherwise, by \cref{prop:secondary_fan_del_Pezzo} and \cref{rem:Sec_on_PicY}, the two cones $c_i^*\Nef(K'_i) \in \MovFan(K)$ lie in the same cone of $\MovSec(K)$, then $\gamma$ would not have the same dimension as the minimal cone of $\MovSec(K)$ that contains it, a contradiction.

Without loss of generality say $E_1 \subset D$, and we consider its flop.
Let $\uLambda_1$ be the dual complex of the central fiber of $K_1 \to \bbA^1$, obtained from $\uLambda$ by flopping the edge corresponding to $E_1$.
The fiber over the corresponding 0-stratum $S(\gamma_{E_1})$ (which we recall from \cref{sec:extension_full} is a fiber over a point of $S^\circ(c_1^*\Nef(K'_1))$ has a double curve corresponding to $E_1'$
(i.e.\ to the {\it new} edge of $\uLambda_1$).
From the description of the fibers over the open stratum $S^\circ(\gamma)$ above, this double curve is smoothed over $S^\circ(\gamma)$, since $E_1'$ is contracted under $K'_1 \to K_1$.

This completes the proof of \cref{cl:restriction_to_1-stratum} and thus of \cref{thm:stable}.

\subsection{The mirror family for \texorpdfstring{$\bbP^1$}{P1}-fibrations} \label{sec:mirror_family_P1-fibration}

Here is a detailed analysis for the case III in the proof of \cref{thm:stable}.
We use the comparison \cref{prop:alg_equal} to borrow results from \cite[\S 6]{Gross_Mirror_symmetry_for_log_Calabi-Yau_surfaces_I_v1}.

Let $(Y,D)$ be a Looijenga pair with a fibration $f\colon Y \to \bbP^1$ with generic fiber $\bbP^1$.
Assume that $f$ is smooth over $\bbA^1 \setminus 0$.

\begin{notation}
	For the rest of this subsection, we replace $(Y,D)$ and $f$ by their restrictions to $\bbA^1\subset\bbP^1$.
	Write $D = D_H + D_F$, where $D_F$ consists of irreducible components contained in fibers, and $D_H$ consists of horizontal components (which is either a union of two sections, or an irreducible component necessarily of degree 2 over the base).
	We assume that $D_F$ lies over $0\in\bbA^1$.
	Note that $D$ is a cycle of rational curves if $D_H$ is irreducible; but if $D_H$ is reducible, then $D$ is a chain of rational curves, with two non-proper ends, the components of $D_H$.
\end{notation} 

Since \cite[\S 6]{Gross_Mirror_symmetry_for_log_Calabi-Yau_surfaces_I_v1} does not treat the case of irreducible $D_H$, we have to do a bit of analysis in this case.

Let $B$ be the support of the dual cone complex $\Sigma_{(Y,D)}$, with a canonical \Zaffine structure determined by the self-intersection numbers of the irreducible components of $D_F$.
If $D_H$ is irreducible, let $B'$ be obtained from $B$ by cutting along the ray of $\Sigma_{(Y,D)}$ corresponding to $D_H$.
Let $q\colon B'\to B$ be the quotient map identifying the two boundary rays of $B'$.
When $D_H$ is reducible we take $B'\coloneqq B$.
Note that $B'$ is isomorphic (as \Zaffine manifold with boundary) to a convex cone in $\bbR^2$, which is a half space if and only if the map $f\colon Y\to\bbA^1$ is toric.

Let $R\coloneqq\bbC[\NE(Y/\bbA^1)]$, $S\coloneqq\Spec(R)$, 
$A$ the free $R$-module with basis $B(\bbZ)$, and $A'$ the free $R$-module with basis $B'(\bbZ)$.
The construction of \cite[\S 6]{Gross_Mirror_symmetry_for_log_Calabi-Yau_surfaces_I_v1} gives
an $R$-algebra structure on $A'$ using the formalism of scattering diagram and broken lines
(this construction depends only on the formal neighborhood of the fiber $D_F$, so it makes sense also in the case where $D_H$ is irreducible).
The scattering diagram is finite (there are only finitely many rays, and the
attached functions are polynomials, as opposed to formal power series), and does not contain either of the boundary rays of $B'$, so it can be equivalently viewed as a scattering diagram on $B$.
Then the exact same structure constants (using broken lines) gives an $R$-algebra structure on $A$.
Let $I \subset A$ be the free $R$-submodule with basis $\theta_b$ for $b\in B(\bbZ)$ not on a ray associated to any component of $D_H$.

\begin{proposition} \label{prop:normalize}
	Let $A \to A'$ be the map of $R$-modules sending \[\theta_b \mapsto\allowbreak \sum_{b' \in q^{-1}(b)} \theta_{b'}.\]
	This is an inclusion of $R$-algebras, and is an integral extension.
	The submodule $I$ is an ideal of both $A$ and $A'$.
	For any $\theta_b\in I$, we have an isomorphism of the localizations $A_{\theta_b} \simeq A'_{\theta_b}$.
\end{proposition}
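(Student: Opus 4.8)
The plan is as follows. If $D_H$ is reducible then $B'=B$, the map $q$ is the identity, and $A=A'$, so every assertion is trivial; I will therefore assume $D_H$ irreducible, so that $D$ is a cycle, the ray $\rho\subset B$ attached to $D_H$ is the common image of the two boundary rays $\rho_1',\rho_2'$ of the convex cone $B'$, and $q\colon B'(\bbZ)\to B(\bbZ)$ is two-to-one over $\rho(\bbZ)$ and one-to-one elsewhere. Injectivity of $A\to A'$ is then purely combinatorial: it sends the distinct basis vectors $\theta_b$ to the sums $\sum_{b'\in q^{-1}(b)}\theta_{b'}$, which have pairwise disjoint nonempty supports, so $q^\sharp$ identifies $A$ with the sub-$R$-module of $A'$ of elements whose coordinates in the basis $\{\theta_{b'}\}$ are constant on the fibres of $q$ (the ``$q$-symmetric'' elements). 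What remains is to see that $q^\sharp$ is multiplicative, that it is integral, that $I$ is an ideal on both sides, and that it becomes an isomorphism after inverting any $\theta_b\in I$.

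For multiplicativity I would use the broken-line description. The finite (polynomial) scattering diagram $\fD$ of \cite[\S 6]{Gross_Mirror_symmetry_for_log_Calabi-Yau_surfaces_I_v1} lives on $B'$, avoids $\rho_1'$ and $\rho_2'$, and hence descends to $B$; the products in $A$ and in $A'$ are read off from broken lines for $\fD$ ending at a general basepoint, in $B$ and in $B'$ respectively. Choosing the basepoint $Q$ off $\rho$, I want to match the contributing broken lines downstairs with those upstairs: each broken line in $B$ should lift uniquely to one in $B'$ once a point of $q^{-1}(Q)$ is fixed, with the same attached monomial, and conversely. The point that needs care is that no contributing broken line winds around the apex of $B$, so that over $B\setminus\rho$ the map $q$ transports broken lines faithfully — in the toric local models around the apex this is just the statement that $q$ is an honest double cover, as in the length-$\le 2$ cycle analysis of loc.\ cit., and in general it should follow from finiteness of $\fD$ together with the usual convexity/boundedness of broken lines. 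Granting it, the theta bases are intertwined and the structure constants agree, so $q^\sharp$ is a ring map onto the $q$-symmetric subring.

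For integrality, observe first that since $\fD$ avoids the boundary rays, theta functions supported on them are multiplicative, $\theta_{ke_i'}=\theta_{e_i'}^{\,k}$, where $e$ is the primitive generator of $\rho$ and $e_1',e_2'$ its two lifts; combined with $\theta_{b'}=q^\sharp(\theta_{q(b')})$ for interior $b'$ and with $\theta_{e_2'}=q^\sharp(\theta_e)-\theta_{e_1'}$ this gives $A'=A[\theta_{e_1'}]$. So it suffices that $\theta_{e_1'}$ satisfy a monic polynomial over $A$, and the natural candidate is $T^2-q^\sharp(\theta_e)\,T+\theta_{e_1'}\theta_{e_2'}$; thus the whole point is $\theta_{e_1'}\theta_{e_2'}\in q^\sharp(A)$ (and then $A'$ is even module-finite of rank two over $A$). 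I would read this off from the explicit equations of the mirror family in \cite[\S 6]{Gross_Mirror_symmetry_for_log_Calabi-Yau_surfaces_I_v1}, extended to irreducible $D_H$ as indicated in the text: the local model of $\Spec A$ along the conductor is $x^2=f$ with $f\in q^\sharp(A)$ and $x=\theta_{e_1'}-\theta_{e_2'}$ up to a unit, so $\theta_{e_1'}\theta_{e_2'}$ lies in $q^\sharp(A)$ and $\Spec A'\to\Spec A$ is the normalization. For the ideal claim, $I$ has $R$-basis $\{\theta_b:b\notin\rho(\bbZ)\}$ (in particular $1=\theta_0\notin I$), and it is an ideal by the positivity/tropical-grading argument of \cref{lem:boundary_E} applied to the function cutting out $\rho$; under $q^\sharp$ it corresponds to the $R$-span of $\{\theta_{b'}:b'\notin\rho_1'\cup\rho_2'\}$, which is an ideal of $A'$ for the same reason and whose vanishing locus $V(I)$ is the conductor.

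Finally, for the localization, fix $\theta_b\in I$, so $b\notin\rho$, $q^{-1}(b)=\{b'\}$ is a single point, and $q^\sharp(\theta_b)=\theta_{b'}$. The map $A_{\theta_b}\to A'_{\theta_b}$ is injective since localization is exact, and for surjectivity it is enough to put $\theta_{e_1'}$ into $A_{\theta_b}$. Since $V(I)$ is the non-normal locus and $\Spec A'\to\Spec A$ is its normalization, the map is an isomorphism over $D(\theta_b)\subset\Spec A\setminus V(I)$; concretely, inverting $\theta_b$ makes $f$ a unit, the local equation $x^2=f$ splits, and $\theta_{e_1'}-\theta_{e_2'}$ (hence $\theta_{e_1'}$) lies in $A_{\theta_b}$, giving $A_{\theta_b}\xrightarrow{\ \sim\ }A'_{\theta_b}$. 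In summary, the two steps that actually require work are the no-wrapping property of broken lines (so that $q^\sharp$ is a ring map) and the identification $\theta_{e_1'}\theta_{e_2'}\in q^\sharp(A)$ together with the conductor $V(I)$; I expect the first to be the main obstacle, and both are cleanest to handle through the explicit \S 6 description the paper has already invoked, which also makes the normalization picture transparent.
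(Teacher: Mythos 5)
Your proposal is not essentially the paper's proof: the paper rests on a single combinatorial lemma you don't find, and your substitutes for it run into circularity. The paper reduces to the case where $B'$ is strictly convex (equivalently, $Y\to\bbA^1$ non-toric, which is implied by your $D_H$ irreducible reduction) and then establishes one Claim: for $a,b\in B'(\bbZ)$, if $r\in B'(\bbZ)\setminus 0$ lies on a boundary ray and $\theta_r$ appears with nonzero coefficient in $\theta_a\cdot\theta_b$, then $a$ and $b$ must both lie on that same boundary ray. This follows directly from strict convexity of $B'$. Everything you need then drops out at once: $\theta_{b_1}\theta_{b_2}\in q^\sharp(A)$ (since $b_1,b_2$ lie on different boundary rays), giving the monic polynomial $T^2-\theta_b T+\theta_{b_1}\theta_{b_2}$; the ideal claim (a product with a factor off the boundary rays never hits a boundary theta); and the localization isomorphism (for $\theta_b\in I$ one gets $\theta_{b_1}\cdot\theta_b\in A$, hence $\theta_{b_1}\in A_{\theta_b}$ directly, no normalization argument needed).

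The genuine gap in your proposal is that your route to $\theta_{e_1'}\theta_{e_2'}\in q^\sharp(A)$ appeals to the local equation $x^2=f$ of $\Spec A$ along the conductor, read off from \cite[\S 6]{Gross_Mirror_symmetry_for_log_Calabi-Yau_surfaces_I_v1}. But those explicit equations describe $\Spec A'$; identifying $\Spec A$ as the quotient $\Spec A'/{\sim}$ with those equations presupposes precisely the ring structure and normalization picture you are trying to establish, so this step is circular as written. The same issue affects your localization argument, which again invokes the normalization picture as input. (Your ``no-wrapping'' worry about broken lines for multiplicativity is a real point, but the paper also dispatches that step tersely, so it isn't the distinguishing gap.) The missing idea is the strict-convexity Claim: it is what lets one see $\theta_{b_1}\theta_{b_2}\in q^\sharp(A)$ combinatorially, before anything about local equations or normalization is known.
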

\begin{proof}
	The proposition is easy to check if $Y\to\bbA^1$ is toric, so we may assume that $B'$ is strictly convex.
	The multiplication rules given by the scattering diagrams imply that $A\to A'$ is an inclusion of $R$-algebras.
	Moreover, the strict convexity of $B'$ implies:
	\begin{claim}
		For $a,b\in B'(\bbZ)$, if $r\in B'(\bbZ)\setminus 0$ lies on a boundary ray, and $\theta_r$ has nonzero coefficient in the product $\theta_a\cdot\theta_b$, then $a,b$ must lie on the same boundary ray as $r$.
		In particular, $\theta_a \cdot \theta_b \in A \subset A'$ unless $a,b$ lie on a same boundary ray.
	\end{claim}

Now take $b \in B(\bbZ)$ lying on (the image of) a boundary ray, and write $q^{-1}(b) = b_1 + b_2$.
Since $\theta_b=\theta_{b_1}+\theta_{b_2}$, we have
\[
\theta_{b_1} \cdot \theta_b = \theta_{b_1}^2 + \theta_{b_1} \cdot \theta_{b_2}.
\]
The claim implies that $\theta_{b_1}\cdot\theta_{b_2}\in A\subset A'$.
It follows that $\theta_{b_1} \in A'$ is integral over $A$, and thus $A \subset A'$ is an
integral extension.
The claim also implies that $I$ is an ideal of both $A$ and $A'$, and gives the localization statement.
This completes the proof.
\end{proof}

Let $\cV' \coloneqq \Spec(A')$ and $\cV \coloneqq \Spec(A)$.
Let $\cC' \subset \cV'$, $\cC \subset \cV$ be the subschemes given by the ideal $I$.
Note $\cC' = q^{-1}(\cC)$, and $q\colon\cV'\to\cV$ is an isomorphism outside $\cC$.

\begin{proposition} \label{prop:P1_fibration}
For any fibers $(V',C'),(V,C)$ over the open stratum (i.e.\ the structure torus) $S^\circ \subset S$, $V'$ is normal, $V' \setminus C' \simeq V \setminus C$ are canonical, and the pairs $(V,C)$, $(V',C')$ have no 0-dimensional log-canonical centers.
\end{proposition}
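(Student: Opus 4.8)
The plan is to separate the case where $f\colon Y\to\bbA^1$ is toric from the case where it is not, and in the non-toric case to bootstrap from the explicit computations of \cite[\S 6]{Gross_Mirror_symmetry_for_log_Calabi-Yau_surfaces_I_v1}, extending them to cover irreducible $D_H$. One observation is immediate: the isomorphism $V'\setminus C'\simeq V\setminus C$ is obtained by restricting to a torus fiber the isomorphism $\cV'\setminus\cC'\simeq\cV\setminus\cC$ from \cref{prop:normalize} (recall $\cC'=q^{-1}(\cC)$ and $q$ is an isomorphism off $\cC$). So what remains is to prove that $V'$ is normal, that $V'\setminus C'$ has at worst canonical singularities, and that $(V',C')$ and $(V,C)$ carry no $0$-dimensional log-canonical centers.

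If $f$ is toric, then $B'$ is a half-plane and the scattering diagram is trivial, so $A'\simeq R[B'(\bbZ)]$ is a monoid algebra; the torus fiber $V'$ is then a smooth toric surface with $C'$ a smooth toric divisor, giving normality, canonicity of $V'\setminus C'$, and the absence of $0$-dimensional log-canonical centers all at once, while $q$ is the normalization obtained by folding $B'$ and $(V,C)$ is generically a node along a curve. If $f$ is not toric, then $B'$ is strictly convex and carries a finite scattering diagram disjoint from the two boundary rays. When $D_H$ is reducible one has $B'=B$ and $(\cV',\cC')=(\cV,\cC)$ coincides, via \cref{prop:alg_equal}, with the family constructed in \cite[\S 6]{Gross_Mirror_symmetry_for_log_Calabi-Yau_surfaces_I_v1}; I would quote from loc.\ cit.\ that the torus fibers are normal, that $V'\setminus C'$ is the complement of an anticanonical divisor in a weak del Pezzo surface with at worst du Val singularities, and that $C'$ is the unique (hence $1$-dimensional) log-canonical center.

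When $D_H$ is irreducible, $A'$ is produced by the identical broken-line recipe from the same combinatorial input --- a strictly convex cone together with a finite scattering diagram --- so the strategy is to check that the normality and du Val computations of \cite[\S 6]{Gross_Mirror_symmetry_for_log_Calabi-Yau_surfaces_I_v1} depend only on $B'$ and the scattering diagram, not on $D_H$ being a split pair of sections, and hence apply verbatim; this settles $V'$ and $(V',C')$. For $(V,C)$ one then argues that a $0$-dimensional log-canonical center would lie either in $V\setminus C\simeq V'\setminus C'$, which is canonical and so has no interior log-canonical centers, or along $C$, where by \cref{prop:normalize} the surface $V$ arises from $V'$ by gluing its two boundary coordinate lines --- \'etale-locally a node along a curve over $V'$ --- so the absence of $0$-dimensional log-canonical centers of $(V',C')$ along $C'$ transfers to $(V,C)$ along $C$.

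The step I expect to be the main obstacle is this irreducible-$D_H$ case: one must be confident that the arguments of \cite[\S 6]{Gross_Mirror_symmetry_for_log_Calabi-Yau_surfaces_I_v1} are genuinely combinatorial in the pair $(B',\text{scattering diagram})$, and then control the folding map $q$ --- in particular see that $C'$ is smooth and that $q$ creates no new log-canonical centers. A safe workaround, if that comparison turns out delicate, would be to unfold the $2$-section $D_H$ into an auxiliary $\bbP^1$-fibered pair with reducible horizontal boundary carrying the \emph{same} scattering diagram, and invoke the reducible case directly.
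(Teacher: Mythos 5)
Your proposal has the right ingredients but has the priorities inverted relative to the paper. What you describe at the very end as a ``safe workaround'' --- unfolding the $2$-section $D_H$ into a reducible horizontal boundary with the same local data near $D_F$ --- is in fact the paper's \emph{main} route, and the observation that licenses it is stated crisply: the algebra $A'$ depends only on the formal neighborhood of the fiber $D_F$ (this is exactly why the scattering diagram/broken-line construction of \cite[\S 6]{Gross_Mirror_symmetry_for_log_Calabi-Yau_surfaces_I_v1} applies even for irreducible $D_H$). This lets the paper assume from the outset that $D_H$ is reducible when analyzing $(V',C')$, bypassing entirely the need to re-audit the arguments of loc.\ cit.\ for combinatorial purity, which is what you spend most of the effort on. Your primary plan is plausible --- the GHK constructions are indeed driven by $(B',\text{scattering diagram})$ --- but it is more work and introduces exactly the kind of delicate comparison you flag as the ``main obstacle,'' when a one-line observation removes the obstacle.

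Two further points where the paper's proof is more concrete than your sketch. First, in the non-toric, $D_F$-not-a-full-fiber case, the paper does not simply quote that torus fibers are normal weak del Pezzos; it factors $f$ through the contraction $Y\to Y'$ of $D_F$ (using that $D_F$ has negative-definite intersection matrix), covers $S^\circ$ by translates of $T_{\Pic(Y'/\bbA^1)}$, and reads off an explicit hypersurface presentation of $(\cV',\cC')$ from the displayed formula on \cite[p.\ 136]{Gross_Mirror_symmetry_for_log_Calabi-Yau_surfaces_I_v1}, from which the normality, canonicity, and lc-center statements are then direct. Your sketch only gestures at ``I would quote from loc.\ cit.'' Second, the passage from $(V',C')$ to $(V,C)$ in the paper is phrased as: $q\colon V'\to V$ is the normalization and $V\simeq V'/\!\sim$, where $\sim$ is the involution of the smooth curve $C'$ swapping $\theta_{b_1},\theta_{b_2}$; this is consistent with your ``étale-locally a node along a curve'' description, and your reasoning that no new $0$-dimensional lc centers are created is the same as the paper's intent. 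So the logical skeleton is sound, but you should promote your workaround to the main argument, make explicit the dependence-only-on-$D_F$ observation, and add the factorization through $Y\to Y'$ and the explicit hypersurface formula in the non-toric case.
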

\begin{proof}
Consider first the statement for $(V',C')$.
For this we can assume $D_H$ is reducible, as $A'$ depends only on the formal neighborhood of the fiber $D_F$.
If $D_F$ is a full fiber of $f$ then $f\colon(Y,D) \to \bbA^1$ is toric,
$\cV' \to S$ is also toric, and the computation is easy.
If $D_F$ is not the full fiber, then $D_F$ has negative-definite intersection matrix,
and $f$ factors through $Y \to Y'$ contracting $D_F$ to a point.
The open stratum $S^\circ\simeq T_{\Pic(Y/\bbA^1)}$ is covered by translates of $T_{\Pic(Y'/\bbA^1)} \subset T_{\Pic(Y/\bbA^1)}$, and the restriction of $(\cV',\cC')$ to such cosets is described explicitly as a hypersurface, in the first displayed formula on \cite[p.\ 136]{Gross_Mirror_symmetry_for_log_Calabi-Yau_surfaces_I_v1}.
From this the proposition for $(V',C')$ follows, and implies the statement for $(V,C)$, since by \cref{prop:normalize}, $q\colon V' \to V$ is the normalization, and $V\simeq V'/\sim$, where $\sim$ is the involution of the smooth curve $C'$ interchanging the theta functions $\theta_{b_1},\theta_{b_2}$ as in the proof of \cref{prop:normalize}. 
\end{proof}

\todo{Paul, can you add some details to the above proof? Can we give a more precise description of exactly what $(V',C')$ is, and how $(V,C)$ is obtained from it?}

\subsection{Speculative strategy for the general conjecture} \label{sec:conjectural_construction}

Here we sketch briefly how we expect the general \cref{conj:main} can be proven, by running the mirror machine twice to obtain the desired universal family.

Let $(Y,D,H)$ be an element in the moduli space $Q$ of \cref{conj:smooth}.
Let $R_Y\coloneqq\bbC[\NE(Y,\bbZ)]$, $\fm\subset R_Y$ the maximal monomial ideal and $\hR_Y$ the completion along $\fm$.
Intersecting with $H$ gives
\[R_Y\twoheadrightarrow \bbC[\bbN]\eqqcolon R_H\quad\text{and}\quad \hR_Y\twoheadrightarrow\hR_H.\]
Let $A_Y$ be the free $\hR_Y$-module with basis $\Sk(U,\bbZ)$ where $U\coloneqq Y\setminus D$.

\begin{conjecture} \label{conj:construction}
	There is a canonical formal $\hR_Y$-algebra structure on $A_Y$.
	We denote the restriction of $\Spf A_Y$ to $\Spf\hR_H$ by $X$.
	It has trivial canonical bundle and has canonical singularities.
	Let $q\colon\hX\to X$ be a crepant terminal resolution, $\hX_0$ the central fiber over $\Spf\hR_H$, $R_q\coloneqq\bbC[\NE(\hX/X,\bbZ)]$ and $A_q$ the free $R_q$-module with basis the integer points in the cone $\Gamma$ over the dual complex $\Lambda$ of $\hX_0$.
	Then there is a canonical graded $R_q$-algebra structure on $A_q$, and we denote $\cY\coloneqq\Proj A_q$.
	Let $\cD\subset\cY$ be the zero locus of the ideal generated by the integer points in the interior of $\Gamma$, and $\Theta\subset\cY$ the zero locus of the sum of basis elements corresponding to the integer points in $\Lambda$.
	The family $(\cY,\cD,\Theta)$ over $\TV(\Nef(q))\simeq\Spec R_q$ extends canonically to a family over a complete toric variety $\TV(\Sec(q))$, where $\Sec(q)$ generalizes the construction of the secondary fan in \cref{sec:secondary_fan}.
	For $0<\epsilon\ll 1$, $(\cY,\cD+\epsilon\Theta)\to\TV(\Sec(q))$ is a family of stable pairs, and the induced map $\TV(\Sec(q))\to\SP$ is finite, with image the closure $\oQ\subset\SP$.
\end{conjecture}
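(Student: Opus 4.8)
The plan is to run the mirror machine twice --- once to build the formal degeneration $X$, and once, on a crepant terminal resolution of $X$, to build the family $(\cY,\cD,\Theta)$ --- and then to establish stability, finiteness and modularity as in the del Pezzo case of \cref{thm:stable} and \cref{thm:del_Pezzo}. The first and most serious missing input is the non-archimedean mirror construction of \cite{Keel_Yu_The_Frobenius} for an arbitrary smooth affine log Calabi--Yau $U=Y\setminus D$ with maximal boundary, i.e.\ without the Zariski open torus hypothesis: one needs the counts of punctured analytic disks in $U^{\an}$ to be finite and to satisfy the associativity relations, so as to endow $A_Y$ with a canonical formal $\hR_Y$-algebra structure. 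Alternatively one may import the intrinsic mirror algebra of Gross--Siebert \cite{Gross_Intrinsic_mirror_symmetry}, provided one first shows its structure constants depend only on $U$, not on the compactification $Y$ (up to change of curve classes in the coefficients) --- which is automatic on the non-archimedean side since the contributing disks live in $U^{\an}$. Granting this, $X\coloneqq\Spf A_Y|_{\Spf\hR_H}$ is defined, and one must show $K_X$ is trivial and $X$ is canonical.

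\textbf{Step 1 (the degeneration $X$).} Triviality and Cartier-ness of $K_X$, Gorensteinness of $X$, and the identification of the central fiber $X_0$ as a ``higher umbrella'' (the $\Proj$-free analog of the generalized Stanley--Reisner scheme of \cref{prop:umbrella}) with trivial dualizing sheaf should follow from the shape of the multiplication rule together with the nefness of the boundary, exactly as in \cref{prop:mirror_is_Fano} and \cite[Proposition 20.2]{Keel_Yu_The_Frobenius}: $(X,X_0^{\red})$ is semi-log-canonical, and a discrepancy computation in the spirit of \cref{prop:slc_surface} upgrades this to canonical on $X$ away from the non-normal locus of its fibers, which one expects to be empty since $X\to\Spf\hR_H$ is generically a smoothing. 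It is here that the lack of a two-dimensionality hypothesis bites hardest: a higher-dimensional classification of the relevant singularities, replacing \cref{prop:classfication_lc_surface}, would likely be needed.

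\textbf{Step 2 (resolve and mirror again).} By the minimal model program, as in \cite{Birkar_Existence_of_minimal_models_for_varieties_of_log_general_type}, $X$ admits a crepant terminal resolution $q\colon\hX\to X$ with $\hX_0$ simple normal crossing. Then $\hX$, over the affine contraction $\oX$ of its zero-section-like locus (mirroring the passage from $K$ to $\oK$ in \cref{nota:K}), is a relative Mori dream space playing the role of $K/\oK$, and the dual complex $\Lambda$ of $\hX_0$ with its cone $\Gamma$ replaces the reflexive polytope. A second application of the mirror machine --- to the log Calabi--Yau complement of $\hX_0$ in $\hX$ --- produces the graded $R_q$-algebra $A_q$, hence $\cY=\Proj A_q$, $\cD$, and $\Theta$ over $\TV(\Nef(q))$, verbatim as in \cref{sec:family_over_secondary_fan}; finite generation and flatness of $A_q$ follow as in \cite{Keel_Yu_The_Frobenius}. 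The fan $\Sec(q)$ is then built exactly by the recipe of \cref{sec:secondary_fan}: the universal torsor over $\hX$, the section $\varphi$ between essential skeletons of \cref{const:universal_torsor}, and the coarsening of $\MoriFan(\hX/\oX)$ obtained by gluing maximal cones with equal $\varphi$ (\cref{thm:convex_cone}) and bogus cones with equal $\gamma_b$ (\cref{prop:add_bogus_cones}) --- the key inputs, \cref{prop:SQM} and the negativity lemma, being dimension-independent. Gluing the families $\Proj A_\alpha$ over $\TV(\Sec(q))$ then proceeds through the universal-torsor reformulation of \cref{sec:rephrase}, the toric fiber bundle construction of \cref{sec:toric_fiber_bundle}, and the equivariant boundary torus action of \cite[\S 17]{Keel_Yu_The_Frobenius}, as in \cref{sec:extension_full}.

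\textbf{Step 3 (stability, finiteness, modularity).} One now follows \cref{sec:proof_of_stability}. Semi-log-canonicity of the fibers and triviality of $K_X+E$ come from \cite[Proposition 20.2]{Keel_Yu_The_Frobenius}; an analog of \cref{prop:restriction_SQM} gives an inductive structure, on the Picard number of $\hX$, reducing the remaining claims to fibers over nef and bogus cones of small index; an analog of \cref{cl:center_claim} --- that exactly one degree-one theta function is non-vanishing at each minimal log-canonical center --- shows $\Theta$ misses all such centers, whence $(\cY,\cD+\epsilon\Theta)$ is a family of stable pairs for $0<\epsilon\ll1$; and \cref{lem:finite_map_from_toric_variety} reduces finiteness of $\TV(\Sec(q))\to\SP$ to showing no toric $1$-stratum is contracted, checked as in \cref{cl:restriction_to_1-stratum} by smoothing a double curve or a $0$-dimensional log-canonical center over an adjacent $0$-stratum. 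Finally, to identify the image with $\oQ$ one argues by dimension count: the image is a complete irreducible subvariety of $\SP$ containing the dense open subset $Q$ of its component, so it is enough to bound $\dim Q$ above by $\dim\TV(\Sec(q))=\rho(\hX)$. This last point is the chief obstacle beyond the del Pezzo setting: \cref{prop:dimcount} relied on Looijenga's classification \cite{Looijenga_Rational_surfaces} of anticanonical pairs on rational surfaces, for which there is no higher-dimensional counterpart; lacking it, one would instead try to prove directly that the mirror family is versal along $Q$ --- so that its image is a whole irreducible component of $\SP$, hence all of $\oQ$ --- which would require substantially new deformation theory.
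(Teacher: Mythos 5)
This is Conjecture~\ref{conj:construction}, which the paper does not prove: \cref{sec:conjectural_construction} is explicitly labeled a \emph{speculative strategy}, and the only surrounding material is \cref{rem:relation_with_conjectural_construction}, which ties the conjectural construction back to the Fano case treated elsewhere in the paper. There is therefore no paper proof to compare your argument against, and what you have written is not a proof but a reconstruction of the authors' own plan (``run the mirror machine twice'') together with an honest inventory of what is missing.

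That inventory is accurate and is the real content of your answer. You correctly identify that the non-archimedean mirror algebra of \cite{Keel_Yu_The_Frobenius} is at present only constructed under the Zariski open torus hypothesis, so even the opening assertion --- that $A_Y$ carries a canonical formal $\hR_Y$-algebra structure --- is open (and the alternative via Gross--Siebert requires the compactification-independence the authors flag as not at all clear); that the singularity analysis (\cref{prop:slc_surface}, \cref{prop:classfication_lc_surface}) and the dimension bound (\cref{prop:dimcount}, resting on Looijenga) are intrinsically two-dimensional with no known higher-dimensional substitutes; and that without them Steps 1 and 3 cannot close. Your Step 2 --- building $\Sec(q)$ by the general recipe of \cref{thm:convex_cone} and \cref{prop:add_bogus_cones}, gluing via \cref{sec:rephrase} and \cref{sec:toric_fiber_bundle}, and reducing finiteness to \cref{lem:finite_map_from_toric_variety} and an analog of \cref{cl:restriction_to_1-stratum} --- is indeed dimension-independent machinery and would transfer once the earlier inputs were available. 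So your proposal correctly mirrors the paper's speculative structure, but each of the gaps you flag is a theorem still to be proved, not a detail to be filled in; the statement remains a conjecture in the paper and in your write-up alike.
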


\begin{remark} \label{rem:relation_with_conjectural_construction}
	Here is what we believe to be the connection between the conjectural construction above, and the mirror construction we use in this paper:
	Let $Y$ be Fano, and $(X,E)$ mirror to $(Y,D)$.
	For simplicity let us assume both are smooth (e.g.\ when $(Y,D)$ is a smooth del Pezzo with a cycle of $(-1)$-curves).
	Then (conjecturally) $V\coloneqq X \setminus E$ occurs as a fiber of the mirror family (from \cite{Keel_Yu_The_Frobenius}) over $\TV(\Nef(Y))$, say over $v \in T_{\Pic(Y)}$.
	Let $\Gm  \subset \bbA^1 \subset \TV(\Nef(Y))$ be the orbit closure of $v$ for the subgroup $\Gm$ of the boundary torus action (see \cite[\S 16]{Keel_Yu_The_Frobenius}) corresponding to $-K_Y$.
	We expect that the restriction of the mirror family $\cV \to \bbA^1$ is $\oK_X \to \bbA^1$ (as in \cref{nota:K}).
	Note $K \to \oK$ gives a crepant resolution.
	This restriction is deformation equivalent to the restriction to $\Spf\hR_H$ as in the first step of the conjectural construction, so we expect the mirror family we construct in this paper is a special case of the family from the second step of the conjectural construction above.
	\end{remark}

\bibliographystyle{plain}
\bibliography{dahema}

\def\cprime{$'$}
\begin{thebibliography}{10}

\bibitem{Alexeev_Moduli_spaces_MgnW}
Valery Alexeev.
\newblock Moduli spaces {$M_{g,n}(W)$} for surfaces.
\newblock In {\em Higher-dimensional complex varieties ({T}rento, 1994)}, pages
  1--22. de Gruyter, Berlin, 1996.

\bibitem{Alexeev_Complete_moduli}
Valery Alexeev.
\newblock Complete moduli in the presence of semiabelian group action.
\newblock {\em Ann. of Math. (2)}, 155(3):611--708, 2002.

\bibitem{Batyrev_Dual_polyhedra_and_mirror_symmetry}
Victor~V. Batyrev.
\newblock Dual polyhedra and mirror symmetry for {C}alabi-{Y}au hypersurfaces
  in toric varieties.
\newblock {\em J. Algebraic Geom.}, 3(3):493--535, 1994.

\bibitem{Berkovich_Spectral_theory}
Vladimir~G. Berkovich.
\newblock {\em Spectral theory and analytic geometry over non-{A}rchimedean
  fields}, volume~33 of {\em Mathematical Surveys and Monographs}.
\newblock American Mathematical Society, Providence, RI, 1990.

\bibitem{Berkovich_Vanishing_cycles_for_formal_schemes}
Vladimir~G. Berkovich.
\newblock Vanishing cycles for formal schemes.
\newblock {\em Invent. Math.}, 115(3):539--571, 1994.

\bibitem{Birkar_Existence_of_minimal_models_for_varieties_of_log_general_type}
Caucher Birkar, Paolo Cascini, Christopher~D. Hacon, and James McKernan.
\newblock Existence of minimal models for varieties of log general type.
\newblock {\em J. Amer. Math. Soc.}, 23(2):405--468, 2010.

\bibitem{Ducros_La_structure_des_courbes_analytiques}
Antoine Ducros.
\newblock La structure des courbes analytiques.
\newblock En cours de r\'edaction, date du 15/11/2012.

\bibitem{Fresnel_Rigid_analytic_geometry_and_its_applications}
Jean Fresnel and Marius van~der Put.
\newblock {\em Rigid analytic geometry and its applications}, volume 218 of
  {\em Progress in Mathematics}.
\newblock Birkh\"auser Boston, Inc., Boston, MA, 2004.

\bibitem{Fulton_Introduction_to_toric_varieties}
William Fulton.
\newblock {\em Introduction to toric varieties}, volume 131 of {\em Annals of
  Mathematics Studies}.
\newblock Princeton University Press, Princeton, NJ, 1993.
\newblock The William H. Roever Lectures in Geometry.

\bibitem{Fulton_Intersection_theory}
William Fulton.
\newblock {\em Intersection theory}, volume~2 of {\em Ergebnisse der Mathematik
  und ihrer Grenzgebiete. 3. Folge. A Series of Modern Surveys in Mathematics}.
\newblock Springer-Verlag, Berlin, second edition, 1998.

\bibitem{Gelfand_Discriminants}
I.~M. Gelfand, M.~M. Kapranov, and A.~V. Zelevinsky.
\newblock {\em Discriminants, resultants, and multidimensional determinants}.
\newblock Mathematics: Theory \& Applications. Birkh\"{a}user Boston, Inc.,
  Boston, MA, 1994.

\bibitem{Gross_Mirror_symmetry_for_log_Calabi-Yau_surfaces_I_v1}
Mark Gross, Paul Hacking, and Sean Keel.
\newblock Mirror symmetry for log {C}alabi-{Y}au surfaces {I}.
\newblock {\em arXiv preprint arXiv:1106.4977v1}, 2011.

\bibitem{Gross_Mirror_symmetry_for_log_Calabi-Yau_surfaces_I_published}
Mark Gross, Paul Hacking, and Sean Keel.
\newblock Mirror symmetry for log {C}alabi-{Y}au surfaces {I}.
\newblock {\em Publ. Math. Inst. Hautes \'{E}tudes Sci.}, 122:65--168, 2015.

\bibitem{Gross_Moduli_of_surfaces}
Mark Gross, Paul Hacking, and Sean Keel.
\newblock Moduli of surfaces with an anti-canonical cycle.
\newblock {\em Compos. Math.}, 151(2):265--291, 2015.

\bibitem{Gross_K3}
Mark Gross, Paul Hacking, Sean Keel, and Bernd Siebert.
\newblock Theta functions for {K}3 surfaces.
\newblock {\em preprint}, 2018.

\bibitem{Gross_Mirror_symmetry_via_logarithmic_degeneration_data_I}
Mark Gross and Bernd Siebert.
\newblock Mirror symmetry via logarithmic degeneration data. {I}.
\newblock {\em J. Differential Geom.}, 72(2):169--338, 2006.

\bibitem{Gross_Intrinsic_mirror_symmetry_announcement}
Mark Gross and Bernd Siebert.
\newblock Intrinsic mirror symmetry and punctured {G}romov-{W}itten invariants.
\newblock In {\em Algebraic geometry: {S}alt {L}ake {C}ity 2015}, volume~97 of
  {\em Proc. Sympos. Pure Math.}, pages 199--230. Amer. Math. Soc., Providence,
  RI, 2018.

\bibitem{Gross_Intrinsic_mirror_symmetry}
Mark Gross and Bernd Siebert.
\newblock Intrinsic mirror symmetry.
\newblock {\em arXiv preprint arXiv:1909.07649}, 2019.

\bibitem{EGA4-2}
A.~Grothendieck.
\newblock \'{E}l\'{e}ments de g\'{e}om\'{e}trie alg\'{e}brique. {IV}. \'{E}tude
  locale des sch\'{e}mas et des morphismes de sch\'{e}mas. {II}.
\newblock {\em Inst. Hautes \'{E}tudes Sci. Publ. Math.}, (24):231, 1965.

\bibitem{Hatcher_Algebraic_topology}
Allen Hatcher.
\newblock {\em Algebraic topology}.
\newblock Cambridge University Press, Cambridge, 2002.

\bibitem{Hu_Mori_dream_spaces}
Yi~Hu and Sean Keel.
\newblock Mori dream spaces and {GIT}.
\newblock volume~48, pages 331--348. 2000.
\newblock Dedicated to William Fulton on the occasion of his 60th birthday.

\bibitem{Kapranov_Quotients_of_toric_vareities}
M.~M. Kapranov, B.~Sturmfels, and A.~V. Zelevinsky.
\newblock Quotients of toric varieties.
\newblock {\em Math. Ann.}, 290(4):643--655, 1991.

\bibitem{Keel_Geometry_of_Chow_quotients}
Sean Keel and Jenia Tevelev.
\newblock Geometry of {C}how quotients of {G}rassmannians.
\newblock {\em Duke Math. J.}, 134(2):259--311, 2006.

\bibitem{Keel_Yu_The_Frobenius}
Sean Keel and Tony~Yue Yu.
\newblock The {F}robenius structure theorem for affine log {C}alabi-{Y}au
  varieties containing a torus.
\newblock {\em arXiv preprint arXiv:1908.09861}, 2019.

\bibitem{Kollar_Threefolds_and_deformations}
J.~Koll\'{a}r and N.~I. Shepherd-Barron.
\newblock Threefolds and deformations of surface singularities.
\newblock {\em Invent. Math.}, 91(2):299--338, 1988.

\bibitem{Kollar_Moduli_of_varieties_of_general_type}
J\'{a}nos Koll\'{a}r.
\newblock Moduli of varieties of general type.
\newblock In {\em Handbook of moduli. {V}ol. {II}}, volume~25 of {\em Adv.
  Lect. Math. (ALM)}, pages 131--157. Int. Press, Somerville, MA, 2013.

\bibitem{Kollar_Singularities_of_the_minimal_model_program}
J\'{a}nos Koll\'{a}r.
\newblock {\em Singularities of the minimal model program}, volume 200 of {\em
  Cambridge Tracts in Mathematics}.
\newblock Cambridge University Press, Cambridge, 2013.
\newblock With a collaboration of S\'{a}ndor Kov\'{a}cs.

\bibitem{Kollar_Log_canonical_singularities}
J\'{a}nos Koll\'{a}r and S\'{a}ndor~J. Kov\'{a}cs.
\newblock Log canonical singularities are {D}u {B}ois.
\newblock {\em J. Amer. Math. Soc.}, 23(3):791--813, 2010.

\bibitem{Kollar_Birational_geometry_of_algebraic_varieties}
J\'anos Koll\'ar and Shigefumi Mori.
\newblock {\em Birational geometry of algebraic varieties}, volume 134 of {\em
  Cambridge Tracts in Mathematics}.
\newblock Cambridge University Press, Cambridge, 1998.
\newblock With the collaboration of C. H. Clemens and A. Corti, Translated from
  the 1998 Japanese original.

\bibitem{Looijenga_Rational_surfaces}
Eduard Looijenga.
\newblock Rational surfaces with an anticanonical cycle.
\newblock {\em Ann. of Math. (2)}, 114(2):267--322, 1981.

\bibitem{Mandel_Theta_bases}
Travis Mandel.
\newblock Theta bases and log {G}romov-{W}itten invariants of cluster
  varieties.
\newblock {\em arXiv preprint arXiv:1903.03042}, 2019.

\bibitem{Miller_Combinatorial_commutative_algebra}
Ezra Miller and Bernd Sturmfels.
\newblock {\em Combinatorial commutative algebra}, volume 227 of {\em Graduate
  Texts in Mathematics}.
\newblock Springer-Verlag, New York, 2005.

\bibitem{Naruki_Cross_ratio_variety}
Isao Naruki.
\newblock Cross ratio variety as a moduli space of cubic surfaces.
\newblock {\em Proc. London Math. Soc. (3)}, 45(1):1--30, 1982.
\newblock With an appendix by Eduard Looijenga.

\bibitem{Nicaise_Xu_Yu_The_non-archimedean_SYZ_fibration}
Johannes Nicaise, Chenyang Xu, and Tony~Yue Yu.
\newblock The non-archimedean {SYZ} fibration.
\newblock {\em Compos. Math.}, 155(5):953--972, 2019.

\bibitem{Serhiyenko_Cluster_structures}
M.~Serhiyenko, Sherman-Bennett and L.~Williams.
\newblock Cluster structures in schubert varieties in the grassmannian.
\newblock {\em arXiv preprint arXiv:1902.00807}, 2019.

\bibitem{Stacks_project}
The {Stacks Project Authors}.
\newblock {Stacks Project}.
\newblock \url{http://stacks.math.columbia.edu}, 2013.

\bibitem{Thuillier_Geometrie_toroidale}
Amaury Thuillier.
\newblock G\'eom\'etrie toro\"\i dale et g\'eom\'etrie analytique non
  archim\'edienne. {A}pplication au type d'homotopie de certains sch\'emas
  formels.
\newblock {\em Manuscripta Math.}, 123(4):381--451, 2007.

\bibitem{Yu_Enumeration_of_holomorphic_cylinders_I}
Tony~Yue Yu.
\newblock Enumeration of holomorphic cylinders in log {C}alabi--{Y}au surfaces.
  {I}.
\newblock {\em Math. Ann.}, 366(3-4):1649--1675, 2016.

\bibitem{Yu_Enumeration_of_holomorphic_cylinders_II}
Tony~Yue Yu.
\newblock Enumeration of holomorphic cylinders in log {C}alabi--{Y}au surfaces,
  {II} : {P}ositivity, integrality and the gluing formula.
\newblock {\em Geom. Topol.}, 25(1):1--46, 2021.

\end{thebibliography}

\end{document}